\def\@tocline#1#2#3#4#5#6#7{\relax
\ifnum #1>\c@tocdepth 
  \else
    \par \addpenalty\@secpenalty\addvspace{#2}%
\begingroup \hyphenpenalty\@M
    \@ifempty{#4}{%
      \@tempdima\csname r@tocindent\number#1\endcsname\relax
 }{%
   \@tempdima#4\relax
 }%
 \parindent\z@ \leftskip#3\relax \advance\leftskip\@tempdima\relax
 \rightskip\@pnumwidth plus4em \parfillskip-\@pnumwidth
 #5\leavevmode\hskip-\@tempdima #6\nobreak\relax
 \ifnum#1<0\hfill\else\dotfill\fi\hbox to\@pnumwidth{\@tocpagenum{#7}}\par
 \nobreak
 \endgroup
  \fi}
    \newcommand*{\qrr@gobblenexttocentry}[5]{}
    \newcommand*{\qrr@gobblenexttocentry}[4]{}
\newcommand*{\addsubsection}{%
    \addtocontents{toc}{\protect\qrr@gobblenexttocentry}%
    \subsection}
\newcommand{\adj}{\rightleftarrows}
\newcommand{\ZZ}{\mathbb{Z}}
\newcommand{\NN}{\mathbb{N}}
\newcommand{\cA}{\mathcal{A}}
\newcommand{\cB}{\mathcal{B}}
\newcommand{\cC}{\mathcal{C}}
\newcommand{\cD}{\mathcal{D}}
\newcommand{\cE}{\mathcal{E}}
\newcommand{\cF}{\mathcal{F}}
\newcommand{\cG}{\mathcal{G}}
\newcommand{\cH}{\mathcal{H}}
\newcommand{\cI}{\mathcal{I}}
\newcommand{\cJ}{\mathcal{J}}
\newcommand{\cK}{\mathcal{K}}
\newcommand{\cL}{\mathcal{L}}
\newcommand{\cM}{\mathcal{M}}
\newcommand{\cN}{\mathcal{N}}
\newcommand{\cP}{\mathcal{P}}
\newcommand{\cR}{\mathcal{R}}
\newcommand{\cS}{\mathcal{S}}
\newcommand{\cT}{\mathcal{T}}
\newcommand{\cU}{\mathcal{U}}
\newcommand{\cW}{\mathcal{W}}
\newcommand{\cX}{\mathcal{X}}
\newcommand{\fC}{\mathfrak{C}}
\newcommand{\B}{\mathrm{B}}
\newcommand{\E}{\mathrm{E}}
\newcommand{\K}{\mathrm{K}}
\renewcommand{\L}{\mathrm{L}}
\def\alp{{\alpha}}
\def\bet{{\beta}}
\def\gam{{\gamma}}
\def\sig{{\sigma}}
\def\vphi{{\varphi}}
\def\om{{\omega}}
\def\Gam{{\Gamma}}
\def\Del{{\Delta}}
\def\Lam{{\Lambda}}
\def\vphi{{\varphi}}
\newtheorem{thm}{Theorem}[subsection]
\newtheorem{cor}[thm]{Corollary}
\newtheorem{assume}[thm]{Assumption}
\newtheorem{lem}[thm]{Lemma}
\newtheorem{prop}[thm]{Proposition}
\theoremstyle{definition}
\newtheorem{define}[thm]{Definition}
\newtheorem{conv}[thm]{Convention}
\newtheorem{example}{Example}
\theoremstyle{remark}
\newtheorem{rem}[thm]{Remark}
\def\Csep{\mathtt{SC^*}}
\DeclareMathOperator{\Shp}{Sh}
\DeclareMathOperator{\Fun}{Fun}
\DeclareMathOperator{\Set}{\cS et}
\DeclareMathOperator{\Cat}{\cC at}
\DeclareMathOperator{\Pro}{Pro}
\DeclareMathOperator{\Ind}{Ind}
\DeclareMathOperator{\Map}{Map}
\DeclareMathOperator{\Ho}{Ho}
\DeclareMathOperator{\Hom}{Hom}
\DeclareMathOperator{\prehocolim}{hocolim}
\DeclareMathOperator{\precolim}{colim}
\DeclareMathOperator{\preholim}{holim}
\DeclareMathOperator{\Lw}{Lw}
\DeclareMathOperator{\Sp}{Sp}
\DeclareMathOperator{\cocell}{cocell}
\def\Ne{\mathrm{N}}
\DeclareMathOperator{\rezk}{Rezk}
\DeclareMathOperator{\df}{def}
\DeclareMathOperator{\fib}{fib}
\DeclareMathOperator{\Ob}{Ob}
\DeclareMathOperator{\Id}{Id}
\DeclareMathOperator{\PS}{\cP\cS hv}
\DeclareMathOperator{\Sh}{\cS hv}
\DeclareMathOperator{\Mor}{Mor}
\DeclareMathOperator{\Dom}{Dom}
\DeclareMathOperator{\Cod}{Cod}
\DeclareMathOperator{\R}{R}
\DeclareMathOperator{\cof}{cof}
\DeclareMathOperator{\spe}{sp}
\DeclareMathOperator{\sm}{sm}
\DeclareMathOperator{\cosk}{cosk}
\DeclareMathOperator{\cofin}{\tau}
\DeclareMathOperator{\nil}{nil}
\DeclareMathOperator{\Ab}{Ab}
\DeclareMathOperator{\Ch}{Ch}
\DeclareMathOperator{\Quick}{Quick}
\DeclareMathOperator{\Morel}{Morel}
\newcommand{\Fib}{\mathcal{F}ib}
\newcommand{\Cof}{\mathcal{C}of}
\newcommand{\Triv}{\mathcal{T}riv}
\newcommand{\op}{\mathrm{op}}
\def\colim{\mathop{\precolim}}
\def\hocolim{\mathop{\prehocolim}}
\def\holim{\mathop{\preholim}}
\def\lrar{\longrightarrow}
\def\llar{\longleftarrow}
\def\hrar{\hookrightarrow}
\def\x{\stackrel}
\def \mcal{\mathcal}
\def \ovl{\overline}
\def \what{\widehat}
\def \uline{\underline}
\title{Pro-categories in homotopy theory}
\author{
Ilan Barnea \and Yonatan Harpaz \and Geoffroy Horel}
\address{Mathematisches Institut\\
Einsteinstrasse 62\\
D-48149 M\"unster\\
Deutschland}
\email{ilanbarnea770@gmail.com}
\address{
D\'epartement de math\'ematiques et applications \\
\'Ecole normale sup\'erieure \\
45 rue d'Ulm \\
75005, Paris \\
France
}
\email{harpazy@gmail.com}
\address{Mathematisches Institut\\
Einsteinstrasse 62\\
D-48149 M\"unster\\
Deutschland}
\email{geoffroy.horel@gmail.com}
\thanks{The first and third authors were supported by Michael Weiss's Humboldt professor grant. The second author was supported by the Fondation Sciences Math\'ematiques de Paris.}
\keywords{Pro-categories, infinity-categories, model categories, profinite spaces, \'etale homotopy type}
\begin{document}

\begin{abstract}
The goal of this paper is to prove an equivalence between the model categorical approach to pro-categories, as studied by Isaksen, Schlank and the first author, and the $\infty$-categorical approach, as developed by Lurie. Three applications of our main result are described. In the first application we use (a dual version of) our main result to give sufficient conditions on an $\omega$-combinatorial model category, which insure that
its underlying $\infty$-category is $\omega$-presentable. In the second application we consider the pro-category of simplicial \'etale sheaves and use it to show that the topological realization of any Grothendieck topos coincides with the shape of the hyper-completion of the associated $\infty$-topos. In the third application we show that several model categories arising in profinite homotopy theory are indeed models for the $\infty$-category of profinite spaces. As a byproduct we obtain new Quillen equivalences between these models, and also obtain an example which settles negatively a question raised by G. Raptis.
\end{abstract}

\maketitle
\tableofcontents

\section*{Introduction}

Following the appearance of model categories in Quillen's seminal paper~\cite{Qui67}, the framework of homotopy theory was mostly based on the language of model categories and their variants (relative categories, categories of fibrant objects, Waldhausen categories, etc.). This framework has proven very successful at formalizing and manipulating constructions such as homotopy limits and colimits as well as more general derived functors, such as derived mapping spaces. There are well-known model category structures on the classical objects of study of homotopy theory like spaces, spectra and chain complexes.

When working in this setting one often requires the extension of classical category theory constructions to the world of model categories. One approach to this problem is to perform the construction on the underlying ordinary categories, and then attempt to put a model structure on the result that is inherited in some natural way from the model structures of the inputs. There are two problems with this approach. The first problem is that model categories are somewhat rigid and it is often hard, if not impossible, to put a model structure on the resulting object. The second problem is that model categories themselves have a non-trivial homotopy theory, as one usually considers Quillen equivalences as ``weak equivalences'' of model categories. It is then a priori unclear whether the result of this approach is invariant under Quillen equivalences, nor whether it yields the ``correct'' analogue of the construction from a homotopical point of view.

Let us look at a very simple example. For $\cM$ a model category and $\cC$ a small ordinary category, one can form the category of functors $\cM^{\cC}$. There is a natural choice for the weak equivalences on $\cM^{\cC}$, namely the objectwise weak equivalences. A model structure with these weak equivalences is known to exist when $\cM$ or $\cC$ satisfy suitable conditions, but is not known to exist in general. Furthermore, even when we can endow $\cM^{\cC}$ with such a model structure, it is not clear whether it encodes the desired notion from a homotopical point of view. In particular, one would like $\cM^{\cC}$ to capture a suitable notion of \textbf{homotopy coherent diagrams} in $\cM$. Writing down exactly what this means when $\cM$ is a model category is itself not an easy task.

These issues can be resolved by the introduction of \textbf{$\infty$-categories}. Given two $\infty$-categories $\cC,\cD$, a notion of a \textbf{functor category} $\Fun(\cC,\cD)$ arises naturally, and takes care of all the delicate issues surrounding homotopy coherence in a clean and conceptual way. On the other hand, any model category $\cM$, and in fact any category with a notion of weak equivalences, can be localized to form an $\infty$-category $\cM_\infty$. The $\infty$-category $\cM_\infty$ can be characterized by the following universal property: for every $\infty$-category $\cD$, the natural map
\[ \Fun(\cM_\infty,\cD) \lrar \Fun(\cM,\cD) \]
is fully faithful, and its essential image is spanned by those functors $\cM \lrar \cD$ which send weak equivalences in $\cM$ to equivalences. The $\infty$-category $\cM_\infty$ is called the \textbf{$\infty$-localization} of $\cM$, and one says that $\cM$ is a \textbf{model} for $\cM_\infty$.

One may then formalize what it means for a model structure on $\cM^{\cC}$ to have the ``correct type'': one wants the $\infty$-category modelled by $\cM^{\cC}$ to coincides with the $\infty$-category $\Fun(\cC,\cM_{\infty})$. When $\cM$ is a combinatorial model category it is known that $\cM^{\cC}$ both exists and has the correct type in the sense above (see~\cite[Proposition 4.2.4.4]{Lur09} for the simplicial case). For general model categories it is not known that $\cM^{\cC}$ has the correct type, even in cases when it is known to exist.

Relying on the theory of $\infty$-categories for its theoretical advantages, it is often still desirable to use model categories, as they can be simpler to work with and more amenable to concrete computations. It thus becomes necessary to be able to \textbf{compare} model categorical constructions to their $\infty$-categorical counterparts.

The aim of this paper is to achieve this comparison for the construction of \textbf{pro-categories}. In classical category theory, one can form the pro-category $\Pro(\cC)$ of a category $\cC$, which is the \textbf{free completion} of $\cC$ under \textbf{cofiltered limits}. This can be formalized in term of a suitable universal property: given a category $\cD$ which admits cofiltered limits, the category of functors $\Pro(\cC) \lrar \cD$ which preserve cofiltered limits is naturally equivalent, via restriction, with the category of all functors $\cC \lrar \cD$. It is often natural to consider the case where $\cC$ already possesses \textbf{finite limits}. In this case $\Pro(\cC)$ admits \textbf{all small limits}, and enjoys the following universal property: (*) if $\cD$ is any category which admits small limits, then the category of functors $\Pro(\cC) \lrar \cD$ which preserve limits can be identified with the category of functors $\cC \lrar \cD$ which preserve finite limits.

If $\cC$ is an $\infty$-category, one can define the pro-category of $\cC$ using a similar universal construction. This was done in~\cite{Lur09} for $\cC$ a small $\infty$-category and in~\cite{Lur11b} for $\cC$ an accessible $\infty$-category with finite limits. On the other hand, when $\cC$ is a model category, one may attempt to construct a model structure on $\Pro(\cC)$ which is naturally inherited from that of $\cC$. This was indeed established in~\cite{EH76} when $\cC$ satisfies certain conditions (``Condition N'') and later in~\cite{Isa04} when $\cC$ is a proper model category. The first author and Schlank  \cite{BS1} observed that a much simpler structure on $\cC$ is enough to construct, under suitable hypothesis, a model structure on $\Pro(\cC)$. Recall that

\begin{define}\label{d:wfc-0}
A \textbf{weak fibration category} is a category $\cC$ equipped with two subcategories
\[\Fib, \cW \subseteq \cC\]
containing all the isomorphisms, such that the following conditions are satisfied:
\begin{enumerate}
\item $\cC$ has all finite limits.
\item $\cW$ has the 2-out-of-3 property.
\item For every pullback square
\[\xymatrix{
X \ar[r]\ar^{g}[d] & Y \ar^{f}[d] \\
Z \ar[r] & W \\
}\]
with $f \in \Fib$ (resp. $f \in \Fib \cap \cW$) we have $g \in \Fib$ (resp. $g \in \Fib \cap \cW$).
\item Every morphism $f: X \lrar Y$ can be factored as $X \x{f'}{\lrar} Z \x{f''}{\lrar} Y$ where $f' \in \cW$ and $f'' \in \Fib$.
\end{enumerate}
\end{define}

The structure of weak fibration category was first studied by Anderson \cite{An78}, who referred to it as a right homotopy structure, and is closely related to the notion of a category of fibrant objects due to Brown \cite{Bro73}. Other variants of this notion were studied by Cisinski \cite{Cis10b}, Radlescu-Banu \cite{RB06}, Szumiło \cite{Szu14} and others. We note that the full subcategory of any weak fibration category spanned by the fibrant objects is a category of fibrant objects in the sense of \cite{Bro73}, and the inclusion functor induces an equivalence of $\infty$-categories after $\infty$-localization. This last statement, which is somewhat subtle when one does not assume the factorizations of Definition~\ref{d:wfc-0} (4) to be functorial, appears as Proposition \ref{p:equivalence C_f C} and is due to Cisinski.

The main result of~\cite{BS1} is the construction of a model structure on the pro-category of a weak fibration category $\cC$, under suitable hypothesis. The setting of weak fibration categories is not only more flexible than that of model categories, but it is also conceptually more natural: as we will show in \S\ref{s:homotopical prelim}, the underlying $\infty$-category of a weak fibration category has finite limits, while the underlying $\infty$-category of a model category has all limits. It is hence the setting in which the $\infty$-categorical analogue of universal property (*) comes into play, and arguably the most natural context in which one wishes to construct pro-categories. In \S\ref{s:pro-model} we give a general definition of what it means for a model structure on $\Pro(\cC)$ to be \textbf{induced} from a weak fibration structure on $\cC$. Our approach unifies the constructions of~\cite{EH76}, ~\cite{Isa04} and~\cite{BS1}, and also answers a question posed by Edwards and Hastings in~\cite{EH76} (see Remark \ref{r:EH}).

Having constructed a model structure on $\Pro(\cC)$, a most natural and urgent question is the following: is $\Pro(\cC)$ a model for the $\infty$-category $\Pro(\cC_\infty)$?
Our main goal in this paper is to give a positive answer to this question:

\begin{thm}[see Theorem~\ref{t:main}]\label{t:main0}
Assume that the induced model structure on $\Pro(\cC)$ exists. Then the natural map
\[ \cF: \Pro(\cC)_\infty \lrar \Pro(\cC_\infty) \]
is an equivalence of $\infty$-categories.
\end{thm}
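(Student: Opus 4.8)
The plan is to show that $\cF$ is fully faithful and essentially surjective, with the fully faithful half carrying essentially all the weight. I would begin on the target side. By Proposition~\ref{p:equivalence C_f C} we may compute $\cC_\infty$ using the fibrant objects, so $\cC_\infty$ has finite limits, and Lurie's construction of the pro-category of an $\infty$-category with finite limits~\cite{Lur11b} identifies $\Pro(\cC_\infty)$ with the opposite of the $\infty$-category of accessible left-exact functors $\cC_\infty \lrar \cS$. More usefully, it supplies a mapping-space formula: for pro-objects presented by cofiltered diagrams $X = \{X_i\}_{i \in I}$ and $Y = \{Y_j\}_{j \in J}$ in $\cC_\infty$,
\[ \Map_{\Pro(\cC_\infty)}(X, Y) \simeq \lim_{j} \colim_{i} \Map_{\cC_\infty}(X_i, Y_j). \]
The functor $\cF$ itself I would obtain from the universal property of $\infty$-localization: post-composing $\cC \lrar \cC_\infty$ with the Yoneda embedding $\cC_\infty \lrar \Pro(\cC_\infty)$ and extending along cofiltered limits yields a functor $\Pro(\cC) \lrar \Pro(\cC_\infty)$ sending a pro-object to the cofiltered limit of the images of its levels; since this functor carries the weak equivalences of $\Pro(\cC)$ to equivalences, it descends to $\cF$.

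The heart of the proof is to compute derived mapping spaces in the induced model structure on $\Pro(\cC)$ and match them with the formula above. Here I would rely on Isaksen's reindexing technology~\cite{Isa04}: every object of $\Pro(\cC)$ is isomorphic to one indexed by a cofinite directed poset, and every morphism is isomorphic to a levelwise map of such diagrams. Using the factorization and pullback axioms of Definition~\ref{d:wfc-0} together with the description of the fibrations in the induced structure~\cite{BS1}, I would replace the target by a levelwise-fibrant (Reedy-fibrant) diagram over the same poset without altering its weak homotopy type. With such presentations in hand, the homotopy function complex $\Map_{\Pro(\cC)_\infty}(X,Y)$ is computed as a homotopy limit over the indexing poset of $Y$ of the homotopy function complexes into each level $Y_j$, while the cofiltered (hence filtered) nature of the indexing of $X$ turns mapping into a single level into a homotopy colimit, giving
\[ \Map_{\Pro(\cC)_\infty}(X, Y) \simeq \holim_{j} \hocolim_{i} \Map_{\cC_\infty}(X_i, Y_j). \]
This agrees with the $\Pro(\cC_\infty)$ formula, yielding full faithfulness of $\cF$.

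Essential surjectivity is comparatively direct and follows from the construction of $\cF$ together with the same reindexing. Every object of $\Pro(\cC_\infty)$ is equivalent to a cofiltered diagram in $\cC_\infty$; after reindexing to a cofinite directed poset this diagram can be rectified to a strict diagram in $\cC$, i.e.\ to a genuine pro-object of $\cC$, whose image under $\cF$ is by construction the given pro-object. Hence the essential image of $\cF$ contains a representative of every object of $\Pro(\cC_\infty)$, and $\cF$ is essentially surjective.

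The step I expect to be the main obstacle is the mapping-space computation of the second paragraph. The delicate points are producing a Reedy-fibrant replacement of the target compatibly across the entire index poset, and then commuting the homotopy limit over that poset past the passage to derived mapping spaces; both require a genuine cofinality and Reedy analysis that keeps careful track of the distinction between levelwise and \emph{essentially} levelwise weak equivalences in $\Pro(\cC)$. Reconciling the combinatorics of the model-categorical indexing by cofinite directed posets with the cofiltered-limit presentation underlying Lurie's $\Pro(\cC_\infty)$ is the crux on which the whole equivalence rests.
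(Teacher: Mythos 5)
Your outline reproduces the paper's overall architecture (a $\lim$-$\colim$ mapping space formula yielding full faithfulness, then essential surjectivity from generation by simple objects), but it has a genuine gap at exactly the point where the theorem's content lies. The inner step of your formula --- that ``the cofiltered (hence filtered) nature of the indexing of $X$ turns mapping into a single level into a homotopy colimit'' --- is not a formal consequence of filteredness. It is the assertion that simple fibrant objects are cocompact in $\Pro(\cC)_\infty$, i.e.\ Proposition~\ref{p:main}, and this is the technical heart of the whole theorem. Derived mapping spaces depend on the model structure, not merely on the underlying category of pro-objects: for instance, a left Bousfield localization such as $L_K\Pro(\cS)$ has the same underlying category and the same pro-objects, yet the formula $\Map^h(X,Y)\simeq \hocolim_i \Map^h_{\cC}(X_i,Y)$ fails there for non-local $Y$. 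What makes the formula true for the \emph{induced} model structure is precisely Condition~(3) of Definition~\ref{d:induce}, fed through Lemma~\ref{l:reedy_fib_rep-2} (every trivial fibration with codomain in $\Pro(\cC^{\fib})$ is a retract of a pro-object of trivial fibrations between fibrant objects), the ``categorically Cartesian'' Lemma~\ref{l:cat-car}, the cofinality of the comparison functor between Grothendieck constructions (Proposition~\ref{p:cofinal}), and Cisinski's model $\uline{\Hom}_{\cC}(X,Y)$ for derived mapping spaces (Corollary~\ref{c:map}). Notably, you flag the $\holim$-over-$j$ half (Proposition~\ref{p:homotopy-limit}, proved in the paper via the injective/Reedy structure and Proposition~\ref{p:injective-limit}) as the main obstacle, but that is the easier half; your outline supplies no mechanism at all for the $\hocolim$-over-$i$ half, which is where the argument would actually fail as written.

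There is a second gap in essential surjectivity: you rectify a homotopy-coherent cofiltered diagram $\cT \lrar \cC_\infty$ (over a cofinite poset) to a strict diagram in $\cC$. No such rectification theorem is available for weak fibration categories --- this is exactly the kind of statement the paper deliberately avoids (compare the discussion of Szumi{\l}o's work in \S\ref{s:homotopical prelim}), and proving it would be a substantial result in its own right. The paper's route needs no rectification: it first proves that $\Pro(\cC)_\infty$ has all small limits (Theorem~\ref{t:model-complete}, itself a gap-filling result), observes that the essential image of $\cF$ contains the simple objects and is closed under the relevant limits, and then invokes Lemma~\ref{l:generate} that every object of $\Pro(\cC_\infty)$ is a cofiltered limit of simple objects. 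A smaller unproved assertion in your construction of $\cF$: descending the cofiltered-limit extension through the localization requires that \emph{all} weak equivalences of the induced model structure (not just the essentially levelwise ones) map to equivalences; this is true, but is itself a theorem --- every weak equivalence in $\Pro(\cC)$ is a composite of two maps in $\Lw^{\cong}(\cW)$, by Lemmas~\ref{l:converse-cof} and~\ref{l:converse-fib} --- which you assert without argument.
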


We give three applications of our general comparison theorem. The first application concerns combinatorial model categories. If $\cM$ is a combinatorial model category, then by~\cite[Proposition A.3.7.6.]{Lur09} and the main result of~\cite{Dug01}, its underlying $\infty$-category $\cM_\infty$ is presentable. For many purposes, however, it is useful to know that $\cM_\infty$ is not only presentable, but also \textbf{$\omega$-presentable}, i.e., equivalent to the ind-category of its subcategory of $\omega$-compact objects. This fact is often not so easy to prove, even if we know that $\cM$ is $\omega$-combinatorial (in the sense that its underlying category is $\omega$-presentable and $\cM$ admits a sets of generating cofibrations and trivial cofibrations whose domains and codomains are $\omega$-compact). Using (a dual version of) our main result we are able to give a simple sufficient condition on an $\omega$-combinatorial model category, which insures that its underlying $\infty$-category is $\omega$-presentable. Namely, we show
\begin{prop}[see Proposition~\ref{p:omega}]
Let $\cM$ be an $\omega$-combinatorial model category and let $\cM_0 \subseteq \cM$ be the full subcategory spanned by $\omega$-compact objects. Suppose that every morphism in $\cM_0$ can be factored inside $\cM_0$ into a cofibration followed by a weak equivalence. Then $(\cM_0)_\infty$ is essentially small, admits finite colimits and
$$\Ind((\cM_0)_\infty)\simeq\cM_\infty.$$
In particular, $\cM_\infty$ is $\omega$-presentable, and every $\omega$-compact object in $\cM_\infty$ is a retract of an object in $\cM_0$.
\end{prop}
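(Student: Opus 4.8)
The plan is to realize $\cM$ itself as the ind-category of $\cM_0$ equipped with an induced model structure, and then to invoke the dual of Theorem~\ref{t:main0}. First I would check that $\cM_0$, with the cofibrations and weak equivalences inherited from $\cM$, is a \emph{weak cofibration category} (the dual notion to Definition~\ref{d:wfc-0}). The only axiom with real content is finite cocompleteness: in a locally $\omega$-presentable category the full subcategory of $\omega$-compact objects is closed under finite colimits and contains the initial object, so $\cM_0$ has finite colimits and the inclusion $\cM_0 \hrar \cM$ preserves them. The 2-out-of-3 property for weak equivalences and the pushout-stability of (trivial) cofibrations are then inherited from $\cM$, while the required factorization of any morphism into a cofibration followed by a weak equivalence is exactly the hypothesis. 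Since $\cM_0$ is essentially small, its $\infty$-localization $(\cM_0)_\infty$ is essentially small, and by the dual of the result of \S\ref{s:homotopical prelim} that the underlying $\infty$-category of a weak fibration category has finite limits, $(\cM_0)_\infty$ admits finite colimits. This already settles the first two assertions.

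Next I would identify the model category $\cM$ with the induced model structure on $\Ind(\cM_0)$ (the dual of the construction of \S\ref{s:pro-model}). As a plain category $\cM \simeq \Ind(\cM_0)$, because $\cM$ is locally $\omega$-presentable and $\cM_0$ is precisely its subcategory of $\omega$-compact objects. The point is to match the two model structures. Because $\cM$ is $\omega$-combinatorial, it admits sets $I, J$ of generating cofibrations and trivial cofibrations whose domains and codomains are $\omega$-compact, hence lie in $\cM_0$; these same sets generate the induced model structure on $\Ind(\cM_0)$, so the two classes of cofibrations (and of trivial cofibrations) coincide. Matching the weak equivalences is the delicate step and, I expect, the main obstacle: one must show that the essentially levelwise weak equivalences defining the induced structure agree with the weak equivalences of $\cM$. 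This is where $\omega$-combinatoriality is essential — the compactness of the domains and codomains of $J$ lets one detect weak equivalences by lifting against $J$ and commute the relevant tests with the filtered colimits presenting the objects of $\Ind(\cM_0) = \cM$. Granting this, $\cM$ and $\Ind(\cM_0)$ are the same model category, so $\cM_\infty \simeq \Ind(\cM_0)_\infty$.

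Finally, since the induced model structure on $\Ind(\cM_0)$ exists, the dual of Theorem~\ref{t:main0} applied to the weak cofibration category $\cM_0$ yields an equivalence $\Ind(\cM_0)_\infty \simeq \Ind((\cM_0)_\infty)$. Combined with the previous step this gives $\cM_\infty \simeq \Ind((\cM_0)_\infty)$, as desired. The ``in particular'' clauses then follow formally from the theory of ind-completions of $\infty$-categories: since $(\cM_0)_\infty$ is essentially small and finitely cocomplete, $\Ind((\cM_0)_\infty)$ is $\omega$-presentable, and its $\omega$-compact objects are exactly the retracts of objects of $(\cM_0)_\infty$, i.e. retracts of the images of objects of $\cM_0$. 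Transporting along the equivalence $\cM_\infty \simeq \Ind((\cM_0)_\infty)$ shows that $\cM_\infty$ is $\omega$-presentable and that every $\omega$-compact object of $\cM_\infty$ is a retract of an object of $\cM_0$.
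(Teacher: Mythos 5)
Your overall strategy---endow $\cM_0$ with a weak cofibration structure, identify $\cM \cong \Ind(\cM_0)$, argue that the given model structure is the induced one, and apply the dual of Theorem~\ref{t:main}---is exactly the paper's, and your first and last paragraphs (verifying the weak cofibration axioms for $\cM_0$, deducing essential smallness and finite cocompleteness of $(\cM_0)_\infty$ via the dual of Corollary~\ref{p:WFC_PB}, and the formal properties of ind-completions) are correct. The gap is in your middle step. First, you presuppose that ``the induced model structure on $\Ind(\cM_0)$'' exists and is generated by $I$ and $J$; none of the existence theorems of \S\ref{s:pro-model} applies here, since ind-admissibility of $\cM_0$ (two-out-of-three for essentially levelwise weak equivalences) is not among the hypotheses. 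Second, and more seriously, the step you single out as the main obstacle---proving that the weak equivalences of $\cM$ agree with the essentially levelwise ones---rests on a mischaracterization of Definition~\ref{d:induce}: the induced structure is pinned down by its cofibrations ${}^{\perp}(\Fib\cap\cW)$ and trivial cofibrations ${}^{\perp}\Fib$ (dually, by $\mathbf{Fib}=(\cC_0\cap\cW_0)^{\perp}$ and $\mathbf{Fib}\cap\mathbf{W}=\cC_0^{\perp}$) together with a factorization condition, \emph{not} by declaring the weak equivalences to be $\Lw^{\cong}(\cW_0)$. Whether the weak equivalences of an induced structure coincide with $\Lw^{\cong}(\cW)$ is explicitly left open in \S\ref{s:pro-model}; in general one only knows $\Lw^{\cong}(\cW)\subseteq\mathbf{W}$ and that every weak equivalence is a composite of two maps in $\Lw^{\cong}(\cW)$. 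Worse, your sketch of the identification uses only $\omega$-combinatoriality, and at that level of generality the statement is false: by Remark~\ref{r:Raptis} and Proposition~\ref{p:counter}, the opposite of Quick's model structure is an $\omega$-combinatorial model category whose weak equivalences are \emph{not} all filtered colimits of weak equivalences between $\omega$-compact objects, so ``commuting the relevant tests with the filtered colimits'' cannot succeed as stated.

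The repair, which is how the paper actually proceeds, makes your delicate step evaporate. Since the generating sets $I,J$ have domains and codomains in $\cM_0$, one sees directly that the fibrations of $\cM$ are $(\cC_0\cap\cW_0)^{\perp}$ and its trivial fibrations are $\cC_0^{\perp}$; that is, the \emph{given} model structure on $\cM\cong\Ind(\cM_0)$ satisfies conditions (1) and (2) of (the dual of) Definition~\ref{d:induce}, and since a model structure is determined by its cofibrations and trivial cofibrations, no separate matching of weak equivalences is needed. The remaining condition (3) is supplied by (the dual of) Proposition~\ref{p:induce}, whose hypotheses hold because $\cM_0$, being essentially small, is homotopically small. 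With the given structure thus recognized as the induced one, the dual of Theorem~\ref{t:main} yields $\Ind((\cM_0)_\infty)\simeq\cM_\infty$, and the rest of your argument goes through unchanged.
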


Our second application involves the theory of \textbf{shapes of topoi}. In~\cite{AM69}, Artin and Mazur defined the \textbf{\'etale homotopy type} of an algebraic variety. This is a pro-object in the homotopy category of spaces, which depends only on the \'etale site of $X$. Their construction is based on the construction of the \textbf{shape} of a topological space $X$, which is a similar type of pro-object constructed from the site of open subsets of $X$. More generally, Artin and Mazur's construction applies to any \textbf{locally connected} site.

In~\cite{BS1} the first author and Schlank used their model structure to define what they call the \textbf{topological realization} of a Grothendieck topos. Their construction works for any Grothendieck topos and \textbf{refines} the previous constructions form a pro-object in the homotopy category of spaces to a pro-object in the category of simplicial sets. On the $\infty$-categorical side, Lurie constructed in~\cite{Lur09} an $\infty$-categorical analogue of shape theory and defined the shape assigned to any $\infty$-topos as a pro-object in the $\infty$-category $\cS_\infty$ of spaces. A similar type of construction also appears in~\cite{ToVe}. One then faces the same type of pressing question: Is the topological realization constructed in~\cite{BS1} using model categories equivalent to the one defined in~\cite{Lur09} using the language of $\infty$-categories? In \S\ref{s:topos} we give a positive answer to this question:

\begin{thm}[see Theorem~\ref{t:main shape}]
For any Grothendieck site $\cC$ there is a weak equivalence
\[|\cC|\simeq \Shp(\what{\Sh}_\infty(\cC)) \]
of pro-spaces, where $|\cC|$ is the topological realization constructed in~\cite{BS1} and $\Shp(\what{\Sh}_\infty(\cC)) \in \Pro(\cS_\infty)$ is the shape of the hyper-completed $\infty$-topos $\what{\Sh}_\infty(\cC)$ constructed in~\cite{Lur09}.
\end{thm}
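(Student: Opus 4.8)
The plan is to reduce the theorem to the main comparison result (Theorem~\ref{t:main0}) by identifying both sides as pro-objects arising from a single weak fibration category attached to the site $\cC$.

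First I would set up the model-categorical side. The topological realization $|\cC|$ of~\cite{BS1} is built from the category of simplicial sheaves (or simplicial presheaves with a local model structure) on $\cC$, passing to the induced model structure on the appropriate pro-category. The key observation is that the relevant weak fibration category here should be a category $\cD$ of (hyper-)sheaves whose underlying $\infty$-category $\cD_\infty$ is exactly the hyper-completed $\infty$-topos $\what{\Sh}_\infty(\cC)$. I would make precise which weak fibration structure on simplicial sheaves produces $|\cC|$, verifying that the induced model structure on $\Pro(\cD)$ exists so that Theorem~\ref{t:main0} applies and gives
\[
\Pro(\cD)_\infty \simeq \Pro(\cD_\infty) \simeq \Pro(\what{\Sh}_\infty(\cC)).
\]

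Next I would identify the shape functor on each side through this equivalence. On the $\infty$-categorical side, Lurie's shape $\Shp(\what{\Sh}_\infty(\cC))$ is defined as the image under the global sections (or the pro-left-adjoint to the constant sheaf functor) landing in $\Pro(\cS_\infty)$. On the model side, $|\cC|$ is obtained by applying an analogous ``underlying space'' construction to a pro-object of simplicial sheaves. The crucial step is to check that these two constructions correspond under the equivalence of the previous paragraph, i.e.\ that the functor computing $|\cC|$ is a model for the $\infty$-categorical shape functor. This amounts to showing that the left adjoint to the constant-sheaf functor, computed at the model-categorical level and then $\infty$-localized, agrees with Lurie's shape. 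I would verify compatibility on the generating objects (representable sheaves, or the terminal object) and use that both functors preserve the relevant (co)limits and send weak equivalences to equivalences, so that they are determined by their restriction to a generating subcategory.

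The main obstacle I expect is the hyper-completion subtlety: the model structure on simplicial sheaves of~\cite{BS1} may compute the \emph{hyper-sheaf} $\infty$-topos rather than the na\"ive $\infty$-topos of sheaves, and one must ensure that the weak fibration category whose $\infty$-localization is being compared really has $\what{\Sh}_\infty(\cC)$—the hyper-completion—as its underlying $\infty$-category. Getting this matching exactly right, and confirming that the local weak equivalences used in~\cite{BS1} correspond precisely to the hyper-local equivalences, is the delicate point; once that identification is secured, the theorem follows formally from Theorem~\ref{t:main0} together with the naturality of the shape functor.
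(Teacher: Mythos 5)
Your proposal follows essentially the same route as the paper: Jardine's local weak fibration structure on simplicial sheaves models the hypercompletion $\what{\Sh}_\infty(\cC)$ (the delicate point you flag is settled by citing Jardine and \cite[Proposition 6.5.2.14]{Lur09}), Theorem~\ref{t:main} then gives $\Pro(\Sh_{\Del}(\cC))_\infty \simeq \Pro(\what{\Sh}_\infty(\cC))$, and the two shape constructions are matched through this equivalence. Where you propose checking compatibility on generating objects, the paper makes this precise by showing that $\Gamma^*_\infty$ preserves finite limits and admits a right adjoint (Lemma~\ref{l:geometric}), invoking the essential uniqueness of geometric morphisms \cite[Proposition 6.3.4.1]{Lur09} to identify $\Gamma^*_\infty$ with $q^*$, using Proposition~\ref{p:weak r q functor} together with uniqueness of left adjoints to identify the derived adjunction $(L_{\Gamma^*})_\infty \dashv \Pro(\Gamma^*)_\infty$ with $L \dashv \Pro(q^*)$, and finally computing directly that $L(\ast) \simeq q_* \circ q^*$ --- a short but necessary step, since Lurie's shape is defined as the composite $q_* q^*$ rather than (as your sketch takes as the definition) the value of a pro-left-adjoint on the terminal object.
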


Combining the above theorem with~\cite[Theorem 1.15]{BS1} we obtain:
\begin{cor}
Let $X$ be a locally Noetherian scheme, and let $X_{\acute{e}t}$ be its \'etale site. Then the image of  $\Shp(\what{\Sh}_\infty(X_{\acute{e}t}))$ in $\Pro(\Ho(\cS_\infty))$ coincides with the \'etale homotopy type of $X$.
\end{cor}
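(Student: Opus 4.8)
The statement follows by combining the preceding Theorem~\ref{t:main shape} with \cite[Theorem 1.15]{BS1}, so the plan is essentially to transport the two identifications through the localization functor to the homotopy category. There is no new homotopical input required; the work is purely in checking compatibilities.

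First I would specialize Theorem~\ref{t:main shape} to the Grothendieck site $\cC = X_{\acute{e}t}$. Since $X$ is locally Noetherian, its small \'etale site is locally connected, so it is a legitimate input, and the theorem produces a weak equivalence
\[ |X_{\acute{e}t}| \;\simeq\; \Shp(\what{\Sh}_\infty(X_{\acute{e}t})) \]
of pro-spaces, i.e.\ an equivalence in $\Pro(\cS_\infty)$ (equivalently, a weak equivalence in the induced model structure on pro-simplicial sets).

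Next I would apply the functor $\Pro(\cS_\infty) \lrar \Pro(\Ho(\cS_\infty))$ induced by the localization $\cS_\infty \lrar \Ho(\cS_\infty)$. Because any equivalence in $\cS_\infty$ becomes an isomorphism in $\Ho(\cS_\infty)$, a pro-equivalence in the source is carried to an isomorphism in the target; hence the images of $|X_{\acute{e}t}|$ and of $\Shp(\what{\Sh}_\infty(X_{\acute{e}t}))$ coincide in $\Pro(\Ho(\cS_\infty))$. Finally I would invoke \cite[Theorem 1.15]{BS1}, which identifies the image of the topological realization $|X_{\acute{e}t}|$ in $\Pro(\Ho(\cS_\infty))$ with the Artin--Mazur \'etale homotopy type of $X$; this is precisely where the locally Noetherian hypothesis is used, to guarantee that the site is locally connected so that the Artin--Mazur construction applies. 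Stringing the two identifications together yields that the image of $\Shp(\what{\Sh}_\infty(X_{\acute{e}t}))$ in $\Pro(\Ho(\cS_\infty))$ is the \'etale homotopy type, as claimed.

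The main point to be careful about---rather than a genuine obstacle---is the compatibility of passage to pro-categories with the localization to the homotopy category, together with the naturality of both cited comparisons, so that the resulting identification is canonical and not merely an abstract isomorphism. Since all of the homotopical content is already packaged into Theorem~\ref{t:main shape} and \cite[Theorem 1.15]{BS1}, I expect this verification to be routine.
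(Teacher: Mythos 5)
Your proof is correct and follows exactly the paper's route: the paper derives this corollary simply by combining Theorem~\ref{t:main shape} (applied to the site $X_{\acute{e}t}$) with \cite[Theorem 1.15]{BS1}, passing to $\Pro(\Ho(\cS_\infty))$ just as you do. One small clarification: Theorem~\ref{t:main shape} holds for an arbitrary Grothendieck site, so local connectedness is not needed for it to apply --- as you note later, the locally Noetherian hypothesis enters only through \cite[Theorem 1.15]{BS1} and the Artin--Mazur construction.
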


Our third application is to the study of \textbf{profinite homotopy theory}. Let $\cS$ be the category of simplicial sets, equipped with the Kan-Quillen model structure. The existence of the induced model structure on $\Pro(\cS)$ (in the sense above) follows from the work of~\cite{EH76} (as well as~\cite{Isa04} and~\cite{BS1} in fact). In~\cite{Isa05}, Isaksen showed that for any set $K$ of fibrant object of $\cS$, one can form the maximal left Bousfield localization $L_K \Pro(\cS)$ of $\Pro(\cS)$ for which all the objects in $K$ are local. The weak equivalences in $L_K\Pro(\cS)$ are the maps $X \lrar Y$ in $\Pro(\cS)$ such that the map
\[\Map^h_{\Pro(\cS)}(Y,A) \lrar \Map^h_{\Pro(\cS)}(X,A)\]
is a weak equivalence for every $A$ in $K$. When choosing a suitable candidate $K = K^{\pi}$, the model category $L_{K^{\pi}}\Pro(\cS)$ can be used as a theoretical setup for \textbf{profinite homotopy theory}.

On the other hand, one may define what profinite homotopy theory should be from an $\infty$-categorical point of view. Recall that a space $X$ is called \textbf{$\pi$-finite} if it has finitely many connected components, and finitely many non-trivial homotopy groups which are all finite. The collection of $\pi$-finite spaces can be organized into an $\infty$-category $\cS^{\pi}_\infty$, and the associated pro-category $\Pro(\cS^{\pi}_{\infty})$ can equally be considered as the natural realm of profinite homotopy theory. One is then yet again faced with the salient question: is $L_{K^{\pi}}\Pro(\cS)$ a model for the $\infty$-category $\Pro(\cS^{\pi}_{\infty})$? In \S\ref{ss:example1} we give a positive answer to this question:

\begin{thm}[see Corollary~\ref{c:isaksen-finite}]\label{t:isaksen-0}
The underlying $\infty$-category of $L_{K^{\pi}}\Pro(\cS)$ is naturally equivalent to the $\infty$-category $\Pro(\cS^{\pi}_{\infty})$ of profinite spaces.
\end{thm}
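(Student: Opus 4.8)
The plan is to deduce this from the general comparison theorem (Theorem~\ref{t:main0}) by exhibiting $L_{K^{\pi}}\Pro(\cS)$ as the induced model structure on the pro-category of a suitable weak fibration category whose underlying $\infty$-category is $\cS^{\pi}_{\infty}$. Concretely, let $\cC$ denote the full subcategory of $\cS$ spanned by the $\pi$-finite Kan complexes, equipped with the Kan fibrations as $\Fib$ and the usual weak homotopy equivalences as $\cW$. My first task would be to verify that $\cC$ is a weak fibration category in the sense of Definition~\ref{d:wfc-0}: the only delicate points are that $\cC$ is closed under the finite limits computed in $\cS$ (a pullback of $\pi$-finite spaces along a fibration remains $\pi$-finite, by inspecting the long exact sequence of homotopy groups) and that every map of $\pi$-finite spaces factors, inside $\cC$, as a map in $\cW$ followed by a map in $\Fib$ (one can run a factorization using only finite models, or truncate and use coskeleta). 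One must then invoke the criterion from the earlier sections to conclude that the induced model structure on $\Pro(\cC)$ exists.

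The second step is to identify the underlying $\infty$-category $\cC_\infty$ with $\cS^{\pi}_{\infty}$. Since $\cC$ is a weak fibration category in which every object is fibrant, Proposition~\ref{p:equivalence C_f C} guarantees that its $\infty$-localization agrees with that of its associated category of fibrant objects, and the latter localizes to the $\infty$-category of $\pi$-finite spaces by the standard description of the $\infty$-localization of a category of fibrant Kan complexes at weak equivalences. This yields a canonical equivalence $\cC_\infty \simeq \cS^{\pi}_{\infty}$.

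The heart of the argument, and the step I expect to be the main obstacle, is to identify the induced model structure on $\Pro(\cC)$ with $L_{K^{\pi}}\Pro(\cS)$. Here I would take the localizing set $K^{\pi}$ to be (a set of representatives of) the $\pi$-finite spaces, so that the $K^{\pi}$-local objects of $\Pro(\cS)$ are precisely the pro-objects built out of $\cC$. The comparison then amounts to showing that the inclusion $\cC \hrar \cS$ induces, after passing to pro-categories, a Quillen equivalence $\Pro(\cC) \adj L_{K^{\pi}}\Pro(\cS)$: the weak equivalences on both sides are detected by the derived mapping spaces $\Map^h(-,A)$ into $\pi$-finite $A$, and one must check that the weak equivalences of the induced model structure on $\Pro(\cC)$ are exactly these and that the fibrant and cofibrant replacements are compatible across the adjunction.

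Granting these three steps, the conclusion follows formally: Theorem~\ref{t:main0} applied to $\cC$ gives $\Pro(\cC)_\infty \simeq \Pro(\cC_\infty)$, the Quillen equivalence of the third step gives $\big(L_{K^{\pi}}\Pro(\cS)\big)_\infty \simeq \Pro(\cC)_\infty$, and the identification $\cC_\infty \simeq \cS^{\pi}_{\infty}$ of the second step produces the desired equivalence $\big(L_{K^{\pi}}\Pro(\cS)\big)_\infty \simeq \Pro(\cS^{\pi}_{\infty})$.
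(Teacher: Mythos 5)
Your first step already contains the decisive gap: the full subcategory $\cC \subseteq \cS$ of $\pi$-finite Kan complexes is a category of fibrant objects in the sense of Definition~\ref{d:fib-obj}, but it is not a weak fibration category, and the induced-model-structure machinery you want to invoke is formulated for the latter. Definition~\ref{d:wfc-0}(1) demands \emph{all} finite limits, whereas you only verify pullbacks along fibrations (which indeed stay $\pi$-finite, being homotopy pullbacks). Equalizers and pullbacks along arbitrary maps of Kan complexes are neither Kan nor homotopy invariant, so there is no reason $\cC$ admits them; and the constructions of \S\ref{s:pro-model} (special $\Fib$-maps, the factorizations of \cite[Definition 4.3]{BS15}, Proposition~\ref{p:forF_sp_is_lw_strong}) use finite limits in $\cC$ throughout. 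Even granting a weak fibration structure, the existence theorems are not free: Theorem~\ref{t:model-2} requires pro-admissibility and homotopical smallness together with essential smallness or cocompleteness, and your $\cC$ is neither essentially small nor cocomplete — while the natural small-models substitute, the category $\cS_{\cofin}$ of $\tau$-finite simplicial sets, is explicitly noted in \S\ref{ss:quick} \emph{not} to be a weak fibration category (fibrant replacement destroys finiteness), and Proposition~\ref{p:counter} shows the profinite weak equivalences on $\Pro(\cS_{\cofin})$ are not even levelwise up to isomorphism, hence cannot come from an induced structure in the sense of Definition~\ref{d:induce}. Your third step is likewise asserted rather than argued: for an induced model structure the weak equivalences are \emph{not} known a priori to be detected by derived mapping spaces into local objects — even the identification of $\mathbf{W}$ with $\Lw^{\cong}(\cW)$ is delicate (Corollary~\ref{c:compose Lw} and the surrounding remarks).

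The paper's proof is architected precisely to avoid manufacturing any model structure on $\pi$-finite spaces. It applies Theorem~\ref{t:main} only to $\cS$ itself, and then proves (Theorem~\ref{t:main-isaksen}) that for \emph{any} small set $K$ of fibrant objects one has $(L_K\Pro(\cS))_\infty \simeq \Pro((K_{\nil})_\infty)$: full faithfulness of $(L_K\Pro(\cS))_\infty \lrar \Pro(\cS)_\infty$ is essentially free because $L_K\Pro(\cS)$ and $\Pro(\cS)$ have the same trivial fibrations, hence the same categories $\uline{\Hom}(X,Y)$ between fibrant objects, and the essential image is pinned down using Isaksen's characterization of the fibrant objects of $L_K\Pro(\cS)$ as the pro-spaces levelwise in $K_{\nil}$, together with closure of the image under limits. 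The $\pi$-finite input enters only afterwards: Proposition~\ref{p:pi-finite} shows (via minimal models and the Moore--Postnikov tower) that every $\pi$-finite space is a \emph{retract} of an object of $K^{\pi}_{\nil}$, and Corollary~\ref{c:pro-pi-equiv} converts retracts into equivalences after passing to pro-categories via the idempotent tower $Y^f$. Note also that the paper's $K^{\pi}$ is the specific small set of building blocks $\K(S,0)$, $\B G$, $\K(A,n)_{hG}$, $\L(A,n)_{hG}$, not a set of representatives of all $\pi$-finite spaces, so your plan would in any case need this decomposition to connect your localizing set to the one in the statement. The realistic repair of your outline is exactly the paper's route: keep the single induced model structure on $\Pro(\cS)$, treat $L_{K^{\pi}}\Pro(\cS)$ as its Bousfield localization, and identify the underlying $\infty$-category of the localization inside $\Pro(\cS_\infty)$, rather than building a second pro-category from scratch.
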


A similar approach was undertaken for the study of \textbf{$p$-profinite homotopy theory}, when $p$ is a prime number. Choosing a suitable candidate $K=K^p$, Isaksen's approach yields a model structure $L_{K^p}\Pro(\cS)$ which can be used as a setup for $p$-profinite homotopy theory. On the other hand, one may define $p$-profinite homotopy theory from an $\infty$-categorical point of view. Recall that a space $X$ is called \textbf{$p$-finite} if it has finitely many connected components and finitely many non-trivial homotopy groups which are all finite $p$-groups. The collection of $p$-finite spaces can be organized into an $\infty$-category $\cS^{p}_\infty$, and the associated pro-category $\Pro(\cS^{p}_{\infty})$ can be considered as a natural realm of $p$-profinite homotopy theory (see~\cite{Lur11b} for a comprehensive treatment). Our results allow again to obtain the desired comparison:
\begin{thm}[see Corollary~\ref{c:isaksen-p finite}]
The underlying $\infty$-category of $L_{K^{p}}\Pro(\cS)$ is naturally equivalent to the $\infty$-category $\Pro(\cS^{p}_{\infty})$ of $p$-profinite spaces.
\end{thm}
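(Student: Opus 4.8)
The proof runs parallel to that of Theorem~\ref{t:isaksen-0} (Corollary~\ref{c:isaksen-finite}), with $\pi$-finite spaces replaced by $p$-finite spaces throughout; I will describe the skeleton and isolate the inputs that are genuinely sensitive to the prime $p$.

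The starting point is to apply the main comparison theorem to the weak fibration category $\cS$ of simplicial sets itself (with its Kan fibrations and weak homotopy equivalences), which has all finite limits. Theorem~\ref{t:main0} then identifies the underlying $\infty$-category of the induced model structure on $\Pro(\cS)$ with $\Pro(\cS_\infty)$. Next I would invoke the general fact that the underlying $\infty$-category of a left Bousfield localization $L_K M$ is the reflective localization of $M_\infty$ at the $K$-local equivalences, in order to identify the underlying $\infty$-category of $L_{K^p}\Pro(\cS)$ with the localization of $\Pro(\cS)_\infty\simeq\Pro(\cS_\infty)$ at the class of $K^p$-local equivalences, namely the maps $f\colon X\to Y$ for which $\Map(Y,A)\to\Map(X,A)$ is an equivalence for every $A\in K^p$.

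It then remains to identify this localization of $\Pro(\cS_\infty)$ with $\Pro(\cS^p_\infty)$, and here two points must be checked. First, that $\cS^p_\infty$ is a full subcategory of $\cS_\infty$ closed under finite limits, so that $\Pro(\cS^p_\infty)$ is defined and receives a functor from the localization: this is one of the two places where the prime enters, and I would verify it by running the long exact sequence of homotopy groups of a homotopy pullback and invoking the closure of the class of finite $p$-groups under subgroups, quotients, finite products and extensions (the path components of a pullback of $p$-finite spaces being manifestly finite). Second, that for the natural choice $K^p=\{K(\ZZ/p,n)\}_{n\ge 1}$, localizing $\Pro(\cS_\infty)$ at the $K^p$-equivalences reproduces exactly $\Pro(\cS^p_\infty)$; this rests on the statement that $K^p$ detects weak equivalences between $p$-finite spaces, which in turn follows because every $p$-finite space admits a finite Postnikov tower built from principal fibrations with fibers of the form $K(\ZZ/p,n)$ --- using that a finite $p$-group has nontrivial center and is therefore assembled by iterated central extensions by $\ZZ/p$.

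The main obstacle I anticipate is precisely this final identification: showing that the accessible localization of $\Pro(\cS_\infty)$ at the $K^p$-local equivalences is the pro-category of the small $\infty$-category $\cS^p_\infty$, rather than the pro-category of some larger reflective subcategory of $p$-complete spaces. Establishing that the $K^p$-local pro-objects are exactly those levelwise modelled by $p$-finite spaces is the technical heart of the argument; granting it, one obtains an equivalence $(L_{K^p}\Pro(\cS))_\infty\simeq\Pro(\cS^p_\infty)$, and the theorem follows by combining this with the identifications supplied above by Theorem~\ref{t:main0}.
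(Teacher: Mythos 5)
Your skeleton follows the same overall strategy as the paper (Theorem~\ref{t:main} to identify $\Pro(\cS)_\infty\simeq\Pro(\cS_\infty)$, then a Bousfield-localization comparison), but there is a genuine gap: the step you explicitly ``grant'' --- that the $K^p$-local pro-objects are exactly those levelwise modelled by $p$-finite spaces --- is precisely the content that has to be proved, and the ingredients you list do not supply it. Knowing that $K^p$ detects equivalences \emph{between $p$-finite spaces} says nothing about why an arbitrary $K^p$-local object of $\Pro(\cS_\infty)$ is equivalent to a levelwise $p$-finite pro-object. The paper closes this using an external input you never invoke: Isaksen's theorem (\cite{Isa05}) that the fibrant objects of $L_K\Pro(\cS)$ are exactly the fibrant pro-spaces isomorphic to pro-spaces that are levelwise in $K_{\nil}$. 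This is the engine of Theorem~\ref{t:main-isaksen}, which computes the essential image of $(L_K\Pro(\cS))_\infty$ inside $\Pro(\cS_\infty)$ (fully faithfulness comes cheaply, since $\Pro(\cS)$ and $L_K\Pro(\cS)$ have the same trivial fibrations; the image is then pinned down using that it is closed under small limits, by Theorem~\ref{t:model-complete} and Remark~\ref{r:adjunction}, together with Lemma~\ref{l:generate}). The output is $(L_{K^p}\Pro(\cS))_\infty\simeq\Pro((K^p_{\nil})_\infty)$, where $K^p_{\nil}$ is an honest category of fibrant objects --- not yet $\Pro(\cS^p_\infty)$.

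The remaining comparison $\Pro((K^p_{\nil})_\infty)\simeq\Pro(\cS^p_\infty)$ is where your Postnikov-tower and center-of-a-$p$-group observations genuinely enter (Lemma~\ref{l:p-abelian} and Proposition~\ref{p:p-finite}: a connected $p$-finite space, replaced by a minimal Kan model, is $\tau$-finite and its Postnikov tower admits a finite principal refinement with fibers $\K(\ZZ/p,n)$, hence lies in $K^p_{\nil}$ on the nose). But two further points that your sketch skips are needed. First, disconnected $p$-finite spaces are only \emph{retracts} of objects of $K^p_{\nil}$, and the paper's $K^p$ accordingly also contains the finite discrete spaces $\K(S,0)$ and $\B\ZZ/p$, not just the $\K(\ZZ/p,n)$ you propose (your smaller set would, before taking retracts, only produce finite sets of $p$-power cardinality). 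Second, these retracts must be split at the pro-level: the paper does this via the idempotent tower $Y^f=(\cdots\x{f}{\lrar} Y\x{f}{\lrar} Y)$ with $f=ir$, as in Corollary~\ref{c:pro-pi-equiv}, which makes $\Pro(\iota_p)$ essentially surjective even though $\iota_p$ itself is not. With Isaksen's characterization of the fibrant objects and this retract-splitting device, the identification you flagged as the ``main obstacle'' is closed; without them, your argument stops exactly where you said it would.
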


Isaksen's approach is not the only model categorical approach to profinite and $p$-profinite homotopy theory. In~\cite{Qui11} Quick constructs a model structure on the category $\what{\cS}$ of \textbf{simplicial profinite sets} and uses it as a setting to perform profinite homotopy theory. His construction is based on a previous construction of Morel (\cite{Mor96}), which endowed the category of simplicial profinite sets with a model structure aimed at studying $p$-profinite homotopy theory. In \S\ref{ss:quick} we show that Quick and Morel's constructions are Quillen equivalent to the corresponding Bousfield localizations studied by Isaksen.

\begin{thm}[see Theorem~\ref{t:isaksen-is-quick} and Theorem~\ref{t:isaksen-is-morel}]\label{t:isaksen-quick-0}
There are  \textbf{Quillen equivalences}
\[ \Psi_{K^\pi}: L_{K^\pi}\Pro(\cS) \adj \what{\cS}_{\Quick} : \Phi_{K^\pi} \]
\[ \Psi_{K^p}: L_{K^p}\Pro(\cS) \adj \what{\cS}_{\Morel} : \Phi_{K^p}. \]
\end{thm}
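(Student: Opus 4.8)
The two statements are proved in parallel, so I describe the plan for the first equivalence, the Quick/$\pi$-finite case; the Morel/$p$-finite case is formally identical, replacing $\cS^{\pi}_{\infty}$, $K^{\pi}$ and $\what{\cS}_{\Quick}$ by their $p$-finite counterparts $\cS^{p}_{\infty}$, $K^{p}$ and $\what{\cS}_{\Morel}$. The overall strategy is to separate the problem into two independent pieces: first produce a Quillen adjunction $\Psi_{K^\pi} \dashv \Phi_{K^\pi}$, and then upgrade it to a Quillen equivalence by invoking the standard fact that a Quillen adjunction is a Quillen equivalence precisely when the induced adjunction of underlying $\infty$-categories is an equivalence of $\infty$-categories. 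This reduces everything to identifying the two underlying $\infty$-categories together with the derived comparison functor between them.

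To construct the functors, I would first define the right adjoint $\Phi_{K^\pi}\colon \what{\cS} \to \Pro(\cS)$ by sending a simplicial profinite set $X$ to the cofiltered diagram of its levelwise-finite simplicial quotients, regarded as a pro-object of $\cS$; this functor preserves limits and hence admits a left adjoint $\Psi_{K^\pi}$, which on a constant simplicial set restricts to levelwise profinite completion. To see that $(\Psi_{K^\pi},\Phi_{K^\pi})$ is a Quillen pair for Isaksen's model structure on $\Pro(\cS)$ I would check the defining conditions on the generating (trivial) cofibrations coming from the weak fibration structure on $\cS$. Finally, to descend to the Bousfield localization $L_{K^\pi}\Pro(\cS)$ I would use the universal property of left Bousfield localizations: it suffices to verify that $\Phi_{K^\pi}$ carries fibrant objects of $\what{\cS}_{\Quick}$ to $K^{\pi}$-local objects of $\Pro(\cS)$, equivalently that $\mathbb{L}\Psi_{K^\pi}$ sends the localizing maps in $K^{\pi}$ to weak equivalences.

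For the upgrade, one side is already under control: by Theorem~\ref{t:isaksen-0} (via the main theorem, Theorem~\ref{t:main0}) the underlying $\infty$-category of $L_{K^\pi}\Pro(\cS)$ is $\Pro(\cS^{\pi}_{\infty})$. The remaining task is to identify $(\what{\cS}_{\Quick})_\infty$ with $\Pro(\cS^{\pi}_{\infty})$ as well, and to check that $\mathbb{L}\Psi_{K^\pi}$ realizes the identity of $\Pro(\cS^{\pi}_{\infty})$ under these two identifications. The natural route to the Quick-side identification is again through the main theorem: exhibit $\what{\cS}_{\Quick}$ as (Quillen equivalent to) an induced model structure, or a suitable left Bousfield localization of one, on the pro-category of a weak fibration category of finite/coskeletal profinite simplicial sets whose $\infty$-localization is $\cS^{\pi}_{\infty}$. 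Since under both identifications the generating copy of $\cS^{\pi}_{\infty}$ is carried to the standard copy of $\cS^{\pi}_{\infty}$ inside $\Pro(\cS^{\pi}_{\infty})$, and $\mathbb{L}\Psi_{K^\pi}$ is compatible with these inclusions, the induced functor of $\infty$-categories is the identity of $\Pro(\cS^{\pi}_{\infty})$ and is therefore an equivalence.

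The main obstacle is the $\infty$-categorical control of Quick's model structure. Its weak equivalences are defined through continuous cohomology and coskeletal finiteness conditions rather than as a manifest pro-construction, so the delicate step is to recognize $\what{\cS}_{\Quick}$ as the pro-category of an appropriate weak fibration category and, simultaneously, to verify that the quotient and completion functors $\Phi_{K^\pi},\Psi_{K^\pi}$ are compatible with this recognition, i.e.\ that they genuinely induce the identity on $\cS^{\pi}_{\infty}$ rather than some nontrivial self-equivalence. Once this compatibility is in place the two Quillen equivalences follow at once, the $p$-finite case being obtained verbatim using Corollary~\ref{c:isaksen-p finite} together with the $p$-finite analogue of the Quick-side identification for $\what{\cS}_{\Morel}$.
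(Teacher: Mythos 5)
Your first two steps (the adjunction and its descent to the localization) are essentially the paper's: the paper identifies $\what{\cS}\cong\Pro(\cS_{\cofin})$, where $\cS_{\cofin}\subseteq\cS$ is the category of $\tau$-finite simplicial sets, takes $\Phi$ to be the prolongation of this fully faithful inclusion (with left adjoint $\Psi$ given on simple objects by the diagram of $\tau$-finite quotients, as you describe), and descends to $L_{K^\pi}\Pro(\cS)$ by observing that the generating fibrations $P$ of Quick's structure have domains and codomains in $K^\pi$, hence are $K^\pi$-local. One small correction there: Isaksen's strict structure on $\Pro(\cS)$ is not known to be cofibrantly generated, so one cannot ``check the defining conditions on the generating (trivial) cofibrations'' on the left; the workable direction, and the one the paper takes, is to use that Quick's structure is \emph{fibrantly} generated and verify that $\Phi$ sends the sets $P$ and $Q$ to Kan fibrations and trivial Kan fibrations.

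The genuine gap is in your route to upgrading the adjunction to an equivalence. You propose to identify $(\what{\cS}_{\Quick})_\infty$ with $\Pro(\cS^\pi_\infty)$ by exhibiting Quick's model structure as (Quillen equivalent to, or a Bousfield localization of) an induced model structure on the pro-category of a weak fibration category, and then apply Theorem~\ref{t:main} on both sides. This step fails: $\cS_{\cofin}$ is not a weak fibration category, and, more decisively, Proposition~\ref{p:counter} exhibits Quick weak equivalences that are not isomorphic to levelwise weak equivalences, whereas in any induced model structure every weak equivalence is a composite of two maps in $\Lw^{\cong}(\cW)$ (Lemmas~\ref{l:converse-cof} and~\ref{l:converse-fib}). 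So Theorem~\ref{t:main} cannot be applied on the Quick side, and the hedge ``Quillen equivalent to an induced structure'' is circular, since the only such comparison available is the theorem being proved. The paper's actual argument avoids $\infty$-categorical input entirely and is purely model-categorical: since $\Phi$ is fully faithful the counit $\Psi\Phi\Rightarrow\Id$ is an isomorphism (Remark~\ref{r:counit}), all objects of $L_{K^\pi}\Pro(\cS)$ are cofibrant, and the key point is Lemma~\ref{l:we}, that $\Psi$ \emph{detects} $K^\pi$-local equivalences, proved from the derived adjunction formula
\[ \Map^h_{\Pro(\cS)}(X,\Phi(A))\simeq\Map^h_{\Pro(\cS_{\cofin})}(\Psi(X),A) \]
for $A\in K^\pi$, which is fibrant in both structures; the derived unit is then checked by applying $\Psi$ to it. The identification $\what{\cS}_\infty\simeq\Pro(\cS^\pi_\infty)$ that you wanted as an input is in the paper a \emph{corollary} of the Quillen equivalence together with Corollary~\ref{c:isaksen-finite} — the logical direction is the reverse of yours. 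The same remarks apply verbatim to the Morel case, where Lemma~\ref{l:we-morel} plays the role of Lemma~\ref{l:we}.
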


These Quillen equivalences appear to be new. A weaker form of the second equivalence was proved by Isaksen in~\cite[Theorem 8.7]{Isa05}, by constructing a length two zig-zag of adjunctions between $L_{K^p}\Pro(\cS)$ and $\what{\cS}_{\Morel}$ where the middle term of this zig-zag is not a model category but only a relative category.

A key point in the construction of these Quillen equivalences is a notion which we call \textbf{$\tau$-finite} simplicial sets. A simplicial set is called $\tau$-finite if it is levelwise finite and $n$-coskeletal for some $n \geq 0$. We denote by $\cS_{\tau} \subseteq \cS$ the full subcategory spanned by $\tau$-finite simplicial sets. The category $\cS_{\tau}$ is clearly essentially small, and we show that its pro-category is equivalent to the category of simplicial profinite sets. These results also enable us to show (see Remark \ref{r:Raptis}) that the opposite of Quick’s model structure is an example of an $\omega$-combinatorial model category, whose class of weak equivalences is not $\omega$-accessible (as a full subcategory of the class of all morphisms). To the knowledge of the authors such an example has not yet appeared in the literature. It settles negatively a question raised by G. Raptis in personal communication with the first author.

Finally, let us briefly mention two additional applications which will appear in forthcoming papers. In a joint work with Michael Joachim and Snigdhayan Mahanta (see~\cite{BJM}) the first author constructs a model structure on the category $\Pro(\Csep)$, where $\Csep$ is the category of separable $C^*$-algebras, and uses it to define a bivariant $\K$-theory category for the objects in $\Pro(\Csep)$. Theorem~\ref{t:main0} is then applied to show that this bivariant $\K$-theory category indeed extends the known bivariant $\K$-theory category constructed by Kasparov. In~\cite{Hor} the third author relies on Theorem~\ref{t:isaksen-0} and Theorem~\ref{t:isaksen-quick-0} in order to prove that the group of homotopy automorphisms of the profinite completion of the little $2$-disk operad is isomorphic to the profinite Grothendieck-Teichm\"uller group.

\addsubsection*{Overview of the paper}

In \S\ref{ss:large} we formulate the set theoretical framework and terminology used throughout the paper. Such framework is required in order to work fluently with both large and small $\infty$-categories. The reader who is familiar with these issues can very well skip this section and refer back to it as needed.

\S\ref{s:homotopical prelim} is dedicated to recalling and sometimes proving various useful constructions and results in higher category theory. In particular, we will recall the notions of $\infty$-categories, relative categories, categories of fibrant objects, weak fibration categories and model categories. Along the way we will fill what seems to be a gap in the literature and prove that the $\infty$-category associated to any category of fibrant objects, or a weak fibration category, has finite limits, and that the $\infty$-category associated to any model category has all limits and colimits.

In \S\ref{s:pro} we recall a few facts about pro-categories, both in the classical categorical case and in the $\infty$-categorical case. In particular, we construct and establish the universal property of the pro-category of a general locally small $\infty$-category, a construction that seems to be missing from the literature.

In \S\ref{s:pro-model} we explain what we mean by a model structure on $\Pro(\cC)$ to be induced from a weak fibration structure on $\cC$. We establish a few useful properties of such a model structure and give various sufficient conditions for its existence (based on the work of~\cite{EH76},~\cite{Isa04} and~\cite{BS1}).

In \S\ref{s:main} we will conduct our investigation of the underlying $\infty$-category of $\Pro(\cC)$ where $\cC$ is a weak fibration category such that the induced model category on $\Pro(\cC)$ exists. Our main result, which is proved in \S\ref{ss:proof-of-main}, is that the underlying $\infty$-category of $\Pro(\cC)$ is naturally equivalent to the pro-category of the underlying $\infty$-category of $\cC$, as defined by Lurie. In \S\ref{ss:omega presentable} we give an application of our main theorem to the theory of combinatorial model categories.

In \S\ref{s:topos} we give an application of our main theorem to the theory of shapes $\infty$-topoi. The main result is Theorem~\ref{t:main shape}, which shows that the shape of the hyper-completion of the $\infty$-topos of sheaves on a site can be computed using the topological realization of~\cite{BS1}.

Finally, in \S\ref{s:profinite} we give another application of our main result to the theory of profinite and $p$-profinite homotopy theory. We compare various models from the literature due to Isaksen, Morel and Quick and we show that they model the pro-category of the $\infty$-category of either $\pi$-finite or $p$-finite spaces.

\addsubsection*{Acknowledgments}

We wish to thank Denis-Charles Cisinski for sharing with us the proof of Proposition ~\ref{p:equivalence C_f C}.

\section{Set theoretical foundations}\label{ss:large}

In this paper we will be working with both small and large categories and $\infty$-categories. Such a setting can involve some delicate set theoretical issues. In this section we fix our set theoretical working environment and terminology. We note that these issues are often ignored in texts on categories and $\infty$-categories, and that the reader who wishes to trust his or her intuition in these matters may very well skip this section and refer back to it as needed.

We refer the reader to \cite{Shu} for a detailed account of various possible set theoretical foundations for category theory. Our approach is based mainly on \S 8 of loc. cit. We will be working in ZFC and further assume
\begin{assume}\label{a:large-cardinal}
For every cardinal $\alp$ there exists a strongly inaccessible cardinal $\kappa$ such that $\kappa > \alp$.
\end{assume}

\begin{define}
We define for each ordinal $\alpha$ a set $V_\alpha$ by transfinite induction as follows:
\begin{enumerate}
\item $V_0:=\varnothing$
\item $V_{\alpha+1}:=\cP(V_{\alpha})$
\item If $\beta$ is a limit ordinal we define $V_\beta:=\bigcup_{\alpha<\beta}V_\alpha$.
\end{enumerate}
We refer to elements of $V_\alp$ as \textbf{$\alp$-sets}.
\end{define}

If $\alp$ is a strongly inaccessible cardinal then it can be shown that $V_\alp$ is a \emph{Grothendieck universe}, and thus a model for ZFC.

\begin{define}
Let $\alp$ be a strongly inaccessible cardinal. An \textbf{$\alp$-category} $\cC$ is a pair of $\alp$-sets $\Ob(\cC)$ and $\Mor(\cC)$, together with three functions
\[\Dom:\Mor(\cC)\lrar\Ob(\cC),\]
\[\Cod:\Mor(\cC)\lrar\Ob(\cC),\]
\[\Id:\Ob(\cC)\lrar\Mor(\cC),\]
satisfying the well-known axioms. A \textbf{functor} between $\alp$-categories $\cC$ and $\cD$ consists of a pair of functions $\Ob(\cC)\lrar \Ob(\cD)$ and $\Mor(\cC)\lrar\Mor(\cD)$ satisfying the well-known identities. Given two objects $X,Y \in \Ob(\cC)$ we denote by $\Hom_{\cC}(X,Y) \subseteq \Mor(\cC)$ the inverse image of $(X,Y) \in \Ob(\cC)$ via the map $(\Dom,\Cod): \Mor(\cC) \lrar \Ob(\cC)\times \Ob(\cC)$.
\end{define}

\begin{rem}
If $\cC$ is an $\alp$-category for some strongly inaccessible cardinal $\alp$ then $\cC$ can naturally be considered as a $\bet$-category for any strongly inaccessible cardinal $\bet > \alp$.
\end{rem}

\begin{define}
Let $\bet > \alp$ be strongly inaccessible cardinals. We denote by $\Set_\alp$ the $\bet$-category of $\alp$-sets. We denote by $\Cat_\alp$ the $\bet$-category of $\alp$-categories.
\end{define}

\begin{define}
Let $\alp$ be a strongly inaccessible cardinal.
A \textbf{simplicial $\alp$-set} is a functor $\Del^{\op} \lrar \Set_\alp$ (where $\Del$ is the usual category of finite ordinals). A \textbf{simplicial set} is a simplicial $\alp$-set for some strongly inaccessible cardinal $\alp$. For $\bet > \alp$ a strongly inaccessible cardinal we denote by $\cS_\alp$ the $\bet$-category of simplicial $\alp$-sets.
\end{define}

In this paper we will frequently use the notion of \textbf{$\infty$-category}. Our higher categorical setup is based on the theory quasi-categories due to~\cite{Joy08} and~\cite{Lur09}.

\begin{define}[Joyal, Lurie]\label{d:infty-cat}
Let $\alp$ be a strongly inaccessible cardinal.
An \textbf{$\alp$-$\infty$-category} is a simplicial $\alp$-set satisfying the right lifting property with respect to the maps $\Lam^n_i \hrar \Del^n$ for $0 < i < n$ (where $\Lam^n_i$ is the simplicial set obtained by removing from $\partial\Del^n$ the $i$'th face).
\end{define}

For every strongly inaccessible cardinal $\alp$ the nerve functor $\Ne:\Cat_\alp \lrar \cS_\alp$ is fully faithful and lands in the full subcategory spanned by $\alp$-$\infty$-categories. If $\cC\in \Cat_\alp$ we will often abuse notation and write $\cC$ for the $\alp$-$\infty$-category $\Ne\cC$.

\begin{define}
We denote by $\kappa$ the smallest strongly inaccessible cardinal, by $\lambda$ the smallest strongly inaccessible cardinal bigger than $\kappa$ and by $\delta$ the smallest strongly inaccessible cardinal bigger than $\lambda$.
\end{define}

We will refer to $\kappa$-sets as  \textbf{small sets}, to simplicial $\kappa$-sets as \textbf{small simplicial sets}, to $\kappa$-categories as \textbf{small categories} and to $\kappa$-$\infty$-categories as \textbf{small $\infty$-categories}. For any strongly inaccessible cardinal $\alp$ we say that an $\alp$-$\infty$-category $\cC$ is \textbf{essentially small} if it is equivalent to a small $\infty$-category.

The following special cases merit a short-hand terminology:
\begin{define}
The notations $\Set$, $\Cat$ and $\cS$ without any cardinal stand for the $\lambda$-categories $\Set_\kappa$, $\Cat_\kappa$ and $\cS_\kappa$ respectively. The notations $\ovl{\Set}, \ovl{\Cat}$ and $\ovl{\cS}$ stand for the $\delta$-categories $\Set_\lambda$, $\Cat_\lambda$ and $\cS_\lambda$ respectively.
\end{define}

\begin{define}\label{d:locally-small-2}
Let $\alp$ be a strongly inaccessible cardinal. We say that an $\alp$-category is \textbf{locally small} if $\Hom_{\cC}(X,Y)$ is a small set for every $X,Y \in \cC$. Similarly, we will say that an $\alp$-$\infty$-category is \textbf{locally small} if the mapping space $\Map_{\cC}(X,Y)$ is weakly equivalent to a small simplicial set for every $X,Y \in \cC$.
\end{define}

In ordinary category theory one normally assumes that all categories are locally small. In the setting of higher category theory it is much less natural to include this assumption in the definition itself. In order to be as consistent as possible with the literature we employ the following convention:
\begin{conv}\label{c:infty_cat}
The term \textbf{category} without an explicit cardinal always refers to a locally small $\lambda$-category. By contrast, the term \textbf{$\infty$-category} without an explicit cardinal always refers to a $\lambda$-$\infty$-category (which is not assumed to be locally small).
\end{conv}

\begin{define}\label{d:small functor}
Let $\bet > \alp$ be strongly inaccessible cardinals and let $f:\cC \lrar \cD$ be a map of $\bet$-$\infty$-categories. We say $f$ is \textbf{$\alp$-small} if there exists a full sub-$\alp$-category $\cC_0 \subseteq \cC$ such that $f$ is a left Kan extension of $f|_{\cC_0}$ along the inclusion $\cC_0 \subseteq \cC$. When $\alp=\kappa$ we will also say that $f$ is \textbf{small}.
\end{define}

\begin{rem}\label{r:equiv small}
The following criterion for $\alp$-smallness is useful to note. Let $f: \cC \lrar \cD$ be a map of $\bet$-$\infty$-categories. Suppose there exists a diagram of the form
$$ \xymatrix{
\cC_0 \ar^{g}[r]\ar_{h}[d] & \cD \\
\cC \ar_{f}[ur] & \\
}$$
with $\cC_0$ an $\alp$-$\infty$-category, and a natural transformation $u: g \Rightarrow f \circ h$ exhibiting $f$ as a left Kan extension of $g$ along $h$. Since $h$ factors through a full inclusion $\cC_0' \subseteq \cC$, with $\cC_0'$ an $\alp$-$\infty$-category, it follows that $f$ is a left Kan extension of some functor $h':\cC_0'\lrar \cD$ along the inclusion $\cC_0' \subseteq \cC$. But then we have that $h'\simeq f|_{\cC'_0}$ (\cite[after Proposition 4.3.3.7]{Lur09}), and so $f$ is $\alp$-small.
\end{rem}

\section{Preliminaries from higher category theory}\label{s:homotopical prelim}

In this section we recall some necessary background from higher category theory and prove a few preliminary results which will be used in the following sections.

\subsection{Cofinal and coinitial maps}\label{ss:cofinal}
In this subsection we recall the notion of \textbf{cofinal} and \textbf{coinitial} maps of $\infty$-categories.

Let $\vphi: \cC \lrar \cD$ be a map of $\infty$-categories (see Definition~\ref{d:infty-cat} and Convention \ref{c:infty_cat}). Given an object $d \in \cD$ we denote by $\cC_{/d} = \cC \times_{\cD} \cD_{/d}$ where $\cD_{/d}$ is the $\infty$-category of \textbf{objects over $d$} (see~\cite[Proposition 1.2.9.2]{Lur09}). If $\cC$ and $\cD$ are (the nerves of) ordinary categories, then $\cC_{/d}$ is an ordinary category whose objects are given by pairs $(c,f)$ where $c$ is an object in $\cC$ and $f: \vphi(c) \lrar d$ is a map in $\cD$. Similarly, we denote by $\cC_{d/} = \cC \times_{\cD} \cD_{d/}$ where $\cD_{d/}$ is now the $\infty$-category of object \textbf{under} $d$.

\begin{define}\label{d:cofinal}
Let $\vphi: \cC \lrar \cD$ be a map of $\infty$-categories. We say that $\vphi$ is \textbf{cofinal} if $\cC_{d/}$ is weakly contractible (as a simplicial set) for every $d \in \cD$. Dually, we say that $\vphi$ is \textbf{coinitial} if $\cC_{/d}$ is weakly contractible for every $d \in \cD$.
\end{define}

\begin{rem}
Let $\cC$ be the $\infty$-category with one object $\ast \in \cC$ and no non-identity morphisms. Then $f: \cC \lrar \cD$ is cofinal if and only if the object $f(\ast)$ is a final object in $\cD$. Similarly, $f: \cC \lrar \cD$ is coinitial if and only if $f(\ast)$ is an initial object in $\cD$.
\end{rem}

A fundamental property of cofinal and coinitial maps is the following:
\begin{thm}[{\cite[Proposition 4.1.1.8]{Lur09}}]\label{t:cofinal}\
\begin{enumerate}
\item
Let $\vphi: \cC \lrar \cD$ be a cofinal map and let $\cF: \cD^{\triangleright} \lrar \cE$ be a diagram. Then $\cF$ is a colimit diagram if and only if $\cF \circ \vphi^{\triangleright}$ is a colimit diagram.
\item
Let $f: \cC \lrar \cD$ be a coinitial map and let $\cF: \cD^{\triangleleft} \lrar \cE$ be a diagram. Then $\cF$ is a limit diagram if and only if $\cF \circ \vphi^{\triangleleft}$ is a limit diagram.
\end{enumerate}
\end{thm}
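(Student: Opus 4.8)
The plan is to recast ``colimit diagram'' as ``initial object'' and to reduce part (1) to showing that a cofinal map induces an equivalence between the relevant $\infty$-categories of cocones. Writing $p = \cF|_{\cD}$, recall that a cocone $\cF: \cD^{\triangleright} \lrar \cE$ extending $p$ is a colimit diagram if and only if it is an initial object of the $\infty$-category $\cE_{p/}$ of cocones under $p$. Precomposition with $\varphi^{\triangleright}: \cC^{\triangleright}\lrar\cD^{\triangleright}$ carries an extension of $p$ to an extension of $p\circ\varphi$, and hence defines a functor
\[ \varphi^{*}: \cE_{p/} \lrar \cE_{(p\varphi)/}, \qquad \cF \longmapsto \cF\circ\varphi^{\triangleright}. \]
The whole of part (1) then follows from the single claim that, when $\varphi$ is cofinal, $\varphi^{*}$ is a categorical equivalence for every $p$. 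Indeed, a categorical equivalence both preserves and reflects initial objects, and $\varphi^{*}$ sends $\cF$ to $\cF\circ\varphi^{\triangleright}$; thus $\cF$ is initial in $\cE_{p/}$ if and only if $\cF\circ\varphi^{\triangleright}$ is initial in $\cE_{(p\varphi)/}$, which is precisely the desired statement.

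To establish the claim I would first recall that $\cE_{p/}\lrar\cE$ and $\cE_{(p\varphi)/}\lrar\cE$ are both left fibrations, and that $\varphi^{*}$ is a map of left fibrations over $\cE$. By the recognition principle for left fibrations, such a map is a categorical equivalence as soon as it induces a homotopy equivalence on the fiber over each object $e\in\cE$. The fiber of $\cE_{p/}\to\cE$ over $e$ is the space $\Map_{\Fun(\cD,\cE)}(p,\mathrm{const}_e)$ of natural transformations from $p$ to the constant diagram $\mathrm{const}_e$, and likewise for the codomain; under these identifications $\varphi^{*}$ becomes restriction along $\varphi$. So everything reduces to showing that
\[ \Map_{\Fun(\cD,\cE)}(p,\mathrm{const}_e) \lrar \Map_{\Fun(\cC,\cE)}(p\varphi,\mathrm{const}_e) \]
is a weak equivalence. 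Here I would identify the left-hand side with the space of sections over $\cD$ of the right fibration $p^{*}(\cE_{/e})\lrar\cD$, whose fiber over $d$ is $\Map_{\cE}(p(d),e)$, the right-hand side with the sections of its pullback to $\cC$, and the map itself with restriction of sections along $\varphi$.

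At this point the cofinality hypothesis must be brought to bear, and this is the step I expect to be the main obstacle. The property we need---that restriction of sections along $\varphi$ is an equivalence for every right fibration over $\cD$---is one of the standard equivalent characterizations of cofinality, but its equivalence with the slice-contractibility definition adopted in Definition~\ref{d:cofinal} (weak contractibility of each $\cC_{d/}=\cC\times_{\cD}\cD_{d/}$) is genuinely hard: it is the content of the classification theorem for cofinal maps, whose proof proceeds by a filtration of $\cD$ along its simplices together with an $\infty$-categorical form of Quillen's Theorem~A. Granting this translation, restriction of sections along the cofinal map $\varphi$ is an equivalence, the fiber map above is a weak equivalence, and the claim follows.

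Finally, part (2) follows formally from part (1) by passing to opposite $\infty$-categories. The map $\varphi$ is coinitial if and only if $\varphi^{\op}:\cC^{\op}\lrar\cD^{\op}$ is cofinal, since $(\cC_{/d})^{\op}\cong(\cC^{\op})_{d/}$ and weak contractibility is preserved by $(-)^{\op}$. Moreover a diagram $\cF:\cD^{\triangleleft}\lrar\cE$ is a limit diagram if and only if $\cF^{\op}$ is a colimit diagram in $\cE^{\op}$, and under the identifications $(\cD^{\triangleleft})^{\op}\cong(\cD^{\op})^{\triangleright}$ and $(\varphi^{\triangleleft})^{\op}=(\varphi^{\op})^{\triangleright}$ the composite $\cF\circ\varphi^{\triangleleft}$ corresponds to $\cF^{\op}\circ(\varphi^{\op})^{\triangleright}$. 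Applying part (1) to the cofinal map $\varphi^{\op}$ and the diagram $\cF^{\op}$ therefore yields exactly the assertion of part (2).
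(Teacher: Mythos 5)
Your proposal is correct and is essentially the argument behind the paper's own treatment: the paper gives no proof of its own but simply quotes \cite[Proposition 4.1.1.8]{Lur09}, whose proof runs exactly as you outline --- recast colimit diagrams as initial objects of $\cE_{p/}$, observe that $\varphi^{*}:\cE_{p/}\lrar\cE_{(p\varphi)/}$ is a map of left fibrations over $\cE$ and check it fiberwise on section spaces of right fibrations, and reconcile the slice-contractibility definition of cofinality used in Definition~\ref{d:cofinal} with the section-restriction property via Joyal's theorem (\cite[Theorem 4.1.3.1]{Lur09}), with part (2) by passage to opposites. You correctly isolate that last comparison as the one genuinely hard input and invoke it rather than reprove it, which is precisely what the paper does by citation, so nothing is missing relative to the paper's own level of detail.
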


Thomason proved in~\cite{Th79} that the homotopy colimit of a diagram of nerves of categories may be identified with the nerve of the corresponding \textbf{Grothendieck construction}. This yields the following important case of Theorem~\ref{t:cofinal}:
\begin{thm}\label{t:cofinal-2}
Let $\cC, \cD$ be ordinary categories and let $f: \cC \lrar \cD$ be a cofinal map (in the sense of Definition~\ref{d:cofinal}). Let $\cF: \cD \lrar \ovl{\Cat}$ be any functor, let $\cG(\cD,\cF)$ be the Grothendieck construction of $\cF$, and let $\cG(\cC,\cF \circ f)$ be the Grothendieck construction of $\cF \circ f$. Then the induced map
\[ \Ne\cG(\cC,\cF \circ f) \x{\simeq}{\lrar} \Ne\cG(\cD,\cF) \]
is a weak equivalence of simplicial sets.
\end{thm}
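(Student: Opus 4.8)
The plan is to combine Thomason's theorem with the $\infty$-categorical cofinality statement of Theorem~\ref{t:cofinal}. Thomason's result provides, for any functor $\cF: \cD \lrar \ovl{\Cat}$, a natural weak equivalence
\[ \hocolim_{d \in \cD} \Ne\cF(d) \x{\simeq}{\lrar} \Ne\cG(\cD, \cF), \]
and likewise for $\cF \circ f$ over $\cC$. The crucial point is that this equivalence is natural in the indexing category, so that $f: \cC \lrar \cD$ fits into a commuting square whose vertical arrows are these equivalences and whose top horizontal arrow is the canonical comparison
\[ \hocolim_{c \in \cC} \Ne\cF(f(c)) \lrar \hocolim_{d \in \cD} \Ne\cF(d) \]
between the homotopy colimit of the restricted diagram and that of the original. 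It therefore suffices to prove that this comparison map is a weak equivalence.

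Next I would reinterpret both homotopy colimits $\infty$-categorically. Writing $\Ne\cF: \cD \lrar \cS$ for the diagram of nerves, valued in simplicial sets with the Kan--Quillen model structure, the Bousfield--Kan homotopy colimit computes the colimit of the induced functor $G := (\Ne\cF)_\infty: \Ne\cD \lrar \cS_\infty$ into the $\infty$-category of spaces. Hence
\[ \hocolim_{\cD} \Ne\cF \simeq \colim_{\Ne\cD} G, \qquad \hocolim_{\cC} \Ne\cF\circ f \simeq \colim_{\Ne\cC} (G \circ \Ne f), \]
and under these identifications the comparison map above is precisely the natural map between the two colimits induced by restriction along $\Ne f$.

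Now I would invoke cofinality. By hypothesis $\cC_{d/}$ is weakly contractible for every $d \in \cD$, so $\Ne f: \Ne\cC \lrar \Ne\cD$ is a cofinal map of $\infty$-categories in the sense of Definition~\ref{d:cofinal}. Let $\ovl{G}: (\Ne\cD)^{\triangleright} \lrar \cS_\infty$ be a colimit diagram extending $G$. Since $\Ne f$ is cofinal, Theorem~\ref{t:cofinal}(1) implies that $\ovl{G} \circ (\Ne f)^{\triangleright}$ is again a colimit diagram, sharing the same cone object. This exhibits the canonical map $\colim_{\Ne\cC}(G \circ \Ne f) \lrar \colim_{\Ne\cD} G$ as an equivalence in $\cS_\infty$, hence the comparison map is a weak equivalence of simplicial sets, as desired.

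The step I expect to be the main obstacle is the first one: verifying that Thomason's equivalence is natural enough for the morphism $\cG(\cC, \cF\circ f) \lrar \cG(\cD, \cF)$ covering $f$ to be identified, under the chain of equivalences, with the canonical restriction map of homotopy colimits. Thomason's argument passes through a simplicial replacement (bar construction), and one must track the comparison map carefully through this construction; this compatibility is exactly what licenses the application of Theorem~\ref{t:cofinal}. The remaining ingredients --- the passage from Bousfield--Kan homotopy colimits to $\infty$-categorical colimits and the cofinality input itself --- are standard once this identification is in place. One should also keep track of universes, since $\cF$ takes values in the large category $\ovl{\Cat}$; working throughout with $\lambda$-small (indeed $\delta$-) simplicial sets resolves this bookkeeping.
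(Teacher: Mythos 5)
Your proof follows exactly the route the paper takes: the paper deduces Theorem~\ref{t:cofinal-2} from Thomason's identification of $\Ne\cG(\cD,\cF)$ with the homotopy colimit of the diagram of nerves, combined with the cofinality statement of Theorem~\ref{t:cofinal}. Your write-up merely makes explicit the naturality of Thomason's equivalence and the passage from Bousfield--Kan homotopy colimits to $\infty$-categorical colimits, both of which the paper leaves implicit, so the approach is essentially identical and correct.
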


\begin{cor}[Quillen's theorem A]
Let $f: \cC \lrar \cD$ be a cofinal functor between ordinary categories. Then the induced map on nerves
\[ \Ne(\cC) \x{\simeq}{\lrar} \Ne(\cD) \]
is a weak equivalence of simplicial sets.
\end{cor}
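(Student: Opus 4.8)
The plan is to obtain this as an immediate consequence of Theorem~\ref{t:cofinal-2}, applied to a suitably trivial coefficient functor. Concretely, I would take $\cF: \cD \lrar \ovl{\Cat}$ to be the constant functor whose value is the terminal category $\ast$, namely the category with a single object and only its identity morphism. Note that $f$ being cofinal in the sense of Definition~\ref{d:cofinal} is exactly the hypothesis of Quillen's theorem A (contractibility of the relevant comma categories $\cC_{d/}$), so no reinterpretation of the hypothesis is needed.

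The first step is to unwind the two Grothendieck constructions that appear in Theorem~\ref{t:cofinal-2} for this choice of $\cF$. An object of $\cG(\cD,\cF)$ is a pair $(d,\ast)$ with $d \in \cD$, and a morphism $(d,\ast) \lrar (d',\ast)$ consists of a morphism $d \lrar d'$ in $\cD$ together with the unique morphism of $\ast$; hence $\cG(\cD,\cF)$ is canonically isomorphic to $\cD$. The same computation gives $\cG(\cC,\cF \circ f) \cong \cC$. Under these isomorphisms, the comparison map of Theorem~\ref{t:cofinal-2} is identified with the map $\Ne(\cC) \lrar \Ne(\cD)$ induced by $f$.

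The second and final step is simply to invoke Theorem~\ref{t:cofinal-2}: since $f$ is cofinal by hypothesis, that theorem guarantees the comparison map is a weak equivalence of simplicial sets, which under the identification above is precisely the claim.

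There is no real obstacle here; the only thing to check carefully is the identification of the Grothendieck construction of a constant functor with its indexing category, and the compatibility of the resulting isomorphisms with the comparison map of Theorem~\ref{t:cofinal-2}. Both are routine verifications directly from the definitions of the Grothendieck construction and of the map in Theorem~\ref{t:cofinal-2}.
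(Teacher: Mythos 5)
Your proof is correct and is exactly the derivation the paper intends: the corollary is stated immediately after Theorem~\ref{t:cofinal-2} precisely because specializing to the constant functor $\cF = \ast$ identifies both Grothendieck constructions with $\cC$ and $\cD$ and the comparison map with $\Ne(f)$. Nothing is missing, and your routine verifications (the identification $\cG(\cD,\ast)\cong\cD$ and its compatibility with the comparison map) are indeed the only points to check.
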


\begin{rem}
Definition~\ref{d:cofinal} pertains to the notions of cofinality and coinitiality which are suitable for higher category theory. In the original definition of these notions, the categories $\cC_{d/}$ and $\cC_{/d}$ were only required to be \textbf{connected}. This is enough to obtain Theorem~\ref{t:cofinal} when $\cE$ is an \textbf{ordinary category}. We note, however, that for functors whose domains are filtered (resp. cofiltered) categories, the classical and the higher categorical definitions of cofinality (resp. coinitiality) coincide (see Lemma~\ref{l:coincide} below).
\end{rem}

\subsection{Relative categories and $\infty$-localizations}\label{ss:infinity}
In this subsection we recall the notion of \textbf{relative categories} and the formation of \textbf{$\infty$-localizations}, a construction which associates an underlying $\infty$-category to any relative category. Let us begin with the basic definitions.

\begin{define}\label{d:rel}
A \textbf{relative category} is a category $\cC$ equipped with a subcategory
$\cW \subseteq \cC$ that contains all the objects. We refer to the maps in $\cW$ as \textbf{weak equivalences}. A \textbf{relative map} $(\cC,\cW) \lrar (\cD,\cU)$ is a map $f: \cC \lrar \cD$ sending $\cW$ to $\cU$.
\end{define}

Given a relative category $(\cC,\cW)$ one may associate to it an $\infty$-category $\cC_\infty = \cC[\cW^{-1}]$, equipped with a map $\cC \lrar \cC_\infty$, which is characterized by the following universal property: for every $\infty$-category $\cD$, the natural map
\[ \Fun(\cC_\infty,\cD) \lrar \Fun(\cC,\cD) \]
is fully faithful, and its essential image is spanned by those functors $\cC \lrar \cD$ which send $\cW$ to equivalences. The $\infty$-category $\cC_\infty$ is called the \textbf{$\infty$-localization} of $\cC$ with respect to $\cW$. In this paper we also refer to $\cC_\infty$ as the \textbf{underlying $\infty$-category of $\cC$}, or the $\infty$-category \textbf{modelled by $\cC$}. We note that this notation and terminology is slightly abusive, as it makes no direct reference to $\cW$. We refer the reader to~\cite{Hin} for a more detailed exposition. The $\infty$-category $\cC_\infty$ may be constructed in the following three equivalent ways

\begin{enumerate}
\item
One may construct the \textbf{Hammock localization} $L^H(\cC,\cW)$ of $\cC$ with respect to $\cW$ (see~\cite{DK80}), and obtain a simplicial category. The $\infty$-category $\cC_\infty$ can then be obtained by taking the coherent nerve of any fibrant model of $L^H(\cC,\cW)$ (with respect to the Bergner model structure).
\item
One may consider the \textbf{marked simplicial set} $\Ne_+(\cC,\cW) = (\Ne(\cC),\cW)$. The $\infty$-category $\cC_\infty$ can then be obtained by taking the underlying simplicial set of any fibrant model of $\Ne_+(\cC,\cW)$ (with respect to the Cartesian model structure, see~\cite[\S 3]{Lur09}).
\item
One may apply to $(\cC,\cW)$ the \textbf{Rezk nerve construction} to obtain a simplicial space $\Ne_{\rezk}(\cC,\cW)$. The space of $n$-simplices of this simplicial space is the nerve of the category whose objects are functors $[n] \lrar \cC$ and whose morphisms are natural transformations which are levelwise weak equivalences. The $\infty$-category $\cC_\infty$ can then be obtained by applying the functor $W_{\bullet,\bullet} \mapsto W_{\bullet,0}$ to any fibrant model of $\Ne_{\rezk}(\cC,\cW)$ (in the complete Segal space model structure).
\end{enumerate}

We refer to~\cite{Hin} for the equivalence of the first two constructions and to the appendix of \cite{Hor16} for the equivalence of the third. Given a relative map $f:(\cC,\cW)\lrar(\cD,\cU)$ we denote by $f_{\infty}:\cC_{\infty}\lrar\cD_{\infty}$ the induced map. The map $f_\infty$ is essentially determined by the universal property of $\infty$-localizations, but it can also be constructed explicitly, depending on the method one uses to construct $\cC_\infty$.

\begin{rem}
We note that the fibrant replacements in either the Bergner, the Cartesian or the complete Segal space model structures can be constructed in such a way that the resulting map on objects is the identity. We will always assume that we use such a fibrant replacement when constructing $\cC_\infty$. This implies that the resulting map $\cC\lrar\cC_\infty$ is also the identity on objects.
\end{rem}

\begin{rem}
If $\cC$ is a category then we may view $\cC$ as a relative category with the weak equivalences being the isomorphisms. In this case we have $\cC_\infty\simeq \cC$.
\end{rem}

\begin{define}\label{d:spaces}
We will denote by $\cS_\infty$ and $\ovl{\cS}_\infty$ the $\infty$-localizations of $\cS$ and $\ovl{\cS}$ respectively with respect to weak equivalences of simplicial sets (see \S\ref{ss:large} for the relevant definitions). We will refer to objects of $\cS_\infty$ as \textbf{small spaces} and to objects of $\ovl{\cS}_\infty$ as \textbf{large spaces}. We will say that a space $X \in \ovl{\cS}_\infty$ is \textbf{essentially small} if it is equivalent to an object in the image of $\cS_\infty \subseteq \ovl{\cS}_\infty$.
\end{define}

As first observed by Dwyer and Kan, the construction of $\infty$-localizations allows one to define \textbf{mapping spaces} in general relative categories:

\begin{define}
Let $(\cC,\cW)$ be a relative category and let $X,Y \in \cC$ be two objects. We denote by
\[ \Map^h_{\cC}(X,Y) :=\Map_{\cC_\infty}(X,Y) \]
the \textbf{derived mapping space} from $X$ to $Y$.
\end{define}

\begin{rem}
If $\cC$ is not small, then $\cC_\infty$ will not be locally small in general (see Definition~\ref{d:locally-small-2}). However, when $\cC$ comes from a model category, it is known that $\cC_\infty$ is always locally small.
\end{rem}

\subsection{Categories of fibrant objects}

In this subsection we recall and prove a few facts about categories of fibrant objects. Let $\cC$ be a category and let $\cM,\cN$ be two classes of morphisms in $\cC$. We denote by ${\cM}\circ {\cN}$ the class of arrows of $\cC$ of the form $g\circ f$ with $g\in\cM$ and $f\in \cN$. Let us begin by recalling the definition of a category of fibrant objects.

\begin{define}[\cite{Bro73}]\label{d:fib-obj}
A \textbf{category of fibrant objects} is a category $\cC$ equipped with two subcategories
\[\Fib, \cW \subseteq \cC\]
containing all the isomorphisms, such that the following conditions are satisfied:
\begin{enumerate}
\item
$\cC$ has a terminal object $\ast \in \cC$ and for every $X \in \cC$ the unique map $X \lrar \ast$ belongs to $\Fib$.
\item $\cW$ satisfies the 2-out-of-3 property.
\item If $f: Y \lrar W$ belongs to $\Fib$ and $h: Z \lrar W$ is any map then the pullback
$$\xymatrix{
X \ar[r]\ar_{g}[d] & Y \ar^{f}[d] \\
Z \ar^{h}[r] & W \\
}$$
exists and $g$ belongs to $\Fib$. If furthermore $f$ belongs to $\cW$ then $g$ belongs to $\cW$ as well.
\item We have $\Mor(\cC) = \Fib \circ \cW$.
\end{enumerate}
\end{define}
We refer to the maps in $\Fib$ as \textbf{fibrations} and to the maps in $\cW$ as \textbf{weak equivalences}. We refer to maps in $\Fib \cap \cW$ as \textbf{trivial fibrations}.

\begin{rem}\label{r:brown}
We note that properties $(1)$ and $(3)$ of Definition~\ref{d:fib-obj} imply that any category of fibrant objects $\cC$ admits finite products. Some authors (notably~\cite{Bro73}) replace property (4) with the a priori weaker statement that for any $X \in \cC$ the diagonal map $X \lrar X \times X$ admits a factorization as in (4) (such a factorization is sometimes called a \textbf{path object} for $X$). By the factorization lemma of~\cite{Bro73} this results in an equivalent definition. In fact, the factorization lemma of~\cite{Bro73} implies something slightly stronger: any map $f: X \lrar Y$ in $\cC$ can be factored as $X \x{i}{\lrar} Z \x{p}{\lrar} Y$ such that $p$ is a fibration and $i$ is a right inverse of a trivial fibration.
\end{rem}

\begin{define}\label{d:left-exact}
A functor $\cF:\cC \lrar \cD$ between categories of fibrant objects is called a \textbf{left exact functor} if
\begin{enumerate}
\item
$\cF$ preserves the terminal object, fibrations and trivial fibrations.
\item
Any pullback square of the form
$$\xymatrix{
X \ar[r]\ar_{g}[d] & Y \ar^{f}[d] \\
Z \ar^{h}[r] & W \\
}$$
such that $f \in \Fib$ is mapped by $\cF$ to a pullback square in $\cD$.
\end{enumerate}
\end{define}

\begin{rem}\label{r:fac-lemma}
By Remark~\ref{r:brown} any weak equivalence $f: X \lrar Y$ in a category of fibrant objects $\cC$ can be factored as $X \x{i}{\lrar} Z \x{p}{\lrar} Y$ such that $p$ is a trivial fibration and $i$ is a right inverse of a trivial fibration. It follows that any left exact functor $f: \cC \lrar \cD$ preserves weak equivalences.
\end{rem}

Given a category of fibrant objects $(\cC,\cW,\Fib)$ we may consider the $\infty$-localization $\cC_\infty = \cC[\cW^{-1}]$ associated to the underlying relative category of $\cC$. In~\cite{Cis10} Cisinski constructs a concrete and convenient model for computing derived mapping spaces in categories of fibrant objects. Let us recall the definition.

\begin{define}\label{d:hom}
Let $\cC$ be a category equipped with two subcategories $\cW,\Fib$ containing all isomorphisms and a terminal object $\ast \in \cC$. Let $X,Y \in \cC$ two objects. We denote by $\uline{\Hom}_{\cC}(X,Y)$ the category of diagrams of the form
\[ \xymatrix{
Z \ar^{f}[r]\ar_{p}[d] & Y \ar[d] \\
X \ar[r] & \ast \\
}\]
where $\ast \in \cC$ is the terminal object and $p: Z \lrar X$ belongs to $\cW \cap \Fib$.
\end{define}

\begin{rem}\label{r:map}
For any object $X \in \cC$ the category $\uline{\Hom}_{\cC}(X,\ast)$ can be identified with the full subcategory of $\cC_{/X}$ spanned by $\Fib\cap\cW$. In particular, $\uline{\Hom}_{\cC}(X,\ast)$ has a terminal object and is hence weakly contractible. For any object $Y \in \cC$ we may identify the category $\uline{\Hom}_{\cC}(X,Y)$ with the Grothendieck construction of the functor $\uline{\Hom}_{\cC}(X,\ast)^{\op} \lrar \Set$ which sends the object $Z \lrar X$ to the set $\Hom_{\cC}(Z,Y)$.
\end{rem}

There is a natural map from the nerve $\Ne\uline{\Hom}_{\cC}(X,Y)$ to the simplicial set $\Map_{L^H(\cC,\cW)}(X,Y)$ where $L^H(\cC,\cW)$ denotes the hammock localization of $\cC$ with respect to $\cW$. We hence obtain a natural map
\begin{equation}\label{e:map}
\Ne\uline{\Hom}_{\cC}(X,Y) \lrar \Map^h_{\cC}(X,Y).
\end{equation}

\begin{rem}\label{r:natural}
If $\cC$ is a category of fibrant objects then $\uline{\Hom}_{\cC}(X,Y)$ depends covariantly on $Y$ and contravariantly on $X$ (via the formation of pullbacks). Furthermore, the map~\ref{e:map} is compatible with these dependencies.
\end{rem}

\begin{prop}[{\cite[Proposition 3.23]{Cis10}}]\label{p:cis-map}
Let $\cC$ be a category of fibrant objects. Then for every $X,Y \in \cC$ the map~\ref{e:map} is a weak equivalence.
\end{prop}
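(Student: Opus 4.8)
The plan is to exhibit both sides of \ref{e:map} as one and the same homotopy colimit, indexed by the category of trivial fibrations over $X$, and to recognize the nerve of $\uline{\Hom}_{\cC}(X,Y)$ as that homotopy colimit via Thomason's theorem. Write $\cF_X := \uline{\Hom}_{\cC}(X,\ast)$ for the category of trivial fibrations $Z \lrar X$, which by Remark~\ref{r:map} has a terminal object and is therefore weakly contractible. By the same remark, $\uline{\Hom}_{\cC}(X,Y)$ is the Grothendieck construction of the functor $\cF_X^{\op} \lrar \Set$ sending $(Z \lrar X)$ to the discrete set $\Hom_{\cC}(Z,Y)$. Since a set is the nerve of a discrete category, Thomason's identification of the homotopy colimit of a diagram of nerves with the nerve of the associated Grothendieck construction (the statement preceding Theorem~\ref{t:cofinal-2}) yields a natural weak equivalence
\[ \Ne\uline{\Hom}_{\cC}(X,Y) \;\simeq\; \hocolim_{(Z \to X) \in \cF_X^{\op}} \Hom_{\cC}(Z,Y), \]
the homotopy colimit being formed in simplicial sets over the contractible index category $\cF_X^{\op}$.

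It therefore suffices to show that this equivalence, composed with \ref{e:map}, exhibits $\Map^h_{\cC}(X,Y)$ as the displayed homotopy colimit. This is where the category-of-fibrant-objects structure enters. I would invoke the explicit model of derived mapping spaces through the hammock localization $L^H(\cC,\cW)$ of Dwyer and Kan \cite{DK80}, together with the principle that a \emph{homotopy calculus of fractions} reduces reduced hammocks to two-arrow zig-zags. Concretely, I would show that the trivial fibrations in $\cC$ furnish $\cW$ with such a calculus, the trivial fibrations playing the role of the backward-pointing maps, so that $\Map^h_{\cC}(X,Y)$ is computed by the nerve of the category of spans consisting of a trivial fibration $Z \lrar X$ followed by an arbitrary map $Z \lrar Y$ — that is, precisely $\uline{\Hom}_{\cC}(X,Y)$. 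The reduction of more general weak-equivalence backward legs to trivial fibrations is supplied by Brown's factorization lemma in the strengthened form recalled in Remark~\ref{r:brown} and Remark~\ref{r:fac-lemma}, which lets one replace an arbitrary weak equivalence over $X$ by a trivial fibration up to the homotopies that the calculus makes available.

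The heart of the argument — and the step I expect to be the main obstacle — is the verification of the Dwyer–Kan axioms for a homotopy calculus of fractions directly from Definition~\ref{d:fib-obj}. The composability of spans and the contractibility of the comma categories governing the reduction of hammocks follow from the existence and fibrancy-stability of pullbacks along fibrations (axiom (3)) combined with the $2$-out-of-$3$ property (axiom (2)): pulling a span back along a further trivial fibration keeps it within $\cF_X$ and does not change the represented map, which is what forces the indexing categories to be filtered enough to be contractible. The ability to rectify an arbitrary map into a span with trivial-fibration backward leg, and to fill the homotopies needed to collapse longer zig-zags, comes from the factorization $\Mor(\cC)=\Fib\circ\cW$ (axiom (4)) and the path-object form of Brown's lemma. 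Once these inputs are assembled, the naturality of \ref{e:map} in both variables (Remark~\ref{r:natural}) guarantees that the comparison produced by the calculus of fractions coincides with the canonical map \ref{e:map}, so that the latter is a weak equivalence, as claimed.
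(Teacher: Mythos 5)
Your proposal cannot be compared against an in-paper argument, because the paper does not prove Proposition~\ref{p:cis-map} at all: it is imported verbatim from \cite[Proposition 3.23]{Cis10}, and the only yardstick is Cisinski's cited proof. Your outline — reduce arbitrary hammocks to two-arrow zig-zags via a Dwyer--Kan homotopy calculus of fractions, then cut the backward legs down from weak equivalences to trivial fibrations using Brown's factorization lemma (Remark~\ref{r:brown}), checking via Remark~\ref{r:natural} that the resulting comparison is the canonical map~\ref{e:map} — is a reasonable reconstruction in the spirit of that argument. Your opening step, identifying $\Ne\uline{\Hom}_{\cC}(X,Y)$ with a homotopy colimit over $\uline{\Hom}_{\cC}(X,\ast)^{\op}$ via Thomason, is correct (it is Remark~\ref{r:map} plus Theorem~\ref{t:cofinal-2}) but also inessential: the fractions-calculus reduction, once established, identifies $\Map^h_{\cC}(X,Y)$ directly with the nerve of a span category, so nothing is gained by rewriting the left-hand side as a homotopy colimit. (A bibliographical point: the hammock-reduction theorems you need are in Dwyer--Kan's \emph{Calculating simplicial localizations}, not in \cite{DK80}.)

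The genuine gap sits exactly where you predict it: the verification that a category of fibrant objects admits a homotopy calculus of fractions is asserted, not carried out, and the justification you sketch does not work as stated. The Dwyer--Kan condition is not the existence of fillers for single squares; it is that certain inclusion-induced maps between nerves of categories of longer reduced hammocks are weak equivalences, and their sufficient criterion for this demands \emph{strict} fillers: given $Z \x{f}{\lrar} Y \x{w}{\llar} Y'$ with $w \in \cW$, one must complete to a strictly commuting square with a weak equivalence out of a new object. In a category of fibrant objects only fibrations can be pulled back (Definition~\ref{d:fib-obj}(3)), so strict fillers exist only when $w$ is a trivial fibration; for a general weak equivalence one must first factor $w$ through Brown's lemma, and the resulting square commutes only up to a homotopy that has itself to be absorbed into the hammock — this delicate inductive asphericity argument (for which Cisinski's ``Lemme d'asph\'ericit\'e,'' used elsewhere in this paper, e.g.\ in Lemma~\ref{l:cofinal-fib}, is the standard tool) is the actual content of the proposition and is entirely absent from your text. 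Relatedly, ``filtered enough to be contractible'' points at the wrong mechanism: the span category $\uline{\Hom}_{\cC}(X,Y)$ is neither filtered nor contractible — its nerve \emph{is} the mapping space, of arbitrary homotopy type — and the comma categories whose contractibility drives the hammock reduction are not shown contractible by pullback-stability plus 2-out-of-3 alone. Finally, the passage from backward legs in $\cW$ to backward legs in $\Fib \cap \cW$ needs its own cofinality (Quillen Theorem A) check, routine given Brown's lemma but still to be done. As it stands, your text is a correct road map of the cited proof rather than a proof.
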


Cisinski's comprehensive work on categories of fibrant objects shows that such a category admits a well-behaved notion of \textbf{homotopy limits} for diagrams indexed by finite posets (and more generally any category whose nerve has only finitely many non-degenerate simplices). Recent work of Szumi{\l}o (see~\cite{Szu14}) shows that a certain variant of the notion of a category of fibrant objects (which includes, in particular, a two-out-of-six axiom for weak equivalences) is in fact \textbf{equivalent}, in a suitable sense, to that of an $\infty$-category admitting finite limits (i.e., limits indexed by simplicial sets with finitely many non-degenerate simplices). Unfortunately, the functor used in~\cite{Szu14} to turn a category of fibrant objects into an $\infty$-category is not the localization functor discussed in \S\ref{ss:infinity} (although recent work of Kapulkin and Szumiło appears to bridge this gap; see \cite{KS15}). All in all, as these lines are written, there has not yet appeared in the literature a proof of the fact that if $\cC$ is a category of fibrant objects, then $\cC_\infty$ has all finite limits. Our goal in the rest of this section is to fill this gap by supplying a proof which is based on Cisinski's work.

Let $\cD$ be a category of fibrant objects and let $\cT$ denote the category
\[\xymatrix{ & \ast\ar[d] \\ \ast\ar[r]  & \ast}\]
Let $\cD^{\cT}_{\spe} \subseteq \cD^{\cT}$ denote the subcategory spanned by those diagrams
\[ \xymatrix{
& X \ar_{f}[d] \\
Y \ar[r]^{g} & Z \\
}\]
such that both $f$ and $g$ are fibrations.

\begin{lem}\label{l:reedy-limit-cis}
Let $\cD$ be a category of fibrant objects. If $\cF:\cT^{\triangleleft} \lrar \cD$ is a limit diagram such that $\cF|_{\cT}$ is belongs to $\cD^{\cT}_{\spe}$ then $\cF_\infty: \cT^{\triangleleft}_\infty \lrar \cD_\infty$ is a limit diagram.
\end{lem}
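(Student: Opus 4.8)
The plan is to verify the limit condition objectwise on corepresentable functors. Recall that a cone $\cG:\cT^{\triangleleft}\to\cD_\infty$ is a limit diagram if and only if, for every object $A\in\cD_\infty$, the composite $\Map_{\cD_\infty}(A,\cG(-)):\cT^{\triangleleft}\to\cS_\infty$ is a limit diagram of spaces. Since $\cD\to\cD_\infty$ is the identity on objects, it suffices to let $A$ range over the objects of $\cD$. Writing the cospan $\cF|_{\cT}$ as $X\to Z\leftarrow Y$ with both legs $f,g\in\Fib$, and with $\cF(v)=W$ the pullback (the cone point $v$ of $\cT^{\triangleleft}$), the limit over the cospan $\cT$ of a diagram of spaces is the homotopy pullback. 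Hence what must be shown is that for every $A$ the square
\[\xymatrix{
\Map^h_{\cD}(A,W)\ar[r]\ar[d] & \Map^h_{\cD}(A,X)\ar[d]\\
\Map^h_{\cD}(A,Y)\ar[r] & \Map^h_{\cD}(A,Z)
}\]
is homotopy cartesian.

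To compute these derived mapping spaces I would use Cisinski's model. By Proposition~\ref{p:cis-map} the natural maps $\Ne\uline{\Hom}_{\cD}(A,-)\to\Map^h_{\cD}(A,-)$ are weak equivalences, and by Remark~\ref{r:natural} they are natural in the second variable, so they assemble into a map of squares whose four connecting legs are all weak equivalences. A square is homotopy cartesian precisely when any square weakly equivalent to it is, so it is enough to prove that the square of nerves $\Ne\uline{\Hom}_{\cD}(A,-)$ is homotopy cartesian. The first key point is that this square is in fact a \emph{strict} pullback: an object of $\uline{\Hom}_{\cD}(A,W)$ is a trivial fibration $p\colon Z'\to A$ together with a map $Z'\to W$, and by the universal property of the pullback $W=X\times_Z Y$ in $\cD$ (which exists precisely because $f\in\Fib$) such a map is the same datum as a pair of maps $Z'\to X$, $Z'\to Y$ agreeing over $Z$. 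The same bijection on morphisms (which, by Remark~\ref{r:map}, are maps $Z''\to Z'$ over $A$ commuting with the structure maps) shows that
\[\uline{\Hom}_{\cD}(A,W)\cong\uline{\Hom}_{\cD}(A,X)\times_{\uline{\Hom}_{\cD}(A,Z)}\uline{\Hom}_{\cD}(A,Y)\]
is an isomorphism of categories, and $\Ne$ preserves pullbacks.

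It remains to see that this strict pullback of nerves computes the homotopy pullback. Here I would show that the functor $\uline{\Hom}_{\cD}(A,X)\to\uline{\Hom}_{\cD}(A,Z)$ induced by postcomposition with $f$ is a discrete opfibration: given an object $(p\colon Z'\to A,\,u\colon Z'\to X)$ and a morphism out of its image $(Z',f\circ u)$ in $\uline{\Hom}_{\cD}(A,Z)$, namely a map $\phi\colon Z''\to Z'$ over $A$ with prescribed composite $(f\circ u)\circ\phi$, the unique lift is $(Z'',u\circ\phi)$. Consequently its nerve is a left fibration of simplicial sets. Strict pullbacks along left fibrations are homotopy cartesian in the Kan--Quillen model structure (left fibrations are stable under base change and their fibres are homotopy invariant, so the strict pullback represents the homotopy pullback), whence the square of nerves, and therefore the square of derived mapping spaces, is homotopy cartesian. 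As this holds for every $A$, the diagram $\cF_\infty$ is a limit diagram.

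The main obstacle is the last step: one must be certain that passing from the purely $1$-categorical pullback $W$ to homotopy-theoretic information is legitimate, i.e.\ that the strict pullback of Cisinski's models genuinely computes the homotopy pullback. The discrete-opfibration observation is what makes this work, and it also isolates exactly where the hypothesis enters — $f\in\Fib$ is used only to guarantee that $W$ exists as a strict pullback, so that the symmetric hypothesis $g\in\Fib$ is not strictly necessary but is part of the ``special'' condition $\cF|_{\cT}\in\cD^{\cT}_{\spe}$. I would take care to justify, or cite, the two standard facts being invoked: that the nerve of a discrete opfibration is a left fibration, and that strict pullback along a left fibration is a Kan--Quillen homotopy pullback. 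These deserve explicit attention because the nerves $\Ne\uline{\Hom}_{\cD}(A,-)$ are not themselves Kan complexes, so the comparison between the Joyal and Kan--Quillen notions of homotopy pullback is precisely what needs to be controlled.
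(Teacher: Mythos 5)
Your reduction to mapping spaces, the identification of $\uline{\Hom}_{\cD}(A,W)$ as the \emph{strict} pullback $\uline{\Hom}_{\cD}(A,X)\times_{\uline{\Hom}_{\cD}(A,Z)}\uline{\Hom}_{\cD}(A,Y)$, and the observation that postcomposition with $f$ is a discrete opfibration whose nerve is a left fibration are all correct (and the last two are nice observations). The fatal step is the purported standard fact that strict pullbacks along left fibrations are homotopy cartesian in the Kan--Quillen model structure: this is \emph{false}, and it is the entire homotopical content of the lemma. Your left fibration has discrete fibers --- the fiber over $(p\colon Z'\to A,\,w\colon Z'\to Z)$ is the \emph{set} of strict lifts $u\colon Z'\to X$ with $f u=w$ --- and its edge-transports $u\mapsto u\circ\phi$ are in general neither injective nor surjective; the parenthetical claim that the fibres of a left fibration are ``homotopy invariant'' is exactly what distinguishes Kan fibrations (for which right properness does give your conclusion) from general left fibrations. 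A minimal counterexample to the cited fact: the inclusion $\{1\}\hookrightarrow\Del^1$ is a left fibration (any commuting square over $\Lam^n_0\subseteq\Del^n$ forces the bottom map $\Del^n\to\Del^1$ to be constant at $1$, since $\Lam^n_0$ contains every vertex), yet its strict pullback along $\{0\}\hookrightarrow\Del^1$ is empty while the Kan--Quillen homotopy pullback is a point.

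There is also a structural red flag that confirms the gap: as you yourself note, your argument uses $f\in\Fib$ \emph{only} to guarantee that the strict pullback $W$ exists; every subsequent step goes through verbatim for an arbitrary map $f$. If the argument were correct, it would therefore prove that \emph{any} pullback square that happens to exist in a category of fibrant objects is homotopy cartesian. This is false: take $\cD$ to be Kan complexes and the cospan $\ast\lrar Z\llar\ast$ given by two distinct vertices in the same connected component of $Z$; the strict pullback is $\emptyset$, so $\Map^h_{\cD}(A,\emptyset)=\emptyset$, while the homotopy pullback of the mapping-space square is a nonempty (loop-space-like) space. So the hypothesis $f\in\Fib$ must enter the homotopical part of the proof and not merely the existence of $W$. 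This is precisely what the paper's (one-line) proof outsources: it cites~\cite[Proposition 3.6]{Cis10} together with Proposition~\ref{p:cis-map}, and Cisinski's proof of that proposition uses the fibration hypothesis via genuine homotopy-lifting arguments in $\cD$ --- something that cannot be recovered by a formal fiberwise argument on the nerve square, since, as your own discrete-opfibration analysis shows, the strict fibers are not homotopy invariant even when $f$ is a fibration.
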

\begin{proof}
This follows directly from~\cite[Proposition 3.6]{Cis10} and Proposition~\ref{p:cis-map}.
\end{proof}

\begin{lem}\label{l:rigid-2}
Let $\cD$ be a category of fibrant objects and let $u:\Ne(\cT) \lrar \cD_\infty$ be a diagram. Then there exists a diagram $\cF_{\spe}: \cT \lrar \cD$ which belongs to $\cD^{\cT}_{\spe}$ such that the composite
\[ \Ne(\cT) \x{\Ne(\cF_{\spe})}{\lrar} \Ne(\cD) \lrar \cD_\infty \]
is homotopic to $u$.
\end{lem}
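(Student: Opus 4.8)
The plan is to rigidify the homotopy-coherent cospan $u$ into a strict cospan of fibrations, using Cisinski's model for mapping spaces (Proposition~\ref{p:cis-map}) to realize each of the two edges of $u$ by an honest morphism, and then the factorization axiom of Definition~\ref{d:fib-obj}(4) to replace these morphisms by fibrations with the same target. First I would record that $\Ne(\cT)$ is the nerve of the free category on the quiver $\bullet \to \bullet \leftarrow \bullet$, so that its only nondegenerate simplices are the three vertices and the two edges. Since the localization map $\cD \lrar \cD_\infty$ is the identity on objects, the datum of $u$ amounts to three objects $X := u(a)$, $Y := u(b)$ and $Z := u(c)$ of $\cD$ together with two edges, i.e. two points $\alpha \in \Map^h_{\cD}(X,Z)$ and $\beta \in \Map^h_{\cD}(Y,Z)$.

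Next I would represent $\alpha$ and $\beta$ by honest maps. By Proposition~\ref{p:cis-map} the natural map $\Ne\uline{\Hom}_{\cD}(X,Z) \lrar \Map^h_{\cD}(X,Z)$ is a weak equivalence, in particular surjective on $\pi_0$; hence $\alpha$ is homotopic to the image of some vertex of $\uline{\Hom}_{\cD}(X,Z)$, that is, of a diagram $X \x{p}{\llar} X' \x{f'}{\lrar} Z$ with $p \in \Fib \cap \cW$. As $p$ becomes an equivalence in $\cD_\infty$, the edge $\alpha$ is identified there with $f' \circ p^{-1}$. Symmetrically $\beta$ is represented by $Y \x{q}{\llar} Y' \x{g'}{\lrar} Z$ with $q \in \Fib \cap \cW$. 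I would then apply axiom (4) of Definition~\ref{d:fib-obj} to factor $f' = f \circ i$ and $g' = g \circ j$ with $i,j \in \cW$ and $f,g \in \Fib$, producing objects $\widetilde{X}, \widetilde{Y}$ and fibrations $f: \widetilde{X} \lrar Z$, $g: \widetilde{Y} \lrar Z$. Setting $\cF_{\spe}$ to be the cospan
\[ \widetilde{X} \x{f}{\lrar} Z \x{g}{\llar} \widetilde{Y} \]
then gives a diagram lying in $\cD^{\cT}_{\spe}$, since both $f$ and $g$ are fibrations.

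It remains to check that the image of $\cF_{\spe}$ in $\cD_\infty$ is homotopic to $u$. In $\cD_\infty$ the zigzags $X \x{p}{\llar} X' \x{i}{\lrar} \widetilde{X}$ and $Y \x{q}{\llar} Y' \x{j}{\lrar} \widetilde{Y}$ consist of weak equivalences, hence define equivalences $\phi_a := i \circ p^{-1}: X \lrar \widetilde{X}$ and $\phi_b := j \circ q^{-1}: Y \lrar \widetilde{Y}$, and I set $\phi_c := \Id_Z$. From $f \circ i = f'$ one reads off $f \circ \phi_a = f' \circ p^{-1} = \alpha$ in $\cD_\infty$, and likewise $g \circ \phi_b = \beta$. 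I would then assemble $(\phi_a,\phi_b,\phi_c)$ into an equivalence between $u$ and the image of $\cF_{\spe}$ in the functor $\infty$-category $\Fun(\Ne(\cT),\cD_\infty)$, using that $\cT$ is free on its two edges (so that the two naturality squares just verified carry all the required coherence) together with the fact that a pointwise equivalence of diagrams is an equivalence.

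The representation and factorization steps are routine, being immediate from Proposition~\ref{p:cis-map} and the axioms of a category of fibrant objects. The main difficulty I anticipate is the final step: turning the pointwise equivalences $\phi_a,\phi_b,\phi_c$ and the homotopy-commuting squares into a genuine homotopy of diagrams $\Ne(\cT) \lrar \cD_\infty$. Making this precise requires being careful about what such a homotopy is (namely an equivalence in $\Fun(\Ne(\cT),\cD_\infty)$) and exploiting that the indexing shape is a free category on a quiver, so that no higher coherences beyond the two commuting squares need to be supplied.
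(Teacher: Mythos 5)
Your proposal is correct and follows essentially the same route as the paper's proof: both exploit that $\cT$ is free on its two edges (the paper phrases this as the counit $\fC(\Ne(\cT)) \lrar \cT$ being an isomorphism, you as the absence of higher coherence data in $\Fun(\Ne(\cT),\cD_\infty)$), both use Proposition~\ref{p:cis-map} to replace the two edges by zig-zags $X \x{p}{\llar} X' \lrar Z$ with $p$ a trivial fibration, and both finish with the factorization axiom of Definition~\ref{d:fib-obj}(4) and a levelwise equivalence of cospans. The only cosmetic difference is that the paper carries out the rigidification inside a Bergner-fibrant simplicial category $\cD_{\Del}$ rather than directly in the quasi-category, which amounts to the same bookkeeping you describe in your final step.
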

\begin{proof}
Let $L^H(\cD,\cW) \x{\simeq}{\lrar} \cD_{\Del}$ be a fibrant replacement with respect to the Bergner model structure such that the map $\Ob(L^H(\cD,\cW)) \lrar \Ob(\cD_{\Del})$ is the identity, so that $\cD_\infty \simeq \Ne(\cD_{\Del})$. By adjunction, the diagram $u:\Ne(\cT) \lrar \cD_\infty$ corresponds to a functor of simplicial categories
\[ \cF:\fC(\Ne(\cT)) \lrar \cD_\Del \]
Since $\cT$ contains no composable pair of non-identity morphisms, the simplicial set $\Ne(\cT)$ does not have any non-degenerate simplices above dimension $1$. It then follows that the counit map $\fC(\Ne(\cT)) \lrar \cT$ is an \textbf{isomorphism}, and so we may represent $\cF$ by a diagram
\[
\xymatrix{
& X \ar_{F}[d] \\
Y \ar^{G}[r] & Z \\
}\]
in $\cD_{\Del}$, which we still denote by the same name $\cF: \cT \lrar \cD_{\Del}$. According to Proposition~\ref{p:cis-map} the maps
\[ \uline{\Hom}_{\cD}(X,Z) \lrar \Map_{\cD_{\Del}}(X,Z) \]
and
\[ \uline{\Hom}_{\cD}(Y,Z) \lrar \Map_{\cD_{\Del}}(Y,Z) \]
are weak equivalences. It follows that there exists a zig-zag $X \x{p}{\llar} X' \x{f}{\lrar} Z$ (with $p$ a trivial fibration) whose corresponding vertex $F' \in \Map_{\cD_{\Del}}(X,Z)$ is homotopic to $F$ and a zig-zag $Y \x{q}{\llar} Y' \x{g}{\lrar} Z$ (with $q$ a trivial fibration) whose corresponding vertex $G' \in \Map_{\cD_{\Del}}(Y,Z)$ is homotopic to $G$. We may then conclude that $\cF$ is homotopic to the diagram $\cF':\cT \lrar \cD_{\Del}$ determined by $F'$ and $G'$. On the other hand, since $p$ and $q$ are weak equivalences it follows that $\cF'$ is equivalent to the composition $\cT \x{\cF''}{\lrar} \cD \lrar \cD_{\Del}$ where $\cF'': \cT\lrar \cD$ is given by
\[
\xymatrix{
& X' \ar_{f}[d] \\
Y' \ar^{g}[r] & Z \\
}\]
Finally, by using property (4) of Definition~\ref{d:fib-obj} we may replace $\cF''$ with a levelwise equivalent diagram $\cF_{\spe}$ which belongs to $\cD^{\cT}_{\spe}$. Now the composed map
\[ \Ne(\cT) \x{\Ne(\cF_{\spe})}{\lrar} \Ne(\cD) \lrar \cD_\infty \]
is homotopic to $u$ as desired.
\end{proof}

\begin{prop}\label{p:brown_PB}
Let $\cD$ be a category of fibrant objects. Then $\cD_{\infty}$ admits finite limits.
\end{prop}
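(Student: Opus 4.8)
The plan is to show that $\cD_\infty$ has a terminal object and admits pullbacks, which together suffice for the existence of all finite limits. The terminal object is immediate: since $\cC$ has a terminal object $\ast$, and the localization functor preserves it (the over-category of $\ast$ being contractible, or more directly because any left-exact argument shows $\Map^h_{\cD}(X,\ast)$ is contractible by Remark~\ref{r:map}), the object $\ast$ remains terminal in $\cD_\infty$. So the real content is the existence of pullbacks, i.e.\ limits indexed by the cospan category $\cT$.

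The key idea is to combine the two lemmas just proved. Given an arbitrary cospan diagram $u: \Ne(\cT) \lrar \cD_\infty$ in the localized $\infty$-category, Lemma~\ref{l:rigid-2} lets me \emph{rectify} it: there exists a strict diagram $\cF_{\spe}: \cT \lrar \cD$ landing in the subcategory $\cD^{\cT}_{\spe}$ of cospans of fibrations, whose image in $\cD_\infty$ is homotopic to $u$. First I would form the actual pullback of this cospan in the ordinary category $\cD$; this pullback exists by axiom (3) of Definition~\ref{d:fib-obj}, precisely because the legs are fibrations. This produces a limit diagram $\cF: \cT^{\triangleleft} \lrar \cD$ with $\cF|_{\cT} = \cF_{\spe}$ belonging to $\cD^{\cT}_{\spe}$. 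Then Lemma~\ref{l:reedy-limit-cis} applies directly and tells me that $\cF_\infty: \cT^{\triangleleft}_\infty \lrar \cD_\infty$ is a limit diagram in $\cD_\infty$. Thus every cospan in $\cD_\infty$ extends to a limit diagram, which is exactly the statement that $\cD_\infty$ admits pullbacks.

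To conclude that finite limits exist in general, I would invoke the standard $\infty$-categorical fact (dual to~\cite[Corollary 4.4.2.4]{Lur09}) that an $\infty$-category admitting a terminal object and all pullbacks admits all finite limits, since every finite limit can be built from these. The only subtlety worth spelling out is the identification $\cT^{\triangleleft}_\infty \simeq \Ne(\cT)^{\triangleleft}$, i.e.\ that localizing the cone point adds a genuine initial vertex; but $\cT^{\triangleleft}$ carries only identity weak equivalences, so its localization is just its nerve, and cone formation commutes with the nerve in the relevant sense.

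I expect the main obstacle to be bookkeeping rather than conceptual: one must check that the limit diagram produced from the strictified cospan is genuinely compatible with the original $u$ under the homotopy supplied by Lemma~\ref{l:rigid-2}, so that the limit one obtains is a limit of $u$ and not merely of some replacement. This is handled by the universal property of $\infty$-localization together with the observation that a diagram being a limit diagram in $\cD_\infty$ depends only on its homotopy class, so replacing $u$ by the homotopic $\Ne(\cF_{\spe})$-image is harmless. The heavy lifting has already been done in the two preceding lemmas, which package Cisinski's computation of mapping spaces (Proposition~\ref{p:cis-map}) and the homotopy-invariance of limit diagrams; the present proposition is essentially the assembly of those inputs.
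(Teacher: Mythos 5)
Your proposal is correct and is essentially the paper's own argument: reduce to a terminal object plus pullbacks via \cite[Proposition 4.4.2.6]{Lur09}, obtain the terminal object from Remark~\ref{r:map}, and produce pullbacks by rectifying a cospan with Lemma~\ref{l:rigid-2}, forming the strict pullback of fibrations guaranteed by axiom (3) of Definition~\ref{d:fib-obj}, and applying Lemma~\ref{l:reedy-limit-cis}. The only differences are cosmetic (you spell out the homotopy-invariance bookkeeping that the paper leaves implicit).
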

\begin{proof}
According to~\cite[Proposition 4.4.2.6]{Lur09} it is enough to show that $\cD_\infty$ has pullbacks and a terminal object. The fact that the terminal object of $\cD$ is also terminal in $\cD_\infty$ follows from Remark~\ref{r:map}. Finally, the existence of pullbacks in $\cD_{\infty}$ follows from Lemma~\ref{l:reedy-limit-cis} and Lemma~\ref{l:rigid-2}.
\end{proof}

By Remark~\ref{r:fac-lemma} any left exact functor $\cF: \cC \lrar \cD$ preserves weak equivalences and hence induces a functor $\cF_\infty: \cC_\infty \lrar \cD_\infty$ on the corresponding $\infty$-categories.

\begin{prop}\label{p:left exact preserves limits}
Let $\cF:\cC\lrar\cD$ be a left exact functor between categories of fibrant objects. Then $\cF_{\infty}:\cC_{\infty}\lrar \cD_{\infty}$ preserves finite limits.
\end{prop}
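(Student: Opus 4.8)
The plan is to mimic the reduction used in the proof of Proposition~\ref{p:brown_PB}. Both $\cC_\infty$ and $\cD_\infty$ admit finite limits by that proposition, and, just as there, the key categorical fact is that finite limits are generated by the terminal object and pullbacks (cf.~\cite[Proposition 4.4.2.6]{Lur09}); consequently a functor between $\infty$-categories admitting finite limits preserves all finite limits as soon as it preserves the terminal object and pullbacks. Thus it suffices to check these two cases for $\cF_\infty$. The terminal object is immediate: $\cF$ preserves the terminal object of $\cC$ by Definition~\ref{d:left-exact}, and by Remark~\ref{r:map} the terminal object of a category of fibrant objects remains terminal after $\infty$-localization, so $\cF_\infty$ carries the terminal object of $\cC_\infty$ to that of $\cD_\infty$.

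The substance of the argument is the case of pullbacks. Let $\cG: \cT^{\triangleleft}_\infty \lrar \cC_\infty$ be a pullback diagram, where $\cT$ is the cospan category of Lemma~\ref{l:reedy-limit-cis}; I must show that $\cF_\infty \circ \cG$ is a limit diagram in $\cD_\infty$. First I would apply Lemma~\ref{l:rigid-2} to the underlying cospan $\cG|_{\Ne(\cT)}: \Ne(\cT) \lrar \cC_\infty$, producing a strict diagram $\cH_{\spe}: \cT \lrar \cC$ lying in $\cC^{\cT}_{\spe}$ (so that both of its legs are fibrations) whose image in $\cC_\infty$ is homotopic to $\cG|_{\Ne(\cT)}$. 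Since both legs of $\cH_{\spe}$ are fibrations, axiom (3) of Definition~\ref{d:fib-obj} guarantees that its pullback exists in $\cC$, yielding a strict limit diagram $\cH: \cT^{\triangleleft} \lrar \cC$ extending $\cH_{\spe}$; by Lemma~\ref{l:reedy-limit-cis} the localized diagram $\cH_\infty$ is a pullback diagram in $\cC_\infty$. As $\cH_\infty$ and $\cG$ are both limit diagrams whose underlying cospans are homotopic, the essential uniqueness of limits identifies them, so it is enough to prove that $\cF_\infty \circ \cH_\infty$ is a limit diagram.

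The final step is that the composite $\cF \circ \cH: \cT^{\triangleleft} \lrar \cD$ is again a strict limit diagram of the special form. Indeed, its restriction $(\cF \circ \cH)|_{\cT} = \cF \circ \cH_{\spe}$ has fibrations for legs because $\cF$ preserves fibrations, so it lies in $\cD^{\cT}_{\spe}$; and the square $\cH$ is a pullback along a fibration, hence preserved by the left exact functor $\cF$ by condition (2) of Definition~\ref{d:left-exact}. Therefore $\cF \circ \cH$ is a limit diagram in $\cD$ belonging to $\cD^{\cT}_{\spe}$, and Lemma~\ref{l:reedy-limit-cis} applied to $\cD$ shows that $(\cF \circ \cH)_\infty$ is a pullback diagram in $\cD_\infty$. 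Since $\infty$-localization is functorial, $(\cF \circ \cH)_\infty \simeq \cF_\infty \circ \cH_\infty$, which finishes the proof. I expect the main obstacle to be the bookkeeping in passing between strict and localized diagrams: one must ensure that the rectification supplied by Lemma~\ref{l:rigid-2} can be realized by an honest pullback whose legs are fibrations (so that left exactness genuinely applies), and that the localized cone is correctly identified with the originally given $\infty$-categorical pullback $\cG$ through the uniqueness of limits.
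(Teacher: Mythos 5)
Your proposal is correct and follows essentially the same route as the paper: reduce via \cite[Corollary 4.4.2.5/Proposition 4.4.2.6]{Lur09} to terminal objects and pullbacks, handle the terminal object using Remark~\ref{r:map}, and for pullbacks combine Lemma~\ref{l:rigid-2} (rectification of a cospan to one in $\cC^{\cT}_{\spe}$) with Lemma~\ref{l:reedy-limit-cis} in both $\cC$ and $\cD$, using that a left exact functor sends strict pullbacks along fibrations to pullbacks. Your writeup merely makes explicit the bookkeeping (forming the strict limit cone and identifying it with the given $\infty$-categorical diagram by uniqueness of limits) that the paper's proof leaves implicit.
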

\begin{proof}
It suffices to prove that $\cF_{\infty}$ preserves pullbacks and terminal objects. Since the terminal object of $\cC$ is also the terminal in $\cC_\infty$ and since $\cF$ preserves terminal objects it follows that $\cF_\infty$ preserves terminal objects. Now let $\cT$ be as above. By Definition~\ref{d:left-exact} we see that $f$ maps limits $\cT^{\triangleleft}$-diagrams which contain only fibrations to limit diagrams. It then follows from Lemmas~\ref{l:reedy-limit-cis} and~\ref{l:rigid-2} that $\cF_\infty$ preserves limit $\cT^{\triangleleft}$-diagrams, i.e., pullback diagrams.
\end{proof}

\subsection{Weak fibration categories}\label{ss:wfc}

Most relative categories appearing in this paper are \textbf{weak fibration categories}.

\begin{define}\label{d:PB}
Let $\cC$ be category and let $\cM \subseteq \cC$ be a subcategory. We say
that $\cM$ is \textbf{closed under base change} if whenever we have a pullback square:
\[
\xymatrix{A\ar[d]_g\ar[r] & B\ar[d]^f\\
C\ar[r] & D}
\]
such that $f$ is in $\cM$, then $g$ is in $\cM$.
\end{define}

\begin{define}[{cf. \cite{An78, BS1}}]\label{d:weak_fib}
A \textbf{weak fibration category} is a category $\cC$ equipped with two subcategories
\[\Fib, \cW \subseteq \cC\]
containing all the isomorphisms, such that the following conditions are satisfied:
\begin{enumerate}
\item $\cC$ has all finite limits.
\item $\cW$ has the 2-out-of-3 property.
\item The subcategories $\Fib$ and $\Fib \cap \cW$ are closed under base change.
\item (Factorization axiom) We have $\Mor({\cC}) = {\cF ib}\circ {\cW}$.
\end{enumerate}
\end{define}

We refer to the maps in $\Fib$ as \textbf{fibrations} and to the maps in $\cW$ as \textbf{weak equivalences}. We refer to maps in $\Fib \cap \cW$ as \textbf{trivial fibrations}.

\begin{define}
A functor $\cC \lrar \cD$ between weak fibration categories is called a \textbf{weak right Quillen functor} if it preserves finite limits, fibrations and trivial fibrations.
\end{define}

We now recall some terminology from~\cite{BS1}.

\begin{define}\label{def CDS}
Let $\cT$ be a poset. We say that $\cT$ is \textbf{cofinite} if for every element $t \in \cT$ the set $\cT_t:=\{s\in \cT| s < t\}$ is \textbf{finite}.
\end{define}

\begin{define}\label{def natural}
Let $\cC$ be a category admitting finite limits, $\cM$ a class of morphisms in $\cC$, $\cI$ a small category, and $F:X\lrar Y$ a morphism in $\cC^{\cI}$. Then $F$ is:
\begin{enumerate}
\item A \textbf{levelwise $\cM$-map} if for every $i\in \cI$ the morphism $F_i:X_i \lrar Y_i$ is in $\cM$. We denote by $\Lw(\cM)$ the class of levelwise $\cM$-maps.
\item A \textbf{special $\cM$-map} if the following holds:
    \begin{enumerate}
    \item The indexing category $\cI$ is an cofinite poset (see Definition ~\ref{def CDS}).
    \item For every $i \in \cI$ the natural map
     \[X_i \lrar Y_i \times_{\lim \limits_{j<i} Y_j} \lim \limits_{j<i} X_j \]
     belongs to $\cM$.
    \end{enumerate}
    We denote by $\Sp(\cM)$ the class of special $\cM$-maps.
\end{enumerate}
We will say that a diagram $X \in \cC^{\cI}$ is a \textbf{special $\cM$-diagram} if the terminal map $X \lrar \ast$ is a special $\cM$-map.
\end{define}

The following proposition from~\cite{BS1} will be used several times, and is recalled here for the convenience of the reader.

\begin{prop}[{\cite[Proposition 2.19]{BS1}}]\label{p:forF_sp_is_lw_strong}
Let $\mcal{C}$ be a category with finite limits, and $\mcal{M} \subseteq \mcal{C}$ a subcategory that is closed under base change, and contains all the isomorphisms. Let $F:X\lrar Y$ be a natural transformation between diagrams in $\mcal{C}$ which is a special $\mcal{M}$-map. Then $F$ is a levelwise $\mcal{M}$-map.
\end{prop}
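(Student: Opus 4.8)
The plan is to argue by induction on the indexing poset, after first reducing to the case where it is finite. Since being a levelwise $\mcal{M}$-map is a condition checked one object at a time, I would fix an object $i \in \cI$ and restrict attention to the down-set $J = \{j \in \cI \mid j \le i\}$. Because $\cI$ is cofinite the set $\{j < i\}$ is finite, so $J$ is a finite poset. The essential bookkeeping point is that for any $j \in J$ the set $\{k \mid k < j\}$ is again contained in $J$, so the matching objects $\lim_{k<j} X_k$ computed in $J$ agree with those computed in $\cI$; consequently the restriction $F|_J$ is again a special $\mcal{M}$-map, and it suffices to prove $(F|_J)_i = F_i \in \mcal{M}$. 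This reduces the statement to the case of a finite poset.

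For the finite case I would prove, by induction on the number of elements of $\cI$, the following strengthened assertion: $F_i \in \mcal{M}$ for every $i$, \emph{and} the induced map $\lim_\cI X \lrar \lim_\cI Y$ lies in $\mcal{M}$. The strengthening is essential because $\mcal{M}$ is only assumed closed under base change, not under limits, so the statement about the total limit map cannot be deduced after the fact and must be carried along in the induction. The base case $\cI = \varnothing$ is immediate, the limit map then being the identity of the terminal object $\ast$.

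For the inductive step I would choose a maximal element $m \in \cI$ and set $\cI' = \cI \setminus \{m\}$ and $\cI_m = \{j < m\}$, both strictly smaller; write $M_m X = \lim_{\cI_m} X$ and similarly $M_m Y = \lim_{\cI_m} Y$. Maximality of $m$ guarantees that $\cI_m \subseteq \cI'$ and that the down-sets of all other elements are unaffected, so both $F|_{\cI'}$ and $F|_{\cI_m}$ remain special $\mcal{M}$-maps. The induction hypothesis then supplies that $F_j \in \mcal{M}$ for $j \in \cI'$, that $\beta \colon \lim_{\cI'} X \lrar \lim_{\cI'} Y$ is in $\mcal{M}$, and that $\gamma \colon M_m X \lrar M_m Y$ is in $\mcal{M}$. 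The heart of the argument is the pullback decomposition $\lim_\cI X \cong (\lim_{\cI'} X) \times_{M_m X} X_m$ obtained by adjoining the maximal vertex $m$ (and similarly for $Y$). Using it I would settle the two remaining claims by exhibiting every relevant map as a composite of base changes of $\mcal{M}$-maps. First, $F_m$ factors as $X_m \lrar Y_m \times_{M_m Y} M_m X \lrar Y_m$, where the first map is in $\mcal{M}$ by the special-map hypothesis and the second is the base change of $\gamma$ along $Y_m \lrar M_m Y$, so $F_m \in \mcal{M}$ by closure under base change and composition. Second, the total map $\lim_\cI X \lrar \lim_\cI Y$ factors through the intermediate object $(\lim_{\cI'} X) \times_{M_m X} (Y_m \times_{M_m Y} M_m X)$, the first leg being the base change of the special map $X_m \lrar Y_m \times_{M_m Y} M_m X$ along $\lim_{\cI'} X \lrar M_m X$, and the second leg being, after a routine reidentification of the fibre products, the base change of $\beta$; both legs therefore lie in $\mcal{M}$, and so does their composite.

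The main obstacle I anticipate is precisely the one motivating the strengthened induction: controlling the behaviour of $\mcal{M}$ under the formation of matching objects, which is not governed by any limit-closure property of $\mcal{M}$. All the needed control is obtained by expressing each limit map as an iterated fibre product of $\mcal{M}$-maps along the maximal-vertex decomposition, so that only closure under base change and under composition is invoked, both of which are available since $\mcal{M}$ is a subcategory closed under base change and containing the isomorphisms. The remaining work is the routine verification that the fibre-product reidentifications and the two factorizations into $\mcal{M}$-maps are correct, together with the naturality check that the two maps $\lim_{\cI'} X \lrar M_m Y$ (through $M_m X$ and through $\lim_{\cI'} Y$) agree.
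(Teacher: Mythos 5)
Your proof is correct and takes essentially the same route as the proof the paper relies on: the statement is only recalled here from \cite[Proposition 2.19]{BS1}, where one likewise factors $F_t$ as $X_t \lrar Y_t \times_{\lim_{s<t} Y_s} \lim_{s<t} X_s \lrar Y_t$ and reduces, by induction over the (finite, down-closed) subposets of a cofinite poset using the maximal-element pullback decomposition, to the companion claim that the induced maps on finite limits lie in $\mcal{M}$ (the absolute version of which is \cite[Proposition 2.17]{BS1}, quoted elsewhere in this paper). Your packaging of the levelwise claim and the limit claim into a single strengthened induction is a purely cosmetic difference.
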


The following constructions of weak fibration structures on functors categories will be useful.

\begin{lem}\label{l:projective-injective}
Let $(\cC,\cW,\Fib)$ be a weak fibration category and $\cT$ be a cofinite poset.
\begin{enumerate}
\item
There exists a weak fibration structure on $\cC^{\cT}$ in which the weak equivalences are the levelwise weak equivalences and the fibrations are the levelwise fibrations (see Definition~\ref{def natural}). We refer to this structure as the \textbf{projective} weak fibration structure on $\cC^{\cT}$.
\item
There exists a weak fibration structure on $\cC^{\cT}$ in which the weak equivalences are the levelwise weak equivalences and the fibrations are the special $\Fib$-maps (see Definition~\ref{def natural}). We refer to this structure as the \textbf{injective} weak fibration structure on $\cC^{\cT}$.
\end{enumerate}
\end{lem}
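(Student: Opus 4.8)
The plan is to verify, for each of the two proposed structures, the four axioms of Definition~\ref{d:weak_fib}, using throughout that limits (hence pullbacks) in $\cC^{\cT}$ are computed levelwise. Axioms (1) and (2) are immediate and shared: since $\cC$ has finite limits and these are computed objectwise, so does $\cC^{\cT}$, and the levelwise weak equivalences inherit the 2-out-of-3 property directly from $\cW$. Axiom (3) in the \emph{projective} case is also immediate, for a pullback square in $\cC^{\cT}$ restricts at each $i \in \cT$ to a pullback square in $\cC$, and $\Fib$ (resp.\ $\Fib \cap \cW$) is closed under base change in $\cC$.

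The \emph{injective} case of axiom (3) is more delicate. Given a special $\Fib$-map $F : X \lrar Y$ and an arbitrary map $Y' \lrar Y$, form the levelwise pullback $X' = X \times_Y Y'$. The key computation is that, since finite limits commute with one another and pullbacks are levelwise, one has $Y'_i \times_{\lim_{j<i}Y'_j}\lim_{j<i}X'_j \cong Y'_i \times_{\lim_{j<i}Y_j}\lim_{j<i}X_j$, and under this identification the relative matching map of $X'$ at $i$ (see Definition~\ref{def natural}) is exactly the base change of the relative matching map $X_i \lrar Y_i \times_{\lim_{j<i}Y_j}\lim_{j<i}X_j$ of $F$ along the map $Y'_i \times_{\lim_{j<i}Y_j}\lim_{j<i}X_j \lrar Y_i \times_{\lim_{j<i}Y_j}\lim_{j<i}X_j$ induced by $Y'_i \lrar Y_i$. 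As the latter lies in $\Fib$ and $\Fib$ is closed under base change, $X' \lrar Y'$ is again a special $\Fib$-map. To handle trivial fibrations one first establishes that a special $\Fib$-map is a levelwise weak equivalence if and only if it is a special $(\Fib\cap\cW)$-map: the ``if'' direction uses Proposition~\ref{p:forF_sp_is_lw_strong} applied to $\Fib\cap\cW$, and the ``only if'' direction proceeds by induction along the cofinite poset, showing that $\lim_{j<i}X_j \lrar \lim_{j<i}Y_j$ is a trivial fibration (the limit of a special $(\Fib\cap\cW)$-map over the finite poset $\cT_{<i}$) and concluding by 2-out-of-3. With this characterization the same base-change computation, now with $\Fib\cap\cW$ in place of $\Fib$, shows that injective trivial fibrations are closed under base change; this step is essential precisely because levelwise weak equivalences are \emph{not} themselves stable under base change.

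For axiom (4) I would construct, for any $F : X \lrar Y$, a factorization $X \x{s}{\lrar} Z \x{p}{\lrar} Y$ with $s$ a levelwise weak equivalence and $p$ a special $\Fib$-map, by transfinite induction along the cofinite poset $\cT$. At stage $i$, with $Z|_{\cT_{<i}}$ and the relevant maps already built, one factors in $\cC$ the canonical map $X_i \lrar Y_i \times_{\lim_{j<i}Y_j}\lim_{j<i}Z_j$ as a weak equivalence followed by a fibration, thereby defining $Z_i$ together with its structure maps to the $Z_j$ for $j<i$ via the projection to $\lim_{j<i}Z_j$. Cofiniteness ensures the matching objects involve only finitely many, already-defined values, and the universal property of the limit assembles the $Z_i$ into a genuine functor. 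By construction $p$ is a special $\Fib$-map and $s$ is levelwise in $\cW$, which is exactly the factorization required for the injective structure. For the projective structure, Proposition~\ref{p:forF_sp_is_lw_strong} shows that any special $\Fib$-map is in particular a levelwise $\Fib$-map, so the identical factorization also splits $F$ as a levelwise weak equivalence followed by a levelwise fibration.

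I expect the main obstacle to be entirely on the injective side: both the inductive factorization, where one must confirm that the stagewise choices glue into an honest functor and that the composite recovers $F$, and the closure of injective trivial fibrations under base change, which cannot be done levelwise and instead forces the passage through the characterization of such maps as special $(\Fib\cap\cW)$-maps together with the matching-object analysis. Once the special factorization is available, the projective structure follows almost formally.
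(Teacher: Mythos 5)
You have a genuine gap: Definition~\ref{d:weak_fib} requires $\Fib$ to be a \emph{subcategory} of $\cC^{\cT}$, so besides base change and factorization you must verify that the special $\Fib$-maps are closed under \emph{composition}, and your proposal never addresses this. It is not automatic: if $f:\{X_t\}\lrar\{Y_t\}$ and $g:\{Y_t\}\lrar\{Z_t\}$ are special $\Fib$-maps, the relative matching map of $g\circ f$ at $t$ is the composite $X_t\lrar Y_t\times_{\lim_{s<t}Y_s}\lim_{s<t}X_s\lrar Z_t\times_{\lim_{s<t}Z_s}\lim_{s<t}X_s$, where the first map is in $\Fib$ by specialness of $f$, but the second map requires a separate argument: one must exhibit it as a base change of the matching map $Y_t\lrar Z_t\times_{\lim_{s<t}Z_s}\lim_{s<t}Y_s$ of $g$ along a suitable pullback square. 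The paper devotes a dedicated paragraph to exactly this point, and without it the injective structure is simply not established (the same remark applies, trivially, to closure of special $(\Fib\cap\cW)$-maps under composition if one follows your route for trivial fibrations). Everything else in your outline is sound and matches the paper closely: your matching-map computation for base change is the paper's pasting-lemma argument in different notation, and your inductive factorization is precisely the construction of~\cite[Definition 4.3]{BS15}, which the paper invokes by citation (together with Proposition~\ref{p:forF_sp_is_lw_strong} to get the projective factorization for free, as you do).

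One further correction of emphasis: your claim that the base-change stability of injective trivial fibrations ``forces'' the characterization as special $(\Fib\cap\cW)$-maps is mistaken. By Proposition~\ref{p:forF_sp_is_lw_strong}, a special $\Fib$-map which is a levelwise weak equivalence is automatically a levelwise \emph{trivial fibration}, and levelwise trivial fibrations \emph{are} stable under (levelwise) base change; so a pullback of an injective trivial fibration is a special $\Fib$-map (by your base-change argument) which is a levelwise trivial fibration, hence again trivial. This is exactly the paper's argument, and it is shorter. Your detour through the ``special $\Fib$ and levelwise equivalence iff special $(\Fib\cap\cW)$'' characterization is nevertheless correct as stated, granted the standard lemma that the limit over the finite poset $\cT_{<i}$ of a special $(\Fib\cap\cW)$-map lies in $\Fib\cap\cW$ (which itself uses closure of $\Fib\cap\cW$ under composition and base change in $\cC$); it just buys nothing here.
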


\begin{proof}
We first note that the category $\cC^{\cT}$ has finite limits, and these may be computed levelwise. Furthermore, it is clear that levelwise weak equivalences satisfy the 2-out-of-3 property. Now given a morphism in $\cC^\cT$ we can factor it into a levelwise weak equivalence followed by a special $\Fib$-map by employing the construction described in~\cite[Definition 4.3]{BS15}. By Proposition~\ref{p:forF_sp_is_lw_strong} the latter is also a levelwise fibration. This establishes the factorization axiom for both the projective and injective weak fibration structures. Now levelwise fibrations and levelwise trivial fibrations are clearly closed under composition and base change. It follows from Proposition~\ref{p:forF_sp_is_lw_strong} that a special $\Fib$-map is trivial in the injective structure if and only if it is a levelwise trivial fibration. To finish the proof it hence suffices to show that the special $\Fib$-maps are closed under composition and base change.

We begin with base change. Let $f: \{X_t\} \lrar \{Y_t\}$ be a special $\Fib$-map and let $g: \{Z_t\} \lrar \{Y_t\}$ be any map in $\cC^{\cT}$. Let $t \in \cT$ be an element. Consider the diagram
\[ \xymatrix{
Z_t \times_{Y_t} X_t \ar[r]\ar[d] & X_t \ar[d] \\
Z_t \times_{\displaystyle\mathop{\lim}_{s < t} Z_s}\displaystyle\mathop{\lim}_{s < t} \left(Z_s \times_{Y_s} X_s\right) \ar[r]\ar[d] & Y_t \times_{\displaystyle\mathop{\lim}_{s < t} Y_s} \displaystyle\mathop{\lim}_{s < t} X_s  \ar[d] \ar[r] & \displaystyle\mathop{\lim}_{s < t} X_s \ar[d] \\
Z_t \ar[r] & Y_t \ar[r] & \displaystyle\mathop{\lim}_{s < t} Y_s \\
}\]
Since limits commute with limits it follows that the large bottom horizontal rectangle is Cartesian. Since the right bottom inner square is Cartesian we get by the pasting lemma for Cartesian squares that the bottom left inner square is Cartesian. Since the vertical left rectangle is Cartesian we get from the pasting lemma that the top left square is Cartesian. The desired result now follows from the fact that $\Fib$ is closed under base change.

We now turn to composition. Let $f: \{X_t\} \lrar \{Y_t\}$ and $g: \{Y_t\} \lrar \{Z_t\}$ be special $\Fib$-maps in $\cC^{\cT}$, an let $t\in \cT$. We need to show that
\[X_t\lrar Z_t\times_{\lim \limits_{s<t} Z_s}\lim_{s<t}X_s\]
belongs to $\Fib$. But this map is the composition of two maps
\[X_t\lrar Y_t\times_{\lim \limits_{s<t}Y_s}\lim_{s<t}X_s\lrar Z_t\times_{\lim \limits_{s<t}Z_s}\lim_{s<t}X_s.\]
The first map belongs to $\Fib$ because $f: \{X_t\} \lrar \{Y_t\}$ is a special $\Fib$-map, and the second map belongs to $\Fib$ because we have a pullback square
\[\xymatrix{Y_t\times_{\displaystyle\mathop{\lim}_{s<t}Y_s} \displaystyle\mathop{\lim}_{s<t}X_s\ar[r]\ar[d] & Y_t\ar[d]\\
Z_t\times_{\displaystyle\mathop{\lim}_{s<t}Z_s}\displaystyle\mathop{\lim}_{s<t}X_s \ar[r] & Z_t\times_{\displaystyle\mathop{\lim}_{s<t}Z_s}\displaystyle\mathop{\lim}_{s<t}Y_s,}\]
and the right vertical map belongs to $\Fib$ because $g: \{Y_t\} \lrar \{Z_t\}$ is a special $\Fib$-map.
Thus the result follows from that fact that $\Fib$ is closed under composition and base change.
\end{proof}

Any weak fibration category $(\cC,\Fib,\cW)$ has an underlying structure of a relative category given by $(\cC,\cW)$, and hence an associated $\infty$-category $\cC_\infty$ (see \S\ref{ss:infinity}). However, unlike the situation with categories of fibrant objects, weak right Quillen functors $f: \cC \lrar \cD$ do not, in general, preserve weak equivalences. To overcome this technicality, one may consider the full subcategory $\iota: \cC^{\fib} \hrar \cC$ spanned by the fibrant objects. Endowed with the fibrations and weak equivalences inherited from $\cC$, the category $\cC^{\fib}$ has the structure of a category of fibrant objects (see Definition~\ref{d:fib-obj}). By Remark~\ref{r:fac-lemma} weak right Quillen functors preserve weak equivalences between fibrant objects, and so the restriction gives a relative functor $f^{\fib}:\cC^{\fib}\lrar\cD$. Thus, any weak right Quillen functor induces a diagram of $\infty$-categories of the form
\[ \xymatrix{
& \left(\cC^{\fib}\right)_\infty \ar^{f^{\fib}_\infty}[dr] \ar_{\iota_\infty}[dl] & \\
\cC_\infty && \cD_\infty. \\
}\]
We will prove in Proposition~\ref{p:equivalence C_f C} below that the map $\iota_\infty: \cC^{\fib}_\infty \lrar \cC_\infty$ is an equivalence of $\infty$-categories. This implies that one can complete the diagram above into a triangle
\[ \xymatrix{
& \left(\cC^{\fib}\right)_\infty \ar^{f^{\fib}_\infty}[dr] \ar_{\iota_\infty}[dl] & \\
\cC_\infty \ar[rr]^{f_\infty} && \cD_\infty \\
}\]
of $\infty$-categories, together with a commutation homotopy $u:f_\infty \circ \iota \simeq f^{\fib}_\infty$. Furthermore, the pair $(f_\infty, u)$ is unique up to a contractible space of choices. We call such a $(f_\infty, u)$ a \textbf{right derived functor} for $f$. In the following sections we simply write $f_\infty: \cC_\infty \lrar \cD_\infty$, without referring explicitly to $u$, and suppressing the choice that was made.

The rest of this subsection is devoted to proving that $\iota_\infty:\cC^{\fib}_{\infty} \lrar \cC_\infty$ is an equivalence of $\infty$-categories. The proof we shall give is due to Cisinski and was described to the authors in personal communication.

Given a weak fibration category $(\cC,\cW,\Fib)$ we denote by $\cW^{\fib} \subseteq \cW$ the full subcategory of $\cW$ spanned by the fibrant objects.

\begin{lem}\label{l:cofinal-fib}
Let $(\cC,\cW,\Fib)$ be a weak fibration category. Then the functor
\[ \cW^{\fib} \lrar \cW \]
is cofinal.
\end{lem}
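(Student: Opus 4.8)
The plan is to unwind Definition~\ref{d:cofinal}: since $\cW$ contains all objects of $\cC$, proving cofinality of $\cW^{\fib}\lrar\cW$ amounts to showing that for every $X\in\cC$ the coslice category $(\cW^{\fib})_{X/}$ is weakly contractible. Its objects are the weak equivalences $f\colon X\lrar Y$ with $Y$ fibrant, and its morphisms are weak equivalences of the targets compatible with the maps from $X$. Non-emptiness is immediate from the factorization axiom (Definition~\ref{d:weak_fib}(4)), which factors $X\lrar\ast$ as $X\x{f}{\lrar}Y\lrar\ast$ with $f\in\cW$ and $Y$ fibrant. Moreover any two objects admit a common refinement mapping to both: given $(Y_1,f_1),(Y_2,f_2)$, the product $Y_1\times Y_2$ is fibrant (its projections are base changes of the fibrations $Y_i\lrar\ast$, and fibrations compose), so factoring the induced map as $X\x{j}{\lrar}Y\x{q}{\lrar}Y_1\times Y_2$ with $j\in\cW$ and setting $g_i=\mathrm{pr}_i\circ q$ gives $g_i\circ j=f_i$, whence $g_i\in\cW$ by two-out-of-three and $(Y,j)$ maps to each $(Y_i,f_i)$. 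The same product-and-factor recipe applied to the honest limit $\lim_{\cT}Y_t$ of a finite diagram produces legs $g_t$ compatible with the transition maps --- \emph{provided} the cone point may be chosen fibrant.

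That proviso is the main obstacle, and it is precisely where the non-functoriality of the factorizations enters. To obtain a strict cone one must map $X$ into the genuine limit $\lim_{\cT}Y_t$, but this limit need not be fibrant: already the equalizer of two parallel weak equivalences $u,v\colon A\rightrightarrows B$ is a pullback along the diagonal $B\lrar B\times B$, which is not a fibration, while the \emph{fibrant} homotopy equalizer only renders $u,v$ homotopic, not equal. Hence $(\cW^{\fib})_{X/}$ is not cofiltered in the strict sense (parallel morphisms cannot in general be equalized after precomposition with a weak equivalence), there is no strict initial object and no strict contracting homotopy, and the category is only \emph{homotopy} cofiltered. Establishing weak contractibility therefore cannot be bootstrapped from the strict categorical structure and is the genuinely homotopical heart of the argument.

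To finish I would trade strict cones for homotopy-coherent ones. Using the injective weak fibration structure on $\cC^{\cT}$ of Lemma~\ref{l:projective-injective}, whose fibrant diagrams are the special $\Fib$-diagrams and whose limits compute homotopy limits, one can replace a finite diagram in $(\cW^{\fib})_{X/}$ by one whose limit is fibrant and then run the product-and-factor construction against it; together with Cisinski's computation of the derived mapping spaces (Proposition~\ref{p:cis-map}) this should show that every finite diagram extends over a cone, i.e. that the category is homotopy cofiltered and hence weakly contractible. A parallel route, which may be cleaner, is to compare $(\cW^{\fib})_{X/}$ with the coslice $\cW_{X/}$: the latter has the initial object $(X,\mathrm{id}_X)$ and is thus contractible, so it suffices to prove the inclusion $(\cW^{\fib})_{X/}\hrar\cW_{X/}$ coinitial --- equivalently, that for each weak equivalence $X\lrar Y$ the category of factorizations $X\lrar Z\lrar Y$ through a fibrant $Z$ is weakly contractible. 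By Proposition~\ref{p:cis-map} this reduces to the contractibility of a derived mapping space. I expect this verification of homotopy-cofilteredness --- equivalently the contractibility of these factorization categories --- to be the hard step, for exactly the reason identified above.
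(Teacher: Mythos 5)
Your diagnosis of the obstruction is accurate, and your first route is essentially the proof the paper gives: for a functor $f\colon \cT \lrar \cW^{\fib}_{X/}$ with $\cT$ a finite poset, one factors $f \lrar \ast$ in the injective weak fibration structure on $\cC^{\cT}$ (Lemma~\ref{l:projective-injective}) as $f \xrightarrow{\Lw(\cW)} f' \xrightarrow{\Sp(\Fib)} \ast$; then $f'$ is levelwise fibrant (Proposition~\ref{p:forF_sp_is_lw_strong}), the honest limit $\lim_{\cT} f'$ is fibrant by~\cite[Proposition 2.17]{BS1}, and factoring $X \lrar \lim_{\cT} f'$ as a weak equivalence $X \lrar Y$ followed by a fibration makes $Y$ fibrant, with $Y \lrar f'$ a levelwise weak equivalence by 2-out-of-3. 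But note carefully what this yields: a zig-zag $f \Rightarrow f' \Leftarrow Y$ of natural transformations with $Y$ constant, \emph{not} a cone over $f$ itself. Your phrase ``every finite diagram extends over a cone'' asks for more than the construction provides --- and more than is true, for exactly the equalizer reason you identified; only the zig-zag survives.

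The genuine gap is your final inference ``homotopy cofiltered and hence weakly contractible'', which you assert with no supporting statement. Since strict cone extensions fail, the nerve of $\cW^{\fib}_{X/}$ is not a cofiltered $\infty$-category, so you cannot appeal to the contractibility of (co)filtered $\infty$-categories; what is needed is precisely Cisinski's Lemme d'asph\'ericit\'e (\cite[p.~509]{Cis10}), which says that a category is weakly contractible provided every functor from a finite poset is connected to a \emph{constant} functor by a zig-zag of natural transformations. That criterion is the missing idea; once it is cited, your first route closes, and Proposition~\ref{p:cis-map} plays no role at all in the argument. Your fallback route is in worse shape: establishing coinitiality of $\cW^{\fib}_{X/} \hrar \cW_{X/}$ requires the weak contractibility of the factorization categories $X \lrar Z \lrar Y$, which is a problem of the same kind as the original; these categories are not of the form $\uline{\Hom}_{\cC}(-,-)$, and, more seriously, using Proposition~\ref{p:cis-map} to compute derived mapping spaces in the weak fibration category $\cC$ (whose objects are not all fibrant) presupposes Proposition~\ref{p:equivalence C_f C}, which the paper proves \emph{from} the present lemma --- so that route is circular as stated.
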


\begin{proof}
Let $X \in \cW$ be an object. We need to show that the category of fibrant replacements $\cW^{\fib}_{X/}$ is contractible. By~\cite[Lemme d'asph\'ericit\'e p. 509]{Cis10}, it suffices to prove that for any finite poset $\cT$, any simplicial map
\[\Ne(\cT) \lrar \Ne(\cW^{\fib}_{X/})\]
is connected to a constant map by a zig-zag of simplicial homotopies. Since the nerve functor $\Ne:\ovl{\Cat} \lrar \ovl{\cS}$ is fully faithful, it suffices to prove that any functor
\[f:\cT \lrar \cW^{\fib}_{X/}\]
is connected to a constant functor by a zig-zag of natural transformations. Such a functor is the same data as a functor $f:\cT \lrar \cW$ which is objectwise fibrant together with a natural transformation $X \lrar f$ in $\cW^{\cT}$ (where $X$ denotes the constant functor with value $X$).

Consider the \textbf{injective} weak fibration structure on $\cC^{\cT}$ (see Lemma~\ref{l:projective-injective}). Using the factorization property we may factor the morphism $f \lrar *$ as
\[f\xrightarrow{\Lw(\cW)} f'\xrightarrow{\Sp(\cF ib)}*.\]
By Proposition~\ref{p:forF_sp_is_lw_strong} $f': \cT \lrar \cC$ is also levelwise fibrant and by~\cite[Proposition 2.17]{BS1} the limit $\lim_{\cT} f' \in \cC$ is fibrant. We can now factor the map $X \lrar \lim_{\cT} f'$ as a weak equivalence $X \lrar Y$ followed by a fibration $Y \lrar \lim_{\cT}f'$. Then $Y$ is fibrant, and by the 2-out-of-3 property in $\cC^{\cT}$ the map $Y \lrar f'$ is a levelwise weak equivalence. Thus, $Y$ determines a constant functor $\cT \to \cW^{\fib}_{X/}$ which is connected to $f$ by a zig-zag of natural transformations
$f\Rightarrow f' \Leftarrow Y.$
\end{proof}

\begin{prop}[Cisinski]\label{p:equivalence C_f C}
Let $\cC$ be a weak fibration category. Then the inclusion $\cC^{\fib}\to\cC$ induces an equivalence
\[ \left(\cC^{\fib}\right)_\infty \lrar \cC_\infty .\]
\end{prop}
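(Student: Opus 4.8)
The plan is to compute both $\infty$-localizations using the Rezk nerve (classification diagram) model of construction (3) in \S\ref{ss:infinity}, and to show that the relative inclusion $\iota\colon(\cC^{\fib},\cW^{\fib})\hookrightarrow(\cC,\cW)$ (which is a relative functor, since $\cC^{\fib}$ is a full subcategory with inherited weak equivalences) induces a \emph{levelwise} weak equivalence of the associated simplicial spaces $\Ne_{\rezk}(\cC^{\fib},\cW^{\fib})\lrar\Ne_{\rezk}(\cC,\cW)$. Since the complete Segal space model structure is a left Bousfield localization of the Reedy model structure, a levelwise weak equivalence is in particular a complete Segal space equivalence, and hence, after fibrant replacement, induces an equivalence of the underlying $\infty$-categories $(\cC^{\fib})_\infty\lrar\cC_\infty$. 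Note that this single comparison simultaneously encodes essential surjectivity and full faithfulness, so no separate fibrant-replacement argument on objects is needed. It therefore suffices to treat each simplicial degree separately.

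Recall that the $n$-th space of $\Ne_{\rezk}(\cC,\cW)$ is the nerve of the category $\Fun([n],\cC)_w$ whose objects are $n$-chains $X_0\lrar\cdots\lrar X_n$ in $\cC$ and whose morphisms are the natural transformations which are levelwise weak equivalences; similarly for $\cC^{\fib}$. The first step is to recognize these categories via weak fibration structures. Since $[n]$ is a finite, hence cofinite, poset, Lemma~\ref{l:projective-injective} equips $\cC^{[n]}=\Fun([n],\cC)$ with the projective weak fibration structure, whose weak equivalences are exactly the levelwise weak equivalences. The fibrant objects of this structure are precisely the levelwise fibrant diagrams, i.e. the $n$-chains lying in $\cC^{\fib}$: indeed the terminal object of $\cC^{[n]}$ is the constant diagram at $\ast$, and $X\lrar\ast$ is a projective fibration iff each $X_i\lrar\ast$ is a fibration in $\cC$. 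Thus, in the notation of Lemma~\ref{l:cofinal-fib} applied to the weak fibration category $\cC^{[n]}$, the subcategory $\cW$ of weak equivalences is $\Fun([n],\cC)_w$ and its fibrant part $\cW^{\fib}$ is exactly $\Fun([n],\cC^{\fib})_w$.

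The second step is the key input. Applying Lemma~\ref{l:cofinal-fib} to the weak fibration category $\cC^{[n]}$ shows that the inclusion $\Fun([n],\cC^{\fib})_w\lrar\Fun([n],\cC)_w$ is cofinal. By Quillen's Theorem A (the corollary following Theorem~\ref{t:cofinal-2}) it therefore induces a weak equivalence on nerves, that is, $\Ne_{\rezk}(\cC^{\fib},\cW^{\fib})_n\lrar\Ne_{\rezk}(\cC,\cW)_n$ is a weak equivalence of simplicial sets for every $n\geq 0$. The case $n=0$ recovers the bare statement that $\Ne(\cW^{\fib})\lrar\Ne(\cW)$ is a weak equivalence, and the higher degrees upgrade this to the levelwise equivalence required in the first paragraph, completing the argument.

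The conceptual heart of the proof, and the step I expect to require the most care, is the bootstrapping of Lemma~\ref{l:cofinal-fib} from $\cC$ to the diagram categories $\cC^{[n]}$: one must check that the projective structure of Lemma~\ref{l:projective-injective} has exactly the levelwise weak equivalences as its weak equivalences and exactly the levelwise fibrant diagrams as its fibrant objects, so that Lemma~\ref{l:cofinal-fib} specializes \emph{precisely} to the cofinality statement about $n$-chains that feeds the degreewise comparison. Everything else—the passage from a levelwise equivalence of Rezk nerves to an equivalence of $\infty$-localizations via the complete Segal space model structure—is formal.
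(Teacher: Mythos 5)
Your proposal is correct and follows essentially the same route as the paper's own proof: both compute the localizations via the Rezk nerve, endow each $\cC^{[n]}$ with the projective weak fibration structure of Lemma~\ref{l:projective-injective}, apply Lemma~\ref{l:cofinal-fib} together with Quillen's Theorem~A to get a levelwise equivalence, and conclude via the complete Segal space model structure. Your explicit verification that the projectively fibrant objects are precisely the levelwise fibrant diagrams is exactly the (implicit) bookkeeping the paper's proof relies on.
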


\begin{proof}
According to Section \ref{ss:infinity} it suffices to show that the induced map on Rezk's nerve
\[\Ne_{\rezk}(\cC^{\fib})\lrar \Ne_{\rezk}(\cC)\]
is an equivalence in the model structure of complete Segal spaces. For each $[n]$, the category $\cC^{[n]}$ can be endowed with the \textbf{projective} weak fibration structure (see Lemma~\ref{l:projective-injective}). Applying Lemma~\ref{l:cofinal-fib} to $\cC^{[n]}$ and using Quillen's theorem A, we get that the map
\[\Ne_{\rezk}(\cC^{\fib})\lrar \Ne_{\rezk}(\cC)\]
is a levelwise equivalence and hence an equivalence in the complete Segal space model structure.
\end{proof}

We finish this subsection by stating a few important corollaries of Proposition~\ref{p:equivalence C_f C}.
\begin{cor}\label{c:map}
Let $\cC$ be a weak fibration category and let $X,Y \in \cC$ be two \textbf{fibrant} objects. Then the natural map
\[ \Ne\uline{\Hom}_{\cC}(X,Y) \x{\simeq}{\lrar} \Map^h_{\cC}(X,Y) \]
is a weak equivalence.
\end{cor}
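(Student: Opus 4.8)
The plan is to reduce the statement to Proposition~\ref{p:cis-map}, which is exactly the assertion of the corollary but for a \emph{category} of fibrant objects rather than a weak fibration category, by passing to the full subcategory $\cC^{\fib} \subseteq \cC$ of fibrant objects. The first step is to observe that, because $X$ and $Y$ are assumed fibrant, the category $\uline{\Hom}_{\cC}(X,Y)$ of Definition~\ref{d:hom} is already computed inside $\cC^{\fib}$. Indeed, every object of $\uline{\Hom}_{\cC}(X,Y)$ comes with a trivial fibration $p: Z \lrar X$; since $X$ is fibrant, the composite $Z \lrar X \lrar \ast$ is a fibration and hence $Z$ is fibrant. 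Using the description of $\uline{\Hom}_{\cC}(X,\ast)$ as the full subcategory of $\cC_{/X}$ on trivial fibrations (Remark~\ref{r:map}) together with the identification of $\uline{\Hom}_{\cC}(X,Y)$ with the Grothendieck construction of $Z \mapsto \Hom_{\cC}(Z,Y)$, I would conclude that there is a canonical identification
\[ \uline{\Hom}_{\cC}(X,Y) = \uline{\Hom}_{\cC^{\fib}}(X,Y), \]
since all objects involved are fibrant and maps between fibrant objects are the same in $\cC$ and in the full subcategory $\cC^{\fib}$.

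Next, since $\cC^{\fib}$ is a category of fibrant objects, Proposition~\ref{p:cis-map} applies directly and gives that the natural map
\[ \Ne\uline{\Hom}_{\cC^{\fib}}(X,Y) \x{\simeq}{\lrar} \Map^h_{\cC^{\fib}}(X,Y) \]
is a weak equivalence. To transport the target back to $\cC$, I would invoke Proposition~\ref{p:equivalence C_f C}: the inclusion $\iota: \cC^{\fib} \hrar \cC$ induces an equivalence of $\infty$-categories $\iota_\infty: (\cC^{\fib})_\infty \x{\simeq}{\lrar} \cC_\infty$, and since $X, Y$ are fibrant the induced map on mapping spaces
\[ \Map^h_{\cC^{\fib}}(X,Y) = \Map_{(\cC^{\fib})_\infty}(X,Y) \x{\simeq}{\lrar} \Map_{\cC_\infty}(X,Y) = \Map^h_{\cC}(X,Y) \]
is a weak equivalence. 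Composing the last two displayed maps with the identification of the first step yields a weak equivalence $\Ne\uline{\Hom}_{\cC}(X,Y) \x{\simeq}{\lrar} \Map^h_{\cC}(X,Y)$.

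The hard part will be to verify that this composite is precisely the natural map~\ref{e:map} appearing in the statement, rather than merely some abstract weak equivalence. For this I would argue that~\ref{e:map} is functorial: it is induced by the comparison of $\Ne\uline{\Hom}$ with the hammock localization, and the inclusion $\iota$ produces a commutative square relating the instance of~\ref{e:map} for $\cC^{\fib}$ to the one for $\cC$, in which the left vertical arrow is the identity supplied by the first step and the right vertical arrow is the map $\Map_{L^H(\cC^{\fib},\cW^{\fib})}(X,Y) \lrar \Map_{L^H(\cC,\cW)}(X,Y)$ representing $\iota_\infty$. The naturality recorded in Remark~\ref{r:natural}, together with the commutativity of this square, then identifies the composite above with~\ref{e:map} and completes the argument. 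I expect this compatibility check to be the only genuine obstacle, the remaining points being formal consequences of the two cited propositions.
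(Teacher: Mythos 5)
Your proposal is correct and follows essentially the same route as the paper, whose entire proof reads ``Combine Proposition~\ref{p:equivalence C_f C} and Proposition~\ref{p:cis-map}.'' The details you supply --- that $\uline{\Hom}_{\cC}(X,Y)$ is already computed in $\cC^{\fib}$ (since the composite $Z \lrar X \lrar \ast$ of fibrations is a fibration), and that the comparison maps are compatible via the functoriality of the hammock localization under the relative inclusion $\cC^{\fib} \hrar \cC$ --- are exactly the routine verifications the paper leaves implicit.
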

\begin{proof}
Combine Proposition~\ref{p:equivalence C_f C} and Proposition~\ref{p:cis-map}.
\end{proof}

\begin{rem}
Corollary~\ref{c:map} can be considered as a generalization of~\cite[Proposition 6.2]{BS1}.
\end{rem}

\begin{cor}\label{p:WFC_PB}
Let $\cD$ be a weak fibration category. Then $\cD_{\infty}$ has finite limits.
\end{cor}
\begin{proof}
Combine Proposition~\ref{p:equivalence C_f C} and Proposition~\ref{p:brown_PB}.
\end{proof}

\begin{cor}\label{p:weak right preserves limits}
Let $f:\cC\lrar\cD$ be a weak right Quillen functor between weak fibration categories. Then the right derived functor $f_{\infty}:\cC_{\infty}\lrar \cD_{\infty}$ preserves finite limits.
\end{cor}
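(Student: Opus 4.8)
The plan is to reduce the statement to the case of categories of fibrant objects, where the analogous result is already available as Proposition~\ref{p:left exact preserves limits}, and then transport it back along the equivalences of Proposition~\ref{p:equivalence C_f C}. Recall that the right derived functor $f_\infty$ was defined by the homotopy-commutative triangle involving the equivalence $\iota_\infty: (\cC^{\fib})_\infty \lrar \cC_\infty$ and the restriction $f^{\fib}: \cC^{\fib} \lrar \cD$, together with a commutation homotopy $f_\infty \circ \iota_\infty \simeq f^{\fib}_\infty$. The key observation is that $f^{\fib}$ in fact lands in the fibrant objects $\cD^{\fib}$ and, so regarded, becomes a \emph{left exact} functor between categories of fibrant objects.

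First I would check that a weak right Quillen functor $f$ restricts to a left exact functor $\tilde{f}: \cC^{\fib} \lrar \cD^{\fib}$. Since $f$ preserves finite limits it preserves the terminal object; hence for fibrant $X$ the map $f(X) \lrar f(\ast) = \ast$ is the image of a fibration, and as $f$ preserves fibrations it is again a fibration, so $f(X)$ is fibrant. Thus $f$ does restrict to a functor $\tilde{f}: \cC^{\fib} \lrar \cD^{\fib}$, and the remaining conditions of Definition~\ref{d:left-exact} --- preservation of the terminal object, of fibrations and of trivial fibrations, and of pullbacks along fibrations --- all follow immediately from the fact that $f$ preserves finite limits, fibrations and trivial fibrations. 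By Proposition~\ref{p:left exact preserves limits}, the induced functor $\tilde{f}_\infty: (\cC^{\fib})_\infty \lrar (\cD^{\fib})_\infty$ preserves finite limits.

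It then remains to assemble the pieces. Writing $\iota_\cC: \cC^{\fib}\hookrightarrow\cC$ and $\iota_\cD: \cD^{\fib}\hookrightarrow\cD$, functoriality of $\infty$-localization gives $f^{\fib}_\infty \simeq (\iota_\cD)_\infty \circ \tilde{f}_\infty$, so the defining triangle for $f_\infty$ exhibits it, up to equivalence, as the composite
\[ \cC_\infty \x{(\iota_\cC)_\infty^{-1}}{\lrar} (\cC^{\fib})_\infty \x{\tilde{f}_\infty}{\lrar} (\cD^{\fib})_\infty \x{(\iota_\cD)_\infty}{\lrar} \cD_\infty, \]
where $(\iota_\cC)_\infty^{-1}$ is a homotopy inverse to the equivalence $(\iota_\cC)_\infty$. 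By Proposition~\ref{p:equivalence C_f C} both $(\iota_\cC)_\infty$ and $(\iota_\cD)_\infty$ are equivalences of $\infty$-categories, hence preserve and reflect finite limits, and $\tilde{f}_\infty$ preserves finite limits by the previous step. Therefore $f_\infty$, being a composite of finite-limit-preserving functors, preserves finite limits. The only point requiring genuine care --- and the one I expect to be the main obstacle --- is the bookkeeping that identifies the derived functor supplied by the triangle with this explicit composite: namely, verifying that $f^{\fib}_\infty$ factors through $(\cD^{\fib})_\infty$ compatibly with the chosen commutation homotopy. Once this identification is made precise, the limit-preservation is formal.
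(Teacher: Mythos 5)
Your proof is correct and is essentially identical to the paper's, whose entire proof reads ``Combine Proposition~\ref{p:equivalence C_f C} and Proposition~\ref{p:left exact preserves limits}''; what you have written is exactly the unwinding of that one-liner, including the routine check that a weak right Quillen functor restricts to a left exact functor $\cC^{\fib}\lrar\cD^{\fib}$. The bookkeeping point you flag is handled by the uniqueness up to contractible choice of the derived functor in the defining triangle, together with the on-the-nose factorization $f^{\fib}=\iota_{\cD}\circ\tilde{f}$ of relative functors.
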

\begin{proof}
Combine Proposition~\ref{p:equivalence C_f C} and Proposition~\ref{p:left exact preserves limits}.
\end{proof}

\subsection{Model categories}\label{ss:model}
In this subsection we recall some basic definitions and constructions from the theory of \textbf{model categories}. We then attempt to fill a gap in the literature by showing that if $\cM$ is a model category then $\cM_\infty$ admits small limits and colimits (Theorem~\ref{t:model-complete}), and that these may be computed using the standard model categorical toolkit (Proposition~\ref{p:injective-limit}). We begin by recalling the basic definitions.

\begin{define}\label{d:model_structure}
A \textbf{model category} is a quadruple $(\cC,\cW,\Fib,\Cof)$, consisting of a category $\cC$, and three subcategories $\cW,\cF ib,\Cof$ of $\cC$, called \textbf{weak equivalences}, \textbf{fibrations}, and \textbf{cofibrations}, satisfying the following properties:
\begin{enumerate}
\item The category $\cC$ has all small limits and colimits.
\item The subcategory $\cW$ satisfies the two-out-of-three property.
\item The subcategories $\cW,\Fib,\Cof$ are closed under retracts.
\item Trivial cofibrations have the left lifting property with respect to fibrations, and the cofibrations have the
left lifting property with respect to trivial fibrations.
\item Any morphism in $\cC$ can be factored (not necessarily functorially) into a cofibration followed by a trivial
fibration, and into a trivial cofibration followed by a fibration.
\end{enumerate}
\end{define}

\begin{rem}
Note that our definition of a model category is weaker than the one given in~\cite{Hir03}, because we do not require the factorizations to be functorial. Indeed, in one of our main examples, the projective model structure on $\Pro(\Sh_{\Del}(\cC))$ considered in Section~\ref{s:topos}, we do not have functorial factorizations.
\end{rem}

The notion of a morphism of model categories is given by a \textbf{Quillen adjunction}, that is, an adjunction $\cL:\cM \rightleftarrows \cN: \cR$ such that $\cL$ preserves cofibrations and trivial cofibrations and $\cR$ preserves fibrations and trivial fibrations. Note that any model category is in particular a weak fibration category with respect to $\Fib$ and $\cW$, and every right Quillen functor can also be considered as a weak right Quillen functor between the corresponding weak fibration categories.

We note that the method described in \S\ref{ss:wfc} of constructing derived functors can be employed for \textbf{model categories} as well. Given a Quillen adjunction $\cL:\cM \rightleftarrows \cN: \cR$, we may form a right derived functor $\cR_\infty: \cN_\infty \lrar \cM_\infty$ using the full subcategory $\cN^{\fib}$ spanned by the fibrant objects and a left derived functor $\cL_\infty:\cM_\infty \lrar \cN_\infty$ using the full subcategory $\cM^{\cof} \subseteq \cM$ spanned by cofibrant objects.

Let $\cM$ be a model category and $\cT$ a cofinite poset. One is often interested in endowing the functor category $\cM^{\cT}$ with a model structure. We first observe the following:
\begin{lem}\label{l:poset-reedy}
Let $\cT$ be a cofinite poset (see Definition~\ref{def CDS}). Then $\cT$ is a Reedy category with only descending morphisms.
\end{lem}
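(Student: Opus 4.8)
The plan is to recall the definition of a Reedy category and then exhibit the required structure on $\cT$ directly, using the cofiniteness hypothesis. Recall that a \textbf{Reedy category} is a category $\cR$ equipped with two wide subcategories $\cR_+$ and $\cR_-$ together with a degree function $d: \Ob(\cR) \lrar \NN$ (or into some ordinal) such that every non-identity morphism in $\cR_+$ strictly raises degree, every non-identity morphism in $\cR_-$ strictly lowers degree, and every morphism factors uniquely as a map in $\cR_-$ followed by a map in $\cR_+$. The assertion that $\cT$ has ``only descending morphisms'' should mean that we take $\cT_+$ to consist only of identities, so that $\cT_- = \cT$ and the unique Reedy factorization of any morphism is the trivial one; the entire content is then to produce an appropriate degree function.

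First I would define the degree function. Since $\cT$ is a cofinite poset, for each $t \in \cT$ the set $\cT_t = \{s \in \cT \mid s < t\}$ is finite. The natural candidate is to set $d(t)$ to be the cardinality of $\cT_t$, or more robustly the length of the longest strictly descending chain below $t$; both are finite by cofiniteness. I would check that this $d$ takes values in $\NN$ and is \emph{strictly monotone}: if $s < t$ then $\cT_s \subsetneq \cT_t$ (every element below $s$ is below $t$ by transitivity, and $s$ itself lies in $\cT_t \setminus \cT_s$), hence $d(s) < d(t)$. This is the key calculation and I expect it to be entirely routine.

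With the degree function in hand, I would set $\cT_+ := $ the subcategory of identities and $\cT_- := \cT$ (all morphisms). Every non-identity morphism $s \to t$ in the poset corresponds to a relation $s < t$; but for the Reedy axioms we need the \emph{descending} maps to lower degree, so I would orient matters by regarding a poset morphism $s \le t$ as a morphism $t \lrar s$ is not how posets are usually nerved --- rather, I would simply observe that since $\cT$ is a poset, between any two objects there is at most one morphism, so the unique factorization axiom is automatic: any morphism $f$ factors as $f \circ \id = \id \circ f$, and uniqueness of factorizations through an object of intermediate degree follows from the poset being thin together with strict monotonicity of $d$, which forbids any nontrivial factorization raising then lowering degree. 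Thus all non-identity morphisms lie in $\cT_-$ and strictly change degree in the descending direction, while $\cT_+$ contains only identities.

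The main (and only real) obstacle is bookkeeping the orientation convention: one must fix whether the generating morphisms of the poset are regarded as degree-lowering or degree-raising, and confirm this matches the claim ``only descending morphisms.'' Once the convention is pinned down --- non-identity morphisms lower degree, so they are the $\cT_-$ morphisms --- the three Reedy axioms (degree behaviour on $\cT_+$, degree behaviour on $\cT_-$, and unique factorization) all follow immediately: the first is vacuous since $\cT_+$ has only identities, the second is the strict monotonicity established above, and the third follows from thinness of the poset together with the degree constraint. I would therefore present the proof as: define $d$, verify strict monotonicity, declare $\cT_- = \cT$ and $\cT_+$ trivial, and check the three axioms in one or two lines each.
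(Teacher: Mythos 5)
Your proof is correct and takes essentially the same approach as the paper: the paper defines $\deg(t)$ as the maximal length of an ascending chain ending at $t$ (finite precisely by cofiniteness), checks that $\deg$ is strictly monotone, and concludes that $\cT$ is a Reedy category with all morphisms descending, exactly as you do (your alternative degree $d(t)=|\cT_t|$ is an equally valid strictly monotone choice). The orientation bookkeeping you flag is settled by Convention~\ref{cv:poset}, under which a poset has a morphism $u\lrar v$ whenever $u\geq v$, so non-identity morphisms indeed lower degree and lie in $\cT_-$, as in your setup.
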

\begin{proof}
For each $t \in \cT$ define the \textbf{degree} $\deg(t)$ of $t$ to be the maximal integer $k$ such that there exists an ascending chain in $\cT$ of the form
\[ t_0 < t_1 < ... < t_k = t \]
Since $\cT$ is cofinite the degree of each element is a well-defined integer $\geq 0$. The resulting map $\deg:\cT \lrar \NN \cup \{0\}$ is strongly monotone (i.e. $t < s \Rightarrow \deg(t) < \deg(s)$) and hence exhibits $\cT$ as a Reedy category in which all the morphisms are descending.
\end{proof}

\begin{rem}\label{r:special-is-reedy}
Let $\cM$ be a model category and $\cT$ a cofinite poset. Then a map $f: X \lrar Y$ in $\cM^{\cT}$ is a special $\Fib$-map (see Definition~\ref{def natural}) if and only if it is a Reedy fibration with respect to the Reedy structure of Lemma~\ref{l:poset-reedy}. Similarly, an object $X \in \cM^{\cT}$ is a special $\Fib$-diagram if and only if it is Reedy fibrant.
\end{rem}

Let $\cI$ be a small category. Recall that a model structure on $\cM^{\cI}$ is called \textbf{injective} if its weak equivalences and cofibrations are defined levelwise. If an injective model structure on $\cM^{\cI}$ exists then it is unique.
\begin{cor}\label{c:poset-reedy}
Let $\cM$ be a model category and $\cT$ a cofinite poset. Then the \textbf{injective model structure} on $\cM^{\cT}$ exists and coincides with the Reedy model structure associated to the Reedy structure of Lemma~\ref{l:poset-reedy}. Furthermore, the underlying weak fibration structure coincides with the injective weak fibration structure of Lemma~\ref{l:projective-injective}.
\end{cor}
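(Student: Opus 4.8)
The plan is to deduce the corollary from the standard existence of the Reedy model structure, combined with the identification of latching and matching objects forced by the hypothesis that $\cT$ has only descending morphisms. Since $\cT$ is a Reedy category by Lemma~\ref{l:poset-reedy} and $\cM$ is a model category, the functor category $\cM^{\cT}$ carries a Reedy model structure whose weak equivalences are the levelwise weak equivalences. The one point requiring care is that we have weakened the model category axioms by not demanding functorial factorizations; however, the Reedy factorizations are produced by an induction on the degree function $\deg:\cT \lrar \NN\cup\{0\}$ of Lemma~\ref{l:poset-reedy}, applying at each stage the (possibly non-functorial) factorizations of $\cM$ to the relative latching and matching maps. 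Because $\cT$ is cofinite this induction is well-founded, so the usual construction applies and yields the Reedy model structure.

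Next I would identify its cofibrations and weak equivalences. Since every non-identity morphism of $\cT$ is descending, the degree-raising subcategory consists only of identities, so the latching category at each object $t$ is empty; hence the latching object $L_t X$ is the initial object of $\cM$ for every $X$ and every $t$, and the relative latching map of a morphism $f: X \lrar Y$ at $t$ reduces to $X_t \lrar Y_t$. Thus a map is a Reedy cofibration if and only if it is a levelwise cofibration. As the Reedy weak equivalences are levelwise by definition, the Reedy model structure has both its cofibrations and its weak equivalences defined levelwise, which is exactly the defining property of an injective model structure. Since an injective model structure is unique whenever it exists, this simultaneously establishes that the injective model structure on $\cM^{\cT}$ exists and that it coincides with the Reedy model structure, giving the first two assertions. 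For the last assertion, the Reedy fibrations are by definition the maps whose relative matching maps are fibrations in $\cM$, and Remark~\ref{r:special-is-reedy} identifies these precisely with the special $\Fib$-maps; together with the levelwise weak equivalences, this shows that the underlying pair of weak equivalences and fibrations is exactly the injective weak fibration structure of Lemma~\ref{l:projective-injective}(2).

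The only non-formal point, and the step I expect to be the main obstacle, is the existence of the Reedy model structure in the absence of functorial factorizations, as everything else is a direct bookkeeping identification via the triviality of the latching objects and the content of Remark~\ref{r:special-is-reedy}. I expect this obstacle to be mild, since the cofiniteness of $\cT$ makes the inductive construction of factorizations terminate, but it is the one step that genuinely uses our weakened axioms rather than an off-the-shelf citation.
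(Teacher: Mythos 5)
Your proof is correct and follows essentially the same route as the paper: the paper's proof likewise deduces everything from the fact that the Reedy structure of Lemma~\ref{l:poset-reedy} has only descending morphisms (so latching categories are empty and Reedy cofibrations are levelwise), the standing existence of Reedy model structures, and Remark~\ref{r:special-is-reedy} for the weak fibration structure. Your extra care about non-functorial factorizations is a sound elaboration of the paper's terse appeal to ``the Reedy model structure always exists,'' since the degree induction indeed needs no functoriality.
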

\begin{proof}
This follows directly from the fact the Reedy structure on $\cT$ has only descending morphisms, and that the Reedy model structure always exists. The last property follows from Remark~\ref{r:special-is-reedy}.
\end{proof}

It seems to be well-known to experts that if $\cM$ is a model category then $\cM_\infty$ admits all small limits and colimits, and that these limits and colimits can be computed via the ordinary model theoretical techniques. For simplicial combinatorial model categories such results were established as part of the general theory due to Lurie which relates simplicial combinatorial model categories and presentable $\infty$-categories (see~\cite[Proposition A.3.7.6]{Lur09}). The theory can be extended to general combinatorial model categories using the work of Dugger~\cite{Dug01} (see~\cite[Propositions 1.3.4.22, 1.3.4.23 and  1.3.4.24]{Lur11a}). However, it seems that a complete proof that the underlying $\infty$-category of any model category admits limits and colimits has yet to appear in the literature. For applications to our main theorem, and for the general benefit of the theory, we will bridge this gap. We first show that the model categorical construction of limits always gives the correct limit in the underlying $\infty$-category.

\begin{prop}\label{p:injective-limit}
Let $\cM$ be a model category and let $\cT$ be a small category such that the injective model structure on $\cM^{\cT}$ exists. Let $\ovl{\cF}: \cT^{\triangleleft} \lrar \cM$ be a limit diagram such that $\cF = \ovl{\cF}_{|\cT}$ is injectively fibrant. Then the image of $\ovl{\cF}$ in $\cM_\infty$ is a limit diagram.
\end{prop}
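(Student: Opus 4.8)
The plan is to verify the $\infty$-categorical universal property of a limit cone directly on derived mapping spaces. Recall the standard characterization (see \cite[\S 4.2.4, \S 5.1.3]{Lur09}): the cone $\ovl{\cF}: \cT^{\triangleleft} \lrar \cM_\infty$ with apex $X := \lim_{\cM}\cF$ is a limit diagram if and only if, for every object $Z \in \cM$, the canonical map
\[ \Map^h_{\cM}(Z,X) \lrar \holim_{t \in \cT} \Map^h_{\cM}(Z,\cF(t)) \]
induced by the legs $X \lrar \cF(t)$ of the cone is a weak equivalence, the target being the homotopy limit in $\cS_\infty$. Since the localization map is the identity on objects, it suffices to produce, for each such $Z$, a natural identification of this map with an equivalence.

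First I would fix a cosimplicial resolution $Z^{\bullet} \to Z$ in $\cM$, that is, a Reedy cofibrant cosimplicial object weakly equivalent to the constant one; these exist for every object, since the Reedy model structure on $\cM^{\Delta}$ is available for any model category (not requiring functorial factorizations). For any fibrant $W$ the simplicial set $\Map_{\cM}(Z^{\bullet},W)$ then computes $\Map^h_{\cM}(Z,W)$ (\cite{Hir03}). Applying $\Map_{\cM}(Z^{\bullet},-)$ levelwise to $\cF$ yields a strict diagram $D:\cT \lrar \cS$, $D(t)=\Map_{\cM}(Z^{\bullet},\cF(t))$, and the crux is that the \emph{ordinary} limit $\lim_{\cT} D$ admits two readings. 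On the one hand, the levelwise adjunction between the constant-diagram functor $c:\cM \lrar \cM^{\cT}$ and $\lim:\cM^{\cT}\lrar\cM$ gives, in each simplicial degree $n$, $\lim_{\cT}\Hom_{\cM}(Z^n,\cF(t)) = \Hom_{\cM}(Z^n,\lim_{\cT}\cF) = \Hom_{\cM}(Z^n,X)$, so that $\lim_{\cT} D = \Map_{\cM}(Z^{\bullet},X)$, which computes $\Map^h_{\cM}(Z,X)$ once we know $X=\lim_{\cM}\cF$ is fibrant (true, as $\lim$ is right Quillen). On the other hand I claim $\lim_{\cT} D$ also computes $\holim_{\cT} D \simeq \holim_{t}\Map^h_{\cM}(Z,\cF(t))$.

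The content of this last claim is that $D$ is \textbf{injectively fibrant} in $\cS^{\cT}$, whence its ordinary limit already computes its homotopy limit (the functor $\lim:\cS^{\cT}_{\mathrm{inj}}\lrar\cS$ is right Quillen, with left adjoint $c$, and $\holim=\R\lim$). I would establish injective fibrancy of $D$ using two Quillen-functoriality inputs. First, since $\cF$ is injectively fibrant and each evaluation $\ev_t:\cM^{\cT}_{\mathrm{inj}}\lrar\cM$ is right Quillen (its left adjoint $L_t$, sending $W$ to the diagram $s \mapsto \coprod_{\Hom_{\cT}(t,s)}W$, clearly preserves the levelwise-defined injective cofibrations and trivial cofibrations), every $\cF(t)$ is fibrant; this is exactly what makes $D(t)\simeq\Map^h_{\cM}(Z,\cF(t))$. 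Second, because $Z^{\bullet}$ is a cosimplicial resolution, the pair $-\otimes Z^{\bullet}\dashv \Map_{\cM}(Z^{\bullet},-)$ is a Quillen adjunction (\cite{Hir03}), so $\Map_{\cM}(Z^{\bullet},-):\cM\lrar\cS$ is right Quillen; postcomposition therefore induces a right Quillen functor $\cM^{\cT}_{\mathrm{inj}}\lrar\cS^{\cT}_{\mathrm{inj}}$ — both injective structures exist, the former by hypothesis and the latter since $\cS$ is combinatorial — which carries the injectively fibrant $\cF$ to the injectively fibrant $D$.

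Assembling these, the chain
\[ \Map^h_{\cM}(Z,X) \simeq \Map_{\cM}(Z^{\bullet},X) = \lim_{\cT} D \simeq \holim_{\cT} D \simeq \holim_{t}\Map^h_{\cM}(Z,\cF(t)) \]
gives the required equivalence, and unwinding the adjunction isomorphism in the middle shows that it is precisely the comparison map induced by the legs of the cone $\ovl{\cF}$, which completes the verification. I expect the main obstacle to be the third paragraph: the identity $\lim_{\cT} D \simeq \holim_{\cT} D$ is exactly where the standing hypothesis that the injective model structure on $\cM^{\cT}$ exists is consumed, and one must be careful that postcomposition with a right Quillen functor preserves injective fibrancy — equivalently, that the levelwise left adjoint preserves the levelwise-defined injective cofibrations and trivial cofibrations. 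The remaining identifications are formal consequences of the theory of cosimplicial resolutions and homotopy function complexes.
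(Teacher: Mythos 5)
Your proposal is correct and is essentially the paper's own argument: both proofs hinge on choosing a cosimplicial resolution $Z^{\bullet}$ of $Z$, observing that $\Map_{\cM}(Z^{\bullet},-)$ is right Quillen and hence induces a right Quillen functor between the injective model structures on $\cM^{\cT}$ and $\cS^{\cT}$, so that the strict diagram $t\mapsto\Map_{\cM}(Z^{\bullet},\cF(t))$ is injectively fibrant and its ordinary limit $\Map_{\cM}(Z^{\bullet},\lim\cF)$ already computes the homotopy limit. The only differences are at the front end and are immaterial: where you invoke the standard comparison between derived mapping spaces and homotopy function complexes (valid for arbitrary, not necessarily fibrant, $Z$), the paper reduces to fibrant $Z$ and reconstructs this comparison explicitly via Cisinski's categories $\uline{\Hom}_{\cM}(Z,-)$, the Dwyer--Kan coinitiality of $\Del\lrar\uline{\Hom}_{\cM}(Z,\ast)$, and Thomason's theorem.
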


\begin{proof}
Let $L^H(\cM,\cW) \x{\simeq}{\lrar} \cM_{\Del}$ be a fibrant replacement with respect to the Bergner model structure such that the map $\Ob(L^H(\cM,\cW)) \lrar \Ob(\cM_{\Del})$ is the identity, so that $\cM_\infty \simeq \Ne(\cM_{\Del})$. In light of~\cite[Theorem 4.2.4.1]{Lur09}, it suffices to show that $\ovl{\cF}$ is a homotopy limit diagram in $\cM_{\Del}$ in the following sense (see~\cite[Remark A.3.3.13]{Lur09}): for every object $Z$ the induced diagram
\[ \Map_{\cM_{\Del}}(Z,\ovl{\cF}(-)): \cT^{\triangleleft} \lrar \cS \]
is a homotopy limit diagram of simplicial sets. Since every object is weakly equivalent to a fibrant object it will suffice to prove the above claim for $Z$ fibrant. Since $\cF$ is injectively fibrant it is also levelwise fibrant, and since the limit functor $\lim: \cM^{\cT} \lrar \cM$ is a right Quillen functor with respect to the injective model structure we get that $\ovl{\cF}$ is levelwise fibrant as well. By Corollary~\ref{c:map} and Remark~\ref{r:natural} we have a levelwise weak equivalence
\[ \Ne\uline{\Hom}_{\cM}(Z,\ovl{\cF}(-)) \x{\simeq}{\lrar} \Map_{\cM_{\Del}}(Z,\ovl{\cF}(-)) .\]
Hence, it suffices to prove that the diagram $\Ne\uline{\Hom}_{\cM}(Z,\ovl{\cF}(-))$ is a homotopy limit diagram of simplicial sets.

Let $Z_\bullet$ be a special cosimplicial resolution for $Z$ in the sense of~\cite[Remark 6.8]{DK80}.
Let $Y$ be any fibrant object in $\cM$ and let $\cH(Z_\bullet,Y)$ be the Grothendieck construction of the functor $\Del^{\op} \lrar \Set$ sending $[n]$ to $\Hom_{\cM}(Z_n,Y)$.
Recall from Remark~\ref{r:map} that the category $\uline{\Hom}_{\cM}(Z,\ast)$ is just the full subcategory of $\cC_{/Z}$ spanned by trivial fibrations. By~\cite[Proposition 6.12]{DK80} we have a coinitial functor $\Del\lrar \uline{\Hom}_{\cM}(Z,\ast)$ which sends $[n]$ to the composed map $Z_n \lrar Z_0\lrar Z$. (Note that the term \textbf{left cofinal} loc. cit. is what we call \textbf{coinitial} here.). By Remark~\ref{r:map} we may identify the category $\uline{\Hom}_{\cM}(Z,Y)$ with the Grothendieck construction of the functor $\uline{\Hom}_{\cM}(Z,\ast)^{\op} \lrar \Set$ which sends a trivial fibration $W \lrar Z$ to the set $\Hom_{\cM}(W,Y)$. By Theorem~\ref{t:cofinal-2} we obtain a natural weak equivalence
\[ \Ne\cH(Z_\bullet,Y) \x{\simeq}{\lrar} \Ne\uline{\Hom}_{\cM}(Z,Y) .\]
Note that the objects of $\cH(Z_\bullet,Y)$ are pairs $([n],f)$ where $[n] \in \Del$ is an object and $f: Z_n \lrar Y$ is a map in $\cM$. Thus, we may identify $\cH(Z_\bullet,Y)$ with the category of simplices of the simplicial set $\Hom_{\cM}(Z_\bullet,Y)$. We thus have a natural weak equivalence
\[ \Ne\cH(Z_\bullet,Y) \x{\simeq}{\lrar} \Hom_{\cM}(Z_\bullet,Y) .\]
Hence, it suffices to show that the diagram
\[\Hom_{\cM}(Z_\bullet,\ovl{\cF}(-)): \cT^{\triangleleft} \lrar \cS \]
is a homotopy limit diagram of simplicial sets. Now for $A \in \Set$ and $M \in \cM$ we define
\[A\otimes M:=\coprod_{a\in A}M\in\cM.\]
This makes $\cM$ tensored over $\Set$. For any simplicial set $K:\Delta^{\op}\lrar \Set$ we can now define
$\cL_{Z_\bullet}(K) := K \otimes_{\Delta} Z_\bullet\in\cM$
to be the appropriate coend. We now have an adjunction
\[ \cL_{Z_\bullet}: \cS \adj \cM :\cR_{Z_\bullet} ,\]
where $\cR_{Z_\bullet}(X) = \Hom_\cM(Z_\bullet,X)$. In light of~\cite[Corollary 16.5.4]{Hir03} this adjunction is a Quillen pair.  We then obtain an induced Quillen pair
\[ \cL^{\cT}_{Z_\bullet}: \cS^{\cT} \adj \cM^{\cT} :\cR^{\cT}_{Z_\bullet} \]
between the corresponding injective model structures. Since $\cF \in \cM^{\cT}$ is an injectively fibrant diagram it follows that $\Hom_{\cM}(Z_\bullet,\cF(-))$ is injectively fibrant. Since $\Hom_{\cM}(Z_\bullet,\ovl{\cF}(-))$ is a limit diagram we may conclude that it is also a homotopy limit diagram.
\end{proof}

\begin{rem}
Applying Proposition~\ref{p:injective-limit} to the opposite model structure on $\cM^{\op}$ we obtain the analogous claim for colimits of projectively cofibrant diagrams in $\cM$.
\end{rem}

\begin{rem}
It is not known if the injective model structure on $\cM^{\cI}$ exists in general. However, using, for example,~\cite[Variant 4.2.3.15]{Lur09}, one may show that for any small category $\cI$ there exists a cofinite poset $\cT$ and a coinitial map $\cT \lrar \cI$ (see Definition~\ref{d:cofinal}). One may hence always compute homotopy limits of functors $\cF: \cI \lrar \cM$ by first restricting to $\cT$, and then computing the homotopy limit using the injective model structure on $\cM^{\cT}$. According to Proposition~\ref{p:injective-limit} and Theorem~\ref{t:cofinal} this procedure yields the correct limit in $\cM_\infty$.
\end{rem}

\begin{thm}\label{t:model-complete}
Let $\cM$ be a model category. Then the $\infty$-category $\cM_\infty$ has all small limits and colimits.
\end{thm}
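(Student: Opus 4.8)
The plan is to reduce the statement to two special cases that are already under control, namely finite limits and small products. First I would dispose of colimits by duality: the opposite category $\cM^{\op}$ carries the opposite model structure, and, since $\infty$-localization commutes with passage to opposites, $(\cM^{\op})_\infty \simeq (\cM_\infty)^{\op}$. Hence a proof that the underlying $\infty$-category of an arbitrary model category has all small \emph{limits} immediately yields that $\cM_\infty$ has all small colimits as well, and it suffices to treat limits. Now recall that an $\infty$-category admits all small limits as soon as it admits finite limits together with all small products (by \cite[Proposition 4.4.2.6]{Lur09} and the standard assembly of a general limit from products and pullbacks). Finite limits are already available: a model category is in particular a weak fibration category, so Corollary~\ref{p:WFC_PB} gives that $\cM_\infty$ has finite limits. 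Thus the only remaining task is to produce arbitrary small products.

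For the products I would invoke Proposition~\ref{p:injective-limit} with a \emph{discrete} index. Let $S$ be a small set and $\{X_s\}_{s\in S}$ a family of objects of $\cM_\infty$. Regarded as a discrete poset, $S$ is trivially cofinite (each $S_t$ is empty), so Corollary~\ref{c:poset-reedy} guarantees that the injective model structure on $\cM^{S}=\prod_{s\in S}\cM$ exists and agrees with the levelwise one. The rectification step that is delicate for general diagrams is here vacuous: since $\Ne(S)$ is $0$-dimensional, a map $\Ne(S)\lrar\cM_\infty$ is merely a choice of objects, so I may simply pick, for each $s$, a fibrant object $\cF(s)\in\cM$ weakly equivalent to $X_s$. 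The resulting $\cF\colon S\lrar\cM$ is levelwise fibrant, hence injectively fibrant, and the product cone $\ovl{\cF}\colon S^{\triangleleft}\lrar\cM$ with apex $\prod_{s}\cF(s)$ is a strict limit diagram. Proposition~\ref{p:injective-limit} then shows that its image in $\cM_\infty$ is a limit diagram, so $\prod_{s}X_s$ exists in $\cM_\infty$. Together with the finite limits already in hand this completes the argument.

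The point I expect to be genuinely hard, and which this route is designed to avoid, is rectification. One could instead attempt the more direct approach suggested by the remark preceding the theorem: for a general index $\cI$, choose via \cite[Variant 4.2.3.15]{Lur09} a coinitial map $\cT\lrar\cI$ out of a cofinite poset $\cT$, replace $\cI$-limits by $\cT$-limits using Theorem~\ref{t:cofinal}, and then apply Proposition~\ref{p:injective-limit} to an injectively fibrant model of the diagram. The obstacle there is that for a genuinely two-dimensional poset $\cT$ the counit $\fC(\Ne(\cT))\lrar\cT$ is no longer an isomorphism, so a diagram $\Ne(\cT)\lrar\cM_\infty$ carries nontrivial homotopy-coherence data that must be strictified into an honest injectively fibrant functor $\cT\lrar\cM$ — an inductive construction over the degree filtration of $\cT$ generalizing Lemma~\ref{l:rigid-2}. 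Confining attention to products and pullbacks sidesteps this difficulty, since in those cases the nerve of the index has no simplices above dimension one and the rectification collapses to the elementary manipulations already performed in Lemma~\ref{l:rigid-2}.
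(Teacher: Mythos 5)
Your proposal is correct and follows essentially the same route as the paper's proof: colimits by passing to the opposite model structure, reduction to pullbacks and small products via \cite[Proposition 4.4.2.6]{Lur09}, pullbacks from Corollary~\ref{p:WFC_PB}, and products by observing that a map $S \lrar \cM_\infty$ from a discrete set is just a choice of objects, replacing each object fibrantly, and applying Proposition~\ref{p:injective-limit} to the injective (= product) model structure on $\cM^{S}$. Your closing remarks on why general rectification is avoided match the role of the remark preceding the theorem in the paper, so nothing is missing.
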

\begin{proof}

We prove the claim for limits. The case of colimits can be obtained by applying the proof to the opposite model structure on $\cM^{\op}$. According to~\cite[Proposition 4.4.2.6]{Lur09} it is enough to show that $\cM_\infty$ has pullbacks and small products. The existence of pullbacks follows from Corollary~\ref{p:WFC_PB}, since $\cM$ is in particular a weak fibration category. To see that $\cM_\infty$ admits small products let $S$ be a small set. We first observe that the product model structure on $\cM^S$ coincides with the injective model structure. Let $L^H(\cM,\cW) \x{\simeq}{\lrar} \cM_{\Del}$ be a fibrant replacement with respect to the Bergner model structure such that the map $\Ob(L^H(\cM,\cW)) \lrar \Ob(\cM_{\Del})$ is the identity, so that $\cM_\infty \simeq \Ne(\cM_{\Del})$. The data of a map $S \lrar \cM_{\infty}$ is just the data of a map of sets
\[ S \lrar (\cM_{\infty})_0= \Ob(\cM_{\Del}) = \Ob(\cM) .\]
Let $u:S\lrar \mathrm{Ob}(\cM)$ be such a map. We can then factor each map $u(s)\lrar\ast$ as a weak equivalence followed by a fibration. This gives us a map $v:S\lrar\cM^{\fib}$ with a weak equivalence $u\lrar v$ and $v$ injectively fibrant. Now, according to Proposition~\ref{p:injective-limit}, the limit of $v$ is a model for the homotopy product of $v$ and hence also of $u$.
\end{proof}

\begin{rem}\label{r:adjunction}
Let $\cL:\cM \rightleftarrows \cN: \cR$ be a Quillen adjunction. According to~\cite[Theorem 2.1]{MG15} the derived functors $\cL_\infty$ and $\cR_\infty$ are adjoints in the $\infty$-categorical sense. It follows that $\cL_\infty$ preserves colimits and $\cR_\infty$ preserves limits (see~\cite[Proposition 5.2.3.5]{Lur09}).
\end{rem}

\section{Pro-categories}\label{s:pro}
\subsection{Pro-categories in ordinary category theory}

In this subsection we recall some general background on pro-categories and prove a few lemmas which will be used in \S\ref{s:main}. Standard references include~\cite{SGA4-I} and~\cite{AM69}.

\begin{define}\label{d:cofiltered}
We say that a category $\cI$ \textbf{cofiltered} if the following conditions are satisfied:
\begin{enumerate}
\item $\cI$ is non-empty.
\item For every pair of objects $i,j \in \cI$, there exists an object $k \in \cI$, together with
morphisms $k \lrar i$ and $k \lrar j$.
\item For every pair of morphisms $f,g:i \lrar j$ in $\cI$, there exists a morphism $h:k\lrar i$ in $\cI$, such that $f\circ h=g\circ h$.
\end{enumerate}
We say that a category $\cI$ is \textbf{filtered} if $\cI^{\op}$ is cofiltered.
\end{define}

\begin{conv}\label{cv:poset}
If $\cT$ is a small partially ordered set, then we view $\cT$ as a small category which has a single morphism $u\lrar v$ whenever $u\geq v$. It is then clear that a poset $\cT$ is cofiltered if and only if $\cT$ is non-empty, and for every $a,b\in \cT$, there exists a $c\in \cT$, such that $c\geq a$ and $c\geq b$.
\end{conv}

We now establish a few basic properties of cofiltered categories.

\begin{lem}\label{l:finite}
Let $\cI$ be cofiltered category and let $\cE$ be a category with finitely many objects and finitely many morphisms. Then any functor $\cF: \cE \lrar \cI$ extends to a functor $\ovl{\cF}:\cE^{\triangleleft} \lrar \cI$.
\end{lem}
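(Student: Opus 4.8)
The plan is to realize the desired extension $\ovl{\cF}$ as the data of a single object $j \in \cI$ (the image of the cone point) together with a compatible cone of morphisms $\psi_e: j \lrar \cF(e)$, one for each object $e$ of $\cE$, subject to the compatibility $\cF(\phi)\circ \psi_e = \psi_{e'}$ for every morphism $\phi: e \lrar e'$ in $\cE$. Indeed, in $\cE^{\triangleleft}$ there is a unique arrow from the cone point $\ast$ to each object $e$, and the only nontrivial functoriality constraint is precisely that the triangle $\ast \lrar e \lrar e'$ commute; so giving $\ovl{\cF}$ is exactly the same as giving such a cone. Producing a cone over a \emph{finite} diagram is what cofilteredness of $\cI$ is designed to supply.

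First I would use conditions (1) and (2) of Definition~\ref{d:cofiltered} to find, by induction on the (finite) number of objects of $\cE$, a single object $j_0 \in \cI$ equipped with morphisms $\psi_e: j_0 \lrar \cF(e)$ for every object $e$. Condition (1) provides a starting object, and condition (2) lets me repeatedly replace two objects by a common lower bound mapping to both; since there are only finitely many objects $\cF(e)$, after finitely many steps we obtain $j_0$ together with the maps $\psi_e$. At this stage the $\psi_e$ need not respect the morphisms of $\cE$: for $\phi: e \lrar e'$ the two maps $\cF(\phi)\circ \psi_e$ and $\psi_{e'}$ from $j_0$ to $\cF(e')$ may differ. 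To correct this I would enumerate the finitely many morphisms $\phi_1,\dots,\phi_m$ of $\cE$ and coequalize them one at a time using condition (3): from $\phi_1$ I obtain $h_1: j_1 \lrar j_0$ equalizing the corresponding pair, from $\phi_2$ I apply condition (3) to the two relevant maps precomposed with $h_1$ to obtain $h_2: j_2 \lrar j_1$, and so on. Setting $j := j_m$ and $h := h_1 \circ \cdots \circ h_m : j \lrar j_0$, I would then define $\ovl{\cF}(\ast) := j$ and $\ovl{\cF}(\ast \lrar e) := \psi_e \circ h$; by construction these satisfy $\cF(\phi)\circ(\psi_e \circ h) = \psi_{e'}\circ h$ for every $\phi$, which is exactly the compatibility needed for $\ovl{\cF}$ to be a functor on $\cE^{\triangleleft}$.

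The only delicate point is this final coequalization step, and even there the difficulty is purely organizational: I must ensure that equalizing one pair of parallel arrows via condition (3) does not destroy the equalities already arranged for the earlier morphisms. This is automatic, since each correction is made by right-precomposition with $h_i$, and any equation of the form $\alpha \circ g = \beta \circ g$ is manifestly preserved under further right-precomposition; the finiteness of $\Mor(\cE)$ then guarantees the process terminates. I would also record the degenerate case $\cE = \varnothing$, where condition (1) alone supplies the cone point and there is nothing further to check.
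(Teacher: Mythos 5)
Your proof is correct and follows essentially the same route as the paper's: first use conditions (1) and (2) of Definition~\ref{d:cofiltered} repeatedly to build an object $j_0$ with maps to every $\cF(e)$, then use condition (3) repeatedly, once per morphism of $\cE$, to equalize the finitely many pairs of parallel maps, noting (as the paper leaves implicit) that equalities of the form $\alpha \circ g = \beta \circ g$ persist under further right-precomposition. Your explicit identification of the extension $\ovl{\cF}$ with a compatible cone, and your treatment of the case $\cE = \varnothing$ via condition (1), match the paper's argument exactly.
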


\begin{proof}
If $\cE$ is empty, then the desired claim is exactly property (1) of Definition~\ref{d:cofiltered}. Now assume that $\cE$ is non-empty. Since $\cE$ has finitely many objects we may find, by repeated applications of property (2) of Definition~\ref{d:cofiltered}, an object $i \in \cI$ admitting maps $f_e: i \lrar \cF(e)$ for every $e \in \cE$. Now for every morphism $g: e \lrar e'$ in $\cE$ we obtain two maps $i \lrar \cF(e')$, namely $f_{e'}$ on the one hand and $\cF(g) \circ f_e$ on the other. Since $\cE$ has finitely many morphisms we may find, by repeated applications of property (3) of Definition~\ref{d:cofiltered}, a map $h: j \lrar i$ in $\cI$ such that $f_{e'} \circ h = \cF(g) \circ f_e \circ h$ for every morphism $g: e \lrar e'$ in $\cE$. The morphisms $f_{e} \circ h: j \lrar \cF(e)$ now form an extension of $\cF$ to a functor $\ovl{\cF}: \cE^{\triangleleft} \lrar \cI$.
\end{proof}

\begin{lem}\label{l:coincide}
Let $\cI$ be a cofiltered category and let $\cF: \cI \lrar \cJ$ be a functor such that for each $j \in \cJ$ the category $\cI_{/j}$ is \textbf{connected}. Then $\cF$ is coinitial (see Definition~\ref{d:cofinal}).
\end{lem}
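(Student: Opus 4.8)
The plan is to deduce coinitiality of $\cF$ from Definition~\ref{d:cofinal} by showing that the slice category $\cI_{/j}$ is weakly contractible for every $j \in \cJ$. Rather than attack contractibility head on, I would first prove the sharper statement that $\cI_{/j}$ is itself \emph{cofiltered} (Definition~\ref{d:cofiltered}); the conclusion then follows, since a cofiltered category has weakly contractible nerve. The latter implication can be obtained exactly as in the proof of Lemma~\ref{l:cofinal-fib}: by Cisinski's asphericity lemma and the full faithfulness of the nerve it suffices to connect every functor $\cT \lrar \cI_{/j}$ from a finite poset $\cT$ to a constant functor by a zig-zag of natural transformations, and Lemma~\ref{l:finite} supplies such a transformation by extending the diagram to a cone $\cT^{\triangleleft} \lrar \cI_{/j}$ whose cone point receives a map from each value.

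Verifying that $\cI_{/j}$ is cofiltered breaks into the three axioms. Nonemptiness is immediate from connectedness, and the coequalizing axiom (3) is inherited directly from axiom (3) for $\cI$: given a parallel pair $\psi_1, \psi_2 : (i,\phi) \rightrightarrows (i',\phi')$, a map $h : k \lrar i$ in $\cI$ with $\psi_1 h = \psi_2 h$ promotes to a morphism $(k, \phi\circ\cF(h)) \lrar (i,\phi)$ coequalizing the pair. The crux, and the place where the connectedness hypothesis genuinely enters, is the common--lower--bound axiom (2).

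The engine for axiom (2) is a \emph{weak pullback over a common target}: given $\alpha : u \lrar z$ and $\beta : v \lrar z$ in $\cI_{/j}$, I would produce a common lower bound of $u$ and $v$ using only that $\cI$ is cofiltered. Choose $k \in \cI$ with maps $a : k \lrar u$ and $b : k \lrar v$ to the underlying objects by axiom (2) for $\cI$, then use axiom (3) for $\cI$ to find $c : k' \lrar k$ equalizing $\alpha a$ and $\beta b$; equipping $k'$ with the structure map $\phi_u \circ \cF(ac)$ to $j$ makes both $ac$ and $bc$ into morphisms of $\cI_{/j}$, the compatibility for $bc$ following from $\phi_z\cF(\alpha a c) = \phi_z\cF(\beta b c)$. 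With this tool in hand, I would prove axiom (2) for an arbitrary pair of objects by induction on the length of a zig-zag connecting them in the connected category $\cI_{/j}$: a backward edge is absorbed by composition, while a forward edge is resolved by taking a weak pullback over its common target.

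The main obstacle is precisely axiom (2): cofilteredness of $\cI$ only lets one bound or equalize morphisms living inside $\cI$, whereas two objects of $\cI_{/j}$ obtained from a common $\cI$-lower bound generally carry \emph{different} structure maps to $j$, and there is no way to reconcile these using $\cI$ alone. Connectedness of $\cI_{/j}$ is exactly the extra input that lets the zig-zag induction funnel the two structure maps into a single weak-pullback situation, and it is the only place where the hypothesis is used. All remaining steps are formal.
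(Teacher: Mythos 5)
Your proposal is correct, but it takes a genuinely different route from the paper's. The paper never shows that $\cI_{/j}$ is cofiltered: it verifies Cisinski's asphericity criterion on $\cI_{/j}$ directly. Given $\cG:\cT \lrar \cI_{/j}$ with $\cT$ a \emph{connected} finite poset, it splits $\cG$ into an underlying diagram $\cG_{\cI}:\cT \lrar \cI$ together with structure maps $\alp_t: \cF(\cG_{\cI}(t)) \lrar j$, cones off $\cG_{\cI}$ inside $\cI$ itself using Lemma~\ref{l:finite}, and then uses connectedness of the \emph{indexing poset} $\cT$ to show that the composites $\gam_t:\cF(i_0)\lrar j$ all coincide, so the cone point lifts to an object of $\cI_{/j}$ and yields a natural transformation from a constant functor to $\cG$; connectedness of $\cI_{/j}$ enters only as the hypothesis of the asphericity lemma (which is what permits restricting to connected $\cT$). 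You instead prove the sharper structural statement that $\cI_{/j}$ is itself cofiltered: your cospan construction is sound (the compatibility check $\phi_z\cF(\alpha a c)=\phi_z\cF(\beta b c)$, using $\phi_u=\phi_z\circ\cF(\alpha)$ and $\phi_v=\phi_z\circ\cF(\beta)$, is exactly what is needed), the zig-zag induction correctly localizes the use of connectedness, and the coequalizing axiom transfers as you say; the final passage from cofiltered to weakly contractible is standard --- besides your route through Lemma~\ref{l:finite} and the asphericity lemma, one could simply quote the dual of \cite[Lemma 5.3.1.18]{Lur09}, as the paper does in the proof of Lemma~\ref{l:full-oo}. What each approach buys: the paper's argument is shorter, a single pass through the asphericity lemma with the structure-map reconciliation trick; yours isolates a reusable fact --- the slices of a cofiltered category over a functor are cofiltered as soon as they are connected --- which is the classical formulation of why the classical and higher-categorical notions of coinitiality agree for cofiltered domains (cf.\ the remark following Theorem~\ref{t:cofinal}), at the cost of a fiddlier induction. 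One cosmetic slip: your labels ``forward'' and ``backward'' for the two edge types read as swapped (the edge pointing \emph{away} from the already-bounded vertex is absorbed by composition, while the edge pointing \emph{into} it creates the cospan requiring the weak pullback), but the mechanism you describe is the right one, so this does not affect correctness.
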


\begin{proof}
Since $\cI_{/j}$ is connected, it suffices, By~\cite[Lemme d'asph\'ericit\'e p. 509]{Cis10}, to prove that for any \textbf{connected} finite poset $\cT$, and any map
\[ \cG:\cT \lrar \cI_{/j}, \]
there exists a natural transformation $X \Rightarrow \cG$ where $X:\cT \lrar \cI_{/j}$ is a constant functor. Now let $\cG: \cT \lrar \cI_{/j}$ be such a map. The data of $\cG$ can be equivalently described as a pair $(\cG_{\cI},\cG_{\cJ})$ where $\cG_{\cI}: \cT \lrar \cI$ is a functor and $\cG_{\cJ}: \cT^{\triangleright} \lrar \cJ$ is a functor sending the cone point to $j$ and such that ${\cG_{\cJ}}|_{\cT} = \cF \circ \cG_{\cI}$. For each $t \in \cT$ let us denote by $\alp_t: \cG_{\cJ}(t) \lrar j$ the map determined by $\cG_{\cJ}$.

By Lemma~\ref{l:finite} there exists an extension $\ovl{\cG}_{\cI}:\cT^{\triangleleft} \lrar \cI$. Let $i_0 \in \cI$ be the image of the cone point of $\cT^{\triangleleft}$ under $\ovl{\cG}_{\cI}$ and for each $t$ let $\bet_t: i_0 \lrar \cG_{\cI}(t)$ be the map determined by $\ovl{\cG}_{\cI}$. Let $\gam_t: \cF(i_0) \lrar j$ be the map obtained by composing the map $\cF(\bet_t): \cF(i_0) \lrar \cF(\cG_{\cI}(t)) = \cG_{\cJ}(t)$ and the map $\alp_t: \cG_{\cJ}(t) \lrar j$. We claim that the maps $\gam_t$ are all identical. Indeed, if $t > s$ then the commutativity of he diagram
\[ \xymatrix{
& \cG_{\cJ}(t) \ar^{\alp_t}[dr]\ar[dd] & \\
\cF(i_0) \ar^{\cF(\bet_t)}[ur]\ar_{\cF(\bet_s)}[dr] & & j \\
& \cG_{\cJ}(s) \ar_{\alp_s}[ur] & \\
}\]
shows that $\gam_t = \gam_s$. Since $\cT$ is connected it follows that $\gam_t = \gam_s$ for every $t,s \in \cT$. Let us call this map $\gam: \cF(i_0) \lrar j$. Then the pair $(i_0,\gam)$ corresponds to an object $X \in \cI_{/j}$, which can be interpreted as a constant functor $X:\cT \lrar \cI_{/j}$. The maps $\bet_t$ now determine a natural transformation $X \Rightarrow \cG$ as desired.
\end{proof}

\begin{lem}\label{l:coinit-cofiltered}
Let $\cF: \cI \lrar \cJ$ be a coinitial functor. If $\cI$ is cofiltered then $\cJ$ is cofiltered.
\end{lem}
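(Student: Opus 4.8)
The plan is to verify the three axioms of Definition~\ref{d:cofiltered} directly for $\cJ$, exploiting the defining contractibility (or at least connectivity) of the slice categories $\cI_{/j}$ that comes packaged in the hypothesis that $\cF$ is coinitial, together with the fact that $\cI$ itself is cofiltered. The overall strategy is to lift objects and morphisms of $\cJ$ back along $\cF$ (up to a morphism in $\cJ$) into $\cI$, solve the corresponding cofilteredness problem inside $\cI$, and then push the solution forward through $\cF$.

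First I would dispatch nonemptiness: since $\cF$ is coinitial, every slice $\cI_{/j}$ is weakly contractible, hence in particular nonempty, so there exists at least one object $i$ with a map $\cF(i)\lrar j$; and $\cI$ being cofiltered is nonempty, so $\cJ$ is nonempty. For the second axiom, given two objects $j_1,j_2\in\cJ$, I would use the nonemptiness of $\cI_{/j_1}$ and of $\cI_{/j_2}$ to produce objects $i_1,i_2\in\cI$ with maps $\cF(i_1)\lrar j_1$ and $\cF(i_2)\lrar j_2$; then cofilteredness of $\cI$ gives a $k\in\cI$ with maps $k\lrar i_1$ and $k\lrar i_2$, and applying $\cF$ and composing yields the two maps $\cF(k)\lrar j_1$ and $\cF(k)\lrar j_2$ witnessing the second condition (with $\cF(k)$ as the common predecessor).

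The third axiom is where I expect the only real subtlety, and it is here that the full weak contractibility --- or at least connectivity --- of the slices, rather than mere nonemptiness, must be used. Given two parallel maps $u,v\colon j_1\rightrightarrows j_2$ in $\cJ$, I would first pick $i_1\in\cI$ with a map $\alpha\colon\cF(i_1)\lrar j_1$. Composing with $u$ and $v$ gives two objects $(i_1,u\circ\alpha)$ and $(i_1,v\circ\alpha)$ of the slice $\cI_{/j_2}$. The key point is that these two objects lie in the \emph{same} connected component of $\cI_{/j_2}$: since $\cI_{/j_2}$ is weakly contractible it is connected, so there is a zig-zag of morphisms in $\cI_{/j_2}$ joining them; using that $\cI$ is cofiltered (so one can equalize finitely many parallel maps and find common lower bounds) one can straighten this zig-zag into a single object $(i_2,\gamma)$ of $\cI_{/j_2}$ admitting a map down to both, which forces the two structure maps $\cF(i_2)\lrar j_2$ induced via $u$ and via $v$ to agree. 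Pulling this agreement back, one obtains a map $h\colon i_2\lrar i_1$ in $\cI$ with $u\circ\alpha\circ\cF(h)=v\circ\alpha\circ\cF(h)$, and then $\cF(h)$ composed appropriately gives the required equalizing morphism in $\cJ$.

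The main obstacle, as indicated, is precisely the third axiom: nonemptiness of the slices is not enough, and one must carefully convert the connectivity of $\cI_{/j_2}$ into an honest equalizing morphism, which requires combining the zig-zag in the slice with repeated use of conditions (2) and (3) of cofilteredness in $\cI$ to collapse the zig-zag to a single dominating object. I would be careful to note that only \emph{connectivity} of the slices is used, so the conclusion would in fact hold under the weaker classical notion of coinitiality; but since we are given the stronger Definition~\ref{d:cofinal}, the argument goes through a fortiori.
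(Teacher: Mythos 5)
Your proof is correct, but note that the paper does not argue this lemma internally: its entire proof is a citation to \cite[Lemma 3.12]{BS15}, so your self-contained verification of the three axioms of Definition~\ref{d:cofiltered} is in effect a reconstruction of that cited argument rather than a different route through the paper's own machinery. The axioms (1) and (2) parts are exactly right. For axiom (3), your outline is sound but two points deserve tightening. First, the ``straightening'' of the zig-zag in $\cI_{/j_2}$ should be run as an induction on its length: each forward arrow is absorbed by composition, while each backward arrow $(b,g)\lrar (b',g')$ requires one application of axiom (2) of cofilteredness in $\cI$ (to dominate the two relevant objects) followed by one application of axiom (3) (to equalize the two resulting parallel maps into $b'$) before the structure maps to $j_2$ can be made to match; this is precisely the ``repeated use of conditions (2) and (3)'' you gesture at, and it does go through. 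Second, a dominating object $(i_2,\gamma)$ mapping to both $(i_1,u\circ\alpha)$ and $(i_1,v\circ\alpha)$ gives two \emph{possibly distinct} maps $p,q\colon i_2\lrar i_1$ with $u\circ\alpha\circ\cF(p)=\gamma=v\circ\alpha\circ\cF(q)$, so the agreement of the structure maps does not yet produce your single map $h$; you need one further equalization in $\cI$, choosing $e\colon i_3\lrar i_2$ with $pe=qe=:h$, after which $\alpha\circ\cF(h)\colon\cF(i_3)\lrar j_1$ is the required equalizer of $u$ and $v$ in $\cJ$. With that repair the argument is complete, and your closing observation that only \emph{connectivity} of the slices is used (so the lemma holds for the weaker classical notion of coinitiality) is correct and consistent with the paper's remark following Theorem~\ref{t:cofinal} that the classical and higher-categorical notions agree for cofiltered domains.
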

\begin{proof}
See~\cite[Lemma 3.12]{BS15}.
\end{proof}

\begin{define}\label{d:pro-discrete}
Let $\cC$ be a category. We define $\Pro(\cC)$ to be the category whose objects are diagrams $X:\cI \lrar \cC$ such that $\cI$ is small and cofiltered (see Definition~\ref{d:cofiltered}) and whose morphism sets are given by
\[\Hom_{\Pro(\cC)}(X,Y):=\lim\limits_s \colim\limits_t \Hom_{\cC}(X_t,Y_s).\]
Composition of morphisms is defined in the obvious way. We refer to $\Pro(\cC)$ as the \textbf{pro-category} of $\cC$ and to objects of $\Pro(\cC)$ as \textbf{pro-objects}.
\end{define}

A pro-object $X: \cI \lrar \cC$ will often be written as $X = \{X_i\}_{i \in \cI}$ where $X_i = X(i)$. There is a canonical full inclusion $\iota: \cC \lrar \Pro(\cC)$ which associates to $X \in \cC$ the constant diagram with value $X$, indexed by the trivial category. We say that a pro-object is \textbf{simple} if it is in the image of $\iota$. Given a pro-object $X = \{X_i\}_{i \in \cI}$ and a functor $p: \cJ \lrar \cI$ we will denote by $p^*X \x{\df}{=} X \circ p$ the restriction (or reindexing) of $X$ along $p$.

If $X,Y: \cI \lrar \cC$ are two pro-objects indexed by $\cI$ then any natural transformation: $X \lrar Y$ gives rise to a morphism $X \lrar Y$ in $\Pro(\cC)$. More generally, for pro-object $X = \{X_i\}_{i \in \cI}, Y = \{Y_j\}_{j \in \cJ}$ if $p:\cJ \lrar \cI$ is a functor and $\phi:p^*X \lrar Y$ is a map in $\cC^{\cJ}$, then the pair $(p,\phi)$ determines a morphism $\nu_{p,\phi}:X \lrar Y$ in $\Pro(\cC)$ (whose image in $\colim\limits_t \Hom_{\cC}(X_t,Y_s)$ is given by $\phi_s:X_{p(s)} \lrar Y_s$).

The following special case of the above construction is well-known.
\begin{lem}\label{l:cofinal}
Let $p:\cJ \lrar \cI$ be a coinitial functor between small cofiltered categories, and let $X = \{X_i\}_{i \in \cI}$ be a pro-object indexed by $\cI$. Then the morphism of pro-objects $\nu_{p,\Id}:X\lrar p^*X$ determined by $p$ is an \textbf{isomorphism}. For the purpose of brevity we will denote $\nu_p \x{\df}{=} \nu_{p,\Id}$.
\end{lem}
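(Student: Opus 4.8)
The plan is to prove that $\nu_p$ is an isomorphism by showing that it induces a bijection on Hom-sets out of every pro-object and then invoking the Yoneda lemma. Concretely, I would fix an arbitrary pro-object $W = \{W_t\}_{t \in \cT}$ and study the map of sets
\[ (\nu_p)_* : \Hom_{\Pro(\cC)}(W, X) \lrar \Hom_{\Pro(\cC)}(W, p^*X) \]
given by postcomposition with $\nu_p = \nu_{p,\Id}$. Note first that $p^*X = X \circ p$ is a legitimate pro-object, since $\cJ$ is small and cofiltered by hypothesis.

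The first step is to unwind the definition of the Hom-sets. Writing $H : \cI \lrar \Set$ for the functor $i \mapsto \colim_t \Hom_{\cC}(W_t, X_i)$, Definition~\ref{d:pro-discrete} gives
\[ \Hom_{\Pro(\cC)}(W, X) = \lim_{i \in \cI} H(i), \qquad \Hom_{\Pro(\cC)}(W, p^*X) = \lim_{j \in \cJ} H(p(j)) = \lim_{j \in \cJ} (H \circ p)(j). \]
The key bookkeeping step is to verify that, under these identifications, $(\nu_p)_*$ is precisely the canonical restriction map $\lim_{\cI} H \lrar \lim_{\cJ} H \circ p$. Indeed, a morphism $W \lrar X$ amounts to a compatible family $(g_i)_{i \in \cI}$ with $g_i \in H(i)$, and composing with the reindexing morphism $\nu_p$ simply records the subfamily $(g_{p(j)})_{j \in \cJ}$, because all the structure maps defining $\nu_{p,\Id}$ are identities.

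Next I would invoke coinitiality. Since $p$ is coinitial, each comma category $\cJ_{/i} = \cJ \times_{\cI} \cI_{/i}$ is weakly contractible, and hence in particular nonempty and connected. By the classical form of the cofinality theorem --- which, as noted in the remark following Theorem~\ref{t:cofinal}, requires only connectedness of the comma categories when the target (here $\Set$) is an ordinary category --- the canonical comparison map $\lim_{\cI} H \lrar \lim_{\cJ} H \circ p$ is a bijection. Therefore $(\nu_p)_*$ is a bijection for every pro-object $W$, and the Yoneda lemma lets us conclude that $\nu_p$ is an isomorphism in $\Pro(\cC)$.

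The argument is short, and the only genuinely delicate point is the identification in the second paragraph of postcomposition with $\nu_{p,\Id}$ as the canonical limit-restriction map; once this is in place, coinitiality of $p$ does all the work, and the inner filtered colimits defining $H$ play no active role. (One could equally run the dual argument using $W$ as the \emph{target} and the induced map on the inner colimits along $p^{\op}$, but taking $W$ as the source is cleaner since it applies coinitiality directly to the outer limit rather than after passing to opposite categories.)
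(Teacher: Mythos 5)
Your proof is correct and is essentially the paper's own argument unwound at the level of Hom-sets: the paper observes that the maps $X \lrar X_i$ exhibit $X$ as the limit in $\Pro(\cC)$ of its diagram of simple objects and that restriction along a coinitial functor preserves limits (Theorem~\ref{t:cofinal}), which, tested against an arbitrary pro-object $W$ as you do, is precisely your identification of $(\nu_p)_*$ with the canonical map $\lim_{\cI} H \lrar \lim_{\cJ} H \circ p$ followed by the cofinality theorem and Yoneda. The only difference is presentational: you carry out explicitly the bookkeeping the paper leaves implicit, and you invoke the classical $1$-categorical form of cofinality (connectedness of the comma categories suffices for $\Set$-valued diagrams, exactly as the paper's remark following Theorem~\ref{t:cofinal} licenses), whereas the paper cites the abstract limit-preservation statement directly.
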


\begin{proof}
For any pro-object $X = \{X_i\}_{i \in \cI}$, the maps $X \lrar X_i$ exhibit $X$ as the limit, in $\Pro(\cC)$, of the diagram $i \mapsto X_i$. Since restriction along coinitial maps preserves limits (see Theorem~\ref{t:cofinal}) it follows that the induced map $\nu_{p}:X\lrar p^*X$ is an isomorphism.
\end{proof}

\begin{define}\label{d:reindex}
We refer to the isomorphisms $\nu_{p}:X\lrar p^*X$ described in Lemma~\ref{l:cofinal} as \textbf{reindexing isomorphisms}.
\end{define}

\begin{define}\label{def CDS-2}
Let $\cT$ be a small poset. We say that $\cT$ is \textbf{inverse} if it is both cofinite and cofiltered.
\end{define}

The following lemma (and variants thereof) is quite standard.
\begin{lem}\label{l:cofinal_CDS}
Let $\cI$ be a small cofiltered category. Then there exists an inverse poset $\cT$ (see Definition~\ref{def CDS-2}) and a coinitial functor $p:\cT \lrar \cI$.
\end{lem}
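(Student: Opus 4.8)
The plan is to realize $\cT$ as a poset of \emph{finite cone configurations} in $\cI$, so that the cofiniteness required by Definition~\ref{def CDS-2} is essentially automatic, and then to verify coinitiality through Lemma~\ref{l:coincide}. By a finite cone configuration I mean a finite nonempty set of objects $O \subseteq \Ob(\cI)$, a finite set $M \subseteq \Mor(\cI)$ of morphisms between objects of $O$, together with a distinguished apex $c \in O$ and a compatible family of maps $c \lrar o$ in $M$ exhibiting $c$ as a cone over the finite diagram $(O,M)$. These configurations carry an evident preorder by inclusion, and the key point is that a given configuration has only finitely many sub-configurations; hence any antisymmetric quotient of this preorder will be cofinite.

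First I would turn this preorder into an honest poset. Passing to the associated poset (identifying two configurations that include one another, which forces $O=O'$ and $M=M'$) yields a genuine partial order whose elements are recorded by the pairs $(O,M)$, and cofiniteness is preserved since strict predecessors correspond to proper subsets of the finite sets $O$ and $M$. To define the functor $p \colon \cT \lrar \cI$ I would fix once and for all well-orderings of $\Ob(\cI)$ and $\Mor(\cI)$ and let $p$ send a configuration to its $\prec$-least apex, equipped with its $\prec$-least compatible cone. An inclusion $(O,M) \subseteq (O',M')$ is then sent to the cone map from the apex of the larger configuration down to that of the smaller (which lies in $O \subseteq O'$); functoriality follows from the cone compatibility of the larger apex, since for a chain of inclusions the relevant triangle of cone maps commutes by construction.

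Next I would check that $\cT$ is cofiltered. Given two configurations, Lemma~\ref{l:finite} (applied to the finite diagram spanned by the union of their objects and morphisms) produces an object of $\cI$ mapping compatibly to all of them; adjoining this object as a new apex yields a configuration containing both, which is the required upper bound. Together with cofiniteness this exhibits $\cT$ as an inverse poset. For coinitiality, since $\cT$ is now cofiltered it suffices by Lemma~\ref{l:coincide} to show that $\cT_{/i}$ is connected for every $i \in \cI$. It is nonempty because the one-object configuration on $i$ maps to $i$ by the identity. Given two objects of $\cT_{/i}$, I would apply Lemma~\ref{l:finite} once more, this time to the finite diagram obtained by adjoining $i$ together with the two structure maps to $i$: the resulting apex dominates both configurations and carries a map to $i$ compatible with the given ones, producing a common upper bound inside $\cT_{/i}$ and hence a zig-zag joining the two objects.

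The main obstacle is not any single verification but the bookkeeping in the first step: arranging the finite configurations into a genuine cofinite \emph{poset} carrying a well-defined functor $p$, that is, reconciling the canonical (well-ordered) choice of apex with the requirement that apices refine as configurations grow. Once this is set up correctly, both the cofiltration and the coinitiality of $p$ reduce to clean applications of Lemmas~\ref{l:finite} and~\ref{l:coincide}.
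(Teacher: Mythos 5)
Your argument is correct in substance, but it takes a genuinely different route from the paper: the paper's entire proof of Lemma~\ref{l:cofinal_CDS} is a one-line citation of \cite[Proposition 5.3.1.16]{Lur09}, whereas you give the self-contained classical reindexing construction (in the spirit of \cite{SGA4-I} and \cite{EH76}), verifying cofilteredness of $\cT$ via the paper's Lemma~\ref{l:finite} and reducing coinitiality to connectivity of the slices $\cT_{/i}$ via Lemma~\ref{l:coincide}. What your approach buys is independence from the $\infty$-categorical machinery and a transparent reason for cofiniteness (strict predecessors of a configuration are sub-pairs of two finite sets); what the citation buys is brevity.

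Three points in your write-up should be made explicit. First, Lemma~\ref{l:finite} is stated for finite \emph{categories}, while the union of two configurations is only a finite graph of morphisms, and one cannot close it under composition without risking infiniteness (a single non-idempotent endomorphism can generate infinitely many composites); however, the proof of Lemma~\ref{l:finite} uses only the underlying graph, so you should invoke that argument (or a graph version of the lemma) rather than the lemma as stated. Second, your well-ordering device does succeed, and it is worth isolating why: the value of $p$ on an inclusion $(O,M)\subseteq(O',M')$ is the component at the smaller configuration's canonical apex of the \emph{larger} configuration's canonical cone, and every commutativity you need is an instance of that cone's compatibility with a morphism already stored in the larger morphism set --- for functoriality, the middle configuration's canonical cone map, which lies in $M'\subseteq M''$; for connectivity of $\cT_{/i}$, the two structure maps to $i$. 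This is exactly why you must adjoin $i$ \emph{together with those structure maps into the new configuration}, as your phrasing indicates: had you applied Lemma~\ref{l:finite} only to the union of the two configurations and kept $i$ outside, the canonical apex selected by the well-ordering could differ from the constructed apex $j$, and nothing would force the two triangles over $i$ to commute. Third, you should require in the definition of a configuration that the cone component at the apex is the identity (and hence that this identity belongs to $M$): compatibility alone only forces that component to be idempotent, and without this normalization $p$ need not send identity morphisms of $\cT$ to identities. With these points spelled out, your proof is complete.
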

\begin{proof}
A proof of this can be found, for example, in~\cite[Proposition 5.3.1.16]{Lur09}.
\end{proof}

\begin{cor}\label{c:index-poset}
Any pro-object is isomorphic to a pro-object which is indexed by an inverse poset.
\end{cor}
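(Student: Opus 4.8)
The plan is to obtain this corollary as an immediate consequence of the two preceding lemmas, which between them already carry all of the real content. Let $X = \{X_i\}_{i \in \cI}$ be an arbitrary pro-object in $\Pro(\cC)$, so that by Definition~\ref{d:pro-discrete} the indexing category $\cI$ is small and cofiltered.

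First I would invoke Lemma~\ref{l:cofinal_CDS} to produce an inverse poset $\cT$ (that is, a small poset which is both cofinite and cofiltered in the sense of Definition~\ref{def CDS-2}) together with a coinitial functor $p: \cT \lrar \cI$. The next step is to transport $X$ across this functor: since $\cT$ and $\cI$ are both small cofiltered categories and $p$ is coinitial, Lemma~\ref{l:cofinal} applies verbatim and tells us that the reindexing morphism $\nu_p: X \lrar p^*X$ is an isomorphism in $\Pro(\cC)$. By construction $p^*X = X \circ p$ is a diagram indexed by the inverse poset $\cT$, hence a pro-object indexed by an inverse poset, and this exhibits $X$ as isomorphic to the required object.

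Since the argument is purely a concatenation of Lemmas~\ref{l:cofinal_CDS} and~\ref{l:cofinal}, I do not expect any genuine obstacle here: the substantive work --- the cofinality argument underlying the reindexing isomorphism, and the combinatorial construction of an inverse poset mapping coinitially into an arbitrary cofiltered category --- has already been done. The only point one must check in order to apply Lemma~\ref{l:cofinal} is that $\cT$ is itself cofiltered, but this is built into the definition of an inverse poset, so the verification is immediate.
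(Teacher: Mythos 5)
Your proof is correct and is exactly the argument the paper intends: the corollary is stated immediately after Lemma~\ref{l:cofinal_CDS} precisely because it follows by combining that lemma with the reindexing isomorphism of Lemma~\ref{l:cofinal}. No gaps — the only hypothesis to check for Lemma~\ref{l:cofinal} is that $\cT$ is cofiltered, which, as you note, is part of the definition of an inverse poset.
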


Although not every map of pro-objects is induced by a natural transformation, it is always isomorphic to one. More specifically, we recall the following lemma:

\begin{lem}\label{l:every map natural}
Let $f: \{Z_i\}_{i\in\cI} \lrar \{X_j\}_{j\in\cJ}$ be a map in $\Pro(\cC)$. Then there exists a cofiltered category $\cT$, coinitial functors $p:\cT\to\cI$ and $q:\cT\to\cJ$, a natural transformation $p^*Z\to q^*X$ and a commutative square in $\Pro(\cC)$ of the form
\[\xymatrix{Z\ar[r]^f \ar[d]_{\nu_{p}} & X \ar[d]^{\nu_{q}}\\
             p^*Z\ar[r] & q^*X.}\]
\end{lem}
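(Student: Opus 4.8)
The plan is to realize $f$ by an explicit \emph{category of representatives}. Recall that a morphism $f\colon Z\to X$ is an element of $\lim_{j\in\cJ}\colim_{i\in\cI}\Hom_\cC(Z_i,X_j)$, so it assigns to each $j\in\cJ$ a class $f_j\in\colim_i\Hom_\cC(Z_i,X_j)$, compatibly with the morphisms of $\cJ$. I would define $\cT$ to be the category whose objects are triples $(i,j,\phi)$ with $i\in\cI$, $j\in\cJ$, and $\phi\colon Z_i\to X_j$ a morphism of $\cC$ whose image in $\colim_i\Hom_\cC(Z_i,X_j)$ is $f_j$; a morphism $(i,j,\phi)\to(i',j',\phi')$ is a pair $(a\colon i\to i',\,b\colon j\to j')$ in $\cI\times\cJ$ with $\phi'\circ Z_a=X_b\circ\phi$. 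The projections $p\colon\cT\to\cI$ and $q\colon\cT\to\cJ$ are then functors, and $(i,j,\phi)\mapsto\phi$ defines a natural transformation $\eta\colon p^*Z\to q^*X$, natural precisely by the defining square. Note that $\cT$ is small, since it is contained in $\Ob\cI\times\Ob\cJ\times\Mor\cC$.

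First I would prove that $\cT$ is cofiltered by verifying the three conditions of Definition~\ref{d:cofiltered}. Nonemptiness is immediate from choosing any $j$ together with a representative of $f_j$. For condition (2), given two objects I would use cofilteredness of $\cI$ and of $\cJ$ to find common sources $i\to i_1,i_2$ and $j\to j_1,j_2$, then pick a representative $\phi\colon Z_i\to X_j$ of $f_j$ and, after a single further refinement in $\cI$, arrange that both naturality squares commute on the nose. The mechanism is that $X_{b_k}\circ\phi$ and $\phi_k\circ Z_{a_k}$ both represent $f_{j_k}$ in the filtered colimit $\colim_i\Hom_\cC(Z_i,X_{j_k})$ (using the compatibility of the $f_j$), hence become strictly equal after precomposing with a suitable $Z_{i^*\to i}$; cofilteredness of $\cI$ lets me perform the two needed refinements simultaneously. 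Condition (3) is handled by the same device together with the coequalizing properties of $\cI$ and $\cJ$.

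Next I would show that $p$ and $q$ are coinitial. Since $\cT$ is now known to be cofiltered, Lemma~\ref{l:coincide} reduces this to proving that the slice categories $\cT_{/i}$ and $\cT_{/j}$ are connected for every $i\in\cI$ and $j\in\cJ$. Nonemptiness of $\cT_{/i}$ follows by refining any object of $\cT$ to lie over $i$, and connectedness follows by taking a common predecessor in the cofiltered $\cT$ and then coequalizing the two induced maps to $i$ via condition (3) in $\cI$, lifting the coequalizer back to $\cT$ by pulling $\phi$ back along the chosen morphism. The argument for $\cT_{/j}$ is identical, using $\cJ$.

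Finally, with the reindexing isomorphisms $\nu_p\colon Z\to p^*Z$ and $\nu_q\colon X\to q^*X$ of Lemma~\ref{l:cofinal}, I would check the square commutes by evaluating both composites in $\Hom_{\Pro(\cC)}(Z,q^*X)=\lim_{(i,j,\phi)\in\cT}\colim_{i'}\Hom_\cC(Z_{i'},X_j)$ componentwise: at the object $(i,j,\phi)$ the composite $\eta\circ\nu_p$ contributes the class of $\phi$, while $\nu_q\circ f$ contributes $f_j$, and these agree by the very definition of the objects of $\cT$. I expect the main obstacle to be the bookkeeping in the cofilteredness proof, namely producing after finitely many refinements a single representative $\phi$ that makes all required squares commute strictly; once that is in place, coinitiality and the commutativity of the square are formal consequences of Lemmas~\ref{l:coincide} and~\ref{l:cofinal}.
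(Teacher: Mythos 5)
Your proof is correct, and it is essentially the same argument as the one the paper relies on: the paper gives no proof of its own but cites~\cite[Appendix 3.2]{AM69}, where precisely your category of representatives (the triples $(i,j,\phi)$ with $\phi$ representing $f_j$, projecting to $\cI$ and $\cJ$) is shown to be cofiltered with coinitial projections. The bookkeeping you flag --- merging the finitely many refinements in $\cI$ using both the common-source and coequalizing axioms of a cofiltered category --- does go through exactly as you sketch, and the reduction of coinitiality to connectedness of the slices via Lemma~\ref{l:coincide} matches the paper's toolkit.
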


\begin{proof}
This is shown in ~\cite[Appendix 3.2]{AM69}.
\end{proof}

Corollary~\ref{c:index-poset} demonstrates that isomorphic pro-objects might have non isomorphic indexing categories. Thus the assignment of the indexing category to every pro-object is non-functorial. It is often useful to assign functorially a ``canonical indexing category'' to every pro-object. This will be done in Definition~\ref{d:canonical index}.

Let $\cC$ be a category, $X = \{X_i\}_{i \in \cI} \in \Pro(\cC)$ a pro-object and $f: X \lrar Y$ a map in $\Pro(\cC)$ where $Y \in \cC \subseteq \Pro(\cC)$ a simple object. Let $\cH:\cI^{\op} \lrar \Set$ be the functor which associates to $i \in \cI$ the set of maps $g: X_i \lrar Y$ such that the composite $X \lrar X_i \x{g}{\lrar} Y$ is equal to $f$. Let $\cG(\cI^{\op},\cH)$ the Grothendieck construction of $\cH$.

\begin{lem}\label{l:lem-1}
The category $\cG(\cI^{\op},\cH)$ is weakly contractible
\end{lem}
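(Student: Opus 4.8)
The plan is to identify $\cG(\cI^{\op},\cH)$ as a \emph{filtered} category and then deduce weak contractibility from the standard fact (reproved here with the paper's own tools) that the nerve of a filtered category is weakly contractible. First I would unwind the definitions. Since $Y$ is a simple object, the hom-set is the filtered colimit
\[ \Hom_{\Pro(\cC)}(X,Y) \;=\; \colim_{i \in \cI^{\op}} \Hom_{\cC}(X_i,Y), \]
which is filtered because $\cI$ is cofiltered, and $\cH(i)$ is by definition the set of those $g\colon X_i \lrar Y$ whose class in this colimit equals $f$. Thus an object of $\cG:=\cG(\cI^{\op},\cH)$ is a pair $(i,g)$ with $g$ representing $f$, and a morphism $(i,g)\lrar(i',g')$ is a morphism $\phi\colon i' \lrar i$ in $\cI$ with $g\circ X_\phi = g'$ (here $X_\phi\colon X_{i'}\lrar X_i$).

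The central step is to verify that $\cG$ is filtered, i.e.\ that $\cG^{\op}$ is cofiltered in the sense of Definition~\ref{d:cofiltered}. Non-emptiness is immediate, since $f$ is represented by at least one $g$. For the filtered analogue of condition (2), given objects $(i,g)$ and $(i',g')$, the fact that $g$ and $g'$ represent the \emph{same} class $f$ in the filtered colimit furnishes an index $k$ and morphisms $a\colon k\lrar i$, $b\colon k\lrar i'$ in $\cI$ with $g\circ X_a = g'\circ X_b$; setting $g'' := g\circ X_a = g'\circ X_b$ gives an object $(k,g'')$ receiving morphisms from both $(i,g)$ and $(i',g')$. For condition (3), a parallel pair $\alpha,\beta\colon (i,g)\lrar(i',g')$ is given by morphisms $\phi_\alpha,\phi_\beta\colon i'\lrar i$ in $\cI$ with $g\circ X_{\phi_\alpha}=g'=g\circ X_{\phi_\beta}$; applying condition (3) of Definition~\ref{d:cofiltered} for $\cI$ to this parallel pair produces $\psi\colon k\lrar i'$ with $\phi_\alpha\circ\psi=\phi_\beta\circ\psi$, and $\psi$ defines a morphism out of $(i',g')$ coequalizing $\alpha$ and $\beta$. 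I expect the use of equality-in-a-filtered-colimit in condition (2) to be the only real content of the argument, since it is precisely the constraint that every $g$ represents the \emph{fixed} map $f$ that forces $\cG$ to be filtered.

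Finally I would deduce weak contractibility by reusing the mechanism of Lemma~\ref{l:cofinal-fib} rather than citing contractibility of filtered categories as a black box. By Cisinski's asphericity lemma~\cite[Lemme d'asph\'ericit\'e p. 509]{Cis10}, it suffices to connect every functor $F\colon\cT\lrar\cG$ from a finite poset $\cT$ to a constant functor by a zig-zag of natural transformations. Applying the dual of Lemma~\ref{l:finite} to the filtered category $\cG$ (with $\cE=\cT$), such a functor extends to $\ovl{F}\colon\cT^{\triangleright}\lrar\cG$; the legs of the cone assemble into a single natural transformation from $F$ to the constant functor at the image of the cone point, which is exactly the required homotopy. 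Hence $\Ne(\cG)$ is weakly contractible. The main obstacle here is not contractibility as such but keeping the variance straight in the Grothendieck construction, so that the morphisms of $\cG$ correspond to the \emph{backwards} maps $\phi\colon i'\lrar i$ of $\cI$ and the three filtered conditions come out in the correct direction.
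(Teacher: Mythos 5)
Your proof is correct, but it takes a genuinely different route from the paper's. The paper stays in the homotopy-colimit framework it has already set up: by Thomason's theorem \cite{Th79}, $\Ne\cG(\cI^{\op},\cH)$ is a model for $\hocolim_{\cI^{\op}}\cH$; since $\cI^{\op}$ is filtered this agrees with the ordinary colimit of the diagram of sets, and that colimit is computed to be a point by applying $\colim_{\cI^{\op}}$ to the Cartesian square defining $\cH(i)$ as the fiber of $\Hom_{\cC}(X_i,Y)\lrar\Hom_{\Pro(\cC)}(X,Y)$ over $f$, using that filtered colimits of sets commute with finite limits (\cite[Theorem 9.5.2]{Sch72}) together with the defining formula $\Hom_{\Pro(\cC)}(X,Y)=\colim_i\Hom_{\cC}(X_i,Y)$. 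You instead verify by hand that $\cG(\cI^{\op},\cH)$ is a \emph{filtered category} -- your variance bookkeeping and your three verifications are all right, and the only substantive input, the characterization of equality of classes in a filtered colimit of sets, is exactly the same combinatorial fact underlying the paper's identification of the colimit with $\ast$ -- and then you deduce contractibility of filtered categories from Cisinski's asphericity lemma \cite{Cis10} plus the dual of Lemma~\ref{l:finite} (a single natural transformation to the constant functor at the cone point), rather than citing this classical fact or invoking hocolim machinery. The paper's argument is shorter given that Thomason's theorem is already deployed, and its computational pattern is reused verbatim in the proof of Lemma~\ref{l:cat-car}, which reduces to Lemma~\ref{l:lem-1}; your argument is more elementary and proves something strictly stronger, namely that $\cG(\cI^{\op},\cH)$ is filtered, which (since $\cG(\cI^{\op},\cH)^{\op}$ is precisely the slice $\cI_{/(X\to Y)}$ over an object of $\cC_{X/}$) is in effect a direct proof of the cofilteredness assertion that the paper only recovers afterwards in Corollary~\ref{c:pro} by combining Lemmas~\ref{l:lem-1}, \ref{l:cofinal} and~\ref{l:coinit-cofiltered}.
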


\begin{proof}
By the main result of~\cite{Th79} the nerve of $\cG(\cI^{\op},\cH)$ is a model for the homotopy colimit of the functor $\cH: \cI^{\op} \lrar \Set$. Since $\cI^{\op}$ is filtered this homotopy colimit it weakly equivalent to the actual colimit of the diagram. It will hence suffice to show that $\colim_{i \in \cI^{\op}} \cH = \ast$. Now the functor $\cH$ fits into a Cartesian square of functors of the form
\[\xymatrix{
\cH(i) \ar[r]\ar[d] & \Hom_{\cC}(X_i, Y) \ar[d] \\
\ast \ar[r] & \Hom_{\Pro(\cC)}(X,Y) \\
}
\]
where the image of the bottom horizontal map is the point $f \in \Hom_{\Pro(\cC)}(X,Y)$. Since $\cI^{\op}$ is filtered the square
\[ \xymatrix{
\displaystyle\mathop{\colim}_{i \in \cI^{\op}}\cH(i) \ar[r]\ar[d] & \displaystyle\mathop{\colim}_{i \in \cI^{\op}}\Hom_{\cC}(X_i, Y) \ar[d] \\
\ast \ar[r] & \Hom_{\Pro(\cC)}(X,Y) \\
}
\]
is a Cartesian square of sets (see~\cite[Theorem 9.5.2]{Sch72}). By definition of $\Pro(\cC)$ the right vertical map is an isomorphism. It follows that the left vertical map is an isomorphism as well as desired.
\end{proof}

\begin{cor}\label{c:pro}
Let $\cC$ be a category and $X = \{X_i\}_{i \in \cI} \in \Pro(\cC)$ a pro-object. Then the natural functor
\[ \cI \lrar \cC_{X/} \]
is coinitial and $\cC_{X/}$ is cofiltered. In particular, if $\cC$ is small then the pro-object $\cC_{X/} \lrar \cC$ given by $(X \lrar Y) \mapsto Y$ is naturally isomorphic to $X$.
\end{cor}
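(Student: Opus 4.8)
The three assertions will follow once we show that the canonical functor $\vphi: \cI \lrar \cC_{X/}$, sending $i \in \cI$ to the pair $(X_i, \pi_i)$ consisting of $X_i$ together with the projection $\pi_i: X \lrar X_i$, is coinitial in the sense of Definition~\ref{d:cofinal}. The plan is to establish coinitiality first, and then to deduce cofilteredness and the reindexing statement by purely formal means.

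To prove coinitiality I would fix an object $d = (Y, f)$ of $\cC_{X/}$, where $Y \in \cC$ is a simple object and $f: X \lrar Y$ is a map in $\Pro(\cC)$, and analyse the category $\cI_{/d} = \cI \times_{\cC_{X/}} (\cC_{X/})_{/d}$. Unwinding the definitions, an object of $\cI_{/d}$ is a pair $(i, g)$ with $i \in \cI$ and $g: X_i \lrar Y$ a morphism in $\cC$ whose composite with $\pi_i$ equals $f$, while a morphism $(i, g) \lrar (i', g')$ is a morphism $\beta: i \lrar i'$ in $\cI$ with $g' \circ X(\beta) = g$. This is precisely the data controlled by the functor $\cH: \cI^{\op} \lrar \Set$ of Lemma~\ref{l:lem-1} (for the present $Y$ and $f$): the objects of $\cI_{/d}$ are exactly $\coprod_{i} \cH(i)$, and a short check of the morphisms identifies $\cI_{/d}$ with the opposite $\cG(\cI^{\op}, \cH)^{\op}$ of the Grothendieck construction appearing there. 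Since the nerve of a category and the nerve of its opposite have the same weak homotopy type, Lemma~\ref{l:lem-1} yields that $\cI_{/d}$ is weakly contractible. As $d$ was arbitrary, $\vphi$ is coinitial.

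The remaining two claims are then immediate. Since $\cI$ is small and cofiltered (being the indexing category of a pro-object, see Definition~\ref{d:pro-discrete}) and $\vphi$ is coinitial, Lemma~\ref{l:coinit-cofiltered} shows that $\cC_{X/}$ is cofiltered. When moreover $\cC$ is small, $\cC_{X/}$ is small --- its objects are pairs $(Y, f)$ with $Y$ ranging over a set and $f$ ranging over the set $\Hom_{\Pro(\cC)}(X, Y)$ --- so the assignment $(Y, f) \too Y$ defines a genuine pro-object $\widetilde{X}: \cC_{X/} \lrar \cC$. By construction $\vphi^* \widetilde{X} = \widetilde{X} \circ \vphi = X$, and applying Lemma~\ref{l:cofinal} to the coinitial functor $\vphi$ produces the reindexing isomorphism $\nu_{\vphi}: \widetilde{X} \x{\simeq}{\lrar} \vphi^* \widetilde{X} = X$, as desired.

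The main obstacle is the bookkeeping in the second paragraph: one must match the variance of $\cH$ and the direction of the morphisms correctly, so that $\cI_{/d}$ is identified with $\cG(\cI^{\op}, \cH)^{\op}$ rather than with something that does not appear in Lemma~\ref{l:lem-1}. The passage to opposite categories is harmless for weak contractibility, but it is exactly what makes the identification with Lemma~\ref{l:lem-1} literal, and getting this right is the only genuinely delicate point; everything else is a formal consequence of the coinitiality established there.
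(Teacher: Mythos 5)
Your proposal is correct and takes essentially the same route as the paper: the paper's proof of Corollary~\ref{c:pro} is literally ``Combine Lemmas~\ref{l:lem-1}, \ref{l:cofinal} and~\ref{l:coinit-cofiltered},'' which is exactly your argument, with your second paragraph simply making explicit the identification of $\cI_{/d}$ with (the opposite of) the Grothendieck construction $\cG(\cI^{\op},\cH)$ that the paper leaves implicit. Your variance bookkeeping is right, and as you note the passage to opposites is harmless since a category and its opposite have weakly equivalent nerves.
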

\begin{proof}
Combine Lemmas~\ref{l:lem-1},~\ref{l:cofinal} and~\ref{l:coinit-cofiltered}.
\end{proof}

\begin{define}\label{d:canonical index}
Let $X = \{X_i\}_{i \in \cI} \in \Pro(\cC)$ be a pro-object.
We refer to $\cC_{X/}$ as the \textbf{canonical indexing category} of $X$ and to $\cJ$ as the \textbf{actual indexing category} of $X$.
\end{define}
\subsection{Pro-categories in higher category theory}

In~\cite{Lur09} Lurie defined pro-categories for \textbf{small} $\infty$-categories and in~\cite{Lur11b} the definition was adjusted to accommodate \textbf{accessible} $\infty$-categories which admit finite limits (such $\infty$-categories are typically not small). The purpose of this subsection is to extend these definitions to the setting of general \textbf{locally small} $\infty$-categories (see Definition \ref{d:locally-small-2}).
It is from this point on in the paper that set theoretical issues of ``largeness" and ``smallness" begin to play a more important role, and the interested reader might want to go back to Section \ref{ss:large} to recall our setting and terminology.

We denote by $\Fun_{\sm}(\cC,\cD) \subseteq \Fun(\cC,\cD)$ the full subcategory spanned by small functors (see Definition~\ref{d:small functor}).

\begin{lem}\label{l:locally small}
If $\cD$ is locally small, then the $\infty$-category $\Fun_{\sm}(\cC,\cD)$ is locally small.
\end{lem}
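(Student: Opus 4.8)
The claim is that if $\cD$ is locally small, then $\Fun_{\sm}(\cC,\cD)$ is locally small. Here $\Fun_{\sm}(\cC,\cD)$ is the full subcategory of $\Fun(\cC,\cD)$ spanned by small functors, where a functor $f:\cC\lrar\cD$ is small (Definition~\ref{d:small functor}) if it is a left Kan extension of its restriction to some full small sub-$\infty$-category $\cC_0\subseteq\cC$. So I need to show that for any two small functors $f,g:\cC\lrar\cD$, the mapping space $\Map_{\Fun(\cC,\cD)}(f,g)$ is essentially small.

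**The plan.** The key idea is to exploit the definition of smallness to reduce a mapping space in a functor $\infty$-category to a mapping space over a \emph{small} index. First I would use the criterion of Remark~\ref{r:equiv small} to fix, for each of $f$ and $g$, a full small sub-$\infty$-category witnessing smallness; by taking the (small) full subcategory generated by their union, I may assume $f$ and $g$ are both left Kan extensions of their restrictions along a single inclusion $h:\cC_0\hrar\cC$ with $\cC_0$ small. The crucial point is then the adjunction property of left Kan extension: if $f\simeq h_!(f_0)$ is a left Kan extension, then for any functor $g$ one has a natural equivalence
\[ \Map_{\Fun(\cC,\cD)}(f,g)\simeq \Map_{\Fun(\cC_0,\cD)}(f_0,g\circ h). \]
This uses only that $h_!$ is left adjoint to restriction $h^*$ (\cite[Proposition 4.3.3.7 and its corollaries]{Lur09}); note that smallness of $g$ is not even needed for this reduction, only smallness of the source $f$.

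**Finishing.** Having reduced to $\Map_{\Fun(\cC_0,\cD)}(f_0,g\circ h)$ with $\cC_0$ small, I would invoke the standard fact that for a small $\infty$-category $\cC_0$ and a locally small $\infty$-category $\cD$, the functor $\infty$-category $\Fun(\cC_0,\cD)$ is locally small. The mapping space there can be computed as an end (a limit indexed by the twisted arrow category of $\cC_0$, which is small) of the mapping spaces $\Map_{\cD}(f_0(x), (g\circ h)(y))$; each such space is essentially small since $\cD$ is locally small, and a small limit of essentially small spaces is essentially small (the relevant limit lands in $\cS_\infty\subseteq\ovl{\cS}_\infty$). Hence $\Map_{\Fun(\cC,\cD)}(f,g)$ is equivalent to a small simplicial set, which is exactly the assertion that $\Fun_{\sm}(\cC,\cD)$ is locally small.

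**Main obstacle.** The conceptual content is light; the delicate part is entirely bookkeeping with the set-theoretic framework of \S\ref{ss:large}. I must ensure that the small subcategory $\cC_0$ chosen to witness smallness of $f$ and $g$ genuinely lies in the smallest universe (a $\kappa$-category), and that all the intermediate $\infty$-categories $\Fun(\cC_0,\cD)$, twisted arrow categories, and end-limits are being formed in the correct larger universe so that Lurie's cited results apply verbatim. The argument itself — adjunction plus ``small limit of small spaces is small'' — is routine, so the real care goes into tracking cardinals and confirming that the left Kan extension adjunction of Remark~\ref{r:equiv small} is available at the relevant level.
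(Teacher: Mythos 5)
Your proof is correct and follows essentially the same route as the paper: choose one small full subcategory $\cC_0 \subseteq \cC$ from which $f$ (and $g$) are left Kan extended, use the Kan extension adjunction to identify $\Map_{\Fun(\cC,\cD)}(f,g)$ with $\Map_{\Fun(\cC_0,\cD)}(f|_{\cC_0},g|_{\cC_0})$, and conclude since $\Fun(\cC_0,\cD)$ is locally small. You merely spell out details the paper leaves implicit (the end over the twisted arrow category, and the accurate observation that only smallness of the source $f$ is used in the reduction), which is fine.
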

\begin{proof}
Let $f,g: \cC \lrar \cD$ be two small functors. Then there exists a small full-subcategory $\cC_0 \subseteq \cC$ such that both $f,g$ are left Kan extended from $\cC_0$. Then
\[ \Map_{\Fun(\cC,\cD)}(f,g) \simeq \Map_{\Fun(\cC_0,\cD)}(f|_{\cC_0},g|_{\cC_0}) \]
and the latter space is small.
\end{proof}

\begin{lem}\label{l:colimit}
The full subcategory $\Fun_{\sm}(\cC,\cD) \subseteq \Fun(\cC,\cD)$ is closed under small colimits.
\end{lem}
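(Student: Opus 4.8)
The plan is to reduce the whole diagram to a single small full subcategory of $\cC$, and then to recognize ``being left Kan extended from a fixed subcategory'' as the essential image of a colimit-preserving, fully faithful functor. So let $p\colon A \lrar \Fun(\cC,\cD)$ be a diagram indexed by a small $\infty$-category $A$ whose colimit $f := \colim_A p$ exists in $\Fun(\cC,\cD)$, and suppose each $f_\alpha := p(\alpha)$ is small. By Definition~\ref{d:small functor} I may choose for every $\alpha$ a small full sub-$\infty$-category $\cC_\alpha \subseteq \cC$ such that $f_\alpha$ is a left Kan extension of $f_\alpha|_{\cC_\alpha}$. Let $\cC_0 \subseteq \cC$ be the full sub-$\infty$-category spanned by the objects of all the $\cC_\alpha$. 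Since $\Ob(A)$ is small and each $\cC_\alpha$ is small, $\cC_0$ is again small, a small-indexed union of small sets of objects being small.

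The first genuine step is to check that each $f_\alpha$ is already left Kan extended from the enlarged category $\cC_0$. Writing $\mathrm{Lan}$ for left Kan extension, transitivity along $\cC_\alpha \subseteq \cC_0 \subseteq \cC$ (see~\cite[\S 4.3]{Lur09}) gives $f_\alpha \simeq \mathrm{Lan}_{\cC_0}^{\cC}(h_\alpha)$ with $h_\alpha := \mathrm{Lan}_{\cC_\alpha}^{\cC_0}(f_\alpha|_{\cC_\alpha})$. Since $\cC_0 \subseteq \cC$ is a full, hence fully faithful, inclusion, restricting a left Kan extension back to $\cC_0$ recovers its input (see~\cite[after Proposition 4.3.3.7]{Lur09}), so $f_\alpha|_{\cC_0} \simeq h_\alpha$ and therefore $f_\alpha \simeq \mathrm{Lan}_{\cC_0}^{\cC}(f_\alpha|_{\cC_0})$. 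Thus every $f_\alpha$ lies in the essential image $\cE \subseteq \Fun(\cC,\cD)$ of the functor $L := \mathrm{Lan}_{\cC_0}^{\cC}$, which is exactly the full subcategory of functors left Kan extended from $\cC_0$.

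Now I would conclude by colimit-preservation. The functor $L$ is a fully faithful left adjoint to restriction $R := (-)|_{\cC_0}\colon \Fun(\cC,\cD) \lrar \Fun(\cC_0,\cD)$, and $\cE$ is precisely the locus where the counit $\eps\colon L R \Rightarrow \Id$ is an equivalence. Both $L$ and $R$ preserve small colimits: $L$ as a left adjoint, and $R$ because colimits in these functor categories are computed objectwise and $R$ is compatible with the evaluations $\ev_c$ for $c \in \cC_0$. Hence $LR$ preserves the colimit $f$, and by naturality of $\eps$ the map $\eps_f\colon LR(f) \lrar f$ is identified, under $LR(f) \simeq \colim_A (LR \circ p)$, with the colimit of the maps $\eps_{f_\alpha}\colon LR(f_\alpha) \lrar f_\alpha$. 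Each of these is an equivalence since $f_\alpha \in \cE$, so $\eps_f$ is an equivalence as well. Therefore $f$ is left Kan extended from the small category $\cC_0$, i.e. $f$ is small.

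The main obstacle I anticipate is the second step: verifying that the Kan-extension property is inherited under the enlargement $\cC_\alpha \rightsquigarrow \cC_0$. This is the place where one must combine transitivity of left Kan extensions with the fact that restriction along a fully faithful inclusion inverts the Kan-extension unit, and also keep the set-theoretic bookkeeping honest so that $\cC_0$ stays small. The colimit-preservation of $R$ is then routine once the ambient colimit is known to be objectwise, which is the case in all situations of interest (for instance whenever $\cD$ admits small colimits, as for $\cD = \cS_\infty$).
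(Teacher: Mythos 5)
Your proof is correct and follows essentially the same route as the paper: the paper's proof likewise passes to a single small full subcategory $\cC_0 \subseteq \cC$ from which every $f_\alpha$ is left Kan extended, and then concludes by the fact that left Kan extension commutes with colimits. Your counit argument (with $L \dashv R$, $L$ fully faithful) and the explicit enlargement step via transitivity of Kan extensions merely spell out the details that the paper leaves implicit.
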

\begin{proof}
Given a family of small functors $f_i:\cC \lrar \cD$ indexed by a small $\infty$-category $\cI$, we may find a small full subcategory $\cC_0 \subseteq \cC$ such that $f_i$ is a left Kan extension of $f_i|_{\cC_0}$ for every $i \in \cI$. Since left Kan extension commute with colimits it follows that $\colim_i f_i$ is a left Kan extension of $\colim_i f_i|_{\cC_0}$.
\end{proof}

\begin{lem}\label{l:small rep}
If $f:\cC \lrar \ovl{\cS}_\infty$ (see Definition~\ref{d:spaces}) is a small colimit of corepresentable functors then $f$ is small. The converse holds if $f$ takes values in essentially small spaces.
\end{lem}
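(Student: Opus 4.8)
The plan is to prove the two implications separately: the forward direction follows immediately from the fact that corepresentables are small together with Lemma~\ref{l:colimit}, while the converse rests on the covariant density theorem applied over the witnessing small subcategory. Throughout, a corepresentable functor means one of the form $\Map_{\cC}(c,-): \cC \lrar \ovl{\cS}_\infty$, and $\ast \in \ovl{\cS}_\infty$ denotes the one-point space.

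For the forward direction, I would first record that every corepresentable $\Map_{\cC}(c,-)$ is small. Let $h_c: \{c\} \hrar \cC$ be the inclusion of the object $c$ and let $\ast: \{c\} \lrar \ovl{\cS}_\infty$ be the constant functor at the point. The pointwise formula for the left Kan extension gives, at each $d \in \cC$, the value $\colim_{\Map_{\cC}(c,d)} \ast \simeq \Map_{\cC}(c,d)$, so $\Map_{\cC}(c,-)$ is the left Kan extension of $\ast$ along $h_c$. Note that one cannot simply left Kan extend the restriction $\Map_{\cC}(c,-)|_{\{c\}} = \Map_{\cC}(c,c)$, which is not a point; this is exactly the discrepancy addressed by the flexible criterion of Remark~\ref{r:equiv small}, which therefore applies (with $\cC_0 = \{c\}$, $g = \ast$, $h = h_c$) and shows $\Map_{\cC}(c,-)$ is small. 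Since $\Fun_{\sm}(\cC,\ovl{\cS}_\infty)$ is closed under small colimits by Lemma~\ref{l:colimit}, any small colimit of corepresentables is small, proving the first assertion.

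For the converse, suppose $f$ is small with essentially small values. By Definition~\ref{d:small functor} there is a small full subcategory $\cC_0 \subseteq \cC$ with $f \simeq \mathrm{Lan}_i(f_0)$ along $i: \cC_0 \hrar \cC$, where $f_0 := f|_{\cC_0}$. Replacing $f_0$ by an equivalent functor, we may assume it factors through $\cS_\infty$; then its category of elements $\cE$ (the left fibration over $\cC_0$ classified by $f_0$, with objects $(c,x)$, $x \in f_0(c)$) is essentially small, since $\cC_0$ is small and each fibre $f_0(c)$ is essentially small. This is precisely where the hypothesis on the values of $f$ is used. By the covariant density theorem (see~\cite{Lur09}) we obtain a small colimit presentation
\[ f_0 \simeq \colim_{(c,x) \in \cE} \Map_{\cC_0}(c,-). \]
Now $\Map_{\cC_0}(c,-)$ is itself the left Kan extension of $\ast$ along $\{c\} \hrar \cC_0$, so transitivity of left Kan extensions yields $\mathrm{Lan}_i\bigl(\Map_{\cC_0}(c,-)\bigr) \simeq \Map_{\cC}(c,-)$. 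Applying $\mathrm{Lan}_i$ to the displayed presentation and using that it preserves colimits gives
\[ f \simeq \mathrm{Lan}_i(f_0) \simeq \colim_{(c,x) \in \cE} \Map_{\cC}(c,-), \]
a small colimit of corepresentables. To avoid worrying about $\mathrm{Lan}_i$ as a global functor, one checks this last equivalence pointwise: at $d \in \cC$ both sides compute $\colim_{c \in \cC_0} f_0(c) \times \Map_{\cC}(c,d)$, the left side by Fubini over $\cE$ and the right side since $\mathrm{Lan}_i(f_0)(d)$ is the colimit of $f_0$ over $\cC_0 \times_{\cC} \cC_{/d}$.

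I expect the main obstacle to be the smallness bookkeeping in the converse: verifying that $\cE$ is genuinely essentially small (which is exactly where the assumption on the values enters and why it cannot be dropped) and justifying the interchange of the left Kan extension with the colimit, for which the pointwise computation above is the safest route. The forward direction is essentially formal once one observes, via Remark~\ref{r:equiv small}, that corepresentables are small.
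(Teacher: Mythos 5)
Your proof is correct and is essentially the paper's own argument: your forward direction is verbatim the paper's (a corepresentable is the left Kan extension of the terminal-space functor along $\Del^0 \lrar \cC$, hence small by Remark~\ref{r:equiv small}, and Lemma~\ref{l:colimit} closes smallness under small colimits), and your converse --- small witnessing subcategory $\cC_0$, essential smallness of the category of elements via straightening (exactly where the value hypothesis enters), the covariant density theorem, and colimit-preservation of $\mathrm{Lan}_i$ plus its transitivity on corepresentables --- matches the paper's argument with the left fibration $\widetilde{\cC}_0 \lrar \cC_0$ step for step. The only nit is a variance slip: the density colimit is indexed by $\cE^{\op}$ (the paper's $\widetilde{\cC}_0^{\op}$), since $(c,x) \mapsto \Map_{\cC_0}(c,-)$ is functorial on the \emph{opposite} of the left fibration; your pointwise coend verification is unaffected by this.
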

\begin{proof}
Suppose $f$ is corepresentable by $c \in \cC$. Then $f$ is a left Kan extension of the functor $\Del^0\lrar \ovl{\cS}_\infty$ which sends the object of $\Del^0$ to the terminal space along the map $\Del^0 \lrar \cC$ which sends the object of $\Del^0$ to $c$. Thus, by Remark \ref{r:equiv small}, $f$ is small. By Lemma~\ref{l:colimit} every small colimit of corepresentable functors is small.

Now suppose that $f$ is small and takes values in essentially small spaces. Let $\widetilde{\cC} \lrar \cC$ be a left fibration classifying $f$. Since $f$ is small there exists a small full subcategory $\cC_0 \subseteq \cC$ such that $f$ is a left Kan extension of $g = f|_{\cC_0}$. Let $\widetilde{\cC}_0 = \widetilde{\cC} \times_{\cC} \cC_0$. Then the left fibration $\widetilde{\cC}_0 \lrar \cC_0$ classifies $g$ and by the straightening unstraightening equivalence of~\cite[Theorem 2.2.1.2]{Lur09} it follows that $g$ can be identified with the colimit of the composition
\[ \widetilde{\cC}_0^{\op} \lrar \cC_0^{\op} \lrar \Fun\left(\cC_0,\ovl{\cS}_\infty\right) \] where the second map is the Yoneda embedding of $\cC_0^{\op}$. Since $f$ is a left Kan extension of $g$ we may identify $f$ with the colimit in $\Fun(\cC,\ovl{\cS}_\infty)$ of the composed map
\[ \widetilde{\cC}_0^{\op} \lrar \cC^{\op} \lrar \Fun(\cC,\ovl{\cS}_\infty). \]
Since $\widetilde{\cC}_0\lrar\cC_0$ is a left fibration classifying a functor $g: \cC_0 \lrar\ovl{\cS}_\infty$ which has a small domain and takes values in essentially small spaces it follows from the straightening unstraightening equivalence that $\widetilde{\cC}_0$ is essentially small. Thus we can replace it with an equivalent small $\infty$-category and so the proof is complete.
\end{proof}

Let us recall the higher categorical analogue of Definition~\ref{d:cofiltered}.

\begin{define}[{\cite[Definition 5.3.1.7]{Lur09}}]\label{d:oo-cofiltered}
Let $\cC$ be an $\infty$-category. We say that $\cC$ is \textbf{cofiltered} if for every map $f: K \lrar \cC$ where $K$ is a simplicial set with finitely many non-degenerate simplices, there exists an extension of the form $\ovl{f}: K^{\triangleleft} \lrar \cC$.
\end{define}

\begin{rem}
For ordinary categories Definition~\ref{d:oo-cofiltered} and Definition~\ref{d:cofiltered} coincide. This follows from Lemma~\ref{l:finite}.
\end{rem}

We begin by establishing the following useful lemma:
\begin{lem}\label{l:full-oo}
Let $\cC$ be a cofiltered $\infty$-category and let $\cD \subseteq \cC$ be a full subcategory such that for every $c \in \cC$ the category $\cD_{/c}$ is non-empty. Then $\cD$ is cofiltered and the inclusion $\cD \subseteq \cC$ is coinitial.
\end{lem}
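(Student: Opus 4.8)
The plan is to prove the two assertions—that $\cD$ is cofiltered, and that the inclusion $\cD\subseteq\cC$ is coinitial—separately, arranging matters so that the second reduces to the first applied to slice categories. Throughout I would use two standard facts about quasi-categorical slices. First, for any diagram $f:K\lrar\cC$ the objects of $\cC_{/f}$ are exactly the extensions $K^{\triangleleft}\lrar\cC$ of $f$ (cones whose apex is the image of the cone point). Second, the apex projection $\cC_{/f}\lrar\cC$ is a right fibration (see~\cite[\S 2.1.2]{Lur09}). I would also repeatedly use fullness of $\cD$: a simplex of $\cC$ factors through $\cD$ as soon as all of its vertices lie in $\cD$.

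First I would show that $\cD$ is cofiltered. Fix a simplicial set $K$ with finitely many non-degenerate simplices and a map $f:K\lrar\cD$; composing with the inclusion gives a finite diagram in $\cC$, which by cofilteredness of $\cC$ extends to a cone $K^{\triangleleft}\lrar\cC$, i.e.\ to an object $\xi\in\cC_{/f}$ whose apex is some $c\in\cC$. By hypothesis $\cD_{/c}$ is non-empty, so there is a morphism $\alpha:d\lrar c$ with $d\in\cD$. Since $\cC_{/f}\lrar\cC$ is a right fibration and $\{1\}\hrar\Delta^1$ is right anodyne, I can lift $\alpha$ (with its endpoint over $c$ given by $\xi$) to a morphism $\widetilde{\alpha}$ in $\cC_{/f}$; its source is an object $\xi'\in\cC_{/f}$ whose apex is $d$. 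Thus $\xi'$ is an extension $K^{\triangleleft}\lrar\cC$ of $f$ whose cone point maps to $d\in\cD$, while every other vertex is a value of $f$ and hence already in $\cD$. By fullness the entire cone lands in $\cD$, producing the required extension inside $\cD$. Hence $\cD$ is cofiltered.

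For coinitiality I must show that $\cD_{/c}$ is weakly contractible for every $c\in\cC$, and I would do this by proving $\cD_{/c}$ is cofiltered and then invoking the standard fact that cofiltered $\infty$-categories are weakly contractible (the dual of~\cite[\S 5.3.1]{Lur09}). The key point is that the pair $\cD_{/c}\subseteq\cC_{/c}$ again satisfies the hypotheses of the lemma, so that the already-proven first assertion applies. Indeed, $\cC_{/c}$ is itself cofiltered: a finite diagram $g:K\lrar\cC_{/c}$ transposes to a map $K\star\Delta^0\lrar\cC$ carrying the last vertex to $c$, and extending this finite diagram to a cone in $\cC$ yields, after transposing back, the desired extension $K^{\triangleleft}\lrar\cC_{/c}$ of $g$. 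Moreover, by fullness of $\cD$ in $\cC$, the subcategory $\cD_{/c}$ is exactly the full subcategory of $\cC_{/c}$ spanned by the objects $x\lrar c$ with $x\in\cD$; and for any object $\xi=(x\lrar c)$ the slice $(\cD_{/c})_{/\xi}$ is non-empty, since a choice of $d\lrar x$ with $d\in\cD$ (using that $\cD_{/x}$ is non-empty) determines such an object after composing to $c$. Applying the first assertion to $(\cC_{/c},\cD_{/c})$ shows $\cD_{/c}$ is cofiltered, which finishes the proof.

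The hard part will be purely organizational rather than conceptual: one must perform the join/slice transpositions carefully enough that the cones constructed are \emph{genuine} extensions of the given diagrams, not merely cones up to homotopy. This is precisely why I would route the first step through the right fibration $\cC_{/f}\lrar\cC$ and the literal identification of $\cC_{/f}$ with the set of extensions, rather than manipulating undercategories up to homotopy. The remaining checks—that $\cD_{/c}$ is genuinely full in $\cC_{/c}$, that $\cC_{/c}$ is cofiltered, and that the non-emptiness hypothesis transfers to the slice—are routine consequences of fullness and the join formalism.
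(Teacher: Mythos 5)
Your proof is correct and takes essentially the same route as the paper's: in the first part you use the right fibration $\cC_{/q} \lrar \cC$ and a right-anodyne lift along $\{1\} \hrar \Del^1$ to move the apex of a cone into $\cD$, then conclude by fullness (the paper packages the same step as the Cartesian square $\cD_{/p} \cong \cD \times_{\cC} \cC_{/q}$ and the non-emptiness of a fiber), and in the second part you apply the first assertion to the full inclusion $\cD_{/c} \subseteq \cC_{/c}$ and invoke weak contractibility of cofiltered $\infty$-categories, exactly as the paper does. The only cosmetic difference is that you verify cofilteredness of $\cC_{/c}$ directly via the join transposition, where the paper cites~\cite[Lemma 5.3.1.19]{Lur09}.
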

\begin{proof}
Let $K$ be a simplicial set with finitely many non-degenerate simplices and let $p: K \lrar \cD$ be a map. Consider the right fibration $\cD_{/p} \lrar \cD$. We need to show that $\cD_{/p}$ is not empty. Let $q: K \lrar \cC$ be the composition of $p$ with the full inclusion $\cD \subseteq \cC$. Since $\cC$ is cofiltered the $\infty$-category $\cC_{/q}$ is non-empty. Since the inclusion $\cD \subseteq \cC$ is full the square
\[ \xymatrix{
\cD_{/p} \ar[r]\ar[d] & \cC_{/q} \ar[d] \\
\cD \ar[r] & \cC \\
}\]
is Cartesian. It will hence suffice to show that there exists a $d \in \cD$ such that the fiber $\cC_{/q} \times_{\cC} \{d\}$ is non-empty. Now let $x \in \cC_{/q}$ be an element whose image in $\cC$ is $c \in \cC$. By our assumptions there exists a map of the form $d \lrar c$ with $d \in \cD$. Since $\cC_{/q}$ is a right fibration there exists an arrow $y \lrar x$ in $\cC_{/q}$ such that the image of $y$ in $\cC$ is $d$. Hence $\cC_{/q} \times_{\cC} \{d\} \neq \emptyset$ and we may conclude that $\cD$ is cofiltered.

Let us now show that the inclusion $\cD \subseteq \cC$ is coinitial. Let $c \in \cC$ be an object. Then the inclusion $\cD_{/c} \hrar \cC_{/c}$ is fully-faithful. Furthermore, for every map $f:c' \lrar c$, considered as an object $f \in \cC_{/c}$, the $\infty$-category $(\cD_{/c})_{/f}$ is equivalent to the $\infty$-category $\cD_{/c'}$ and is hence non-empty. By~\cite[Lemma 5.3.1.19]{Lur09} the $\infty$-category $\cC_{/c}$ is cofiltered. Applying again the argument above to the inclusion $\cD_{/c} \hrar \cC_{/c}$ we conclude that $\cD_{/c}$ is cofiltered, and is hence weakly contractible by~\cite[Lemma 5.3.1.18]{Lur09}.
\end{proof}

We now turn to the main definition of this subsection.

\begin{define}\label{d:pro}
Let $\cC$ be a locally small $\infty$-category. We say that a functor $f:\cC \lrar \ovl{\cS}_\infty$ is a \textbf{pro-object} if $f$ is small, takes values in essentially small spaces, and is classified by a left fibration $\widetilde{\cC} \lrar \cC$ such that $\widetilde{\cC}$ is cofiltered. We denote by $\Pro(\cC) \subseteq \Fun\left(\cC,\ovl{\cS}_\infty\right)^{\op}$ the full subcategory spanned by pro-objects.
\end{define}

\begin{rem}
If $\cC$ is a small $\infty$-category then Definition~\ref{d:pro} reduces to~\cite[Definition 5.3.5.1]{Lur09}.
\end{rem}

\begin{rem}
By definition the essential image of $\Pro(\cC)$ in $\Fun(\cC,\ovl{\cS}_\infty)$ is contained in the essential image of $\Fun_{\sm}\left(\cC,\cS_\infty\right) \subseteq \Fun_{\sm}\left(\cC,\ovl{\cS}_\infty\right)$. It follows by Lemma \ref{l:locally small} that $\Pro(\cC)$ is locally small.
\end{rem}

\begin{lem}
Any corepresentable functor $f:\cC \lrar \ovl{\cS}_\infty$ is a pro-object.
\end{lem}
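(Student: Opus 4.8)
The plan is to verify the three defining conditions of Definition~\ref{d:pro} for a functor of the form $f \simeq \Map_\cC(c,-)$, where $c \in \cC$ is the corepresenting object: namely that $f$ is small, takes values in essentially small spaces, and is classified by a left fibration with cofiltered total space. The first condition is immediate from Lemma~\ref{l:small rep}, since a single corepresentable is the colimit of the one-object diagram of corepresentables and hence small. More concretely, as in the opening lines of the proof of that lemma, $f$ is the left Kan extension of the functor $\Del^0 \to \ovl{\cS}_\infty$ selecting the terminal space along the map $\Del^0 \to \cC$ selecting $c$, and is therefore small by Remark~\ref{r:equiv small}. The second condition follows from the standing assumption that $\cC$ is locally small (Convention~\ref{c:infty_cat}, Definition~\ref{d:locally-small-2}): each value $\Map_\cC(c,d)$ is by definition weakly equivalent to a small simplicial set, so $f$ lands in essentially small spaces.

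The bulk of the argument is the third condition. I would observe that $\Map_\cC(c,-)$ is classified by the coslice left fibration $\cC_{c/} \to \cC$, so it remains to prove that $\cC_{c/}$ is cofiltered in the sense of Definition~\ref{d:oo-cofiltered}. The key point is that $\cC_{c/}$ has an initial object, namely $\mathrm{id}_c$, and any $\infty$-category equipped with an initial object is cofiltered. To see the latter I would check the defining extension property directly: given a diagram $p : K \to \cC_{c/}$ with $K$ having finitely many non-degenerate simplices, an extension $\overline{p} : K^{\triangleleft} \to \cC_{c/}$ amounts to a cone over $p$, and a cone whose cone point is the initial object exists because the space of such cones is a homotopy limit over $K$ of the mapping spaces out of $\mathrm{id}_c$, each of which is contractible; hence this space is contractible, and in particular non-empty.

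The step I expect to require the most care is this last one: identifying the corepresentable with the coslice fibration and arguing cleanly that the presence of an initial object forces cofilteredness. Neither point is deep, but the cofilteredness statement is the only place where one must unwind Definition~\ref{d:oo-cofiltered} rather than simply invoke an earlier result, whereas everything else is bookkeeping against the definitions and Lemma~\ref{l:small rep}.
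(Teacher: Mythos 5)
Your proposal is correct and takes essentially the same route as the paper: smallness via Lemma~\ref{l:small rep}, essential smallness of the values from local smallness of $\cC$, and the identification of the classifying left fibration with the coslice $\cC_{c/}$, which is cofiltered because it has the initial object $\mathrm{id}_c$. The only difference is that where the paper simply cites~\cite[Proposition 5.3.1.15]{Lur09} for the fact that an initial object implies cofilteredness, you verify that fact directly by noting that the space of cones on a finite diagram with initial cone point is a homotopy limit of contractible mapping spaces, hence contractible and in particular non-empty --- a correct inline check of the cited result.
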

\begin{proof}
By Lemma \ref{l:small rep} we know that $f$ is small, and since $\cC$ is locally small $f$ takes values in essentially small spaces.
Let $\widetilde{\cC} \lrar \cC$ be the left fibration classifying $f$. If $f$ is corepresentable by $c \in \cC$ then $\widetilde{\cC} \simeq \cC_{c/}$ has an initial object and is thus cofiltered by \cite[Proposition 5.3.1.15]{Lur09}.
\end{proof}

\begin{define}
By the previous lemma we see that the Yoneda embedding $\cC \hrar \Fun\left(\cC,\ovl{\cS}_\infty\right)^{\op}$ factors through $\Pro(\cC)$, and we denote it by $\iota_{\cC}: \cC \hrar \Pro(\cC)$. We say that a pro-object is \textbf{simple} if it belongs to the essential image of $\iota_{\cC}$.
\end{define}

\begin{lem}\label{l:generate}
Every pro-object is a small cofiltered limit of simple objects.
\end{lem}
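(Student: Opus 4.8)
The plan is to exhibit $f$ directly as a cofiltered limit indexed by (a small model of) the total space of the left fibration classifying it. Let $\pi\colon \widetilde{\cC} \lrar \cC$ be the left fibration classifying $f$, so that $\widetilde{\cC}$ is cofiltered by Definition~\ref{d:pro}. Since $f$ is a small functor, first I would choose a small full subcategory $\cC_0 \subseteq \cC$ from which $f$ is left Kan extended, and set $\widetilde{\cC}_0 := \widetilde{\cC} \times_{\cC} \cC_0$. Because $\cC_0 \hrar \cC$ is fully faithful, $\widetilde{\cC}_0$ is (equivalent to) the full subcategory of $\widetilde{\cC}$ spanned by the objects lying over $\cC_0$, and exactly as in the proof of Lemma~\ref{l:small rep} it is essentially small and the straightening--unstraightening equivalence identifies $f$ with the colimit in $\Fun(\cC,\ovl{\cS}_\infty)$ of the composite $\widetilde{\cC}_0^{\op} \lrar \cC^{\op} \lrar \Fun(\cC,\ovl{\cS}_\infty)$ whose second arrow is the Yoneda embedding. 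Passing to the opposite category and restricting to the full subcategory $\Pro(\cC)$ (which contains $f$ and all the simple objects in the diagram), this reads $f \simeq \lim_{\widetilde{\cC}_0}(\iota_{\cC}\circ\pi|_{\widetilde{\cC}_0})$, a limit of simple objects indexed by the essentially small $\widetilde{\cC}_0$. It therefore remains only to see that $\widetilde{\cC}_0$ is cofiltered.

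For this I would invoke Lemma~\ref{l:full-oo} with the cofiltered $\infty$-category $\widetilde{\cC}$ and its full subcategory $\widetilde{\cC}_0$: it suffices to check that for every object $\tilde c \in \widetilde{\cC}$, lying over some $c \in \cC$, the slice $(\widetilde{\cC}_0)_{/\tilde c}$ is non-empty. This is precisely the point at which the left Kan extension hypothesis is used. By the pointwise formula for left Kan extensions we have $f(c) \simeq \colim_{\cC_0 \times_{\cC} \cC_{/c}} (f|_{\cC_0})$, and since a colimit of spaces is the classifying space of the total space of the corresponding left fibration, the map from that total space to $f(c)$ is surjective on connected components. Hence the point of $f(c)$ corresponding to $\tilde c$ lifts to a triple consisting of an object $c_0 \in \cC_0$, a morphism $c_0 \lrar c$, and a point of $f(c_0)$. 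Such a triple is exactly an object $\tilde c_0 \in \widetilde{\cC}_0$ together with a morphism $\tilde c_0 \lrar \tilde c$ in $\widetilde{\cC}$, so $(\widetilde{\cC}_0)_{/\tilde c} \neq \emptyset$ as required.

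Granting this, Lemma~\ref{l:full-oo} shows that $\widetilde{\cC}_0$ is cofiltered (and, as a bonus, that the inclusion $\widetilde{\cC}_0 \hrar \widetilde{\cC}$ is coinitial). Replacing $\widetilde{\cC}_0$ by an equivalent small $\infty$-category, the identification $f \simeq \lim_{\widetilde{\cC}_0}(\iota_{\cC}\circ\pi|_{\widetilde{\cC}_0})$ of the first paragraph then exhibits $f$ as a small cofiltered limit of simple objects, as desired. I expect the only genuine obstacle to be the non-emptiness claim of the second paragraph: converting the left Kan extension condition into an actual lift of a point of $f(c)$ through the left fibration $\pi$ requires the standard but slightly delicate identification of colimits in $\ovl{\cS}_\infty$ with classifying spaces of total spaces of left fibrations, together with the resulting $\pi_0$-surjectivity. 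All the remaining steps are formal consequences of Lemma~\ref{l:small rep}, Lemma~\ref{l:full-oo}, and the density (co-Yoneda) description of $f$.
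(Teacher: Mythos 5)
Your proposal is correct and follows essentially the same route as the paper's proof: you construct $\widetilde{\cC}_0$ exactly as in the second part of the proof of Lemma~\ref{l:small rep}, identify $f$ with the limit in $\Pro(\cC)$ of the resulting diagram of simple objects, and then deduce cofilteredness of $\widetilde{\cC}_0$ from Lemma~\ref{l:full-oo} via non-emptiness of the slices $(\widetilde{\cC}_0)_{/\tilde c}$. The only difference is that you spell out the non-emptiness step (via the pointwise left Kan extension formula and $\pi_0$-surjectivity from the total space of the classifying left fibration), which the paper asserts without proof; your elaboration is accurate.
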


\begin{proof}
Let $f: \cC \lrar \ovl{\cS}_\infty$ be a pro-object.
We define $\widetilde{\cC}$, $\cC_0$, $g$ and $\widetilde{\cC}_0$ as in the second part of the proof of Lemma \ref{l:small rep}. As we have shown there, $\widetilde{\cC}_0$ is (essentially) small and we may identify $f$ with the colimit in $\Fun(\cC,\ovl{\cS}_\infty)$ of the composed map
\[ \widetilde{\cC}_0^{\op} \lrar \cC^{\op} \lrar \Fun(\cC,\ovl{\cS}_\infty), \]
where the second map is the Yoneda embedding.
Thus $f$ can be identified with the \textbf{limit} in $\Pro(\cC)$ of the composed map
\[ \widetilde{\cC}_0 \lrar \cC \lrar \Pro(\cC). \]
It will hence suffice to show that $\widetilde{\cC}_0$ is cofiltered. Since $f$ is a pro-object the $\infty$-category $\widetilde{\cC}$ is cofiltered by Definition. Since $f$ is a left Kan extension of $g$ it follows that for every $c \in \widetilde{\cC}$ the category $\left(\widetilde{\cC}_0\right)_{/c}$ is \textbf{non-empty}. The desired result now follows form Lemma~\ref{l:full-oo}.
\end{proof}

\begin{lem}\label{l:closed-filtered}
The full subcategory $\Pro(\cC) \subseteq \Fun\left(\cC,\ovl{\cS}_\infty\right)^{\op}$ is closed under small cofiltered limits.
\end{lem}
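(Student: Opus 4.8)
The plan is to reduce the statement to a claim about filtered colimits and then settle it using straightening--unstraightening together with a compactness argument.

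First I would unwind the definitions. A small cofiltered limit in $\Fun(\cC,\ovl{\cS}_\infty)^{\op}$ is the same datum as a small \emph{filtered} colimit in $\Fun(\cC,\ovl{\cS}_\infty)$; so let $\cA$ be a small filtered $\infty$-category, let $\{f_\alpha\}_{\alpha\in\cA}$ be the corresponding diagram of pro-objects, and set $g=\colim_{\alpha\in\cA}f_\alpha$, the colimit computed in $\Fun(\cC,\ovl{\cS}_\infty)$. I must show that $g$ again satisfies the three conditions of Definition~\ref{d:pro}. Two of these are immediate: $g$ is small by Lemma~\ref{l:colimit}, and since colimits in $\Fun(\cC,\ovl{\cS}_\infty)$ are computed pointwise, each value $g(c)=\colim_{\alpha}f_\alpha(c)$ is a small colimit of essentially small spaces and hence essentially small. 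It remains to prove that the left fibration $\widetilde{\cC}_g\to\cC$ classifying $g$ is cofiltered.

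The key step is the identification of $\widetilde{\cC}_g$. Since straightening--unstraightening \cite[Theorem 2.2.1.2]{Lur09} is an equivalence between $\Fun(\cC,\ovl{\cS}_\infty)$ and the $\infty$-category of left fibrations over $\cC$, it preserves colimits, so $\widetilde{\cC}_g\simeq\colim_{\alpha}\widetilde{\cC}_\alpha$, the colimit being taken among left fibrations over $\cC$. I would then argue that, because $\cA$ is \emph{filtered}, this colimit agrees with the honest filtered colimit of the total spaces $\widetilde{\cC}_\alpha$ formed in $\infty$-categories (equivalently, realized at the level of simplicial sets). Concretely, a filtered colimit of left fibrations over $\cC$ is again a left fibration: the defining right lifting property is tested against the finite inclusions $\Lam^n_i\hookrightarrow\Del^n$, and any such lifting problem over a filtered colimit factors through a finite stage by compactness of $\Lam^n_i$; moreover, after replacing the diagram by an equivalent one built from monomorphisms, this naive colimit computes the derived, hence $\infty$-categorical, colimit. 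Thus $\widetilde{\cC}_g$ is, up to equivalence, the filtered colimit $\colim_\alpha\widetilde{\cC}_\alpha$ of the $\infty$-categories $\widetilde{\cC}_\alpha$.

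Granting this, cofilteredness of $\widetilde{\cC}_g$ follows from a second compactness argument. Let $K$ be a simplicial set with finitely many non-degenerate simplices and let $p:K\to\widetilde{\cC}_g$ be a map. Since $K$ is finite and $\widetilde{\cC}_g\simeq\colim_\alpha\widetilde{\cC}_\alpha$ is a filtered colimit, $p$ factors through some $\widetilde{\cC}_{\alpha_0}$. Each $\widetilde{\cC}_{\alpha_0}$ is cofiltered by hypothesis (as $f_{\alpha_0}$ is a pro-object), so $p$ extends to a map $K^{\triangleleft}\to\widetilde{\cC}_{\alpha_0}$; composing with $\widetilde{\cC}_{\alpha_0}\to\widetilde{\cC}_g$ yields the required extension $K^{\triangleleft}\to\widetilde{\cC}_g$. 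By Definition~\ref{d:oo-cofiltered} this shows that $\widetilde{\cC}_g$ is cofiltered, completing the verification that $g$ is a pro-object. The routine points (smallness, essential smallness, and the two factorization-through-a-stage arguments) are straightforward; the main obstacle is the middle step, namely justifying that the colimit of left fibrations classifying $g$ is computed as the honest filtered colimit of total spaces. This is where one must check carefully that the naive simplicial-set colimit both lands among left fibrations and models the correct $\infty$-categorical colimit, and the filteredness of $\cA$ (the opposite of the cofiltered indexing diagram) is precisely what makes this possible.
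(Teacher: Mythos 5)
Your proof is correct and is essentially the paper's own argument: the paper simply says that the proof of \cite[Proposition 5.3.5.3]{Lur09} applies verbatim together with Lemma~\ref{l:colimit}, and that proof is precisely what you spell out --- check smallness via Lemma~\ref{l:colimit}, unstraighten to identify the classifying left fibration of the colimit with the honest filtered colimit of the total spaces, and deduce cofilteredness from the compactness of simplicial sets with finitely many non-degenerate simplices. The one step worth making explicit (as Lurie does) is the strictification needed before one can speak of honest colimits of simplicial sets, namely replacing the filtered indexing $\infty$-category $\cA$ by a cofinal filtered poset (the dual of Lemma~\ref{l:cofinal_CDS}), but this is routine.
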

\begin{proof}
The same proof as~\cite[Proposition 5.3.5.3]{Lur09} can be applied here, using Lemma~\ref{l:colimit}.
\end{proof}

\begin{cor}\label{c:universal}
The full subcategory $\Pro(\cC) \subseteq \Fun\left(\cC,\ovl{\cS}_\infty\right)^{\op}$ is the smallest one containing the essential image of $\iota_{\cC}$ and closed under small cofiltered limits.
\end{cor}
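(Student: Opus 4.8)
The plan is to read off the statement directly from the two immediately preceding lemmas, Lemma~\ref{l:generate} and Lemma~\ref{l:closed-filtered}, so that essentially all the content has already been proved and what remains is a short formal argument.

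First I would record that $\Pro(\cC)$ is itself a full subcategory of $\Fun\left(\cC,\ovl{\cS}_\infty\right)^{\op}$ enjoying the two required properties. It contains the essential image of $\iota_{\cC}$ by construction: the Yoneda embedding factors through $\Pro(\cC)$ precisely because every corepresentable functor is a pro-object, so the simple objects are by definition objects of $\Pro(\cC)$. Closure under small cofiltered limits is the content of Lemma~\ref{l:closed-filtered}. Thus $\Pro(\cC)$ is one candidate among the full subcategories under consideration, and it remains only to prove that it is the smallest.

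For minimality I would argue as follows. Let $\cD \subseteq \Fun\left(\cC,\ovl{\cS}_\infty\right)^{\op}$ be an arbitrary full subcategory which contains the essential image of $\iota_{\cC}$ and is closed under small cofiltered limits. Given any pro-object $f \in \Pro(\cC)$, Lemma~\ref{l:generate} exhibits $f$ as a small cofiltered limit of simple objects. Each of these simple objects lies in $\cD$, since $\cD$ contains the essential image of $\iota_{\cC}$; and the limit realizing $f$ is taken in the ambient $\infty$-category $\Fun\left(\cC,\ovl{\cS}_\infty\right)^{\op}$, so closure of $\cD$ under small cofiltered limits forces $f \in \cD$. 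As $f$ was arbitrary, this gives the inclusion $\Pro(\cC) \subseteq \cD$, which is exactly the minimality claim.

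I do not expect any genuine obstacle here, as the real work is packaged in the two cited lemmas; the only point meriting care is purely formal. One must interpret ``closed under small cofiltered limits'' in the usual sense — that whenever a small cofiltered diagram takes its values in $\cD$ and admits a limit in the ambient functor category, that limit again lies in $\cD$ — and then check that the particular limit presentation supplied by Lemma~\ref{l:generate} is of this form, with the indexing diagram genuinely landing among the simple objects. Since that lemma already produces the limit inside $\Pro(\cC) \subseteq \Fun\left(\cC,\ovl{\cS}_\infty\right)^{\op}$ with simple vertices, this compatibility is automatic, and the argument closes.
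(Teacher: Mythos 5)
Your proof is correct and follows exactly the paper's route: the paper's entire proof of Corollary~\ref{c:universal} is the one-line combination of Lemma~\ref{l:generate} and Lemma~\ref{l:closed-filtered}, which you spell out in full. Your extra care about the limit of Lemma~\ref{l:generate} being computed in the ambient category $\Fun\left(\cC,\ovl{\cS}_\infty\right)^{\op}$ (it is a colimit there, per the proof of Lemma~\ref{l:small rep}) is the right point to check and is handled correctly.
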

\begin{proof}
This follows from Lemma~\ref{l:generate} and Lemma~\ref{l:closed-filtered}.
\end{proof}

\begin{rem}\label{r:lurie}
If $\cC$ is an accessible $\infty$-category which admits finite limits, then $\Pro(\cC)$ as defined above coincides with the pro-category defined in~\cite[Definition 3.1.1]{Lur11b}, namely, $\Pro(\cC) \subseteq \Fun(\cC,\ovl{\cS}_\infty)^{\op}$ is the full subcategory spanned by \textbf{accessible functors which preserve finite limits}. This follows from the fact that they both satisfy the characterization of Corollary~\ref{c:universal} (see the proof of~\cite[Proposition 3.1.6]{Lur11b}).
\end{rem}

\begin{define}\label{d:cocompact}
Let $\cC$ be an $\infty$-category. We say that $X \in \cC$ is $\omega$-\textbf{cocompact} if the functor $\cC \lrar \ovl{\cS}_\infty$ corepresented by $X$ preserves cofiltered limits.
\end{define}

\begin{lem}\label{l:cocompact}
Let $X \in \Pro(\cC)$ be a simple object. Then $X$ is $\omega$-\textbf{cocompact}.
\end{lem}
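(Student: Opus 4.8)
The plan is to identify, for a simple object $X$, the mapping-space functor $\Map_{\Pro(\cC)}(-,X)$ with an evaluation functor, and then to exploit the fact that cofiltered limits in $\Pro(\cC)$ are nothing but pointwise filtered colimits of the associated presheaves, which evaluation manifestly preserves. So first I would fix a simple object $X$ and write $X \simeq \iota_{\cC}(c)$ for some $c \in \cC$. Under the defining inclusion $\Pro(\cC) \subseteq \Fun(\cC,\ovl{\cS}_\infty)^{\op}$, the object $\iota_{\cC}(c)$ is exactly the corepresentable functor $\Map_{\cC}(c,-)$. Concretely, $\omega$-cocompactness of $X$ amounts to the statement that for every small cofiltered diagram $\alpha \mapsto Z^{(\alpha)}$ in $\Pro(\cC)$, with limit $Z = \lim_{\alpha} Z^{(\alpha)}$, the natural map
\[ \colim_{\alpha} \Map_{\Pro(\cC)}\big(Z^{(\alpha)},X\big) \lrar \Map_{\Pro(\cC)}(Z,X) \]
is an equivalence of spaces.

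The key step is an application of the Yoneda lemma. Regarding a pro-object $Z$ as a functor $Z : \cC \lrar \ovl{\cS}_\infty$, the inclusion into $\Fun(\cC,\ovl{\cS}_\infty)^{\op}$ reverses the direction of maps, so that
\[ \Map_{\Pro(\cC)}(Z,\iota_{\cC}(c)) \simeq \Map_{\Fun(\cC,\ovl{\cS}_\infty)}\big(\Map_{\cC}(c,-),Z\big) \simeq Z(c), \]
because mapping out of a corepresentable functor computes evaluation. Thus $\Map_{\Pro(\cC)}(-,X)$ is the restriction to $\Pro(\cC)$ of the evaluation functor $\mathrm{ev}_c : \Fun(\cC,\ovl{\cS}_\infty) \lrar \ovl{\cS}_\infty$, $g \mapsto g(c)$. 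It is here that simplicity of $X$ is indispensable: for a general pro-object $\{X_i\}$ the associated presheaf is the filtered colimit $\colim_i \Map_{\cC}(X_i,-)$ of corepresentables, so $\Map_{\Pro(\cC)}(-,\{X_i\})$ becomes the limit $\lim_i \mathrm{ev}_{X_i}$, and this limit over the indexing category need not commute with the colimits computing cofiltered limits.

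To conclude, I would invoke Lemma~\ref{l:closed-filtered}: since $\Pro(\cC)$ is closed under small cofiltered limits inside $\Fun(\cC,\ovl{\cS}_\infty)^{\op}$, the limit $Z$ is computed there, i.e. $Z \simeq \colim_{\alpha} Z^{(\alpha)}$ in $\Fun(\cC,\ovl{\cS}_\infty)$. Colimits of functors valued in $\ovl{\cS}_\infty$ are computed objectwise, so $\mathrm{ev}_c$ preserves them and $Z(c) \simeq \colim_{\alpha} Z^{(\alpha)}(c)$. Combining this with the identification of the previous paragraph gives the desired equivalence $\Map_{\Pro(\cC)}(Z,X) \simeq \colim_{\alpha}\Map_{\Pro(\cC)}(Z^{(\alpha)},X)$. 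The only genuine content is the Yoneda identification together with the pointwise description of cofiltered limits; there is no serious obstacle beyond keeping track of the variance, namely that the functor is contravariant and carries cofiltered limits to filtered colimits of spaces.
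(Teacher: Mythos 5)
Your proof is correct and follows essentially the same route as the paper: the paper likewise uses Lemma~\ref{l:closed-filtered} to reduce to showing cocompactness of the corepresentable in the ambient category $\Fun(\cC,\ovl{\cS}_\infty)^{\op}$, and then cites \cite[Proposition 5.1.6.8]{Lur09}, whose content is exactly your Yoneda identification $\Map_{\Pro(\cC)}(Z,\iota_{\cC}(c))\simeq Z(c)$ combined with the pointwise computation of colimits in functor $\infty$-categories. You have merely inlined the proof of the cited result, which is fine.
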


\begin{proof}
By Lemma~\ref{l:closed-filtered} it will suffice to show that $X$ is $\omega$-cocompact when considered as an object of $\Fun(\cC,\ovl{\cS}_\infty)^{\op}$. But this now follows from~\cite[Proposition 5.1.6.8]{Lur09} in light of our large cardinal axiom.
\end{proof}

We now wish to show that if $\cC$ is an \textbf{ordinary category} then Definition~\ref{d:pro} coincides with Definition~\ref{d:pro-discrete} up to a natural equivalence. For this purpose we let $\Pro(\cC)$ denote the category defined in \ref{d:pro-discrete}. For each pro-object $X = \{X\}_{i \in \cI} \in \Pro(\cC)$ we may consider the associated functor $R_X: \cC \lrar \Set$ given by
\[ R_X(Y) = \Hom_{\Pro(\cC)}(X,Y) = \colim_{i \in \cI}\Hom_\cC(X_i,Y) .\]
The equivalence of Definition~\ref{d:pro-discrete} and~\ref{d:pro} for $\cC$ now follows from the following proposition:
\begin{prop}\label{p:characterization}
The association $X \mapsto R_X$ determines a fully-faithful embedding $\iota:\Pro(\cC) \hrar \Fun(\cC,\Set)^{\op}$. A functor $\cF: \cC \lrar \Set$ belongs to the essential image of $\iota$ if and only if $\cF$ is small and its Grothendieck construction $\cG(\cC,\cF)$ is cofiltered.
\end{prop}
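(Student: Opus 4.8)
The plan is to treat fully faithfulness and the description of the essential image in turn. For both, the starting point is that colimits in $\Fun(\cC,\Set)$ are computed objectwise, so that for a pro-object $X=\{X_i\}_{i\in\cI}$ we have the presentation $R_X=\colim_{i\in\cI}\Hom_\cC(X_i,-)$ as a cofiltered colimit of corepresentable functors. For fully faithfulness, given $X=\{X_i\}_{i\in\cI}$ and $Y=\{Y_j\}_{j\in\cJ}$ I would use the identity $\Hom_{\Fun(\cC,\Set)^{\op}}(R_X,R_Y)=\Hom_{\Fun(\cC,\Set)}(R_Y,R_X)$, the fact that $\Hom_{\Fun(\cC,\Set)}(-,R_X)$ carries the colimit $R_Y=\colim_j\Hom_\cC(Y_j,-)$ to a limit, and then the Yoneda lemma, to obtain
\[ \Hom_{\Fun(\cC,\Set)^{\op}}(R_X,R_Y)\cong\lim_{j}\Hom_{\Fun(\cC,\Set)}\big(\Hom_\cC(Y_j,-),R_X\big)\cong\lim_j R_X(Y_j). \]
Since $R_X(Y_j)=\colim_i\Hom_\cC(X_i,Y_j)$, the right-hand side equals $\lim_j\colim_i\Hom_\cC(X_i,Y_j)=\Hom_{\Pro(\cC)}(X,Y)$ by Definition~\ref{d:pro-discrete}; checking that this isomorphism is the map induced by $\iota$ shows $\iota$ is fully faithful.

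Next I would establish that $\cF=R_X$ forces $\cF$ to be small with $\cG(\cC,\cF)$ cofiltered. The Grothendieck construction $\cG(\cC,R_X)$ has objects $(Y,s)$ with $s\in R_X(Y)=\Hom_{\Pro(\cC)}(X,Y)$ and morphisms $g\colon Y\to Y'$ with $g\circ s=s'$; this is precisely the slice $\cC_{X/}$, which is cofiltered by Corollary~\ref{c:pro}. Smallness follows from the presentation above: $R_X$ is a small cofiltered colimit of corepresentable functors, so by Lemma~\ref{l:small rep} — noting that a cofiltered colimit of discrete spaces is again discrete, so that this $\Set$-colimit agrees with the colimit computed in $\ovl{\cS}_\infty$ — the functor $R_X$ is small.

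For the converse, suppose $\cF$ is small and $\cG(\cC,\cF)$ is cofiltered. The density (co-Yoneda) presentation gives $\cF\cong\colim_{(Y,s)\in\cG(\cC,\cF)^{\op}}\Hom_\cC(Y,-)$, but $\cG(\cC,\cF)$ may be large, so this does not yet exhibit $\cF$ as an $R_X$. The idea is to cut down to a small coinitial piece using smallness. Choosing a small full subcategory $\cC_0\subseteq\cC$ from which $\cF$ is left Kan extended, I would consider $\cI:=\cG(\cC_0,\cF|_{\cC_0})$. Because $\cC_0$ is full in $\cC$, $\cI$ is a full subcategory of $\cG(\cC,\cF)$, and it is small since $\cC_0$ is small and each $\cF(c_0)$ is a set. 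The pointwise formula for the left Kan extension shows the comparison maps $\cF(c_0)\to\cF(Y)$ (over objects $c_0\in\cC_0$ and maps $c_0\to Y$) are jointly surjective, so each comma category $\cI_{/(Y,s)}$ is non-empty; Lemma~\ref{l:full-oo} then shows $\cI$ is cofiltered and $\cI\hookrightarrow\cG(\cC,\cF)$ is coinitial. Setting $X\colon\cI\to\cC$, $(c_0,t)\mapsto c_0$, coinitiality together with Theorem~\ref{t:cofinal} identifies $R_X=\colim_{\cI^{\op}}\Hom_\cC(c_0,-)$ with the density colimit above, yielding $R_X\cong\cF$.

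The main obstacle is this last step: producing, from the possibly large cofiltered category $\cG(\cC,\cF)$, a genuine small cofiltered indexing category without changing the colimit. The essential input is the recognition of $\cG(\cC_0,\cF|_{\cC_0})$ as a \emph{full}, coinitial subcategory of $\cG(\cC,\cF)$, after which Lemma~\ref{l:full-oo} and the cofinality statement of Theorem~\ref{t:cofinal} do the remaining work; everything else is routine bookkeeping with the density formula.
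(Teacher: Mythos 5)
Your proposal is correct, and its core is the same as the paper's proof: the forward direction (identifying $\cG(\cC,R_X)$ with $\cC_{X/}$ and citing Corollary~\ref{c:pro}, with smallness via Lemma~\ref{l:small rep}) and the heart of the converse (choosing a small full $\cC_0$ from which $\cF$ is left Kan extended, forming $\cI=\cG(\cC_0,\cF|_{\cC_0})$, checking fullness and non-emptiness of the slices via the pointwise Kan extension formula, and invoking Lemma~\ref{l:full-oo} to get cofilteredness and coinitiality) are exactly the paper's steps. You deviate in two spots, both legitimately. First, for fully faithfulness the paper merely cites that the Yoneda embedding lands in the $\omega$-cocompact objects of $\Fun(\cC,\Set)^{\op}$; your explicit computation $\Hom_{\Fun(\cC,\Set)^{\op}}(R_X,R_Y)\cong\lim_j R_X(Y_j)\cong\lim_j\colim_i\Hom_{\cC}(X_i,Y_j)$ is the same standard argument written out in full, and is if anything more complete. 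Second, for the final identification $R_X\cong\cF$ the paper deliberately avoids any large colimit: for each test object $Y$ it enlarges $\cC_0$ to a small $\cC_0'$ containing $Y$, passes to $\cI'=\cG(\cC_0',\cF|_{\cC_0'})$ with $\cI\lrar\cI'$ coinitial, and computes the small coend $\int^{c_0'\in\cC_0'}\cF(c_0')\times\Hom_{\cC_0'}(c_0',Y)\cong\cF(Y)$; you instead take the density presentation of $\cF$ as a colimit over the possibly large $\cG(\cC,\cF)^{\op}$ and transport it along the cofinal inclusion $\cI^{\op}\hrar\cG(\cC,\cF)^{\op}$ using Theorem~\ref{t:cofinal}. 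Within the paper's universe conventions of \S\ref{ss:large} this is unproblematic (the density colimit exists and is verified pointwise via the discrete fibration), so your route buys a one-shot global identification at the cost of a large colimit, while the paper's buys strictly small colimits at the cost of an auxiliary enlargement per test object. Two purely terminological slips to fix: the presentation $R_X=\colim_{i\in\cI^{\op}}\Hom_{\cC}(X_i,-)$ is a \emph{filtered} colimit (the index is $\cI^{\op}$), and correspondingly it is \emph{filtered} colimits of discrete spaces that are again discrete; this is what your argument actually uses, so only the wording, not the mathematics, needs correction.
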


\begin{proof}
The fact that $X \mapsto R_X$ is fully-faithful follows from the fact that the Yoneda embedding $\cC\to \Fun(\cC,\Set)^{\op}$ is fully faithful and lands in the subcategory of $\Fun(\cC,\Set)^{\op}$ spanned by $\omega$-cocompact objects.

Now let $X = \{X_i\}_{i \in \cI}$ be a pro-object. Then $R_X: \cC \lrar \Set$ is a small colimit of corepresentable functors and is hence small by Lemma \ref{l:small rep}. The Grothendieck construction of $R_X$ can naturally be identified with $\cC_{X/}$ and is hence cofiltered by Corollary~\ref{c:pro}.

On the other hand, let $\cF: \cC \lrar \Set$ be a small functor such that $\cG(\cC,\cF)$ is cofiltered. Let $\cC_0 \subseteq \cC$ be a small full subcategory such that $\cF$ is a left Kan extension of $\cF|_{\cC_0}$ and let $\cI = \cG(\cC_0,\cF|_{\cC_0})$ be the associated Grothendieck construction. Since the inclusion $\cC_0 \subseteq \cC$ is fully-faithful, the induced map $ \cI \lrar \cG(\cC,\cF) $ is fully faithful. Now let $(c,x) \in \cG(\cC,\cF)$ be an object, so that $x$ is an element of $f(c)$. Since $\cF$ is a left Kan extension of $\cF|_{\cC_0}$ there exists a map $\alp:c_0 \lrar c$ with $c_0 \in \cC_0$ and an element $y \in \cF(c_0)$ such that $\cF(\alp)(y) = x$. This implies that $\alp$ lifts to a map $(c_0,y) \lrar (c,x)$ in $\cG(\cC,\cF)$. It follows that for every $(c,x) \in \cG(\cC,\cF)$ the category $\cI_{/(c,x)}$ is non-empty. By Lemma~\ref{l:full-oo} we get that $\cI$ is cofiltered and the inclusion $\cI \hrar \cG(\cC,\cF)$ is coinitial. Let $X = \{X_i\}_{i \in \cI}$ be the pro-object corresponding to the composed map $\cI \lrar \cC_0 \hrar \cC$. We now claim that $R_X$ is naturally isomorphic to $\cF$. Let $Y \in \cC$ be an object and choose a full subcategory $\cC_0' \subseteq \cC_0$ which contains both $\cC_0$ and $Y$. Let $\cI' = \cG(\cC_0',\cF|_{\cC_0'})$ be the associated Grothendieck construction. Then $\cF|_{\cC_0'}$ is a left Kan extension of $\cF|_{\cC_0}$ and $\cF$ is a left Kan extension of $\cF|_{\cC_0'}$. By the arguments above $\cI'$ is cofiltered and the functor $ \cI \lrar \cI' $ is coinitial. Let $X = \{X'_{i'}\}_{i' \in \cI'}$ be the pro-object corresponding to the composed map $\cI' \lrar \cC_0' \hrar \cC$. We then have natural isomorphisms
\[ \colim_{i \in \cI^{\op}} \Hom_{\cC}(X_i,Y)
\cong \colim_{i' \in (\cI')^{\op}} \Hom_{\cC_0'}(X'_{i'},Y) \cong \int_{c_0' \in \cC_0'} \cF(c_0') \times \Hom_{\cC'_0}(c_0',Y) \cong \cF(Y) \]
\end{proof}

We finish this subsection by verifying that $\Pro(\cC)$ satisfies the expected universal property (compare~\cite[Proposition 3.1.6]{Lur11b}):

\begin{thm}\label{t:universal}
Let $\cC$ be a locally small $\infty$-category and let $\cD$ be a locally small $\infty$-category which admits small cofiltered limits. Let $\Fun_{\mathrm{cofil}}(\cC,\cD) \subseteq \Fun(\cC,\cD)$ denote the full subcategory spanned by those functors which preserve small cofiltered limits. Then composition with the Yoneda embedding restricts to an equivalence of $\infty$-categories
\begin{equation}\label{e:univer}
\Fun_{\mathrm{cofil}}(\Pro(\cC),\cD) \x{\simeq}{\lrar} \Fun(\cC,\cD)
\end{equation}
\end{thm}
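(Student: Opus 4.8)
The plan is to realize the restriction functor of~\eqref{e:univer} as one leg of the adjunction $\iota_{\cC}^{*} \dashv \mathrm{Ran}_{\iota_{\cC}}$ between restriction and right Kan extension along the Yoneda embedding $\iota_{\cC}\colon \cC \hookrightarrow \Pro(\cC)$, and to show that this adjunction cuts out mutually inverse equivalences between $\Fun(\cC,\cD)$ and $\Fun_{\mathrm{cofil}}(\Pro(\cC),\cD)$. Since $\iota_{\cC}$ is fully faithful, the counit $\iota_{\cC}^{*}\,\mathrm{Ran}_{\iota_{\cC}} \to \mathrm{id}$ is an equivalence (so $\mathrm{Ran}_{\iota_{\cC}}$ is fully faithful), by the general theory of Kan extensions in~\cite{Lur09}. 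The theorem then reduces to two claims: \textbf{(A)} for every $f\colon \cC \to \cD$ the right Kan extension $F := \mathrm{Ran}_{\iota_{\cC}}(f)$ exists and preserves small cofiltered limits; and \textbf{(B)} for every $F \in \Fun_{\mathrm{cofil}}(\Pro(\cC),\cD)$ the unit $F \to \mathrm{Ran}_{\iota_{\cC}}(\iota_{\cC}^{*}F)$ is an equivalence. Granting (B), full faithfulness of $\iota_{\cC}^{*}$ on $\Fun_{\mathrm{cofil}}$ follows formally, since for $F,G \in \Fun_{\mathrm{cofil}}$ we obtain $\Map(F,G) \simeq \Map(\mathrm{Ran}_{\iota_{\cC}}\iota_{\cC}^{*}F, \mathrm{Ran}_{\iota_{\cC}}\iota_{\cC}^{*}G) \simeq \Map(\iota_{\cC}^{*}F,\iota_{\cC}^{*}G)$; granting (A) provides the essential surjectivity.

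The technical heart is a pointwise description of $\mathrm{Ran}_{\iota_{\cC}}$. First I would identify, for a pro-object $X$ classified by the cofiltered left fibration $\widetilde{\cC} \to \cC$, the comma $\infty$-category $\cC \times_{\Pro(\cC)} \Pro(\cC)_{X/}$ with $\widetilde{\cC}$ itself. Indeed $\Map_{\Pro(\cC)}(X,\iota_{\cC}(c)) \simeq f_{X}(c)$ by the Yoneda lemma (where $f_{X}\colon \cC \to \ovl{\cS}_{\infty}$ is the functor underlying $X$), so the objects of this comma category are exactly the pairs $(c,x)$ with $x \in f_{X}(c)$, i.e.\ the objects of $\widetilde{\cC}$; this is the $\infty$-categorical analogue of Corollary~\ref{c:pro}. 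Although $\widetilde{\cC}$ may fail to be small, the small coinitial subcategory $\widetilde{\cC}_{0}$ produced in the proof of Lemma~\ref{l:generate} (via Lemma~\ref{l:full-oo}) shows, by Theorem~\ref{t:cofinal}, that $\lim_{\widetilde{\cC}} (f \circ \mathrm{pr})$ exists in $\cD$ and computes $F(X)$. This settles existence of $F$ and, since the comma category at an object of $\cC$ has an initial object, also shows $F \circ \iota_{\cC} \simeq f$.

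For (B), I would evaluate at a pro-object $X$. By Lemma~\ref{l:generate} we may write $X \simeq \lim_{i \in \widetilde{\cC}_{0}} \iota_{\cC}(X_{i})$ as a small cofiltered limit of simple objects. Since $F$ preserves small cofiltered limits we get $F(X) \simeq \lim_{i} F(\iota_{\cC}(X_{i}))$, and by the pointwise formula together with coinitiality of $\widetilde{\cC}_{0} \hookrightarrow \widetilde{\cC}$ the same limit computes $\mathrm{Ran}_{\iota_{\cC}}(\iota_{\cC}^{*}F)(X)$. A check that the unit realizes this canonical comparison then finishes (B).

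The main obstacle will be the preservation statement in (A): that $F = \mathrm{Ran}_{\iota_{\cC}}(f)$ preserves \emph{arbitrary} small cofiltered limits, not only limits of simple objects. Given a small cofiltered diagram $\{X_{k}\}_{k \in \cK}$ with limit $X$, the key input is the $\omega$-cocompactness of simple objects (Lemma~\ref{l:cocompact}): for each $c \in \cC$ it yields $\Map_{\Pro(\cC)}(X,\iota_{\cC}(c)) \simeq \colim_{k} \Map_{\Pro(\cC)}(X_{k},\iota_{\cC}(c))$. Presenting each $X_{k}$ as a cofiltered limit of simple objects (Lemma~\ref{l:generate}) and assembling these into a single small cofiltered system, this cocompactness is exactly what is needed to show that the assembled system is coinitial in the comma category $\cC \times_{\Pro(\cC)} \Pro(\cC)_{X/} \simeq \widetilde{\cC}$. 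The pointwise formula and Theorem~\ref{t:cofinal} then give $F(X) \simeq \lim_{k} F(X_{k})$. The delicate point, requiring the most care, is establishing this coinitiality at the $\infty$-categorical level and verifying that all comparison maps in sight are the canonical ones.
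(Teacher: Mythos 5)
Your proposal is correct in outline, but it takes a genuinely different route from the paper. The paper's proof is essentially a citation: it applies \cite[Proposition 5.3.6.2]{Lur09}, with $\cK$ the collection of small cofiltered simplicial sets and $\cR$ empty, to the opposite $\infty$-category $\cC^{\op}$, using the large cardinal axiom to replace $\cS_\infty$ by $\ovl{\cS}_\infty$, and then identifies Lurie's free completion $\cP^{\cK}(\cC)$ with $\Pro(\cC)$ by means of the characterization in Corollary~\ref{c:universal} (i.e.\ Lemmas~\ref{l:generate} and~\ref{l:closed-filtered}). You instead re-prove the universal property from scratch via the adjunction between restriction and right Kan extension along $\iota_{\cC}$. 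Your skeleton --- the counit equivalence from full faithfulness of $\iota_{\cC}$, the reduction to claims (A) and (B), the identification of the comma $\infty$-category of a pro-object with its classifying left fibration $\widetilde{\cC}$, and the use of the small coinitial subcategory $\widetilde{\cC}_0$ from Lemma~\ref{l:generate} to control size --- is sound, and amounts to unpacking, in this special case, the machinery hidden inside Lurie's proof. What the paper's route buys is exactly the avoidance of your ``delicate point'': the rectification and coinitiality argument needed for (A) is the technical heart of \cite[\S 5.3.6]{Lur09}, and the citation disposes of it wholesale. What your route buys is a self-contained argument whose only external inputs are the Kan extension formalism and cofinality (Theorem~\ref{t:cofinal}).

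Two concrete cautions if you carry out (A). First, variance: assemble the comma categories $\widetilde{\cC}^k$ of the objects $X_k$ into a \emph{Cartesian} fibration $\cE \lrar \cK$ (unstraightening $k \mapsto \widetilde{\cC}^k$ contravariantly), rather than a coCartesian fibration over $\cK^{\op}$, since it is along Cartesian fibrations that right Kan extensions are computed fiberwise; this is what yields the iterated limit $\lim_{\cE} \simeq \lim_{k \in \cK}\lim_{\widetilde{\cC}^k}$, hence $\lim_k F(X_k)$, while the opposite convention does not give the fiberwise formula. Second, for the coinitiality of the induced functor $\cE \lrar \widetilde{\cC}_X$, the correct input is Lemma~\ref{l:cocompact} in the form that the underlying functor of $X$ is the pointwise filtered colimit $\colim_k f_{X_k}$ in $\Fun(\cC,\ovl{\cS}_\infty)$ (as in the proof of Lemma~\ref{l:closed-filtered}), so that every object and mapping datum of $\widetilde{\cC}_X$ lifts to a finite stage, after which an argument in the style of Lemma~\ref{l:full-oo} applies. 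This step is not automatic, and it is the one place where your sketch, as written, still owes a proof.
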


\begin{proof}
This is a particular case of~\cite[Proposition 5.3.6.2]{Lur09} where $\cK$ is the family of small cofiltered simplicial sets and $\cR$ is empty. Note that \cite[Proposition 5.3.6.2]{Lur09} is stated for a small $\infty$-category $\cC$ (in the terminology of loc. cit.) and makes use of the $\infty$-category $\cS_\infty$ of small spaces. In light of our large cardinal axiom~\ref{a:large-cardinal} we may replace $\cS_\infty$ with $\ovl{\cS}_\infty$ and apply~\cite[Proposition 5.3.6.2]{Lur09} to the $\infty$-category $\cC^{\op}$. The fact that the $\infty$-category $\cP^{\cK}(\cC)$ constructed in the proof of~\cite[Proposition 5.3.6.2]{Lur09} coincides with $\Pro(\cC)$ follows from Corollary~\ref{c:universal}.
\end{proof}

The universal property~\ref{t:universal} allows, in particular, to define the \textbf{prolongation} of functors in the setting of $\infty$-categories.
\begin{define}\label{d:prolong}
Let $f: \cC \lrar \cD$ be a map of locally small $\infty$-categories. A \textbf{prolongation} of $f$ is a cofiltered limit preserving functor $\Pro(f): \Pro(\cC) \lrar \Pro(\cD)$, together with an equivalence $u: \Pro(f)|_{\cC} \simeq \iota_{\cD} \circ f$ (where $\iota_{\cD}: \cD \hrar \Pro(\cD)$ is the full embedding of simple objects). By Theorem~\ref{t:universal} we see that a prolongation $(\Pro(f),u)$ is unique up to a contractible choice.
\end{define}

\section{The induced model structure on $\Pro(\cC)$}\label{s:pro-model}

\subsection{Definition}

In this subsection we define what we mean for a model structure on $\Pro(\cC)$ to be \textbf{induced} by a weak fibration structure on $\cC$. Sufficient hypothesis on $\cC$ for this procedure to be possible appear in~\cite{EH76,Isa04,BS1,BS2}. We begin by establishing some useful terminology.

\begin{define}\label{def mor}
Let $\cC$ be a category with finite limits, and $\cM$ a class of morphisms in $\cC$. Denote by:
\begin{enumerate}
\item $\R(\cM)$ the class of morphisms in $\cC$ that are retracts of morphisms in $\cM$.
\item ${}^{\perp}\cM$ the class of morphisms in $\cC$ with the left lifting property against any morphism in $\cM$.
\item $\cM^{\perp}$ the class of morphisms in $\cC$ with the right lifting property against any morphism in $\cM$.
\item $\Lw^{\cong}(\cM)$ the class of morphisms in $\Pro(\cC)$ that are isomorphic to a levelwise $\cM$-map (see Definition~\ref{def natural}).
\item $\Sp^{\cong}(\cM)$ the class of morphisms in $\Pro(\cC)$ that are isomorphic to a special $\cM$-map (see Definition~\ref{def natural}).
\end{enumerate}
\end{define}

\begin{lem}[{\cite[Proposition 2.2]{Isa04}}]\label{l:ret_lw}
Let $\cM$ be any class of morphisms in $\mcal{C}$. Then $\R(\Lw^{\cong}(\cM)) = \Lw^{\cong}(\cM)$.
\end{lem}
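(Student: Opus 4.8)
The inclusion $\Lw^{\cong}(\cM) \subseteq \R(\Lw^{\cong}(\cM))$ is automatic, since every morphism is a retract of itself; so the plan is to prove the reverse inclusion, i.e.\ that a retract of a morphism in $\Lw^{\cong}(\cM)$ again lies in $\Lw^{\cong}(\cM)$. First I would reduce to a convenient representative. Since $\Lw^{\cong}(\cM)$ is by definition closed under isomorphism in the arrow category of $\Pro(\cC)$, and since the relation ``is a retract of'' is invariant under replacing the dominating morphism by an isomorphic one, I may assume that the morphism being retracted is a genuine levelwise $\cM$-map $g = \{g_i : X_i \to Y_i\}_{i \in \cI}$ with each $g_i \in \cM$ and $\cI$ small cofiltered (even an inverse poset, by Corollary~\ref{c:index-poset}, should this be convenient).

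Next I would record the retract data as idempotents. Writing the retract of $f : A \to B$ onto $g$ as sections $s : A \to X$, $s' : B \to Y$ and retractions $p : X \to A$, $p' : Y \to B$, with $p s = \mathrm{id}_A$, $p' s' = \mathrm{id}_B$ and commuting squares $g s = s' f$ and $p' g = f p$, the endomorphisms $e_X := s p$ and $e_Y := s' p'$ are idempotents of $X$ and $Y$ in $\Pro(\cC)$ satisfying $g e_X = e_Y g$; and $A, B$ are the splittings (images) of $e_X, e_Y$, with $f$ the morphism induced by $g$ between them.

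The heart of the argument is to realize these splittings \emph{levelwise, using the objects $X_i$ and $Y_i$ themselves}. The point is that although $\cC$ need not admit split idempotents, the image of an idempotent always exists in $\Pro(\cC)$ and is computed by the cofiltered limit of the associated telescope $X \xleftarrow{e_X} X \xleftarrow{e_X} \cdots$ (and likewise for $Y$). Concretely, after reindexing (Lemma~\ref{l:every map natural}) so that $e_X$ and $e_Y$ are represented by natural transformations over $\cI$, I would flatten the resulting $\NN$-indexed tower of pro-objects into a single pro-object indexed by a cofiltered category built out of $\cI$ and $\NN$; this yields presentations of $A$ and $B$ over a \emph{common} index whose entries are among the $X_i$ and $Y_i$. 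Because $g e_X = e_Y g$, the map $g$ passes to a levelwise map between the two telescopes, and in this presentation $f$ is given levelwise by the morphisms $g_i$, each of which lies in $\cM$. Hence $f \in \Lw^{\cong}(\cM)$.

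The main obstacle is precisely this flattening step, together with the fact that the identity $p s = \mathrm{id}_A$ holds in $\Pro(\cC)$ but in general only ``eventually'' at the level of representatives, so one cannot split $e_X$ levelwise inside $\cC$. This is exactly why the telescope construction is essential: it reuses the morphisms $g_i \in \cM$ rather than trying to exhibit the components of $f$ as retracts in $\cC$ of maps in $\cM$, and it is what makes the statement hold for an \emph{arbitrary} class $\cM$. I would execute the flattening via the standard identification of a cofiltered limit of pro-objects with a pro-object indexed by the Grothendieck construction of the indexing data, and then check that the comparison isomorphisms are natural enough that the two telescopes and the induced map $f$ are indexed compatibly, so that $f$ indeed becomes a levelwise map with all components among the $g_i$.
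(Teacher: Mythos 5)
Your argument is correct, and it is essentially the argument this lemma rests on: the paper gives no proof of its own, citing \cite[Proposition 2.2]{Isa04}, and Isaksen's proof is exactly your idempotent--telescope argument --- encode the retraction as compatible idempotents on the dominating level map, realize the splittings as the cofiltered limits of the telescopes, and flatten those limits (after reindexing over an inverse poset) into pro-objects whose entries are the given $X_i$, $Y_i$ and whose connecting map is levelwise given by the $g_i \in \cM$. You also correctly isolate the one genuine subtlety --- that the retraction identities hold in $\Pro(\cC)$ but only ``eventually'' on representatives, so no levelwise splitting in $\cC$ is available --- which is precisely why the telescope, rather than a levelwise retract argument, is the right mechanism and why the statement holds for an arbitrary class $\cM$.
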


\begin{define}\label{d:induce}
Let $(\cC,\cW,\cF ib)$ be a weak fibration category. We say that a model structure $(\Pro(\cC),\mathbf{W}, \mathbf{Cof}, \mathbf{Fib})$ on $\Pro(\cC)$ is \textbf{induced} from $\cC$ if the following conditions are satisfied:
\begin{enumerate}
\item The cofibrations are $\mathbf{Cof} =  {}^{\perp} (\Fib\cap \cW)$.
\item The trivial cofibrations are $\mathbf{Cof}\cap\mathbf{W} = {}^{\perp} \Fib$.
\item If $f:Z \lrar X$ is a morphism in $\cC^\cT$, with $\cT$ a cofiltered category, then there exists a cofiltered category $\cJ$, a coinitial functor $\mu:\cJ\lrar \cT$ and a factorization
\[ \mu^*Z\xrightarrow{g} Y\xrightarrow{h}\mu^*X\]
in $\cC^{\cJ}$ of the map $\mu^*f: \mu^*Z \lrar \mu^*X$ such that $g$ is a cofibration in $\Pro(\cC)$ and $h$ is both a trivial fibration in $\Pro(\cC)$  and a levelwise trivial fibration.
\end{enumerate}
\end{define}

Since a model structure is determined by its cofibrations and trivial cofibrations, we see that the induced model structure is unique if it exists.

We now recall some terminology from \cite{BS1}.

\begin{define}
Let $(\cC,\cW,\Fib)$ be a weak fibration category and $\cC_s \subseteq \cC$ a full-subcategory which is closed under finite limits. We say that $\cC_s$ is a \textbf{full weak fibration subcategory of $\cC$} if $(\cC_s,\cW \cap \cC_s,\Fib \cap \cC_s)$ satisfies the axioms of a weak fibration category.
\end{define}

\begin{define}\label{d:dense}
Let $(\cC,\cW,\Fib)$ be a weak fibration category and $\cC_s \subseteq \cC$ a full weak fibration subcategory of $\cC$. We say that $\cC_s$ is \textbf{dense} if the following condition is satisfied: if $X \x{f}{\lrar} H \x{g}{\lrar} Y$ is a pair of composable morphisms in $\cC$ such that $X,Y \in \cC_s$ and $g$ is a fibration (resp. trivial fibration) then there exists a diagram of the form
\[ \xymatrix{
& H'\ar^{g'}[rd]\ar[dd] & \\
X \ar^{f'}[ur]\ar^{f}[dr] & & Y \\
& H \ar^{g}[ur] & \\
}\]
such that $g'$ is a fibration (resp. trivial fibration)  and $H'\in \cC_s$.
\end{define}

\begin{define}\label{d:locally-small}
Let $\cC$ be a weak fibration category. We say that $\cC$ is \textbf{homotopically small} if for every map of the form $f: \cI \lrar \cC$ where $\cI$ is a small cofiltered category, there exists a dense \textbf{essentially small} weak fibration subcategory $\cC_s \subseteq \cC$ such that the image of $f$ is contained in $\cC_s$.
\end{define}

\begin{rem}
  Note that any essentially small weak fibration category is clearly homotopically small.
\end{rem}

\begin{prop}\label{p:induce}
Let $(\cC,\cW,\cF ib)$ be a weak fibration category. Suppose that there exists a model structure on $\Pro(\cC)$  such that the following conditions are satisfied:
\begin{enumerate}
\item The cofibrations are ${}^{\perp} (\Fib\cap \cW)$.
\item The trivial cofibrations are ${}^{\perp} \Fib$.
\end{enumerate}
If $\cC$ is a model category or is homotopically small then the model structure on $\Pro(\cC)$ is induced from $\cC$ in the sense of Definition \ref{d:induce}.
\end{prop}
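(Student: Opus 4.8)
The plan is to establish condition (3) of Definition~\ref{d:induce}, since conditions (1) and (2) hold by hypothesis. Because a model structure is determined by its cofibrations and trivial cofibrations, I will freely use that the trivial fibrations of $\Pro(\cC)$ are exactly $\mathbf{Cof}^{\perp} = ({}^{\perp}(\Fib\cap\cW))^{\perp}$. First I would reduce to inverse posets: given $f:Z\lrar X$ in $\cC^{\cT}$ with $\cT$ cofiltered, Lemma~\ref{l:cofinal_CDS} supplies an inverse poset with a coinitial functor onto $\cT$, and restricting $f$ along it (using the reindexing isomorphisms of Lemma~\ref{l:cofinal}) I may assume $\cT$ is itself an inverse poset and $f$ an honest natural transformation. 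The factorization will be produced over this same $\cT$, so in the notation of (3) I take $\cJ=\cT$ and let $\mu$ be the composite coinitial functor. The goal then reduces to factoring $f$ in $\cC^{\cT}$ as $Z\x{g}{\lrar}Y\x{h}{\lrar}X$ with $g$ a cofibration in $\Pro(\cC)$ and $h$ a special $(\Fib\cap\cW)$-map: granting this, Proposition~\ref{p:forF_sp_is_lw_strong} shows $h$ is automatically a levelwise trivial fibration, so all that remains is that such an $h$ is a trivial fibration in $\Pro(\cC)$.

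That last point is a lifting lemma I would isolate and prove first: every special $(\Fib\cap\cW)$-map $h$ lies in $\mathbf{Cof}^{\perp}$, i.e. every cofibration has the left lifting property against $h$. I would prove this by transfinite induction along the inverse poset $\cT$, solving a lifting problem against $h$ stage by stage; the obstruction at $t$ is a lifting problem against the relative matching map $Y_t\lrar X_t\times_{\lim_{s<t}X_s}\lim_{s<t}Y_s$, which lies in $\Fib\cap\cW$ by definition of a special map. Since any cofibration is in ${}^{\perp}(\Fib\cap\cW)$ and maps out of a pro-object into a simple one factor through a stage, each such problem is solvable; this is the lifting property of special maps from~\cite{BS1}.

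It then remains to construct the levelwise factorization in each case. If $\cC$ is a model category, then by Corollary~\ref{c:poset-reedy} the injective model structure on $\cC^{\cT}$ exists and coincides with the Reedy structure of the inverse poset $\cT$; as $\cT$ has only descending morphisms, its Reedy cofibrations are the levelwise cofibrations and its trivial fibrations are the special $(\Fib\cap\cW)$-maps (Remark~\ref{r:special-is-reedy} and Proposition~\ref{p:forF_sp_is_lw_strong}). Factoring $f$ there gives $g$ (a levelwise cofibration) and $h$ (a special $(\Fib\cap\cW)$-map); $h$ is a trivial fibration by the lemma above, and $g$ is a cofibration in $\Pro(\cC)$, since in any lifting problem against a simple trivial fibration $E\lrar B$ both maps out of the pro-objects factor through a common stage $t$, where $g_t$ is a cofibration of $\cC$ and $E\lrar B$ a trivial fibration, so the lift exists in $\cC$ and hence in $\Pro(\cC)$.

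If instead $\cC$ is only homotopically small, I would use Definition~\ref{d:locally-small} to choose a dense, essentially small weak fibration subcategory $\cC_s\subseteq\cC$ containing the image of the whole datum $f:Z\lrar X$. Since $\cC_s$ is essentially small, its class $\Fib\cap\cW$ of trivial fibrations is a set and its simple pro-objects are $\omega$-cocompact (Lemma~\ref{l:cocompact}), which lets me run the dual (``cosmall'') small object argument generated by this set to factor $f$ as $g\in{}^{\perp}(\Fib\cap\cW)$ followed by $h$ built as an inverse tower of base changes of simple trivial fibrations, i.e. a special $(\Fib\cap\cW)$-map. I expect the main obstacle to be exactly here: keeping the entire construction of $h$ inside $\cC_s$ so that it is genuinely special in $\cC$, and certifying that $g$ is a cofibration relative to all of $\Pro(\cC)$ rather than merely $\Pro(\cC_s)$. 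This is precisely where the density hypothesis (Definition~\ref{d:dense}) must be used, since it reduces a lifting problem for $g$ against an arbitrary simple trivial fibration of $\cC$ to one with source and target in $\cC_s$. With $g$ and $h$ so constructed, the conclusion follows as before, completing the verification of condition (3).
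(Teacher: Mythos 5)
Your overall strategy is the same as the paper's: reduce to an inverse poset, factor the map into a candidate cofibration followed by a special $(\Fib\cap\cW)$-map, note via Proposition~\ref{p:forF_sp_is_lw_strong} that the latter is automatically a levelwise trivial fibration, and then verify separately that special $(\Fib\cap\cW)$-maps are trivial fibrations in $\Pro(\cC)$ and that the first factor is a cofibration. The difference is only in how the ingredients are justified: the paper cites the construction of \cite[Definition 4.3]{BS15} together with \cite[Proposition 4.1]{BS15} (which gives $\Lw(\cC of)={}^{\perp}(\Fib\cap\cW)$ and $\R(\Sp(\Fib\cap\cW))=({}^{\perp}(\Fib\cap\cW))^{\perp}$) in the model category case, and \cite[Proposition 3.15]{BS1} together with the inclusion $\Sp(\Fib\cap\cW)\subseteq\cocell(\Fib\cap\cW)\subseteq({}^{\perp}(\Fib\cap\cW))^{\perp}$ of \cite[Proposition 5.10]{BS2} in the homotopically small case. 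Your replacements are sound: the Reedy factorization of Corollary~\ref{c:poset-reedy} is exactly the levelwise construction the paper invokes; your stage-wise lifting argument against the relative matching maps (by induction on the degree function of Lemma~\ref{l:poset-reedy}) is precisely the proof of the cocell inclusion; and your factor-through-a-stage argument for $\Lw(\cC of)\subseteq{}^{\perp}(\Fib\cap\cW)$ is the relevant half of \cite[Proposition 4.1]{BS15} --- you correctly only need the inclusion, not the equality.

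Two caveats, both in the homotopically small case. First, your opening commitment to take $\cJ=\cT$ cannot be kept there: a weak fibration category has no cofibration class, so the factorization cannot be performed levelwise over the fixed poset $\cT$; the cosmall object argument attaches cocells (base changes of trivial fibrations of $\cC_s$) transfinitely and thereby enlarges the indexing poset. This is exactly why Definition~\ref{d:induce}(3) allows an arbitrary cofiltered $\cJ$ with a coinitial functor $\mu:\cJ\lrar\cT$, and why the paper, following \cite[Proposition 3.15]{BS1}, produces the factorization over a new inverse poset. The commitment is harmless in your model category case, where the Reedy factorization genuinely lives over $\cT$. Second, your treatment of the homotopically small case remains a sketch at precisely the point where the paper outsources to \cite[Proposition 3.15]{BS1}: you correctly identify that density (Definition~\ref{d:dense}) is what keeps the construction inside $\cC_s$ while retaining the left lifting property of $g$ against all of $\Fib\cap\cW$ rather than only $\Fib\cap\cW\cap\cC_s$, but you do not carry the construction out; filling this in amounts to reproving that cited result.
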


\begin{proof}
We only need to verify that Condition (3) in Definition \ref{d:induce} is satisfied.

Suppose that $\cC$ is a model category. Let $f:Z \lrar X$ be a morphism in $\cC^\cT$, with $\cT$ a cofiltered category. Choose an inverse poset $\cA$ with a coinitial functor $\mu:\cA\lrar \cT$ and consider the induced map $\mu^*f: \mu^*Z \lrar \mu^*X$. Since $\cF ib\cap\cW$ and $\cC of$ are classes of morphisms satisfying $(\cF ib\cap\cW) \circ \cC of = \Mor(\cC)$ we may employ the construction described in~\cite[Definition 4.3]{BS15} to factor $\mu^*f$ in $\cC^\cA$ as
\[ \mu^*Z\xrightarrow{\Lw(\cC of)} Y\xrightarrow{\Sp(\cF ib\cap\cW)}\mu^*X.\]

By \cite[Proposition 4.1]{BS15} we have that
$$\Lw(\cC of)= {}^{\perp}\R(\Sp(\cF ib\cap\cW)= {}^{\perp}(\cF ib\cap\cW).$$
$$\R(\Sp(\cF ib\cap\cW)=\Lw(\cC of)^{\perp}=({}^{\perp}(\cF ib\cap\cW))^{\perp}.$$
Thus, the first map is a cofibration and the second map is both a trivial fibration and a levelwise trivial fibration (see Proposition~\ref{p:forF_sp_is_lw_strong}).

Now suppose that $\cC$ is homotopically small. Let $f:Z \lrar X$ be a morphism in $\cC^\cT$, with $\cT$ a cofiltered category. Following the proof of ~\cite[Proposition 3.15]{BS1}, we can find an inverse poset $\cA$ equipped with a coinitial functor $\mu:\cA \lrar \cT$, together with a factorization of $\mu^*f$ in $\cC^\cA$ as
\[\mu^*Z\xrightarrow{{}^{\perp}(\cF ib\cap\cW)} Z'\xrightarrow{\Sp(\cF ib\cap\cW)}\mu^*X.\]

By \cite[Proposition 5.10]{BS2} we have that
$$\Sp(\cF ib\cap\cW)\subseteq \cocell(\cF ib\cap\cW)\subseteq ({}^{\perp}(\cF ib\cap\cW))^{\perp}.$$

Thus the first map is a cofibration and the second map is both a trivial fibration and a levelwise trivial fibration (see Proposition~\ref{p:forF_sp_is_lw_strong}).
\end{proof}

\subsection{Existence results}

We shall now describe sufficient conditions on $\cC$ which insure the existence of an induced model structure on $\Pro(\cC)$.

We denote by $[1]$ the category consisting of two objects and one non-identity morphism between them. Thus, if $\cC$ is any category, the functor category $\cC^{[1]}$ is just the category of morphisms in $\cC$.

\begin{define}\label{d:admiss}
A relative category $(\cC,\cW)$ is called \textbf{pro-admissible} if $\Lw^{\cong}(\cW)\subseteq \Pro(\cC)^{[1]}$ satisfies the 2-out-of-3 property.
\end{define}

\begin{lem}[Isaksen]\label{l:proper}
Let $\cM$ be a proper model category. Then $(\cM,\cW)$ is pro-admissible.
\end{lem}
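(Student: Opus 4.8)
The plan is to deduce the 2-out-of-3 property for $\Lw^{\cong}(\cW)$ inside $\Pro(\cM)$ from the 2-out-of-3 property for $\cW$ inside $\cM$, applied \emph{levelwise}, following Isaksen. Suppose we are given a commutative triangle in $\Pro(\cM)$ with edges $f$, $g$ and $h = g \circ f$, two of which lie in $\Lw^{\cong}(\cW)$. First I would rectify the whole triangle to a levelwise one: by Lemma~\ref{l:every map natural} each edge is isomorphic to a natural transformation over a common cofiltered category, and after reconciling the shared objects $X,Y,Z$ through common coinitial refinements (Lemma~\ref{l:cofinal}, Lemma~\ref{l:cofinal_CDS} and Corollary~\ref{c:index-poset}) one obtains an inverse poset $\cT$ over which $f$, $g$, $h$ are represented by natural transformations $f_t, g_t, h_t$ with $h_t = g_t \circ f_t$ for every $t$.

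The heart of the matter is the following claim: if a levelwise map over $\cT$ lies in $\Lw^{\cong}(\cW)$, then there is a coinitial functor $\cT' \lrar \cT$ along which it restricts to a map that is a weak equivalence at \emph{every} level. Granting this, I would apply it to the first distinguished edge to pass to an index on which that edge is a genuine levelwise weak equivalence, and then apply it again to the second distinguished edge; since the first edge was a weak equivalence at every level of the intermediate index, it remains one after the second reindexing (reindexing along coinitial functors only yields isomorphic pro-objects, so $\Lw^{\cong}(\cW)$ is preserved throughout). Thus over a single inverse poset both distinguished edges are honest levelwise weak equivalences, and since the triangle still commutes levelwise, the 2-out-of-3 property of $\cW$ in $\cM$ forces the third edge to be a levelwise weak equivalence as well; hence it lies in $\Lw(\cW) \subseteq \Lw^{\cong}(\cW)$.

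It remains to prove the claim, and this is exactly where properness of $\cM$ is used. By definition a levelwise map in $\Lw^{\cong}(\cW)$ is connected, by an isomorphism in $\Pro(\cM)^{[1]}$, to a levelwise weak equivalence; rectifying this isomorphism (again via Lemma~\ref{l:every map natural}) produces, over a common coinitial refinement, a levelwise commutative square whose bottom edge is a weak equivalence and whose two vertical comparison maps represent the isomorphism. To run the levelwise 2-out-of-3 I need these comparison maps to be weak equivalences at each level. I would obtain this by factoring the comparison maps into (trivial) cofibrations and (trivial) fibrations and invoking properness: pullbacks along fibrations preserve weak equivalences (right properness) and pushouts along cofibrations preserve them (left properness), which lets one transport the weak-equivalence property across the reindexing squares after one further coinitial reindexing. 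The two directions of cancellation in the 2-out-of-3 property are dual and call on right and left properness respectively, which is why \emph{full} properness is assumed.

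The main obstacle is precisely this claim. The tempting shortcut---reindex each distinguished edge separately so that it becomes a levelwise weak equivalence, and then intersect the two coinitial sets of ``good'' levels---fails, because coinitial subsets of a cofiltered poset need not be closed under intersection (two disjoint coinitial subsets of a descending chain already exhibit this). Properness is the device that circumvents this combinatorial failure, by producing, for a \emph{single} essential weak equivalence, an index on which it is a weak equivalence at every level rather than merely on a coinitial subset.
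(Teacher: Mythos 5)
Your first step (rectifying the commuting triangle to a strict levelwise triangle over a common inverse poset) is fine, but the ``heart of the matter'' claim on which everything rests is false, and properness cannot rescue it. Take $\cM=\cS$ with the Kan--Quillen structure, which is proper. Let $X$ be the constant pro-object $\ast$ indexed by $\NN$, let $Y=\{S^1\}_{n\in\NN}$ with every bonding map constant at the basepoint, and let $f\colon X\lrar Y$ be the levelwise basepoint inclusion. Since each bonding map of $Y$ factors through $\ast$, a direct check shows that $f$ is an \emph{isomorphism} in $\Pro(\cS)$ (its composite with the unique map $Y\lrar\ast$ equals $\mathrm{id}_Y$ in the pro-category, because $\mathrm{id}_{Y_m}$ and the constant map agree after precomposition with one bonding map). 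Hence $f$ is isomorphic in $\Pro(\cS)^{[1]}$ to $\mathrm{id}_\ast$ and so lies in $\Lw^{\cong}(\cW)$; yet restriction along \emph{any} coinitial functor $\mu\colon\cT'\lrar\NN$ only reuses the components $f_{\mu(t')}\colon\ast\lrar S^1$, none of which is a weak equivalence. The same example defeats your third paragraph: the vertical comparison maps of a rectified pro-isomorphism are exactly of this type, so no factorization-plus-properness argument can make them weak equivalences at each level. The underlying issue is that membership in $\Lw^{\cong}(\cW)$ is invariant under pro-isomorphism, whereas ``levelwise in $\cW$'' is a property of a chosen presentation, and pro-isomorphisms are in general not implemented by coinitial restrictions; your own warning about intersecting coinitial subsets is beside the point, because the claim already fails for a single map.

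With the claim gone, the reduction to a triangle in which both distinguished edges are honest levelwise weak equivalences collapses, and this is precisely why the actual proof is structured differently. The paper's proof is simply a citation of Lemmas 3.5 and 3.6 of \cite{Isa04}, and Isaksen's mechanism is not to upgrade both edges to strict levelwise weak equivalences: one edge is kept only \emph{essentially} levelwise, the given level representations are modified (rather than restricted) using levelwise factorizations into (trivial) cofibrations and fibrations, and left and right properness are applied level by level to pushout and pullback squares to manufacture a new level representation, isomorphic to the original map, of the remaining edge; retract stability, $\R(\Lw^{\cong}(\cW))=\Lw^{\cong}(\cW)$ (Lemma~\ref{l:ret_lw}, i.e.\ \cite[Proposition 2.2]{Isa04}), then transports the conclusion back. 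In short, properness is the tool for building isomorphic level representations compatible with a given composite, not a device for converting essentially levelwise weak equivalences into levelwise ones over a coinitial subindex, and your proposal would need this substantially different argument to go through.
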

\begin{proof}
Combine Lemma 3.5 and Lemma 3.6 of~\cite{Isa04}.
\end{proof}

\begin{rem}
Lemma~\ref{l:proper} can be generalized to a wider class of relative categories via the notion of \textbf{proper factorizations}, see~\cite[Proposition 3.7]{BS2}.
\end{rem}

Our first sufficient condition is based on the work of Isaksen:

\begin{thm}[\cite{Isa04}]\label{t:model_Isa}
Let $(\cC,\cW,\cF ib,\cC of)$ be a pro-admissible \textbf{model category} (e.g., a proper model category). Then the induced model structure on $\Pro(\cC)$ exists. Furthermore, we have:
\begin{enumerate}
\item The weak equivalences in $\Pro(\cC)$ are given by $\mathbf{W} =\Lw^{\cong}(\cW)$.
\item The fibrations in $\Pro(\cC)$ are given by $\mathbf{Fib} := \R(\Sp^{\cong}(\Fib))$.
\item The cofibrations in $\Pro(\cC)$ are given by $\mathbf{Cof} =\Lw^{\cong}(\cC of)$.
\item The trivial cofibrations in $\Pro(\cC)$ are given by $\mathbf{Cof}\cap\mathbf{W} =\Lw^{\cong}(\cC of\cap\cW)$.
\item The trivial fibrations in $\Pro(\cC)$ are given by $\mathbf{Fib}\cap\mathbf{W} =\R(\Sp^{\cong}(\cF ib\cap\cW))$.
\end{enumerate}
\end{thm}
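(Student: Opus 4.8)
The plan is to build the model structure by hand following Isaksen~\cite{Isa04}, and then to recognize it as an induced structure by invoking Proposition~\ref{p:induce}. First I would propose the three classes $\mathbf{W} = \Lw^{\cong}(\cW)$, $\mathbf{Cof} = \Lw^{\cong}(\cC of)$ and $\mathbf{Fib} = \R(\Sp^{\cong}(\Fib))$ and verify the axioms of Definition~\ref{d:model_structure}. Two of these are essentially immediate: closure under retracts holds for $\mathbf{W}$ and $\mathbf{Cof}$ by Lemma~\ref{l:ret_lw} and for $\mathbf{Fib}$ by construction, while the two-out-of-three property for $\mathbf{W}$ is exactly the pro-admissibility hypothesis (Definition~\ref{d:admiss}), which is guaranteed for proper model categories by Lemma~\ref{l:proper}.

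The substantive part, and the step I expect to be the main obstacle, is the pair of factorization axioms together with the lifting axioms. Given an arbitrary morphism of $\Pro(\cC)$, I would first use Lemma~\ref{l:every map natural} to replace it up to isomorphism by a natural transformation $f: Z \lrar X$ of diagrams indexed by an inverse poset, and then apply the Reedy-style factorization of~\cite[Definition 4.3]{BS15} attached to the two factorization pairs $(\cC of, \Fib \cap \cW)$ and $(\cC of \cap \cW, \Fib)$ of $\cC$. This yields factorizations of $\mu^*f$ in $\cC^{\cA}$ as a $\Lw(\cC of)$-map followed by an $\Sp(\Fib \cap \cW)$-map, and as a $\Lw(\cC of \cap \cW)$-map followed by an $\Sp(\Fib)$-map. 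To turn these into honest model-category factorizations I would invoke the lifting calculus of~\cite[Proposition 4.1]{BS15}, which gives $\Lw^{\cong}(\cC of) = {}^{\perp}\R(\Sp^{\cong}(\Fib \cap \cW)) = {}^{\perp}(\Fib \cap \cW)$ and, for the second pair, $\Lw^{\cong}(\cC of \cap \cW) = {}^{\perp}\R(\Sp^{\cong}(\Fib)) = {}^{\perp}\Fib$. These identities deliver the lifting axioms at once and show that in each factorization the first factor is a (trivial) cofibration and the second a (trivial) fibration.

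With the model structure established, the remaining descriptions follow formally. In any model category the (trivial) cofibrations are precisely the maps with the left lifting property against the (trivial) fibrations, so the displayed identities identify $\mathbf{Cof} = \Lw^{\cong}(\cC of)$ and $\mathbf{Cof}\cap\mathbf{W} = \Lw^{\cong}(\cC of \cap \cW)$; dually, since a special $\Fib$-map is a weak equivalence if and only if it is a special $(\Fib \cap \cW)$-map (Proposition~\ref{p:forF_sp_is_lw_strong}), combining $\mathbf{Fib} = \R(\Sp^{\cong}(\Fib))$ with $\mathbf{W} = \Lw^{\cong}(\cW)$ gives $\mathbf{Fib}\cap\mathbf{W} = \R(\Sp^{\cong}(\Fib \cap \cW))$.

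Finally, to see that this structure is induced in the sense of Definition~\ref{d:induce}, I would appeal to Proposition~\ref{p:induce}. Its hypotheses (1) and (2) ask exactly for $\mathbf{Cof} = {}^{\perp}(\Fib \cap \cW)$ and $\mathbf{Cof}\cap\mathbf{W} = {}^{\perp}\Fib$, which are the identities already obtained above. Since $\cC$ is a model category, Proposition~\ref{p:induce} then furnishes Condition~(3) of Definition~\ref{d:induce} for free, so the model structure is induced from $\cC$ and the proof is complete.
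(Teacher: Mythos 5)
Your overall plan---reconstruct Isaksen's existence proof from the weak factorization calculus of \cite{BS15} and then feed the outcome into Proposition~\ref{p:induce}---is reasonable and close in spirit to the paper, which simply cites \cite[\S 4]{Isa04} for the existence of the model structure together with properties (1)--(5) and Conditions (1)--(2) of Definition~\ref{d:induce}, and uses Proposition~\ref{p:induce} only to obtain Condition (3) (noting that Isaksen's properness hypothesis enters solely through pro-admissibility, Lemma~\ref{l:proper}). However, your argument has a genuine gap precisely where you write ``with the model structure established''. What \cite[Proposition 4.1]{BS15} delivers is two weak factorization systems on $\Pro(\cC)$, namely $\bigl(\Lw^{\cong}(\Cof),\R(\Sp^{\cong}(\Fib\cap\cW))\bigr)$ and $\bigl(\Lw^{\cong}(\Cof\cap\cW),\R(\Sp^{\cong}(\Fib))\bigr)$, together with a class $\mathbf{W}=\Lw^{\cong}(\cW)$ closed under retracts and two-out-of-three. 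This is strictly less than a model structure: the lifting axiom of Definition~\ref{d:model_structure} requires that \emph{every} map in $\mathbf{Cof}\cap\mathbf{W}=\Lw^{\cong}(\Cof)\cap\Lw^{\cong}(\cW)$ lift against $\mathbf{Fib}=\R(\Sp^{\cong}(\Fib))$, whereas you have only shown this for the a priori smaller class $\Lw^{\cong}(\Cof\cap\cW)={}^{\perp}\R(\Sp^{\cong}(\Fib))$. The missing compatibilities
\[ \Lw^{\cong}(\Cof)\cap\Lw^{\cong}(\cW)\subseteq\Lw^{\cong}(\Cof\cap\cW), \qquad \R(\Sp^{\cong}(\Fib))\cap\Lw^{\cong}(\cW)\subseteq\R(\Sp^{\cong}(\Fib\cap\cW)), \]
are exactly parts (4) and (5) of the theorem: a map isomorphic to a levelwise cofibration and separately isomorphic to a levelwise weak equivalence need not visibly admit a single level representation doing both jobs, and merging the two representations is the delicate reindexing argument that occupies most of \cite[\S 4]{Isa04}; it is the step where the two-out-of-three property of $\Lw^{\cong}(\cW)$ does real work.

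Your shortcut around this is circular in two ways. First, you deduce (4) and (5) ``formally'' from the model structure, but (given your two weak factorization systems) the lifting axiom needed to \emph{have} a model structure is equivalent to these very statements; note that once one of the two displayed inclusions is proven the other follows by the standard retract argument, so what is missing is one genuine lemma, not a formality. Second, your specific justification is incorrect: Proposition~\ref{p:forF_sp_is_lw_strong} says only that a special $\cM$-map is a levelwise $\cM$-map. It gives the easy direction (a special $(\Fib\cap\cW)$-map is a levelwise trivial fibration, hence lies in $\mathbf{W}$), but it says nothing about the converse claim that a special $\Fib$-map lying in $\Lw^{\cong}(\cW)$ is a (retract of a) special $(\Fib\cap\cW)$-map---that converse \emph{is} the missing lemma. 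For the same reason your final appeal to Proposition~\ref{p:induce} is premature: its hypothesis (2) asks for $\mathbf{Cof}\cap\mathbf{W}={}^{\perp}\Fib$, and you have only established ${}^{\perp}\Fib=\Lw^{\cong}(\Cof\cap\cW)$, which coincides with $\mathbf{Cof}\cap\mathbf{W}$ only after (4) is known. To repair the proposal you must either prove the compatibility lemma directly (following Isaksen's argument) or do as the paper does and cite \cite[\S 4]{Isa04} for the existence statement wholesale, observing that those arguments apply verbatim to any pro-admissible model category.
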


\begin{proof}
The existence of a model structure satisfying Conditions (1) and (2) of Definition~\ref{d:induce}, as well as Properties (1)-(5) above, is proven in~\cite[\S 4]{Isa04}. The rest follows from Proposition \ref{p:induce}. Note that the results of~\cite{Isa04} are stated for a proper model category $\cC$. However, the properness of $\cC$ is only used to show that $\cC$ is pro-admissible (see Lemma~\ref{l:proper}), while the arguments of~\cite[\S 4]{Isa04} apply verbatim to any pro-admissible model category.
\end{proof}

The following case is based on the work of the work of Schlank and the first author:

\begin{thm}[~\cite{BS1}]\label{t:model-2}
Let $(\cC,\cW,\Fib)$ be a homotopically small pro-admissible weak fibration category and assume that $\cC$ is either essentially small or admits small colimits. Then the induced model structure on $\Pro(\cC)$ exists. Furthermore, we have:
\begin{enumerate}
\item The weak equivalences in $\Pro(\cC)$ are given by $\mathbf{W} =\Lw^{\cong}(\cW)$.
\item The fibrations in $\Pro(\cC)$ are given by $\mathbf{Fib} := \R(\Sp^{\cong}(\Fib))$.
\item The trivial fibrations in $\Pro(\cC)$ are given by $\mathbf{Fib}\cap\mathbf{W} =\R(\Sp^{\cong}(\cF ib\cap\cW))$.
\end{enumerate}
\end{thm}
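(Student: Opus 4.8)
The plan is to reduce everything to two inputs: the construction of the model structure, which is the main theorem of~\cite{BS1}, and the general criterion of Proposition~\ref{p:induce} that upgrades such a model structure to an \emph{induced} one. First I would invoke~\cite{BS1} to obtain, under the stated hypotheses, a model structure on $\Pro(\cC)$ whose weak equivalences, fibrations and trivial fibrations are given by the formulas~(1)--(3). The only genuinely external point is that the hypotheses here---homotopical smallness, pro-admissibility, and the assumption that $\cC$ is essentially small or cocomplete---are precisely those needed to run the Barnea--Schlank machinery: pro-admissibility supplies the $2$-out-of-$3$ property for $\Lw^{\cong}(\cW)$, while the size hypothesis and homotopical smallness guarantee that every map in $\cC^{\cT}$ admits, after a coinitial reindexing to an inverse poset, a factorization into a map in ${}^{\perp}(\Fib\cap\cW)$ followed by a map in $\Sp(\Fib\cap\cW)$ (and dually for $\Fib$), built via the construction of~\cite[Definition 4.3]{BS15}.

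Granting the model structure, the remaining task is to verify the conditions of Definition~\ref{d:induce}. Conditions~(1) and~(2) ask that the cofibrations equal ${}^{\perp}(\Fib\cap\cW)$ and the trivial cofibrations equal ${}^{\perp}\Fib$, where on the right $\Fib\cap\cW$ and $\Fib$ denote the corresponding classes of \emph{simple} morphisms of $\Pro(\cC)$. By the standard retract argument valid in any model category, the cofibrations coincide with ${}^{\perp}(\mathbf{Fib}\cap\mathbf{W}) = {}^{\perp}\R(\Sp^{\cong}(\Fib\cap\cW))$ and the trivial cofibrations with ${}^{\perp}\mathbf{Fib} = {}^{\perp}\R(\Sp^{\cong}(\Fib))$, using the descriptions~(2) and~(3). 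Hence it suffices to establish the two identities
\[ {}^{\perp}(\Fib\cap\cW) = {}^{\perp}\R(\Sp^{\cong}(\Fib\cap\cW)), \qquad {}^{\perp}\Fib = {}^{\perp}\R(\Sp^{\cong}(\Fib)). \]
In each case one inclusion is immediate from the antitonicity of ${}^{\perp}$, since a simple fibration is in particular a special one. For the reverse inclusion I would use that special maps are cocellular: by~\cite[Proposition 5.10]{BS2} one has $\Sp(\Fib\cap\cW)\subseteq\cocell(\Fib\cap\cW)\subseteq({}^{\perp}(\Fib\cap\cW))^{\perp}$, so any map with the left lifting property against the simple trivial fibrations automatically lifts against all special $\Fib\cap\cW$-maps, and hence against their retracts; the same argument applies verbatim with $\Fib$ in place of $\Fib\cap\cW$.

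Finally, condition~(3) of Definition~\ref{d:induce} is supplied directly by the homotopically small case of Proposition~\ref{p:induce}: since $\cC$ is homotopically small and the model structure satisfies conditions~(1) and~(2) above, that proposition produces the required reindexed factorization into a cofibration followed by a map that is simultaneously a trivial fibration and a levelwise trivial fibration, passing between special and levelwise trivial fibrations via Proposition~\ref{p:forF_sp_is_lw_strong}. The main obstacle is not any of these comparatively formal identifications but the internal content of~\cite{BS1} itself---the construction of the factorizations in $\Pro(\cC)$ and the verification of the lifting axioms---which rests on the reduction to inverse posets, the functorial formation of special maps, and the theory of cocellular maps; once that is in hand, matching the classes and invoking Proposition~\ref{p:induce} is routine.
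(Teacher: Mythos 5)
Your proposal is correct and follows essentially the same route as the paper: cite \cite[Theorem 4.18]{BS1} for the existence of the model structure together with the descriptions (1)--(3), and invoke the homotopically small case of Proposition~\ref{p:induce} for condition (3) of Definition~\ref{d:induce}. The only difference is that you re-derive conditions (1) and (2) of Definition~\ref{d:induce} from the fibration descriptions via the retract argument and $\Sp(\Fib\cap\cW)\subseteq\cocell(\Fib\cap\cW)\subseteq({}^{\perp}(\Fib\cap\cW))^{\perp}$ from \cite[Proposition 5.10]{BS2}, a valid but unnecessary detour, since \cite[Theorem 4.18]{BS1} already supplies those two conditions directly (this is exactly how the paper quotes it).
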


\begin{proof}
The existence of a model structure satisfying Conditions (1) and (2) of Definition~\ref{d:induce}, as well as Properties (1)-(3) above, is proven in~\cite[Theorem 4.18]{BS1}. The rest follows from Proposition \ref{p:induce}
\end{proof}

\subsection{The weak equivalences in the induced model structure}

In this subsection, we let $\cC$ be a weak fibration category and \textbf{assume} that the induced model structure on $\Pro(\cC)$ exists (see Definition~\ref{d:induce}). Our goal is to relate the weak equivalences of $\Pro(\cC)$ to the class $\Lw^{\cong}(\cW)$ (see Definition~\ref{def mor}).

\begin{prop}\label{p:Lw are WE}
Every map in $\Lw^{\cong}(\cW)$ is a weak equivalence in $\Pro(\cC)$.
\end{prop}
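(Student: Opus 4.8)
The plan is to reduce the statement, using closure properties of the class $\mathbf{W}$ together with the factorization axiom built into Definition~\ref{d:induce}, to the single assertion that a cofibration which happens to be a levelwise weak equivalence is automatically a trivial cofibration. First I would dispose of the word ``isomorphic'' in $\Lw^{\cong}(\cW)$: if $f$ is isomorphic in the arrow category to a levelwise weak equivalence $f'$, then $f=b^{-1}\circ f'\circ a$ with $a,b$ isomorphisms, so since $\mathbf{W}$ contains all isomorphisms and has two-out-of-three, $f\in\mathbf{W}$ iff $f'\in\mathbf{W}$. It thus suffices to treat an honest levelwise weak equivalence $f\colon Z\lrar X$ in $\cC^{\cT}$. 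Next I apply Condition (3) of Definition~\ref{d:induce}: after reindexing along a coinitial $\mu\colon\cJ\lrar\cT$ one obtains a factorization $\mu^*f=h\circ g$ in $\cC^{\cJ}$ with $g\in\mathbf{Cof}$ and $h$ both a trivial fibration in $\Pro(\cC)$ and a levelwise trivial fibration. By Lemma~\ref{l:cofinal} the reindexing map $\nu_{\mu}$ is an isomorphism, so $f\in\mathbf{W}$ iff $\mu^*f\in\mathbf{W}$; and since $h\in\mathbf{W}$, by two-out-of-three it suffices to prove $g\in\mathbf{W}$. Finally, $h$ is a levelwise weak equivalence and so is $\mu^*f$, whence by the two-out-of-three property of $\cW$ applied in each level $g$ is a levelwise weak equivalence as well. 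Everything therefore reduces to the claim that a natural transformation $g\colon A\lrar B$ in $\cC^{\cJ}$ which is both a cofibration in $\Pro(\cC)$ and a levelwise weak equivalence lies in ${}^{\perp}\Fib$, i.e. is a trivial cofibration.

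To prove this I would verify directly that such a $g$ lifts against an arbitrary simple fibration $p\colon U\lrar V$ (a map of $\Fib$ between constant pro-objects). Given a lifting square with corners $A,B,U,V$ and diagonal data $\alpha\colon A\lrar U$, $\beta\colon B\lrar V$, the description of maps into simple objects in Definition~\ref{d:pro-discrete} lets me descend $\alpha,\beta$ and the commutativity relation to a single index $j\in\cJ$, producing a commutative square in $\cC$
\[
\xymatrix{
A_j \ar^{\alpha_j}[r]\ar_{g_j}[d] & U \ar^{p}[d] \\
B_j \ar_{\beta_j}[r] & V
}
\]
with $g_j\in\cW$ and $p\in\Fib$. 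Then I form the pullback $P:=B_j\times_V U$, whose projection $\pi\colon P\lrar B_j$ lies in $\Fib$ since $\Fib$ is closed under base change; the square supplies $\phi\colon A_j\lrar P$ with $\pi\circ\phi=g_j$ and $\mathrm{pr}_U\circ\phi=\alpha_j$. Factoring $\phi$ by the factorization axiom of $\cC$ as $A_j\x{c}{\lrar}Q\x{r}{\lrar}P$ with $c\in\cW$ and $r\in\Fib$, the composite $\pi\circ r\colon Q\lrar B_j$ is a fibration, and since $(\pi\circ r)\circ c=g_j\in\cW$ with $c\in\cW$, two-out-of-three forces $\pi\circ r\in\cW$. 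Hence $\pi\circ r$ is a \emph{trivial} fibration between simple objects, so the cofibration $g$ lifts against it, yielding $\ell\colon B\lrar Q$ with $(\pi\circ r)\circ\ell$ the canonical projection $B\lrar B_j$ and $\ell\circ g$ equal to $A\lrar A_j\x{c}{\lrar}Q$. The composite $\mathrm{pr}_U\circ r\circ\ell\colon B\lrar U$ is then the required lift: using $p\circ\mathrm{pr}_U=\beta_j\circ\pi$ and $\mathrm{pr}_U\circ\phi=\alpha_j$, a short diagram check shows it sits over $\beta$ and restricts to $\alpha$ along $g$.

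The main obstacle is precisely this last construction. Because $\cC$ has no cofibrations of its own, one cannot lift $g_j$ against $p$ inside $\cC$; the point is to trade the weak equivalence $g_j$ and the fibration $p$ for a genuine trivial fibration $\pi\circ r$ by means of a pullback and the factorization axiom, and only then invoke the lifting property of the pro-cofibration $g$. I would also note that the argument is insensitive to whether the class $\Fib\cap\cW$ defining $\mathbf{Cof}$ is read as consisting of simple, levelwise, or special trivial fibrations, since the simple trivial fibration $\pi\circ r$ qualifies as all three (over the one-point poset the levelwise and special conditions reduce to the map itself).
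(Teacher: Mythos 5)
Your proof is correct, and its skeleton coincides with the paper's own: both arguments first dispose of the isomorphism in $\Lw^{\cong}(\cW)$ via two-out-of-three, then invoke Condition (3) of Definition~\ref{d:induce} to factor $\mu^*f$ as a cofibration $g$ followed by a map $h$ that is simultaneously a trivial fibration in $\Pro(\cC)$ and a levelwise trivial fibration, deduce by levelwise two-out-of-three that $g\in\Lw(\cW)$, and so reduce everything to the inclusion ${}^{\perp}(\Fib\cap\cW)\cap\Lw(\cW)\subseteq{}^{\perp}\Fib$. The one place you genuinely diverge is this last step: the paper disposes of it by citing \cite[Proposition 4.17]{BS1}, whereas you prove it from scratch, and your argument is sound. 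Descending $\alpha$, $\beta$ and the commutativity relation to a single level $j$ is legitimate because $U$ and $V$ are simple, $\cJ$ is cofiltered, and $g$ is an honest natural transformation (naturality of $g$ is what allows the relation $p\circ\alpha_j=\beta_j\circ g_j$ to hold on the nose after a further restriction of the index); the pullback $P=B_j\times_V U$ together with the factorization $\phi=r\circ c$ then correctly trades the level data for a \emph{simple} trivial fibration $\pi\circ r$ (using base-change closure of $\Fib$, closure of $\Fib$ under composition, $g_j\in\cW$, and two-out-of-three), against which $g$ lifts by the very definition of $\mathbf{Cof}$, and your verification of the two triangles for $\mathrm{pr}_U\circ r\circ\ell$ via $p\circ\mathrm{pr}_U=\beta_j\circ\pi$ and $\mathrm{pr}_U\circ\phi=\alpha_j$ checks out. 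What your route buys is self-containedness: it makes visible exactly where each axiom of a weak fibration category enters (finite limits for the pullback, base-change stability for $\pi$, the factorization axiom for $\phi$, two-out-of-three for $\pi\circ r$), at the cost of reproving a lemma the paper outsources --- your last two paragraphs are in essence a direct proof of the result cited from~\cite{BS1}.
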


\begin{proof}
Since any isomorphism is a weak equivalence it is enough to show that every map in $\Lw(\cW)$ is a weak equivalence in $\Pro(\cC)$. Let $\cI$ be a cofiltered category and let $f:Z \lrar X$ be a morphism in $\cC^{\cI}$ which is levelwise in $\cW$. By condition (3) of Definition~\ref{d:induce}, there exists a cofiltered category $\cJ$ with a coinitial functor $\mu:\cJ\lrar \cI$ and a factorization
\[ \mu^*Z\xrightarrow{g} Y\xrightarrow{h}\mu^*X \]
in $\cC^{\cJ}$ of the map $\mu^*f: \mu^*Z \lrar \mu^*X$ such that $g$ is a cofibration in $\Pro(\cC)$ and $h$ is both a trivial fibration in $\Pro(\cC)$  and a levelwise trivial fibration. Since $f$ is a levelwise weak equivalence, we get that $g$ is a levelwise weak equivalence. Since the weak equivalences in $\Pro(\cC)$ are closed under composition, it is enough to show that $g$ is a trivial cofibration in $\Pro(\cC)$, or, equivalently, that $g\in {}^{\perp} \Fib$. But $g\in {}^{\perp} (\Fib\cap \cW)\cap \Lw(\cW)$, so this follows from \cite[Proposition 4.17]{BS1}.
\end{proof}

\begin{cor}\label{c:factorizations}
Every map $f:Z \lrar X$ in $\Pro(\cC)$ can be factored as
\[ Z \x{g}{\lrar} Z' \x{h}{\lrar} X' \x{\cong}{\lrar} X \]
such that $g$ is a weak equivalence, $h$ is a levelwise fibration and the isomorphism $X' \x{\cong}{\lrar} X$ is a reindexing isomorphism (see Definition~\ref{d:reindex}).
\end{cor}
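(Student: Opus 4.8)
The plan is to reduce the statement to a factorization that already exists inside a functor category $\cC^{\cT}$, and then transport it across reindexing isomorphisms. First I would apply Lemma~\ref{l:every map natural} to the given map $f: Z \lrar X$, where $Z = \{Z_i\}_{i \in \cI}$ and $X = \{X_j\}_{j \in \cJ}$: this produces a cofiltered category $\cT$, coinitial functors $p: \cT \lrar \cI$ and $q: \cT \lrar \cJ$, a natural transformation $\widetilde{f}: p^*Z \lrar q^*X$ in $\cC^{\cT}$, and a commuting square whose vertical maps are the reindexing isomorphisms $\nu_p$ and $\nu_q$, so that $\nu_q \circ f = \widetilde{f} \circ \nu_p$. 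Using Lemma~\ref{l:cofinal_CDS} I would then precompose $p$ and $q$ with a coinitial functor from an inverse poset; since coinitiality is preserved under composition and reindexing isomorphisms compose (Lemma~\ref{l:cofinal}), after replacing $\widetilde{f}$ by its reindexing we may assume that $\cT$ is an inverse poset, and in particular a cofinite poset.

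Next I would invoke the \textbf{projective} weak fibration structure on $\cC^{\cT}$ from Lemma~\ref{l:projective-injective}(1), whose weak equivalences are the levelwise weak equivalences and whose fibrations are the levelwise fibrations. Its factorization axiom factors $\widetilde{f}$ as $p^*Z \x{g'}{\lrar} Z' \x{h}{\lrar} q^*X$ with $g'$ a levelwise weak equivalence and $h$ a levelwise fibration. Since every levelwise weak equivalence lies in $\Lw^{\cong}(\cW)$, Proposition~\ref{p:Lw are WE} shows that $g'$ is a weak equivalence in $\Pro(\cC)$.

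Finally I would set $g := g' \circ \nu_p$ and $X' := q^*X$, and take the closing isomorphism to be $\nu_q^{-1}: X' \x{\cong}{\lrar} X$, the inverse of the reindexing isomorphism of Definition~\ref{d:reindex}. As $\nu_p$ is an isomorphism, hence a weak equivalence, and weak equivalences in $\Pro(\cC)$ are closed under composition, the map $g$ is a weak equivalence; the map $h$ remains a levelwise fibration; and the commuting square of Lemma~\ref{l:every map natural} gives $f = \nu_q^{-1} \circ h \circ g$. Thus $Z \x{g}{\lrar} Z' \x{h}{\lrar} X' \x{\cong}{\lrar} X$ is exactly the asserted factorization. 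The only genuine work is the indexing bookkeeping in the first paragraph: I must ensure that refining $\cT$ to an inverse poset stays compatible with the square of Lemma~\ref{l:every map natural}, so that the final composite really recovers $f$. Once the diagrams are aligned, the remaining steps are immediate applications of the projective weak fibration structure and of Proposition~\ref{p:Lw are WE}.
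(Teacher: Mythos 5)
Your proof is correct and follows essentially the same route as the paper: reduce via Lemma~\ref{l:every map natural} and Lemma~\ref{l:cofinal_CDS} to a level map over an inverse poset, factor it as a levelwise weak equivalence followed by a levelwise fibration, and conclude with Proposition~\ref{p:Lw are WE}. The only cosmetic difference is that you invoke the factorization axiom of the projective weak fibration structure of Lemma~\ref{l:projective-injective}, whereas the paper cites the underlying construction of \cite[Definition 4.3]{BS15} together with Proposition~\ref{p:forF_sp_is_lw_strong} directly -- but that is precisely how Lemma~\ref{l:projective-injective} is proved, so the arguments coincide.
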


\begin{proof}
Let $f: Z \lrar X$ be a map in $\Pro(\cC)$. By Lemma~\ref{l:every map natural}, we may assume that $f$ is given by a morphism in $\cC^{\cT}$, with $\cT$ a cofiltered category.

Now choose an inverse poset $\cA$ with a coinitial functor $\mu:\cA\lrar \cT$. Since $\cF ib$ and $\cW$ are classes of morphisms in $\cC$ such that $\cF ib\circ \cW  = \Mor(\cC)$, we can, by the construction described in~\cite[Definition 4.3]{BS15}, factor $\mu^*f$ as
\[\mu^*Z\xrightarrow{\Lw(\cW)} Z'\xrightarrow{\Sp(\Fib)}\mu^*X.\]
The first map is a weak equivalence by Proposition~\ref{p:Lw are WE} and the second map is in $\Lw(\Fib)$ by Proposition~\ref{p:forF_sp_is_lw_strong}, so the conclusion of the lemma follows.
\end{proof}

Proposition~\ref{p:Lw are WE} admits two partial converses.
\begin{lem}\label{l:converse-cof}
Every trivial cofibration in $\Pro(\cC)$ belongs to $\Lw^{\cong}(\cW)$.
\end{lem}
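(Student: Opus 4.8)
The plan is to exhibit $f$ as a retract of a map lying in $\Lw(\cW)$ and then to conclude by the retract-closure of $\Lw^{\cong}(\cW)$ recorded in Lemma~\ref{l:ret_lw}. First I would arrange a convenient model for $f$: by Lemma~\ref{l:every map natural} the map $f$ is isomorphic, as an arrow of $\Pro(\cC)$, to a natural transformation of diagrams indexed by a common cofiltered category, and by Lemma~\ref{l:cofinal_CDS} we may reindex this category along a coinitial functor out of an inverse poset $\cA$. Since the reindexing isomorphisms of Definition~\ref{d:reindex} preserve both the class of trivial cofibrations (being isomorphisms) and the class $\Lw^{\cong}(\cW)$ (by its very definition), there is no loss of generality in assuming that $f\colon Z\lrar X$ is itself a natural transformation in $\cC^{\cA}$ with $\cA$ an inverse poset.

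Next, using $\Fib\circ\cW=\Mor(\cC)$, I would factor $f$ in $\cC^{\cA}$ by the construction of~\cite[Definition 4.3]{BS15} (exactly as in the proof of Corollary~\ref{c:factorizations}) as
\[ Z\x{g}{\lrar} Z'\x{h}{\lrar} X,\qquad g\in\Lw(\cW),\quad h\in\Sp(\Fib). \]
The essential observation is that the special fibration $h$ --- which is in particular a levelwise fibration by Proposition~\ref{p:forF_sp_is_lw_strong} --- is right orthogonal to every trivial cofibration. Indeed, applying~\cite[Proposition 5.10]{BS2} to the class $\Fib$, which is closed under base change because $\cC$ is a weak fibration category, gives $\Sp(\Fib)\subseteq\cocell(\Fib)\subseteq\left({}^{\perp}\Fib\right)^{\perp}$, while by Definition~\ref{d:induce}(2) the trivial cofibrations are precisely ${}^{\perp}\Fib$.

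Finally I would run the standard retract argument. As $f\in{}^{\perp}\Fib$ and $h\in\left({}^{\perp}\Fib\right)^{\perp}$, the commutative square with $f$ on the left, $h$ on the right, $g$ along the top and $\Id_X$ along the bottom (which commutes since $h\circ g=f$) admits a lift $r\colon X\lrar Z'$ with $r\circ f=g$ and $h\circ r=\Id_X$. This displays $f$ as a retract of $g$, so $g\in\Lw(\cW)\subseteq\Lw^{\cong}(\cW)$ together with Lemma~\ref{l:ret_lw} yields $f\in\Lw^{\cong}(\cW)$. The one non-formal ingredient is the orthogonality $\Sp(\Fib)\subseteq\left({}^{\perp}\Fib\right)^{\perp}$; note that, in contrast to the converse direction, we never need $h$ to be a weak equivalence.
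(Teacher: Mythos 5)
Your proof is correct and takes essentially the same route as the paper's: both factor the map (after reindexing, via the construction of \cite[Definition 4.3]{BS15}) into a levelwise weak equivalence followed by a special $\Fib$-map, use that trivial cofibrations $={}^{\perp}\Fib$ lift against such special maps, and conclude by the retract argument together with Lemma~\ref{l:ret_lw}. The only difference is packaging: the paper compresses the lifting/retract step into citations of \cite[Proposition 4.1 and Lemma 4.5]{BS15}, whereas you carry it out explicitly and source the orthogonality $\Sp(\Fib)\subseteq\left({}^{\perp}\Fib\right)^{\perp}$ from \cite[Proposition 5.10]{BS2}, exactly parallel to the paper's own use of that result for $\Fib\cap\cW$ in the proof of Proposition~\ref{p:induce}.
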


\begin{proof}
Since $\cC$ is a weak fibration category we know that $\cC$ has finite limits and that $\Mor(\cC)=\Fib\circ\cW$. By~\cite[Proposition 4.1]{BS15} we know that $\Mor(\Pro(\cC))=\Sp^{\cong}(\Fib)\circ\Lw^{\cong}(\cW)$. Now, by~\cite[Proposition 4.1 and Lemma 4.5]{BS15} and Lemma~\ref{l:ret_lw} we have that
\[\mathbf{Cof}\cap\mathbf{W} = {}^{\perp} \Fib={}^{\perp}\Sp^{\cong}(\Fib)\subseteq \R(\Lw^{\cong}(\cW))=\Lw^{\cong}(\cW).\]
\end{proof}

\begin{lem}\label{l:converse-fib}
Every trivial fibration in $\Pro(\cC)$ belongs to $\Lw^{\cong}(\cW)$.
\end{lem}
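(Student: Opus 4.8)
The plan is to reduce the claim about trivial fibrations to the already-proved statement about trivial cofibrations (Lemma~\ref{l:converse-cof}) via a retract/factorization argument. Let me sketch the structure.

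First I would recall that by Theorem~\ref{t:model_Isa} (or Theorem~\ref{t:model-2}, depending on the existence hypothesis in force), the trivial fibrations are exactly $\R(\Sp^{\cong}(\Fib\cap\cW))$. Since $\Lw^{\cong}(\cW)$ is closed under retracts by Lemma~\ref{l:ret_lw}, it suffices to show that every special $(\Fib\cap\cW)$-map belongs to $\Lw^{\cong}(\cW)$. But a special $(\Fib\cap\cW)$-map is, by Proposition~\ref{p:forF_sp_is_lw_strong} applied to the subcategory $\Fib\cap\cW$ (which is closed under base change and contains the isomorphisms by Definition~\ref{d:weak_fib}), automatically a levelwise $(\Fib\cap\cW)$-map, hence in particular a levelwise $\cW$-map, i.e.\ an element of $\Lw(\cW)\subseteq\Lw^{\cong}(\cW)$.

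\begin{proof}
By the explicit description of the trivial fibrations (see Theorem~\ref{t:model_Isa}(5) or Theorem~\ref{t:model-2}(3), according to the existence hypothesis), every trivial fibration in $\Pro(\cC)$ lies in $\R(\Sp^{\cong}(\Fib\cap\cW))$. Since $\Lw^{\cong}(\cW)$ is closed under retracts by Lemma~\ref{l:ret_lw}, it is enough to prove that $\Sp^{\cong}(\Fib\cap\cW)\subseteq\Lw^{\cong}(\cW)$. As this class is defined up to isomorphism in $\Pro(\cC)$, it suffices to show that every special $(\Fib\cap\cW)$-map is a levelwise $\cW$-map. The subcategory $\Fib\cap\cW$ is closed under base change and contains all isomorphisms, so by Proposition~\ref{p:forF_sp_is_lw_strong} every special $(\Fib\cap\cW)$-map is a levelwise $(\Fib\cap\cW)$-map, and in particular a levelwise $\cW$-map. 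Thus $\Sp^{\cong}(\Fib\cap\cW)\subseteq\Lw^{\cong}(\cW)$, and the claim follows.
\end{proof}

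The argument is essentially immediate once one invokes the structural description of the weak equivalences and fibrations, so there is no serious obstacle; the only point requiring care is matching the existence hypothesis on $\Pro(\cC)$ to the correct source (Theorem~\ref{t:model_Isa} versus Theorem~\ref{t:model-2}) for the identity $\mathbf{Fib}\cap\mathbf{W}=\R(\Sp^{\cong}(\Fib\cap\cW))$. Since the subsection opens by \emph{assuming} the induced model structure exists, both descriptions coincide and either may be cited. The key conceptual input is Proposition~\ref{p:forF_sp_is_lw_strong}, which converts the ``special'' condition into a ``levelwise'' one for any base-change-closed subcategory — here applied to $\Fib\cap\cW$ rather than to $\Fib$.
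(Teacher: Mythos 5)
Your argument is sound from its second step onward: Proposition~\ref{p:forF_sp_is_lw_strong} does apply to $\Fib\cap\cW$ (which is closed under base change and contains the isomorphisms), and Lemma~\ref{l:ret_lw} handles the retract closure. The gap is in the first step. The standing assumption of this subsection is only that the induced model structure on $\Pro(\cC)$ exists in the sense of Definition~\ref{d:induce}; it is \emph{not} assumed that this existence comes via Theorem~\ref{t:model_Isa} or Theorem~\ref{t:model-2}. The identity $\mathbf{Fib}\cap\mathbf{W}=\R(\Sp^{\cong}(\Fib\cap\cW))$ is a \emph{conclusion} of those theorems, established under their extra hypotheses (pro-admissibility, $\cC$ a model category or homotopically small, etc.), and it is nowhere shown to follow formally from Definition~\ref{d:induce}. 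Your parenthetical justification --- that since the model structure is assumed to exist, ``both descriptions coincide and either may be cited'' --- is precisely where the circularity lies. Definition~\ref{d:induce} determines the trivial fibrations only abstractly, as $\mathbf{Cof}^{\perp}=({}^{\perp}(\Fib\cap\cW))^{\perp}$; identifying this class with $\R(\Sp^{\cong}(\Fib\cap\cW))$ would require factoring an arbitrary map of $\Pro(\cC)$ into a cofibration followed by a \emph{special} $(\Fib\cap\cW)$-map, and condition (3) of Definition~\ref{d:induce} delivers something strictly weaker: a cofibration followed by a map that is a trivial fibration and a \emph{levelwise} trivial fibration, with no speciality claimed. (In a general weak fibration category one has no factorization of maps of $\cC$ into ``cofibration'' followed by trivial fibration, so the special-map factorization used in the existence theorems is simply not available.)

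The paper's own proof shows how to run your retract strategy with only the weaker input supplied by the definition: reindex $f$ via Lemma~\ref{l:every map natural}, apply condition (3) to get a factorization of every map as a cofibration followed by a levelwise trivial fibration, conclude that $\Mor(\Pro(\cC))=\Lw^{\cong}(\cW)\circ\mathbf{Cof}$, and then invoke the retract lemma (\cite[Lemma 4.5]{BS15}) together with Lemma~\ref{l:ret_lw} to obtain $\mathbf{Fib}\cap\mathbf{W}=\mathbf{Cof}^{\perp}\subseteq\R(\Lw^{\cong}(\cW))=\Lw^{\cong}(\cW)$. So the repair is to replace your appeal to the explicit description of the trivial fibrations by the factorization built into Definition~\ref{d:induce} itself; as written, your proof is valid only in the settings covered by Theorems~\ref{t:model_Isa} and~\ref{t:model-2}, not under the general hypothesis of the subsection, which is the generality actually needed later in the paper.
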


\begin{proof}
Let $f:Z \lrar X$ be a morphism in $\Pro(\cC)$. By Lemma~\ref{l:every map natural} $f$ is isomorphic to a natural transformation $f':Z' \lrar X'$ over a common indexing category $\cT$. By condition (3) of Definition~\ref{d:induce}, there exists a cofiltered category $\cJ$ with a coinitial functor $\mu:\cJ\lrar \cT$ and a factorization in $\cC^\cJ$ of the map $\mu^*f': \mu^*Z' \lrar \mu^*X'$ of the form
\[ \mu^*Z'\xrightarrow{g} Y\xrightarrow{h}\mu^*X'\]
such that $g$ is a cofibration in $\Pro(\cC)$ and $h$ is a levelwise trivial fibration.
We thus obtain a factorization of $f$ of the form
\[Z \x{\mathbf{Cof}}{\lrar} Y \x{\Lw^{\cong}(\cW)}{\lrar} X.\]
It follows that $\Mor(\Pro(\cC))=\Lw^{\cong}(\cW)\circ\mathbf{Cof}$. Now, by~\cite[Lemma 4.5]{BS15} and Lemma~\ref{l:ret_lw} we have that
\[\mathbf{Fib}\cap\mathbf{W} = {\mathbf{Cof}}^{\perp} \subseteq \R(\Lw^{\cong}(\cW))=\Lw^{\cong}(\cW).\]
\end{proof}

Combining Lemmas~\ref{l:converse-cof} and~\ref{l:converse-fib} we obtain
\begin{cor}
Every weak equivalence in $\Pro(\cC)$ is a composition of two maps in $\Lw^{\cong}(\cW)$.
\end{cor}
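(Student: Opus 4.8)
The plan is to use the factorization axiom of the model structure on $\Pro(\cC)$ (which exists by assumption) together with the two-out-of-three property, and then invoke Lemmas~\ref{l:converse-cof} and~\ref{l:converse-fib} directly. Concretely, let $f: Z \lrar X$ be a weak equivalence in $\Pro(\cC)$. By axiom (5) of Definition~\ref{d:model_structure} applied to $\Pro(\cC)$, I would factor $f$ as a trivial cofibration followed by a fibration, say
\[ Z \x{g}{\lrar} Y \x{h}{\lrar} X, \]
where $g \in \mathbf{Cof} \cap \mathbf{W}$ and $h \in \mathbf{Fib}$.

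Next I would observe that $h$ is automatically a \emph{trivial} fibration. Indeed, both $f$ and $g$ are weak equivalences, so by the two-out-of-three property of $\mathbf{W}$ (Definition~\ref{d:model_structure}(2)) the map $h$ is also a weak equivalence, hence $h \in \mathbf{Fib} \cap \mathbf{W}$.

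Finally I would apply the two preceding lemmas. Since $g$ is a trivial cofibration, Lemma~\ref{l:converse-cof} gives $g \in \Lw^{\cong}(\cW)$; since $h$ is a trivial fibration, Lemma~\ref{l:converse-fib} gives $h \in \Lw^{\cong}(\cW)$. Thus $f = h \circ g$ exhibits $f$ as a composition of two maps in $\Lw^{\cong}(\cW)$, as desired.

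I do not anticipate any real obstacle here: the statement is a formal consequence of the factorization axiom, two-out-of-three, and the two converses already established. The only genuine choices are cosmetic — one could equally well factor $f$ as a cofibration followed by a trivial fibration and run the same two-out-of-three argument on the cofibration — and both routes reduce to the same pair of lemmas. The substance of the result lives entirely in Lemmas~\ref{l:converse-cof} and~\ref{l:converse-fib}.
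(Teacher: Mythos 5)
Your proof is correct and is exactly the argument the paper intends: the paper proves this corollary simply by ``combining'' Lemmas~\ref{l:converse-cof} and~\ref{l:converse-fib}, which amounts precisely to your factorization of $f$ into a trivial cofibration followed by a fibration (using axiom (5) of Definition~\ref{d:model_structure} for the induced model structure on $\Pro(\cC)$) and the two-out-of-three upgrade of the fibration to a trivial fibration. No gaps; your cosmetic remark about the alternative factorization is also accurate.
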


\begin{cor}\label{c:compose Lw}
If the class $\Lw^{\cong}(\cW)$ is closed under composition then the weak equivalences in $\Pro(\cC)$ are precisely $\mathbf{W} =\Lw^{\cong}(\cW)$.
\end{cor}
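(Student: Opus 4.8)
The plan is to prove the two inclusions $\Lw^{\cong}(\cW)\subseteq\mathbf{W}$ and $\mathbf{W}\subseteq\Lw^{\cong}(\cW)$ separately, and then combine them. Essentially all of the substance has already been extracted in the preceding results, so this corollary is a short bookkeeping argument that records their combination under the extra hypothesis on composition.

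First I would observe that the inclusion $\Lw^{\cong}(\cW)\subseteq\mathbf{W}$ requires no new work: it is precisely the content of Proposition~\ref{p:Lw are WE}, which asserts that every map in $\Lw^{\cong}(\cW)$ is a weak equivalence in $\Pro(\cC)$. This direction holds unconditionally, i.e. without assuming that $\Lw^{\cong}(\cW)$ is closed under composition.

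For the reverse inclusion $\mathbf{W}\subseteq\Lw^{\cong}(\cW)$, I would invoke the corollary immediately preceding this one, which states that every weak equivalence in $\Pro(\cC)$ factors as a composition of two maps lying in $\Lw^{\cong}(\cW)$. (That corollary is itself the fusion of Lemma~\ref{l:converse-cof} and Lemma~\ref{l:converse-fib}, which handle trivial cofibrations and trivial fibrations respectively, together with the model-categorical factorization of an arbitrary weak equivalence into a trivial cofibration followed by a trivial fibration.) Granting this, if $f\in\mathbf{W}$ then $f=f_2\circ f_1$ with $f_1,f_2\in\Lw^{\cong}(\cW)$; invoking the hypothesis that $\Lw^{\cong}(\cW)$ is closed under composition, we conclude $f\in\Lw^{\cong}(\cW)$, as desired.

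The two inclusions then give $\mathbf{W}=\Lw^{\cong}(\cW)$, completing the proof. I do not expect any real obstacle here: the only delicate input is the decomposition of a weak equivalence into two maps of $\Lw^{\cong}(\cW)$, and this has already been established. The closure hypothesis is exactly what is needed to promote this two-step decomposition into membership in a single class, and it is used only in the second inclusion.
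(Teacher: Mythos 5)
Your proposal is correct and matches the paper's (implicit) argument exactly: the paper states this corollary without proof because, as you observe, it follows immediately by combining Proposition~\ref{p:Lw are WE} for the inclusion $\Lw^{\cong}(\cW)\subseteq\mathbf{W}$ with the preceding corollary (itself the fusion of Lemmas~\ref{l:converse-cof} and~\ref{l:converse-fib} via factorization and two-out-of-three) and the closure hypothesis for the reverse inclusion. Nothing is missing, and your remark that the closure hypothesis is used only in the second inclusion is accurate.
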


\begin{rem}\label{r:EH}
In~\cite{EH76}, Edwards and Hastings give conditions on a model category which they call Condition N (see~\cite[Section 2.3]{EH76}). They show in~\cite[Theorem 3.3.3]{EH76} that a model category $\cC$, satisfying Condition N, gives rise to a model structure on $\Pro(\cC)$. By Proposition \ref{p:induce} we get that this model structure is induced on $\Pro(\cC)$ in the sense of Definition~\ref{d:induce}.

In~\cite{EH76} Edwards and Hastings ask whether the weak equivalences in their model structure are precisely $\Lw^{\cong}(\cW)$. Using the results above we may give a positive answer to their question. Indeed, in a model category satisfying Condition N we have that either every object is fibrant or every object is cofibrant. It follows that such a model category is either left proper or right proper. By~\cite[Proposition 3.7 and Example 3.3]{BS2} we have that $\Lw^{\cong}(\cW)$ is closed under composition and hence by Corollary~\ref{c:compose Lw} the weak equivalences in $\Pro(\cC)$ coincide with $\Lw^{\cong}(\cW)$. In particular, a model category satisfying Condition N is pro-admissible and the existence of the induced model structure is a special case of Theorem~\ref{t:model_Isa}.
\end{rem}

\begin{rem}
In all cases known to the authors the weak equivalences in the induced model structure coincide with $\Lw^{\cong}(\cW)$. It is an interesting question whether or not there exist weak fibration categories for which the induced model structure exists but $\Lw^{\cong}(\cW) \subsetneq \mathbf{W}$. In fact, we do not know of any example of a weak fibration category for which $\Lw^{\cong}(\cW)$ does not satisfy two-out-of-three.
\end{rem}

\section{The underlying $\infty$-category of $\Pro(\cC)$}\label{s:main}

Throughout this section we let $\cC$ be a weak fibration category and assume that the induced model structure on $\Pro(\cC)$ exists (see Definition~\ref{d:induce}). In the previous section we have shown that this happens, for example, if:
\begin{enumerate}
\item $\cC$ is the underlying weak fibration category of a pro-admissible model category (Theorem~\ref{t:model_Isa}).
\item $\cC$ is essentially small and pro-admissible (Theorem~\ref{t:model-2}).
\item $\cC$ is homotopically small, pro-admissible and cocomplete (Theorem~\ref{t:model-2}).
\end{enumerate}

\subsection{A formula for mapping spaces}

Let $\cC$ be an ordinary category. Given two objects $X = \{X_i\}_{i \in \cI}$ and $Y = \{Y_j\}_{j \in \cJ}$ in $\Pro(\cC)$, the set of morphisms from $X$ to $Y$ is given by the formula
\[ \Hom_{\Pro(\cC)}(X,Y) = \lim_{j \in \cJ}\colim_{i \in \cI} \Hom_{\cC}(X_i,Y_j) \]
The validity of this formula can be phrased as a combination of the following two statements:
\begin{enumerate}
\item
The compatible family of maps $Y \lrar Y_j$ induces an isomorphism
\[ \Hom_{\Pro(\cC)}(X,Y) \x{\cong}{\lrar} \lim_{j \in \cJ}\Hom_{\Pro(\cC)}(X,Y_j) \]
\item
For each simple object $Y \in \cC \subseteq \Pro(\cC)$ the compatible family of maps $X \lrar X_i$ (combined with the inclusion functor $\cC \hrar \Pro(\cC)$) induces an isomorphism
\[ \colim_{i \in \cI}\Hom_{\cC}(X_i,Y) \x{\cong}{\lrar} \Hom_{\Pro(\cC)}(X,Y) \]
\end{enumerate}

In this section we want to prove that when $\cC$ is a weak fibration category, statements (1) and (2) above hold for \textbf{derived mapping spaces} in $\Pro(\cC)$, as soon as one replaces limits and colimits with their respective \textbf{homotopy limits and colimits}. As a result, we obtain the explicit formula
\[ \Map^h_{\Pro(\cC)}(X,Y) = \holim_{j \in \cJ}\hocolim_{i \in \cI} \Map^h_{\cC}(X_i,Y_j). \]

We first observe that assertion (1) above is equivalent to the statement that the maps $Y \lrar Y_j$ exhibit $Y$ as the limit, in $\Pro(\cC)$, of the diagram $j \mapsto Y_j$. Our first goal is hence to verify that the analogous statement for homotopy limits holds as well.

\begin{prop}\label{p:homotopy-limit}
Let $\cC$ be a weak fibration category and let $Y = \{Y_j\}_{j \in \cJ} \in \Pro(\cC)$ be a pro-object. Let $\ovl{\cF}: \cJ^{\triangleleft} \lrar \Pro(\cC)$ be the limit diagram extending $\cF(j) = Y_j$ so that $\ovl{\cF}(\ast) = Y$ (where $\ast \in \cJ^{\triangleleft}$ is the cone point). Then the image of $\ovl{\cF}$ in $\Pro(\cC)_\infty$ is a limit diagram. In particular, for every $X = \{X_i\}_{i \in \cI}$ the natural map
\[
\Map^h_{\Pro(\cC)}(X,Y) \lrar \holim_{j \in \cJ}\Map^h_{\Pro(\cC)}(X,Y_j)
\]
is a weak equivalence.
\end{prop}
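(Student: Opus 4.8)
The plan is to reduce to an inverse indexing poset and then invoke Proposition~\ref{p:injective-limit}, after replacing $Y$ by a Reedy fibrant model of its defining diagram. First I would reduce to the case where $\cJ = \cT$ is an inverse poset. By Lemma~\ref{l:cofinal_CDS} we may choose an inverse poset $\cT$ together with a coinitial functor $p : \cT \lrar \cJ$; the reindexing isomorphism $\nu_p : Y \lrar p^*Y$ (Lemma~\ref{l:cofinal}) identifies $Y$ with the limit of $t \mapsto Y_{p(t)}$, and by Theorem~\ref{t:cofinal}(2) the cone $\ovl{\cF}$ is a limit diagram in $\Pro(\cC)_\infty$ if and only if its restriction $\ovl{\cF} \circ p^{\triangleleft}$ along the coinitial map $p^{\triangleleft}$ is. Hence we may assume from the start that $\cJ = \cT$ is an inverse poset, so that $\cT$ is a cofinite poset and, by Corollary~\ref{c:poset-reedy}, the injective model structure on $\Pro(\cC)^{\cT}$ exists and coincides with the Reedy structure of Lemma~\ref{l:poset-reedy}.

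The key step is a fibrant replacement of the defining diagram. Viewing $t \mapsto Y_t$ as a diagram $\cF$ in $\cC^{\cT}$ and using the injective weak fibration structure of Lemma~\ref{l:projective-injective}, I would factor the map $\cF \lrar \ast$ as $\cF \xrightarrow{\Lw(\cW)} \cF' \xrightarrow{\Sp(\Fib)} \ast$ via the construction of~\cite[Definition 4.3]{BS15}, producing a special $\Fib$-diagram $\cF' = \{Y'_t\}$ together with a levelwise weak equivalence $\cF \lrar \cF'$. Composing with $\iota : \cC \hrar \Pro(\cC)$ gives a diagram $\iota\cF'$ of simple objects. Since $\iota$ preserves finite limits, each matching map of $\iota\cF'$ is $\iota$ applied to the corresponding matching map of $\cF'$, which lies in $\Fib$; and because the trivial cofibrations of $\Pro(\cC)$ have the left lifting property against $\Fib$ (Definition~\ref{d:induce}(2)), every map in $\iota(\Fib)$ is a fibration in $\Pro(\cC)$. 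Thus $\iota\cF'$ is a special $\mathbf{Fib}$-diagram, hence Reedy fibrant (Remark~\ref{r:special-is-reedy}), hence injectively fibrant (Corollary~\ref{c:poset-reedy}). Its limit in $\Pro(\cC)$ is the pro-object $Y' = \{Y'_t\}$, so Proposition~\ref{p:injective-limit} applies and shows that the limit cone $\ovl{\cF'} : \cT^{\triangleleft} \lrar \Pro(\cC)$ with $\ovl{\cF'}(\ast) = Y'$ maps to a limit diagram in $\Pro(\cC)_\infty$.

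Finally I would transport this conclusion back to $\ovl{\cF}$. Functoriality of limits turns the natural transformation $\cF \lrar \cF'$ into a natural transformation $\ovl{\cF} \Rightarrow \ovl{\cF'}$ of cones over $\cT^{\triangleleft}$; on each $t \in \cT$ it is $\iota(Y_t \lrar Y'_t)$, which is $\iota$ of a weak equivalence and so lies in $\Lw(\cW)$, while on the cone point it is the map $Y \lrar Y'$ represented by the levelwise weak equivalence $\{Y_t \lrar Y'_t\}$, again in $\Lw(\cW)$. By Proposition~\ref{p:Lw are WE} every component of $\ovl{\cF} \Rightarrow \ovl{\cF'}$ is a weak equivalence in $\Pro(\cC)$, so $\ovl{\cF}$ and $\ovl{\cF'}$ become objectwise-equivalent diagrams $\cT^{\triangleleft} \lrar \Pro(\cC)_\infty$; since being a limit diagram is invariant under objectwise equivalence and $\ovl{\cF'}$ is a limit diagram, so is $\ovl{\cF}$. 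The ``in particular'' clause then follows by applying $\Map_{\Pro(\cC)_\infty}(X,-)$, which preserves limit diagrams and hence yields the claimed weak equivalence $\Map^h_{\Pro(\cC)}(X,Y) \lrar \holim_{j \in \cJ}\Map^h_{\Pro(\cC)}(X,Y_j)$. The main obstacle is precisely that a general pro-object $Y$ is \emph{not} Reedy fibrant as a diagram of simple objects, which is why the detour through the special $\Fib$-diagram $\cF'$, together with the observation that $Y \lrar Y'$ is itself a levelwise weak equivalence, is essential.
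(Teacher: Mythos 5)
Your proof is correct and follows essentially the same route as the paper: reduce to an inverse indexing poset via Lemma~\ref{l:cofinal_CDS}, replace the defining diagram by an injectively fibrant (special $\Fib$) levelwise equivalent diagram, observe via Corollary~\ref{c:poset-reedy} that it is injectively fibrant in $\Pro(\cC)^{\cT}$, apply Proposition~\ref{p:injective-limit}, and transfer back along the levelwise weak equivalence using Proposition~\ref{p:Lw are WE}. You merely spell out details the paper leaves implicit (the cofinality transport, the check that $\iota$ of a special $\Fib$-diagram is a special $\mathbf{Fib}$-diagram, and the objectwise-equivalence-of-cones argument), all of which are accurate.
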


\begin{proof}
In light of Lemma~\ref{l:cofinal_CDS} we may assume that $Y$ is indexed by an inverse poset $\cT$ (see Definition~\ref{def CDS-2}). Consider the injective weak fibration structure on $\cC^{\cT}$ (see Lemma~\ref{l:projective-injective}). We may then replace $t \mapsto Y_t$ with an injective-fibrant \textbf{levelwise equivalent} diagram $t \mapsto Y_t'$. By Proposition~\ref{p:Lw are WE} we get that the pro-object $Y' = \{Y_t'\}_{t \in \cT}$ is weakly equivalent to $Y$ in $\Pro(\cC)$, and so it is enough to prove the claim for $Y'$.

By Corollary~\ref{c:poset-reedy} the injective model structure on $\Pro(\cC)^{\cT}$ exists, and the underlying weak fibration structure is the injective one as well. Thus the diagram $t \mapsto Y_t'$ is injectively fibrant in $\Pro(\cC)^{\cT}$. The desired result now follows from Proposition~\ref{p:injective-limit}. The last claim is a consequence of~\cite[Theorem 4.2.4.1]{Lur09} and also follows from the proof of Proposition~\ref{p:injective-limit}.
\end{proof}

Our next goal is to generalize assertion (2) above to derived mapping spaces.

\begin{prop}\label{p:main}
Let $X = \{X_i\}_{i \in \cI}$ be a pro-object and $Y \in \cC \subseteq \Pro(\cC)$ a simple object. Then the compatible family of maps $X \lrar X_i$ induces a weak equivalence
\begin{equation}\label{e:formula}
\hocolim_{i \in \cI}\Map^h_{\cC}(X_i,Y) \lrar \Map^h_{\Pro(\cC)}(X,Y)
\end{equation}
\end{prop}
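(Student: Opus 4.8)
First I would arrange that all objects in sight are fibrant. Factoring $Y \lrar \ast$ in $\cC$ as a weak equivalence followed by a fibration, I may replace $Y$ by a fibrant simple object: both sides of~\eqref{e:formula} are invariant under this change, since the functors $\Map^h_\cC(X_i,-)$ preserve weak equivalences, and a simple object which is fibrant in $\cC$ is fibrant in $\Pro(\cC)$ (for simple objects the special $\Fib$-map condition reduces to being a fibration in $\cC$). Next, by Corollary~\ref{c:index-poset} and the reindexing isomorphism of Lemma~\ref{l:cofinal}, I may assume $X=\{X_t\}_{t\in\cT}$ is indexed by an inverse poset $\cT$. Finally, using the injective (Reedy) weak fibration structure of Lemma~\ref{l:projective-injective} and Corollary~\ref{c:poset-reedy}, I replace $\{X_t\}$ by a levelwise weakly equivalent special $\Fib$-diagram; by Proposition~\ref{p:Lw are WE} this does not change the pro-object up to weak equivalence, by Proposition~\ref{p:forF_sp_is_lw_strong} its entries stay fibrant, and $X$ itself becomes fibrant in $\Pro(\cC)$. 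After these reductions $X$ and $Y$ are fibrant in the weak fibration category $\Pro(\cC)$ and each $X_t$ and $Y$ is fibrant in $\cC$.

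\textbf{Cisinski models.} Applying Corollary~\ref{c:map} to $\Pro(\cC)$ and to $\cC$, the two sides of~\eqref{e:formula} become $\Ne\uline{\Hom}_{\Pro(\cC)}(X,Y)$ and $\hocolim_i \Ne\uline{\Hom}_{\cC}(X_i,Y)$. By Remark~\ref{r:natural} the assignment $t\mapsto \uline{\Hom}_{\cC}(X_t,Y)$ is contravariantly functorial in $X_t$ via pullback, hence defines a functor $\cT^{\op}\lrar\ovl{\Cat}$; since $\cT$ is cofiltered, $\cT^{\op}$ is filtered, and by Thomason's theorem (\cite{Th79}, as used in Theorem~\ref{t:cofinal-2}) the source is modelled by the nerve of the Grothendieck construction,
\[ \hocolim_{t\in\cT^{\op}}\Ne\uline{\Hom}_{\cC}(X_t,Y)\;\simeq\;\Ne\cG\bigl(\cT^{\op},\uline{\Hom}_{\cC}(X_\bullet,Y)\bigr). \]

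\textbf{The comparison functor.} I then construct a functor
\[ \Phi:\cG\bigl(\cT^{\op},\uline{\Hom}_{\cC}(X_\bullet,Y)\bigr)\lrar \uline{\Hom}_{\Pro(\cC)}(X,Y) \]
sending an object $\bigl(t,\,W\x{p}{\lrar}X_t,\,W\x{g}{\lrar}Y\bigr)$ to the diagram $X\llar W\times_{X_t}X\lrar Y$, where the left leg is the base change of the (simple) trivial fibration $p$ along the projection $X\lrar X_t$ and the right leg is $W\times_{X_t}X\lrar W\x{g}{\lrar}Y$. The base change is again a trivial fibration in $\Pro(\cC)$ because trivial fibrations are stable under pullback, so $\Phi$ is well defined, and after the identifications above $\Ne\Phi$ is precisely the natural map of~\eqref{e:formula} (each $X\lrar X_t$ induces $\Map^h_{\cC}(X_t,Y)\lrar\Map^h_{\Pro(\cC)}(X,Y)$ by pullback, by Remark~\ref{r:natural}).

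\textbf{Main obstacle and conclusion.} It remains to prove $\Ne\Phi$ is a weak equivalence, and here lies the heart of the matter. By Quillen's Theorem A (the corollary to Theorem~\ref{t:cofinal}) it suffices to show $\Phi$ is cofinal, i.e. that for every object $e=\bigl(X\x{P}{\llar}Z\x{F}{\lrar}Y\bigr)$ of $\uline{\Hom}_{\Pro(\cC)}(X,Y)$ the comma category $\cG(\cT^{\op},\uline{\Hom}_{\cC}(X_\bullet,Y))_{e/}$ is weakly contractible. Unwinding the pullback, an object of this comma category consists of a level $t$, a trivial fibration $W\x{p}{\lrar}X_t$ in $\cC$, a map $g:W\lrar Y$, and a factorization $\psi:Z\lrar W$ of the composite $Z\x{P}{\lrar}X\lrar X_t$ compatible with $F$. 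Since $P$ is a trivial fibration in $\Pro(\cC)$ I may, after replacing it by the special $(\Fib\cap\cW)$-map it is a retract of and invoking Proposition~\ref{p:forF_sp_is_lw_strong}, assume $Z$ and $X$ share an inverse-poset index with $P$ levelwise a trivial fibration, and since $Y$ is simple the map $F$ factors through a single level $Z_k\lrar Y$. The plan is then to use the cofilteredness of $\cT$ to produce such data and to refine any two choices to a common one, thereby exhibiting this comma category as cofiltered and hence weakly contractible by~\cite[Lemma 5.3.1.18]{Lur09}. I expect this contractibility argument to be the main difficulty: it requires simultaneously handling the levelwise description of trivial fibrations in $\Pro(\cC)$, the factorization of maps into the simple object $Y$ through finite stages, and the cofinality of the reindexing used to bring $Z$ and $X$ over a common cofinite poset. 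Granting it, $\Ne\Phi$ is a weak equivalence and~\eqref{e:formula} follows.
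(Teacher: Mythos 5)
Your reductions and your comparison functor are essentially the paper's: after making $Y$ and the levels of $X$ fibrant, the paper also models the left side of~(\ref{e:formula}) by the nerve of the Grothendieck construction of $t \mapsto \uline{\Hom}_{\cC}(X_t,Y)$ (via Thomason, Theorem~\ref{t:cofinal-2}) and compares it to $\uline{\Hom}_{\Pro(\cC)}(X,Y)$ by exactly your pullback functor (its $\cF_{X,Y}$), concluding by Quillen's Theorem A once the comma categories are shown weakly contractible. But the step you defer is precisely where all the work lies (Proposition~\ref{p:cofinal}), and your plan for it --- exhibiting the comma category as cofiltered --- fails in both directions. Given two objects $(t_i,\, W_i \lrar X_{t_i},\, g_i: W_i \lrar Y,\, \psi_i)$, any cone or cocone requires, after pulling back to a common level $t_3$, a map between the $W_i$'s compatible with \emph{both} structure maps to $Y$. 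The only construction available in a weak fibration category, $W_1 \times_{X_{t_3}} W_2$, carries two a priori different maps to $Y$; these are equalized only after precomposing with $Z$, i.e.\ at some deep stage $Z_s$ of the pro-object, and morphisms in your Grothendieck construction only reindex along $\cT$ on the $X$-side --- they provide no way to produce maps out of $W_1, W_2$ into anything built from $Z_s$. So neither the filteredness nor the cofilteredness conditions of Definition~\ref{d:cofiltered} can be verified for this comma category, and the argument stalls exactly at the point you flagged.

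The paper's mechanism, absent from your sketch, has two ingredients. First, your strictification of $P$ cannot be an "assume without loss of generality": the trivial fibration $P$ is only a \emph{retract} (in $\Pro(\cC)^{[1]}$, via Lemma~\ref{l:reedy_fib_rep-2}) of a levelwise trivial fibration between levelwise fibrant pro-objects, so one must argue that the nerve of the comma category at $e$ is a retract of the nerve at the strictified object (as in the first paragraph of the proof of Lemma~\ref{l:cat-car}); contractibility passes to retracts of simplicial sets, whereas (co)filteredness does not. Second, and decisively, the paper changes ambient category: it works in the arrow category $\cD = \cC^{[1]}$ with $\cD_0 = \Triv^{\fib}$, and identifies the comma category with the Grothendieck construction of a \emph{set-valued} functor $\cH_d$ on $\left((\cD_0)_{e/}\right)^{\op}$, where $e = (Z \lrar X) \in \Pro(\cD_0)$ and $d = (Y \lrar \ast)$. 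There the variance is right: the slice $(\cD_0)_{e/}$ is cofiltered with cofinal canonical index by Corollary~\ref{c:pro}, because its objects receive maps from the stages $(Z_s \lrar X_s)$ of $e$ itself --- maps \emph{out of} stages of $Z$ exist in abundance, unlike maps out of the $W_i$. The residual data (the maps $g: W \lrar Y$ compatible with $F$) is then a set-valued layer whose filtered colimit computes the fiber of $\Hom_{\Pro(\cD)}(e,d)$ over $f$, hence a point, and Thomason's theorem gives contractibility (Lemma~\ref{l:lem-1}). Your filteredness intuition is in fact salvageable for \emph{that} set-valued Grothendieck construction --- but only after the passage to the arrow category and the retract lemma; for the comma category as you parametrize it over $\cT^{\op}$, it is not.
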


Before proving Proposition~\ref{p:main}, let us note an important corollary.

\begin{cor}\label{c:fully-faithful}
The natural map
\[ \cC_\infty \lrar \Pro(\cC)_\infty \]
is fully faithful.
\end{cor}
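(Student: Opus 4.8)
The plan is to deduce the corollary directly from Proposition~\ref{p:main}, specialized to the case in which the source pro-object is already simple. First I would observe that the inclusion $\iota:\cC\hrar\Pro(\cC)$ is a relative functor and hence induces the map $\iota_\infty:\cC_\infty\lrar\Pro(\cC)_\infty$ in question. Indeed, a weak equivalence $f:X\lrar Y$ in $\cC$, viewed as a morphism of constant pro-objects indexed by the trivial category, lies in $\Lw(\cW)$, and is therefore a weak equivalence in $\Pro(\cC)$ by Proposition~\ref{p:Lw are WE}.

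To check that $\iota_\infty:\cC_\infty\lrar\Pro(\cC)_\infty$ is fully faithful, it suffices to show that for every pair of objects $X,Y\in\cC$ the induced map on derived mapping spaces
\[ \Map^h_{\cC}(X,Y)\lrar\Map^h_{\Pro(\cC)}(X,Y) \]
is a weak equivalence. Here I would apply Proposition~\ref{p:main} with the source pro-object taken to be $X$ indexed by the trivial category, so that $\cI=\ast$ and the unique structure map $X\lrar X_i$ is the identity. A homotopy colimit indexed by the trivial category is just the value of the diagram, so the left-hand side of the equivalence~\ref{e:formula} collapses to $\Map^h_{\cC}(X,Y)$, and the map of~\ref{e:formula} is precisely the comparison map displayed above. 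Proposition~\ref{p:main} then asserts exactly that this map is a weak equivalence, which completes the argument.

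The only point that deserves a word of care --- and the closest thing to an obstacle --- is the identification of the comparison map $\Map^h_{\cC}(X,Y)\lrar\Map^h_{\Pro(\cC)}(X,Y)$ coming from $\iota_\infty$ with the map of~\ref{e:formula} for a simple source. Both are induced by the inclusion $\cC\hrar\Pro(\cC)$ together with the (identity) structure maps, so they agree; once this is granted the statement is purely formal, with all the real content residing in Proposition~\ref{p:main}.
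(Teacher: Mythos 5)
Your proof is correct and takes essentially the same route as the paper, which states Corollary~\ref{c:fully-faithful} as an immediate consequence of Proposition~\ref{p:main}: one specializes to a simple source pro-object with trivial indexing category, so the homotopy colimit in~(\ref{e:formula}) collapses to $\Map^h_{\cC}(X,Y)$ and the map of~(\ref{e:formula}) is exactly the comparison map induced by $\iota_\infty$. Your preliminary check via Proposition~\ref{p:Lw are WE} that $\iota:\cC\hrar\Pro(\cC)$ is a relative functor is a correct and sensible elaboration of a point the paper leaves implicit.
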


\begin{rem}\label{r:locally-small}
Since $\cC$ is not assumed to be essentially small, the mapping spaces appearing in (\ref{e:formula}) are a priori \textbf{large} spaces (see Definition~\ref{d:spaces}). Fortunately, since $\Pro(\cC)$ is a model category we know that $\Map^h_{\Pro(\cC)}(X,Y)$ is weakly equivalent to a small simplicial set. By Corollary~\ref{c:fully-faithful} the derived mapping spaces in $\cC$ are, up to weak equivalence, small as well.
\end{rem}

The rest of this section is devoted to the proof of Proposition~\ref{p:main}. The proof itself will be given in the end of this section. We begin with a few preliminaries.

\begin{define}
Let $(\cC,\cW,\Fib)$ be a weak fibration category. We denote by $\Fib^{\fib} \subseteq \cC^{[1]}$ the full subcategory spanned by \textbf{fibrations between fibrant objects}, and by $\Triv^{\fib} \subseteq \cC^{[1]}$ the full subcategory spanned by \textbf{trivial fibrations between fibrant objects}.
\end{define}

\begin{lem}\label{l:reedy_fib_rep}
Every object $Z$ in $\Pro(\cC)$ admits a weak equivalence of the form $Z \x{\simeq}{\lrar} Z'$ with $Z' \in \Pro(\cC^{\fib})\subseteq\Pro(\cC)$.
\end{lem}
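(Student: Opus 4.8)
The plan is to read off the fibrant replacement directly from the factorization properties of the induced model structure, applied to the \emph{terminal} map of $Z$. Recall that the terminal object of $\Pro(\cC)$ is the simple pro-object $\ast$ attached to the terminal object $\ast \in \cC$. First I would use Corollary~\ref{c:index-poset} to replace $Z$ by an isomorphic pro-object $Z = \{Z_t\}_{t \in \cA}$ indexed by an inverse poset $\cA$; the terminal map is then represented by a natural transformation $f: Z \lrar \ast_{\cA}$ in $\cC^{\cA}$, where $\ast_{\cA}$ is the constant diagram on $\cA$ with value $\ast$ (this is isomorphic to $\ast$ in $\Pro(\cC)$ via a reindexing isomorphism).

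Next I would factor $f$ levelwise. Since $\cC$ is a weak fibration category we have $\Fib \circ \cW = \Mor(\cC)$, and the construction of~\cite[Definition 4.3]{BS15} produces a factorization
\[ Z \xrightarrow{g} Z' \xrightarrow{h} \ast_{\cA} \]
in $\cC^{\cA}$ with $g \in \Lw(\cW)$ and $h \in \Sp(\Fib)$. By Proposition~\ref{p:Lw are WE} the map $g$ is a weak equivalence in $\Pro(\cC)$, so the whole statement reduces to checking that $Z'$ lies in $\Pro(\cC^{\fib})$.

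For this final point I would invoke Proposition~\ref{p:forF_sp_is_lw_strong}, which guarantees that the special $\Fib$-map $h$ is automatically a \emph{levelwise} fibration. The key observation is that its target $\ast_{\cA}$ is levelwise terminal, so for each $t \in \cA$ the component $Z'_t \lrar \ast$ is a fibration in $\cC$; that is, every $Z'_t$ is fibrant, which is exactly the assertion that $Z' = \{Z'_t\}_{t \in \cA}$ is a pro-object of $\cC^{\fib}$. The weak equivalence $g: Z \lrar Z'$ is then the one required.

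I do not anticipate a genuine obstacle: the statement is essentially a bookkeeping consequence of results already established. The only point requiring slight care is the interaction between reindexing and the terminal object --- one should confirm that reindexing the simple terminal object yields a levelwise-terminal diagram, so that a levelwise fibration into it genuinely certifies levelwise fibrancy. Alternatively, one could avoid the explicit reindexing altogether and simply apply Corollary~\ref{c:factorizations} to the terminal map $Z \lrar \ast$, taking $Z'$ to be the middle term of the resulting factorization.
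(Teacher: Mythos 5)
Your proposal is correct and is essentially the paper's argument: the paper's proof is exactly the one-line alternative you mention at the end, namely applying Corollary~\ref{c:factorizations} to the terminal map $Z \lrar \ast$, while your main text simply unpacks the proof of that corollary (reindexing over an inverse poset, the factorization of~\cite[Definition 4.3]{BS15} into $\Lw(\cW)$ followed by $\Sp(\Fib)$, Proposition~\ref{p:Lw are WE}, and Proposition~\ref{p:forF_sp_is_lw_strong}) in this special case. Your care about the target being levelwise terminal after reindexing is also sound, since the constant diagram on $\ast$ over a cofiltered index is terminal levelwise, so a levelwise fibration into it certifies levelwise fibrancy.
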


\begin{proof}
This follows from Corollary~\ref{c:factorizations} applied to the map $Z\lrar\ast$.
\end{proof}

\begin{lem}\label{l:reedy_fib_rep-2}
Under the natural equivalence
$\Pro(\cC^{[1]})\simeq\Pro(\cC)^{[1]}$
every trivial fibration in $\Pro(\cC)$, whose codomain is in $\Pro(\cC^{\fib})$, is a retract of a trivial fibration which belongs to $\Pro(\Triv^{\fib})$.
\end{lem}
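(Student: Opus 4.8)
The plan is to represent $f$ by an honest levelwise map with levelwise fibrant codomain, factor that map using condition (3) of the induced model structure, and then strip off a cofibration by a retract argument. First I would use Lemma~\ref{l:every map natural} to replace the given trivial fibration $f\colon X \lrar Y$ by an isomorphic natural transformation $\tilde{f}\colon \tilde{X} \lrar \tilde{Y}$ between diagrams indexed by a common cofiltered category $\cT$. Since the codomain $Y$ lies in $\Pro(\cC^{\fib})$, I may assume $Y$ is levelwise fibrant; as $\tilde{Y}$ is obtained from $Y$ by restriction along a coinitial functor (Lemma~\ref{l:cofinal}), it is again levelwise fibrant. Being isomorphic to $f$, the map $\tilde{f}$ is still a trivial fibration in $\Pro(\cC)$.

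Next I would apply condition (3) of Definition~\ref{d:induce} to $\tilde{f}$, obtaining a coinitial functor $\mu\colon \cJ \lrar \cT$ and a factorization
\[ \mu^*\tilde{X} \xrightarrow{g} Y' \xrightarrow{h} \mu^*\tilde{Y} \]
in $\cC^{\cJ}$ with $g$ a cofibration in $\Pro(\cC)$ and $h$ both a trivial fibration in $\Pro(\cC)$ and a levelwise trivial fibration. The key observation is that $h$ lands in $\Pro(\Triv^{\fib})$ on the nose: the codomain $\mu^*\tilde{Y}$ is still levelwise fibrant (a restriction of $\tilde{Y}$), and since each $h_j\colon Y'_j \lrar (\mu^*\tilde{Y})_j$ is a fibration onto a fibrant object, the domain $Y'$ is levelwise fibrant as well (fibrations are closed under composition). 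Hence $h$ is levelwise a trivial fibration between fibrant objects.

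Finally I would run the retract argument. By Lemma~\ref{l:cofinal} the reindexing isomorphism gives $\mu^*\tilde{f} \cong \tilde{f} \cong f$, so $\mu^*\tilde{f} = h \circ g$ is a trivial fibration and therefore has the right lifting property against the cofibration $g$. A lift in the square with $\Id_{\mu^*\tilde{X}}$ along the top and $h$ along the bottom produces $r\colon Y' \lrar \mu^*\tilde{X}$ satisfying $r \circ g = \Id$ and $(\mu^*\tilde{f}) \circ r = h$; together with the identity on codomains these exhibit $\mu^*\tilde{f}$ as a retract of $h$ in $\Pro(\cC)^{[1]}$. Composing with the isomorphism $f \cong \mu^*\tilde{f}$ then displays $f$ as a retract of $h \in \Pro(\Triv^{\fib})$, which is itself a trivial fibration in $\Pro(\cC)$, as required.

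I expect the only genuinely delicate point to be the second step: making sure the middle object $Y'$ of the factorization is \emph{literally} levelwise fibrant, so that $h$ belongs to $\Pro(\Triv^{\fib})$ exactly rather than merely up to isomorphism. This is precisely what forces the initial reduction --- carrying the fibrancy of $Y$ through Lemma~\ref{l:every map natural} and the restriction functor $\mu^*$ --- and everything after it is formal manipulation with the lifting axioms.
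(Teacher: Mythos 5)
Your proof is correct and takes essentially the same route as the paper's: reindex $f$ via Lemma~\ref{l:every map natural}, factor with condition~(3) of Definition~\ref{d:induce}, observe that levelwise fibrancy of the codomain forces the second factor $h$ to lie in $\Pro(\Triv^{\fib})$, and extract the retract by lifting the cofibration $g$ against the trivial fibration. Your explicit verification that the middle object of the factorization is literally levelwise fibrant (via closure of fibrations under composition) is a point the paper leaves implicit, but the argument is the same.
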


\begin{proof}
Let $f:Z\lrar X$ be a trivial fibration in $\Pro(\cC)$, whose codomain is in $\Pro(\cC^{\fib})$. By Lemma~\ref{l:every map natural} we may assume that $X,Y$ are both indexed by the same cofiltered category $\cI$ and that $f$ is given by a morphism in $\cC^{\cI}$. By Condition (3) of Definition~\ref{d:induce} there exists a coinitial functor $\mu:\cJ\lrar \cI$ and a factorization
\[ \mu^*Z\xrightarrow{g} Y\xrightarrow{h}\mu^*X\]
in $\cC^{\cJ}$ of the map $\mu^*f: \mu^*Z \lrar \mu^*X$, such that $g$ is a cofibration and $h$ is both a trivial fibration and a levelwise trivial fibration. It follows that $\mu^* X$ belongs to $\Pro(\cC^{\fib})$ and the map $Y\xrightarrow{h}\mu^*X$ belongs to $\Pro(\Triv^{\fib})$. The commutative diagram
\[
\xymatrix{
\mu^*Z\ar[r]^=\ar[d]_{\mathbf{Cof}} & \mu^*Z\ar[d]^{\mathbf{W}\cap\mathbf{Fib}} \\
            Y\ar[r] & X \\
}
\]
then admits a lift $Y \lrar Z$. Using the isomorphisms $Z \x{\cong}{\lrar} \mu^*Z$, $X \x{\cong}{\lrar} \mu^*X$ and their inverses we obtain a retract diagram in $\Pro(\cC)^{[1]}$ of the form
\[\xymatrix{Z\ar[r]\ar[d]^f & Y\ar[r]\ar[d] & Z\ar[d]^f\\
           X \ar[r] & \mu^*X\ar[r] & X} \]
and so the desired result follows.
\end{proof}

For the proof of Proposition~\ref{p:main} below we need the following notion. Let
\begin{equation}\label{e:square}
\xymatrix{
\cA \ar^{\vphi}[r]\ar^{\tau}[d] & \cB \ar^{\rho}[d] \\
\cC \ar^{\psi}[r] & \cD \\
}
\end{equation}
be a diagram of categories equipped with a commutativity natural isomorphism $\rho \circ \vphi \rightarrow \psi \circ \tau$. Given a triple $(b,c,f)$ where $b \in \cB,c \in \cC$ and $f: \rho(b) \lrar \psi(c)$ is a morphism in $\cD$, we denote by $\cM(\cA,b,c,f)$ the category whose objects are triples $(a,g,h)$ where $a$ is an object of $\cA$, $g:b \lrar \vphi(a)$ is a morphism in $\cB$ and $h:\tau(a) \lrar c$ is a morphism in $\cC$ such that the composite
\[ \rho(b) \x{\rho(g)}{\lrar} \rho(\vphi(a)) \cong \psi(\tau(a)) \x{\psi(h)}{\lrar} \psi(c) \]
is equal to $f$.

\begin{define}\label{d:cc}
We say that the square (\ref{e:square}) is \textbf{categorically Cartesian} if for every $(b,c,f)$ as above the category $\cM(\cA,b,c,f)$ is weakly contractible.
\end{define}

Our main claim regarding categorically Cartesian diagrams is the following:

\begin{lem}\label{l:cat-car}
Let $\cD$ be a category and $\cD_0 \subseteq \cD$ a full subcategory. Let $\cE \subseteq \Pro(\cD)$ be a full subcategory containing $\cD_0$, such that each object of $\cE$ is a retract of an object in $\cE \cap \Pro(\cD_0)$. Then the diagram
\[
\xymatrix{
\cD_0 \ar[r]\ar[d] & \cE \ar^{\iota_{\cE}}[d] \\
\cD \ar^-{\iota_{\cD}}[r] & \Pro(\cD) \\
}
\]
is categorically Cartesian.
\end{lem}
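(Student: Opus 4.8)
The plan is to reduce, using the retract hypothesis, to the case where $b$ actually lies in $\Pro(\cD_0)$, and there to identify $\cM(\cD_0,b,c,f)$ together with the relevant comma categories with the Grothendieck constructions already shown to be weakly contractible in Lemma~\ref{l:lem-1}. Recall first that an object of $\cM:=\cM(\cD_0,b,c,f)$ is a triple $(a,g,h)$ with $a\in\cD_0$, a map $g\colon b\lrar a$ in $\Pro(\cD)$ and a map $h\colon a\lrar c$ in $\cD$ satisfying $h\circ g=f$, while a morphism $(a,g,h)\lrar(a',g',h')$ is a map $\theta\colon a\lrar a'$ in $\cD_0$ with $\theta\circ g=g'$ and $h'\circ\theta=h$ (should the opposite convention for morphisms be intended, the whole argument dualizes and the conclusion is unaffected, weak contractibility being invariant under $(-)^{\op}$).

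For the reduction, write $b$ as a retract of some $b_0\in\cE\cap\Pro(\cD_0)$, say $b\x{s}{\lrar}b_0\x{r}{\lrar}b$ with $r\circ s=\mathrm{id}_b$, and set $f_0:=f\circ r\colon b_0\lrar c$. The assignments $(a,g,h)\mapsto(a,g\circ r,h)$ and $(a,g_0,h)\mapsto(a,g_0\circ s,h)$ define functors $S\colon\cM(\cD_0,b,c,f)\lrar\cM(\cD_0,b_0,c,f_0)$ and $R\colon\cM(\cD_0,b_0,c,f_0)\lrar\cM(\cD_0,b,c,f)$, the defining identities being immediate from $h\circ g=f$ and $r\circ s=\mathrm{id}$, and one checks $R\circ S=\mathrm{id}$. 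Hence $\Ne\cM(\cD_0,b,c,f)$ is a retract of $\Ne\cM(\cD_0,b_0,c,f_0)$, and as a retract of a weakly contractible simplicial set is weakly contractible, it suffices to treat $b_0$, i.e. to assume $b=\{b_i\}_{i\in\cI}\in\Pro(\cD_0)$.

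For the base case, let $\nu_i\colon b\lrar b_i$ be the projections and let $\cH\colon\cI^{\op}\lrar\Set$ be the functor of Lemma~\ref{l:lem-1} attached to $f\colon b\lrar c$, so that $\cG(\cI^{\op},\cH)$ is weakly contractible. I would define $\Phi\colon\cG(\cI^{\op},\cH)^{\op}\lrar\cM$ by $(i,g_i)\mapsto(b_i,\nu_i,g_i)$, observing that membership in $\cM$ is precisely the condition $g_i\in\cH(i)$ and that each transition map $b_{i'}\lrar b_i$ supplies the required morphism of $\cM$. The crux is to show $\Phi$ is coinitial: for a fixed $m=(a,g,h)\in\cM$, an object of the comma category $\bigl(\cG(\cI^{\op},\cH)^{\op}\bigr)_{/m}$ is a pair $(i,\theta)$ with $\theta\colon b_i\lrar a$ and $\theta\circ\nu_i=g$, i.e. a level-$i$ representative of $g$, the remaining datum $g_i=h\circ\theta$ being forced and automatically lying in $\cH(i)$ since $h\circ g=f$. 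Thus this comma category is, up to opposites, the Grothendieck construction $\cG(\cI^{\op},\cH_a)$ of the functor $\cH_a(i)=\{\theta\colon b_i\lrar a\mid\theta\circ\nu_i=g\}$ of Lemma~\ref{l:lem-1} attached now to $g\colon b\lrar a$, hence weakly contractible. So $\Phi$ is coinitial, and by the coinitial form of Quillen's theorem A (obtained by passing to opposites) $\Ne\Phi$ is a weak equivalence; therefore $\Ne\cM\simeq\Ne\cG(\cI^{\op},\cH)^{\op}$ is weakly contractible.

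The main obstacle is the bookkeeping in the base case, namely matching $\bigl(\cG(\cI^{\op},\cH)^{\op}\bigr)_{/m}$ with $\cG(\cI^{\op},\cH_a)$ while keeping all variances straight, and in particular verifying that fixing $m$ forces $g_i=h\circ\theta$ so that the fiber is exactly the functor $\cH_a$ to which Lemma~\ref{l:lem-1} applies. Once this identification is secured, both the retract reduction and the two invocations of Lemma~\ref{l:lem-1} are routine.
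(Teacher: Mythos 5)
Your proof is correct, and its first half coincides with the paper's: the paper performs the same retract reduction (phrased as a retract diagram of nerves rather than via explicit functors $S$ and $R$, but with identical content) to reduce to $b \in \Pro(\cD_0)$, with fullness of $\cE$ in $\Pro(\cD)$ justifying the suppression of $\iota_{\cE}$ just as in your argument. Where you genuinely diverge is the base case. The paper identifies $\cM$ outright with the Grothendieck construction of a functor $\cH_c \colon ((\cD_0)_{b/})^{\op} \lrar \Set$ defined on the opposite of the \emph{canonical indexing category} of $b$, observes that the natural functor $\cI^{\op} \lrar ((\cD_0)_{b/})^{\op}$ is cofinal by Corollary~\ref{c:pro}, and then applies the Thomason-based Theorem~\ref{t:cofinal-2} (cofinal base change induces a weak equivalence of Grothendieck constructions) to reduce to Lemma~\ref{l:lem-1}. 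You instead never leave the actual index category $\cI$: you build the comparison functor $\Phi \colon \cG(\cI^{\op},\cH)^{\op} \lrar \cM$ directly, verify its coinitiality by hand --- your identification of the comma category $\bigl(\cG(\cI^{\op},\cH)^{\op}\bigr)_{/m}$ with the opposite of $\cG(\cI^{\op},\cH_a)$, including the observation that $g_i = h \circ \theta$ is forced and automatically lands in $\cH(i)$, checks out, and weak contractibility is indeed insensitive to $(-)^{\op}$ and to the convention for morphisms in $\cM$ --- and conclude with plain Quillen's Theorem A. In effect you have inlined the proof of Corollary~\ref{c:pro}, whose coinitiality assertion is precisely your comma-category computation, and traded Theorem~\ref{t:cofinal-2} for a second explicit application of Lemma~\ref{l:lem-1}. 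What your route buys is self-containment: it needs neither the canonical indexing category nor the Thomason-type statement about Grothendieck constructions over a changing base, only Quillen's Theorem A and Lemma~\ref{l:lem-1} applied twice (to $f\colon b \lrar c$ and to each $g \colon b \lrar a$). What the paper's route buys is brevity, since the cofinality of $\cI^{\op}$ inside the canonical indexing category has already been packaged once and for all in Corollary~\ref{c:pro} and is reused elsewhere (e.g.\ in the proof of Proposition~\ref{p:main}).
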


\begin{proof}
Let $d \in \cD, e \in \cE$ be objects and $f: \iota_{\cE}(e) \lrar \iota_{\cD}(d)$ a morphism. We need to show that $\cM(\cD_0,d,e,f)$ is weakly contractible. By our assumptions there exists a $e' \in \Pro(\cD_0) \cap \cE$ and a retract diagram of the form $e \lrar e' \lrar e$. Let $f': \iota_{\cE}(e') \lrar \iota_{\cD}(d)$ be the map obtained by composing the induced map $\iota_\cE(e') \lrar \iota_\cE(e)$ with $f$. We then obtain a retract diagram of simplicial sets
\[ \Ne\cM(\cD_0,d,e,f) \lrar \Ne\cM(\cD_0,d,e',f') \lrar \Ne\cM(\cD_0,d,e,f) \]
and so it suffices to prove that $\cM(\cD_0,d,e',f')$ is weakly contractible. In particular, we might as well assume that $\cE = \Pro(\cD_0)$ and suppress $\iota_{\cE}$ from our notation.

Now let $e = \{e_i\}_{i \in \cI}$ be an object of $\Pro(\cD_0)$, $d$ be an object of $\cD$ and $f: e \lrar d$ a map in $\Pro(\cD)$. We may identify $\cM(\cD_0,d,e,f)$ with the Grothendieck construction of the functor $\cH_d: ((\cD_0)_{e/})^{\op} \lrar \Set$ which associates to each $(e \lrar x) \in ((\cD_0)_{e/})^{\op}$ the set of morphisms $g: x \lrar d$ in $\cD$ such that the composite $e \lrar x \x{g}{\lrar} d$ in $\Pro(\cD)$ is $f$. By~\cite{Th79} we may consider $\cM(\cD_0,d,e,f)$ as a model for the homotopy colimit of the functor $\cH_d$. Now according to Corollary~\ref{c:pro} the natural functor
\[ \cI^{\op} \lrar ((\cD_0)_{e/})^{\op} \]
sending $i$ to $e \lrar e_i$ is cofinal. By Theorem~\ref{t:cofinal-2} it suffices to prove that the Grothendieck construction of the restricted functor $\cH_d|_{\cI^{\op}}: \cI^{\op} \lrar \Set$ is weakly contractible. But this is exactly the content of Lemma~\ref{l:lem-1}.
\end{proof}

Our next goal is to construct an explicit model for the homotopy colimit on the left hand side of (\ref{e:formula}). Let $X = \{X_i\}_{i \in \cI} \in \Pro(\cC^{\fib})$ and let $Y \in \cC^{\fib}$ be a fibrant simple object. Let $\cG(X,Y)$ be the Grothendieck construction of the functor
$\cH_Y: \left(\cC^{\fib}_{X/}\right)^{\op} \lrar \ovl{\Cat}$ which sends the object $(X \lrar X') \in \left(\cC^{\fib}_{X/}\right)^{\op}$ to the category $\uline{\Hom}_{\cC}(X',Y)$. Unwinding the definitions, we see that an object in
$\cG(X,Y)$ corresponds to a diagram of the form
\begin{equation}\label{e:gro}
\xymatrix{
& Z \ar^{g}[r]\ar^{f}[d] & Y \ar[d] \\
X\ar[r] & X' \ar[r] & \ast \\
}
\end{equation}
where $X'$ is a fibrant object of $\cC$, $f: Z \lrar X'$ is a trivial fibration in $\cC$, and $X,Y$ are fixed. By the main result of~\cite{Th79} the nerve of the category $\cG(X,Y)$ is a model for the homotopy colimit of the composed functor $N\circ\cH_Y: \left(\cC^{\fib}_{X/}\right)^{\op} \lrar \cS$. We have a natural functor
\[ \cF_{X,Y}: \cG(X,Y) \lrar \uline{\Hom}_{\Pro(\cC)}(X,Y) \]
which sends the object corresponding to the diagram (\ref{e:gro}) to the external rectangle in the diagram
\[
\xymatrix{
X \times_{X'} Z \ar[r]\ar[d] & Z \ar^{g}[r]\ar^{f}[d] & Y \ar[d] \\
X\ar[r] & X' \ar[r] & \ast \\
}
\]
considered as an object of $\uline{\Hom}_{\Pro(\cC)}(X,Y)$.

\begin{prop}\label{p:cofinal}
Let $X,Y$ be as above. Then the functor
\[ \cF_{X,Y}: \cG(X,Y) \lrar \uline{\Hom}_{\Pro(\cC)}(X,Y) \]
is cofinal.
\end{prop}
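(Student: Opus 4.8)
The plan is to verify cofinality straight from Definition~\ref{d:cofinal}, i.e. to show that for every object $d\in\uline{\Hom}_{\Pro(\cC)}(X,Y)$ the slice category $\cG(X,Y)_{d/}$ is weakly contractible. Such a $d$ is a pair $(p\colon W\lrar X,\,a\colon W\lrar Y)$ with $p$ a trivial fibration in $\Pro(\cC)$, and unwinding the external-rectangle description of $\cF_{X,Y}$ one sees that an object of $\cG(X,Y)_{d/}$ is a datum $(X',\phi,f\colon Z\lrar X',g\colon Z\lrar Y,\beta\colon W\lrar Z)$ in which $(X\xrightarrow{\phi}X')\in\cC^{\fib}_{X/}$, $f$ is a trivial fibration in $\cC$, $f\circ\beta=\phi\circ p$ and $g\circ\beta=a$; that is, a commutative square from $p$ to the trivial fibration $f$ together with a map $g$ to $Y$ whose restriction along $\beta$ is $a$.

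First I would reduce to a convenient class of $d$. Since the codomain $X$ lies in $\Pro(\cC^{\fib})$, Lemma~\ref{l:reedy_fib_rep-2} exhibits $p$ as a retract in $\Pro(\cC)^{[1]}$ of a trivial fibration belonging to $\Pro(\Triv^{\fib})$, the retraction being the identity on the codomain up to a reindexing isomorphism; transporting $a$ along this retraction shows that $d$ is itself a retract of an object $\tilde d=(\tilde p,\tilde a)$ with $\tilde p=\{p_i\colon W_i\lrar X_i\}_{i\in\cI}$ a levelwise trivial fibration between fibrant pro-objects over a common cofiltered $\cI$, and $\tilde a$ represented by some $a_{i_0}\colon W_{i_0}\lrar Y$. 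As $d\mapsto\cG(X,Y)_{d/}$ carries retracts of $d$ to retracts of simplicial sets and a retract of a weakly contractible space is weakly contractible, it suffices to treat $\tilde d$.

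For such $\tilde d$ I would present $\cG(X,Y)_{\tilde d/}$ as a Grothendieck construction and reduce its contractibility to results already in hand. Projecting an object onto its underlying $(X\xrightarrow{\phi}X')\in\cC^{\fib}_{X/}$ displays the slice as the Grothendieck construction over $(\cC^{\fib}_{X/})^{\op}$ of the functor sending $(X\to X')$ to the category of factorizations of $(\phi\circ p,a)$ through a trivial fibration $f\colon Z\to X'$ with a compatible $g$. By Corollary~\ref{c:pro} the canonical functor $\cI\lrar\cC^{\fib}_{X/}$ is coinitial and $\cC^{\fib}_{X/}$ is cofiltered, so by Theorem~\ref{t:cofinal-2} it is enough to prove contractibility after restricting the indexing along $\cI^{\op}$. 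Over each $i$ the trivial fibration $p_i\colon W_i\to X_i$ furnishes a canonical object, and the remaining fibre is a category of extensions of a map to the simple object $Y$ of exactly the kind shown contractible in Lemma~\ref{l:lem-1}; globally, I expect this to be packaged as the statement that the square built from $\cD_0=\cC^{\fib}\subseteq\cD=\cC$ (or $\Triv^{\fib}\subseteq\cC^{[1]}$) is categorically Cartesian in the sense of Definition~\ref{d:cc}, which is precisely the content of Lemma~\ref{l:cat-car}, its retract hypothesis again being supplied by Lemma~\ref{l:reedy_fib_rep-2}.

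The step I expect to be the main obstacle is this last identification: matching $\cG(X,Y)_{\tilde d/}$, after restriction along $\cI$, with the contractible categories $\cM(\cdots)$ of Lemma~\ref{l:cat-car} and the Grothendieck construction of Lemma~\ref{l:lem-1}. The delicate points are that $f$ is required to be a trivial fibration only onto $X'$ (not onto $X'\times Y$), so the map $g$ must be tracked by a further Grothendieck construction, and that the compatibility $f\circ\beta=\phi\circ p$ couples the lift $\beta$ to the chosen $\phi$; checking that neither constraint obstructs contractibility — using that the $f$'s are trivial fibrations, so that maps out of $W$ lift through them, and that $\cI$ is cofiltered — is where the real work lies.
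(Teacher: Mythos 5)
Your route is essentially the paper's: the published proof of Proposition~\ref{p:cofinal} is exactly one application of Lemma~\ref{l:cat-car} to $\cD=\cC^{[1]}$, $\cD_0=\Triv^{\fib}$, and $\cE\subseteq\Pro(\cD)\cong\Pro(\cC)^{[1]}$ the trivial fibrations with codomain in $\Pro(\cC^{\fib})$, with the retract hypothesis supplied by Lemma~\ref{l:reedy_fib_rep-2} --- precisely the second instantiation you name. Two remarks, though. First, your preliminary reductions are redundant: the retract reduction on $d$, and the restriction of the indexing along $\cI^{\op}$ via Corollary~\ref{c:pro} and Theorem~\ref{t:cofinal-2} followed by Lemma~\ref{l:lem-1}, are exactly the internal steps of the \emph{proof} of Lemma~\ref{l:cat-car}; once you quote that lemma you should not redo them outside it (and doing the retract step by hand forces you to fuss with the reindexing isomorphism on the codomain, which the lemma's formulation avoids, since there the codomain is not pinned).

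Second --- and this concerns the step you flag as ``where the real work lies'' --- both of your worries dissolve once you commit to the arrow-category instantiation, and the identification is a direct unwinding rather than real work. Taking $d=(Y\lrar\ast)\in\cD$ and $e=(p\colon Z\lrar X)\in\cE$, an object of $\cM(\cD_0,d,e,f)$ is a triple $(a,g,h)$ in which $a=(p'\colon Z'\lrar X')\in\Triv^{\fib}$; the morphism $g\colon e\lrar\iota_{\cE}(a)$ in $\Pro(\cC)^{[1]}$ \emph{is} the commutative square $(\beta,\phi)$, so the coupling $p'\circ\beta=\phi\circ p$ is built into the data rather than being an extra constraint to check; and the morphism $h\colon a\lrar d$ in $\cC^{[1]}$ is nothing but a map $Z'\lrar Y$, since the component over the terminal object carries no information --- so no ``further Grothendieck construction'' is needed to track $g$. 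The compatibility of the composite with $f$ then says exactly that the triple defines an object of $\cG(X,Y)_{W/}$, giving an isomorphism of categories. Relatedly, your first candidate instantiation $\cD_0=\cC^{\fib}\subseteq\cD=\cC$ would not suffice: it forgets the trivial fibration $Z'\lrar X'$, which is the essential datum of an object of $\uline{\Hom}$, and it is precisely to encode this datum (and Lemma~\ref{l:reedy_fib_rep-2}, which lives in the arrow category) that the paper passes to $\cC^{[1]}$.
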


\begin{proof}
Let $X \in \Pro(\cC^{\fib})$ and let $Y \in \cC^{\fib}$ be a simple fibrant object. Let $W \in \uline{\Hom}_{\Pro(\cC)}(X,Y)$ be an object corresponding to a diagram of the form
\begin{equation}\label{e:W}
\xymatrix{
Z \ar[r]\ar^{p}[d] & Y \ar[d] \\
X \ar[r] & \ast \\
}
\end{equation}
where $p$ is a trivial fibration in $\Pro(\cC)$. We want to show that the category $\cG(X,Y)_{W/}$ is weakly contractible. Unwinding the definitions we see that objects of $\cG(X,Y)_{W/}$ correspond to diagrams of the form
\begin{equation}
\xymatrix{
Z \ar[r]\ar^{p}[d] & Z' \ar[r]\ar^{p'}[d] & Y \ar[d] \\
X \ar[r] & X' \ar[r] & \ast \\
}
\end{equation}
where $p': Z' \lrar X'$ is a trivial fibration in $\cC$.

Now let $\cD = \cC^{[1]}$ be the arrow category of $\cC$ and let $\cD_0 = \Triv^{\fib} \subseteq \cD$ the full subcategory spanned by trivial fibrations between fibrant objects. The category $\Pro(\cD)$ can be identified with the arrow category $\Pro(\cC)^{[1]}$. Let $\cE \subseteq \Pro(\cD)$ be the full subcategory spanned by trivial fibrations whose codomain is in $\Pro(\cC^{\fib})$. According to Lemma~\ref{l:reedy_fib_rep-2} every object $\cE$ is a retract of an object in $\Pro(\cD_0)$. We hence see that the categories $\cD,\cD_0$ and $\cE$ satisfy the assumptions of Lemma~\ref{l:cat-car}. It follows that the square
\begin{equation}\label{e:square-3}
\xymatrix{
\cD_0 \ar[r]\ar[d] & \cE \ar^{\iota_{\cE}}[d] \\
\cD \ar^{\iota_{\cD}}[r] & \Pro(\cD) \\
}
\end{equation}
is categorically Cartesian. Now the object $Y$ corresponds to an object $d = (Y \lrar \ast) \in \cD$ and the trivial fibration $p:Z \lrar X$ corresponds to an object $e \in \cE$. The diagram (\ref{e:W}) then gives a map $f:\iota_{\cE}(e) \lrar \iota_{\cD}(d)$. The category $\cM(\cD_0,d,e,f)$ of Definition~\ref{d:cc} can then be identified with $\cD(X,Y)_{W/}$. Since (\ref{e:square-3}) is categorically Cartesian we get that $\cD(X,Y)_{W/}$ is weakly contractible as desired.
\end{proof}

We are now ready to prove the main result of this subsection.

\begin{proof}[Proof of Proposition~\ref{p:main}]
We begin by observing that both sides of (\ref{e:formula}) remain unchanged up to a weak equivalence by replacing $Y$ with a fibrant model $Y \x{\simeq}{\lrar} Y'$. We may hence assume without loss of generality that $Y$ itself is fibrant. According to Lemma~\ref{l:reedy_fib_rep} we may also assume that each $X_i$ is fibrant as well.

Now according to Corollary~\ref{c:pro} the natural functor $\iota:\cI^{\op} \lrar \left(\cC^{\fib}_{X/}\right)^{\op}$ which sends $i$ to $X \lrar X_i$ is cofinal. Let $\cG(\cI,X,Y)$ be the Grothendieck construction of the restricted functor $\left(\cH_Y\right)|_{\cI^{\op}}: \cI^{\op} \lrar \ovl{\Cat}$. Since $\iota$ is cofinal we known by Theorem~\ref{t:cofinal-2} that the natural map $\cG(\cI,X,Y) \lrar \cG(X,Y)$ induces a weak equivalence on nerves. By Proposition~\ref{p:cofinal} the functor $\cF_{X,Y}$ induces a weak equivalence on nerves and so the composed functor
\[ \cG(\cI,X,Y) \lrar \uline{\Hom}_{\Pro(\cC)}(X,Y) \]
induces a weak equivalence on nerves as well. Now the nerve of the category $\cG(\cI,X,Y)$ is a model for the homotopy colimit of the functor sending $i \in \cI^{\op}$ to $\Map^h_{\cC}(X_i,Y)$. On the other hand, the nerve of $\uline{\Hom}_{\Pro(\cC)}(X,Y)$ is a model for $\Map^h(X,Y)$. It hence follows that the map (\ref{e:formula}) is a weak equivalence as desired.
\end{proof}

\subsection{The comparison of $\Pro(\cC)_\infty$ and $\Pro(\cC_\infty)$}\label{ss:proof-of-main}

Let $\cC$ be weak fibration category such that the induced model structure on $\Pro(\cC)$ exists. By Remark~\ref{r:locally-small} we know that $\cC_\infty$ and $\Pro(\cC)_\infty$ are locally small $\infty$-categories. Let $\Pro(\cC_\infty)$ be the pro-category of $\cC_\infty$ in the sense of Definition~\ref{d:pro}. Let $\cF$ be the composed map
\[ \Pro(\cC)_\infty \lrar \Fun(\Pro(\cC)_\infty,\cS_\infty)^{\op} \lrar \Fun(\cC_\infty,\cS_\infty)^{\op} \]
where the first map is the opposite Yoneda embedding and the second is given by restriction. Informally, the functor $\cF$ may be described as sending an object $X \in \Pro(\cC)_\infty$ to the functor $\cF(X): \cC_\infty \lrar \cS_\infty$ given by $Y \mapsto \Map^h_{\Pro(\cC)}(X,Y)$. By Proposition~\ref{p:main} we know that $\cF(X)$ is a small cofiltered limit of objects in the essential image of $\cC_\infty \subseteq \Fun(\cC_\infty,\cS_\infty)^{\op}$. It hence follows by Lemma~\ref{l:closed-filtered} and Corollary~\ref{c:fully-faithful} that the image of $\cF$ lies in $\Pro(\cC_\infty)$.
We are now able to state and prove our main theorem:

\begin{thm}\label{t:main}
The functor
\[ \cF: \Pro(\cC)_\infty \lrar \Pro(\cC_\infty) \]
is an equivalence of $\infty$-categories.
\end{thm}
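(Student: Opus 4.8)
The plan is to prove that $\cF$ is fully faithful and essentially surjective, using the explicit mapping-space formulas of Propositions~\ref{p:main} and~\ref{p:homotopy-limit}. First I would record the behaviour of $\cF$ on simple objects. For $c\in\cC$ and $Z\in\cC$, Corollary~\ref{c:fully-faithful} gives $\cF(\iota_{\cC}(c))(Z)=\Map^h_{\Pro(\cC)}(c,Z)\simeq\Map_{\cC_\infty}(c,Z)$, so $\cF(\iota_{\cC}(c))$ is the corepresentable functor at $c$, i.e. $\cF\circ\iota_{\cC}\simeq\iota_{\cC_\infty}$. More generally, for a pro-object $Y=\{Y_j\}_{j\in\cJ}$ Proposition~\ref{p:main} yields $\cF(Y)(Z)=\Map^h_{\Pro(\cC)}(Y,Z)\simeq\hocolim_{j}\Map_{\cC_\infty}(Y_j,Z)$; since cofiltered limits in $\Pro(\cC_\infty)\subseteq\Fun(\cC_\infty,\ovl{\cS}_\infty)^{\op}$ are computed as pointwise colimits, this exhibits a natural equivalence $\cF(Y)\simeq\holim_{j}\iota_{\cC_\infty}(Y_j)$ in $\Pro(\cC_\infty)$ (both sides lying in $\Pro(\cC_\infty)$ by Lemma~\ref{l:closed-filtered}).

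Next I would prove full faithfulness. Fix $X,Y\in\Pro(\cC)_\infty$ with $Y=\{Y_j\}_{j\in\cJ}$. By Proposition~\ref{p:homotopy-limit} we have $Y\simeq\holim_j\iota_{\cC}(Y_j)$ in $\Pro(\cC)_\infty$, so $\Map_{\Pro(\cC)_\infty}(X,Y)\simeq\holim_j\Map^h_{\Pro(\cC)}(X,Y_j)$. On the other side, using $\cF(Y)\simeq\holim_j\iota_{\cC_\infty}(Y_j)$ together with the fact that mapping into a simple object is evaluation,
\[ \Map_{\Pro(\cC_\infty)}(\cF(X),\cF(Y))\simeq\holim_j\Map_{\Pro(\cC_\infty)}(\cF(X),\iota_{\cC_\infty}(Y_j))\simeq\holim_j\cF(X)(Y_j)\simeq\holim_j\Map^h_{\Pro(\cC)}(X,Y_j). \]
The two homotopy limits agree term by term, and the identification is visibly the map induced by $\cF$; hence $\cF$ is fully faithful.

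For essential surjectivity, the first step shows the essential image of $\cF$ contains every simple object $\iota_{\cC_\infty}(c)$, so by Corollary~\ref{c:universal} it is enough to show this image is closed under small cofiltered limits; since $\Pro(\cC)_\infty$ is complete (Theorem~\ref{t:model-complete}), it suffices that $\cF$ preserve small cofiltered limits. Evaluating at a simple object $c$, this reduces to showing that each simple $\iota_{\cC}(c)\in\Pro(\cC)_\infty$ is $\omega$-cocompact, i.e. that $\Map^h_{\Pro(\cC)}(\holim_k X_k,\iota_{\cC}(c))\simeq\hocolim_k\Map^h_{\Pro(\cC)}(X_k,\iota_{\cC}(c))$ for every small cofiltered system $\{X_k\}_{k\in\cK}$. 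To prove this I would reindex over an inverse poset $\cT$ (Lemma~\ref{l:cofinal_CDS} and its $\infty$-categorical analogue) and, after an injective-fibrant replacement in $\Pro(\cC)^{\cT}$ (Corollary~\ref{c:poset-reedy}, Proposition~\ref{p:injective-limit}), realize the homotopy limit as a single pro-object $\{Z_m\}_{m\in\cM}$ with levels in $\cC$, where $\cM$ is the Grothendieck construction assembling the indexing categories $\cI_k$ of the $X_k=\{X_k^i\}_{i\in\cI_k}$. Proposition~\ref{p:main} then rewrites the left-hand side as $\hocolim_{m}\Map^h_{\cC}(Z_m,c)$, and Thomason's theorem (Theorem~\ref{t:cofinal-2}) identifies this colimit over $\cM$ with $\hocolim_k\hocolim_i\Map^h_{\cC}(X_k^i,c)\simeq\hocolim_k\Map^h_{\Pro(\cC)}(X_k,\iota_{\cC}(c))$, as required. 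Full faithfulness together with essential surjectivity then gives that $\cF$ is an equivalence.

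The main obstacle is this last step. Full faithfulness falls out cleanly from the two mapping-space formulas, but showing that $\cF$ preserves cofiltered limits requires the genuinely pro-categorical input that a cofiltered homotopy limit of pro-objects is again modelled by a pro-object indexed by the total Grothendieck category, with levels in $\cC$, so that Proposition~\ref{p:main} and the Thomason interchange apply. Carrying this identification out at the level of homotopy limits in the model category $\Pro(\cC)$ — rather than strict limits — and ensuring the fibrant replacement does not disturb the cofinality comparisons, is the delicate point.
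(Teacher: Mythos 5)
Your treatment of full faithfulness is correct and is essentially the paper's own argument: both decompose $Y$ via its structural diagram on the two sides (Proposition~\ref{p:homotopy-limit} for $Y\simeq\holim_j Y_j$ in $\Pro(\cC)_\infty$, Proposition~\ref{p:main} for $\cF(Y)\simeq\holim_j\iota_{\cC_\infty}(Y_j)$ in $\Pro(\cC_\infty)$) and then reduce to mapping into simple objects. Where the paper finishes this reduction using the $\omega$-cocompactness of simple objects of $\Pro(\cC_\infty)$ (Lemma~\ref{l:cocompact}) together with Corollary~\ref{c:fully-faithful}, you instead use the Yoneda evaluation $\Map_{\Pro(\cC_\infty)}(\cF(X),\iota_{\cC_\infty}(Y_j))\simeq\cF(X)(Y_j)=\Map^h_{\Pro(\cC)}(X,Y_j)$; this is a harmless and slightly more direct variant, modulo the naturality check you wave at with ``visibly''.

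The genuine gap is in your essential surjectivity argument. You reduce it to the statement that $\cF$ preserves \emph{arbitrary} small cofiltered limits, equivalently that simple objects are $\omega$-cocompact in $\Pro(\cC)_\infty$ with respect to an arbitrary cofiltered diagram $\{X_k\}_{k\in\cK}$ in the $\infty$-category $\Pro(\cC)_\infty$ --- a strictly stronger statement than anything the paper proves --- and your sketch of it breaks at the strictification step. Corollary~\ref{c:poset-reedy} and Proposition~\ref{p:injective-limit} take as input a \emph{strict} diagram $\cT\lrar\Pro(\cC)$; after reindexing by an inverse poset (which is fine, via the $\infty$-categorical version of Lemma~\ref{l:cofinal_CDS}) you would still need to rectify a map of simplicial sets $\Ne(\cT)\lrar\Pro(\cC)_\infty$ to a strict $\cT$-indexed diagram in $\Pro(\cC)$, and nothing in the paper provides this. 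The paper's only rectification result is Lemma~\ref{l:rigid-2}, whose proof exploits the fact that the cospan category $\cT$ has no composable pair of non-identity morphisms, so that $\fC(\Ne(\cT))\lrar\cT$ is an \emph{isomorphism}; this fails for a general inverse poset, and rectifying poset-shaped diagrams in the localization of a general (non-combinatorial, non-simplicial) model category is a substantial theorem you cannot simply cite. Even granting a strict diagram, the identification of its injective-fibrant limit with a pro-object indexed by the total Grothendieck construction, compatibly with the cofinality hypotheses needed to apply Proposition~\ref{p:main} and Theorem~\ref{t:cofinal-2}, is only gestured at (as you concede). The paper circumvents all of this: having established full faithfulness, it observes that the essential image of $\cF$ contains the corepresentable functors, that $\Pro(\cC)_\infty$ admits all small limits and colimits (Theorem~\ref{t:model-complete}), and that by Lemma~\ref{l:generate} every object of $\Pro(\cC_\infty)$ is a colimit of corepresentable functors, hence lies in the essential image; the only limit-comparison facts it ever uses are the structural-diagram cases already supplied by Propositions~\ref{p:homotopy-limit} and~\ref{p:main}. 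To repair your proposal, either supply the missing rectification theorem or restructure the surjectivity step along the paper's lines, so that only structural presentations of honest pro-objects of $\Pro(\cC)$ ever enter the argument.
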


\begin{proof}
We first prove that $\cF$ is fully faithful. Let $Y = \{Y_i\}_{i \in \cI}$ be a pro-object. By Proposition~\ref{p:homotopy-limit} we know that the natural maps $Y \lrar Y_i$ exhibit $Y$ is the homotopy limit of the diagram $i \mapsto Y_i$. On the other hand, by Proposition~\ref{p:main} the maps $\cF(Y) \lrar \cF(Y_i)$ exhibit $\cF(Y)$ as the limit of the diagram $i \mapsto \cF(Y_i)$ in $\Pro(\cC_\infty)$. Hence in order to show that $\cF$ is fully faithful it suffices to show that $\cF$ induces an equivalence on mapping spaces from a pro-object to a simple object. In light of Proposition~\ref{p:main} and the fact that every simple object in $\Pro(\cC_\infty)$ is $\omega$-cocompact (see Lemma~\ref{l:cocompact}), we may reduce to showing that the restriction of $\cF$ to $\cC_\infty$ is fully faithful. But this now follows from Corollary~\ref{c:fully-faithful}.

We shall now show that $\cF$ is essentially surjective. By Theorem~\ref{t:model-complete} $\Pro(\cC)_\infty$ has all limits and colimits. Since the restricted functor $\cF|_{\cC_\infty}$ is fully faithful and its essential image are the corepresentable functors we may conclude that the essential image of $\cF$ in $\Pro(\cC_\infty)$ contains every object which is a colimit of corepresentable functors. But by Lemma~\ref{l:generate} every object in $\Pro(\cC_\infty)$ is a colimit of corepresentable functors. This concludes the proof of Theorem~\ref{t:main}.
\end{proof}

Let $f: \cC \lrar \cD$ be a weak right Quillen functor between two weak fibration categories. Then the prolongation $\Pro(f): \Pro(\cC) \lrar \Pro(\cD)$ preserves all limits. It is hence natural to ask when does $\Pro(f)$ admit a left adjoint.

\begin{lem}\label{l:left-adjoint}
Let $\cC,\cD$ be weak fibration categories and $f: \cC \lrar \cD$ a weak right Quillen functor. The functor $\Pro(f): \Pro(\cC) \lrar \Pro(\cD)$ admits a left adjoint $L_f$ if and only if for every $d \in \cD$ the functor $R_d: c \mapsto \Hom_{\cD}(d,f(c))$ is small. Furthermore, when this condition is satisfied then $R_d$ belongs to $\Pro(\cC) \subseteq \Fun(\cC,\Set)^{\op}$ and $L_f$ is given by the formula
\[ L_f(\{d_i\}_{i \in \cI}) = \lim_{i \in \cI}R_{d_i},\]
where the limit is taken in the category $\Pro(\cC)$.
\end{lem}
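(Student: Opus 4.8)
The statement is an adjoint‑functor/representability argument, and the plan is to reduce everything to the fully faithful embedding $\iota\colon\Pro(\cC)\hrar\Fun(\cC,\Set)^{\op}$ of Proposition~\ref{p:characterization} together with the defining identity $\Hom_{\Pro(\cC)}(Z,Y)=R_Z(Y)$, which holds for every $Z\in\Pro(\cC)$ and every \emph{simple} object $Y\in\cC$.

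First the easy direction. Suppose $\Pro(f)$ admits a left adjoint $L_f$. For a simple $d\in\cD$ and any simple $Y\in\cC$, using that the inclusions $\cC\hrar\Pro(\cC)$ and $\cD\hrar\Pro(\cD)$ are full, one has natural bijections
\[ R_d(Y)=\Hom_{\cD}(d,f(Y))=\Hom_{\Pro(\cD)}(d,\Pro(f)(Y))\cong\Hom_{\Pro(\cC)}(L_f(d),Y)=R_{L_f(d)}(Y). \]
Hence $R_d\cong\iota(L_f(d))$ in $\Fun(\cC,\Set)^{\op}$, and since by Proposition~\ref{p:characterization} $\iota$ takes values in small functors, $R_d$ is small.

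Conversely, assume every $R_d$ is small. I first claim that $R_d$ lies in the essential image of $\iota$; by Proposition~\ref{p:characterization} it remains to check that its Grothendieck construction $\cG(\cC,R_d)$ is cofiltered. Now $\cG(\cC,R_d)$ is precisely the comma category whose objects are maps $\phi\colon d\lrar f(c)$ and whose morphisms $(c,\phi)\lrar(c',\phi')$ are arrows $g\colon c\lrar c'$ with $f(g)\circ\phi=\phi'$. This is exactly where the hypothesis that $f$ is a weak right Quillen functor enters, since such an $f$ preserves finite limits. It is nonempty because $f$ preserves the terminal object, so there is a (unique) map $d\lrar f(\ast)=\ast$; any two objects $(c_1,\phi_1),(c_2,\phi_2)$ admit a common cone over $(c_1\times c_2,(\phi_1,\phi_2))$ using $f(c_1\times c_2)=f(c_1)\times f(c_2)$; and any parallel pair $g,h\colon(c_1,\phi_1)\lrar(c_2,\phi_2)$ is equalized by the object $(e,\psi)$, where $e\lrar c_1$ is the equalizer of $g,h$ and $\psi\colon d\lrar f(e)$ is the factorization of $\phi_1$ through $f(e)=\mathrm{eq}(f(g),f(h))$. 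Thus $\cG(\cC,R_d)$ is cofiltered and $R_d\in\Pro(\cC)$.

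It remains to verify the adjunction for $L_f(\{d_i\}_{i\in\cI}):=\lim_{i}R_{d_i}$, the limit being taken in $\Pro(\cC)$, which exists and lies in $\Pro(\cC)$ because $\Pro(\cC)$ is closed under small cofiltered limits (the $1$‑categorical analogue of Lemma~\ref{l:closed-filtered}). Fix $D=\{d_i\}_{i\in\cI}$ and $X=\{X_j\}_{j\in\cJ}$. Since the maps $X\lrar X_j$ exhibit $X$ as a cofiltered limit of simple objects and $\Hom_{\Pro(\cC)}(L_fD,-)$ preserves limits, it suffices to compute $\Hom_{\Pro(\cC)}(L_fD,X_j)$ for simple $X_j$. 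Here lies the one genuinely delicate point: rather than trying to move a limit out of the first variable, I pass through the embedding. Using $\Hom_{\Pro(\cC)}(Z,X_j)=\iota(Z)(X_j)$ for simple $X_j$, that $\iota$ preserves cofiltered limits, and that colimits in $\Fun(\cC,\Set)$ are computed pointwise, I obtain
\[ \Hom_{\Pro(\cC)}(L_fD,X_j)=\iota(\lim_i R_{d_i})(X_j)=(\colim_i R_{d_i})(X_j)=\colim_i\Hom_{\cD}(d_i,f(X_j)), \]
the colimit being the filtered one appearing in the pro‑mapping formula. Taking the limit over $j$ then recovers
\[ \Hom_{\Pro(\cC)}(L_fD,X)=\lim_j\colim_i\Hom_{\cD}(d_i,f(X_j))=\Hom_{\Pro(\cD)}(D,\Pro(f)(X)), \]
which is the required natural bijection; functoriality of $L_f$ and naturality in $D$ and $X$ follow formally from the constructions. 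The main obstacle is precisely this last identification of $\Hom(\lim_i R_{d_i},X_j)$ with a filtered colimit, resolved by evaluating through $\iota$ instead of manipulating the limit directly.
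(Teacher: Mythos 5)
Your proof is correct and takes essentially the same route as the paper: the easy direction by observing that $R_d$ is corepresented by $L_f(d)$ and hence small, the converse via Proposition~\ref{p:characterization} using that $f$ (and hence $R_d$) preserves finite limits to make the category of elements cofiltered, and the same defining formula $L_f(\{d_i\})=\lim_i R_{d_i}$. The only cosmetic differences are that you spell out the cofiltered check on the comma category $d/f$ where the paper just cites left-exactness of $R_d$, and that you verify the adjunction by a direct hom-set computation through the embedding $\iota$ (reducing to simple objects in both variables), whereas the paper constructs a counit and leaves the verification as straightforward.
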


\begin{proof}
First assume that a left adjoint $L_f: \Pro(\cD) \lrar \Pro(\cC)$ exists. By adjunction we have
\[ \Hom_{\Pro(\cC)}(L_f(d),c) = \Hom_{\Pro(\cD)}(d,f(c)) = \Hom_{\cD}(d,f(c)) = R_d(c) \]
for every $c \in \cC, d \in \cD$ and so the functor $R_d$ is corepresented by $L_f(d)$, i.e., corresponds to the pro-object $L_f(d) \in \Pro(\cC) \subseteq \Fun(\cC,\Set)^{\op}$. It follows that $R_d$ is small.

Now assume that each $R_d$ is small. Let $\widetilde{\cC} \lrar \cC$ be the Grothendieck construction of the functor $R_d$. Since $f$ preserves finite limits it follows that $R_d$ preserves finite limits. This implies that $\widetilde{\cC}$ is cofiltered and so by Proposition~\ref{p:characterization} $R_d$ belongs to the essential image of $\Pro(\cC)$ in $\Fun(\cC,\Set)^{\op}$. We may then simply \textbf{define} $L_f: \Pro(\cD) \lrar \Pro(\cC)$ to be the functor
\[ L_f(\{d_i\}_{i \in \cI}) = \lim_{i \in \cI}R_{d_i}.\]
where the limit is taken in $\Pro(\cC)$. The map of sets $\Hom_{\cC}(c,c') \lrar R_{f(c)}(c') = \Hom_{\cD}(f(c),f(c'))$ determines a counit transformation $L_f \circ \Pro(f) \Rightarrow \Id$ and it is straightforward to verify that this counit exhibits $L_f$ as left adjoint to $\Pro(f)$.
\end{proof}

\begin{rem}
The condition of Lemma~\ref{l:left-adjoint} holds, for example, in the following cases:
\begin{enumerate}
\item The categories $\cC$ and $\cD$ are small.
\item The categories $\cC$ and $\cD$ are accessible and $f$ is an accessible functor (see~\cite[Example 2.17 (2)]{AR}).
\end{enumerate}
\end{rem}

\begin{prop}
Let $f: \cC \lrar \cD$ be a weak right Quillen functor between weak fibration categories such that the condition of Lemma~\ref{l:left-adjoint} is satisfied. Suppose that the induced model structures on $\Pro(\cC)$ and $\Pro(\cD)$ exist. Then the adjoint pair
\[L_f:\Pro(\cD)\rightleftarrows\Pro(\cC):\Pro(f)\]
given by Lemma~\ref{l:left-adjoint} is a Quillen pair.
\end{prop}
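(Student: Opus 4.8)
The plan is to verify that the left adjoint $L_f$ preserves cofibrations and trivial cofibrations, which is one of the standard equivalent characterizations of a Quillen pair. The key observation is that, by Definition~\ref{d:induce}, the cofibrations of $\Pro(\cC)$ are ${}^{\perp}(\Fib\cap\cW)$ and the trivial cofibrations are ${}^{\perp}\Fib$, where $\Fib\cap\cW$ and $\Fib$ denote the trivial fibrations and fibrations \emph{of the base category $\cC$}, regarded as maps between simple objects in $\Pro(\cC)$ via the inclusion $\cC\subseteq\Pro(\cC)$; the analogous description holds for $\Pro(\cD)$. Thus both classes are detected by lifting against morphisms between \emph{simple} objects only, and this is precisely the class of morphisms on which $\Pro(f)$ restricts to $f$.

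First I would record the transpose of lifting problems supplied by the adjunction $L_f\dashv\Pro(f)$: for a morphism $g$ in $\Pro(\cD)$ and a morphism $p$ in $\Pro(\cC)$, the map $L_f(g)$ has the left lifting property against $p$ if and only if $g$ has the left lifting property against $\Pro(f)(p)$. Then, given a cofibration $g$ of $\Pro(\cD)$ and an arbitrary trivial fibration $p\colon c\lrar c'$ of $\cC$, viewed as a morphism of simple objects in $\Pro(\cC)$, I would argue as follows. Since $\Pro(f)$ is the levelwise prolongation of $f$, it restricts to $f$ on simple objects, so $\Pro(f)(p)$ is the image $f(p)\colon f(c)\lrar f(c')$, again a morphism of simple objects in $\Pro(\cD)$. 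Because $f$ is a weak right Quillen functor it preserves trivial fibrations, so $f(p)$ lies in $\Fib\cap\cW$ of $\cD$; as $g\in{}^{\perp}(\Fib\cap\cW)$ in $\Pro(\cD)$, the map $g$ lifts against $f(p)$, and hence $L_f(g)$ lifts against $p$. Since $p$ was arbitrary, $L_f(g)\in{}^{\perp}(\Fib\cap\cW)=\mathbf{Cof}$ in $\Pro(\cC)$. The argument for trivial cofibrations is identical, replacing $\Fib\cap\cW$ by $\Fib$ everywhere and using that $f$ preserves fibrations rather than merely trivial fibrations.

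I expect no serious obstacle here: the whole proof is formal once two bookkeeping points are in place, namely (i) that the (trivial) cofibrations of the induced structures are by definition detected by lifting against the (trivial) fibrations between \emph{simple} objects, and (ii) that $\Pro(f)\circ\iota_{\cC}\simeq\iota_{\cD}\circ f$, so that $\Pro(f)$ carries these test morphisms of $\Pro(\cC)$ to test morphisms of $\Pro(\cD)$ of the same type. The only property of $f$ used is that, being a weak right Quillen functor, it preserves fibrations and trivial fibrations. In particular, the explicit formula for $L_f$ from Lemma~\ref{l:left-adjoint} is not needed: only the adjunction itself enters, through the transpose of lifting squares.
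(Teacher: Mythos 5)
Your proof is correct and takes essentially the same route as the paper: the paper also verifies that $L_f$ preserves cofibrations and trivial cofibrations by noting $f(\Fib_{\cC})\subseteq \Fib_{\cD}$ and $f(\Fib_{\cC}\cap\cW_{\cC})\subseteq \Fib_{\cD}\cap\cW_{\cD}$, deducing by adjunction that $L_f({}^{\perp}\Fib_{\cD})\subseteq {}^{\perp}\Fib_{\cC}$ and $L_f({}^{\perp}(\Fib_{\cD}\cap\cW_{\cD}))\subseteq {}^{\perp}(\Fib_{\cC}\cap\cW_{\cC})$, and concluding via conditions (1) and (2) of Definition~\ref{d:induce}. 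Your version merely spells out the lifting-transposition step and the identification of $\Pro(f)$ with $f$ on simple objects more explicitly.
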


\begin{proof}
Since $f(\Fib_{\cC}) \subseteq \Fib_{\cD}$ it follows by adjunction that $L_f({}^{\perp} \Fib_{\cD}) \subseteq {}^{\perp} \Fib_{\cC}$. Since $f(\Fib_{\cC} \cap \cW_{\cC}) \subseteq \Fib_{\cD} \cap \cW_{\cD}$ it follows by adjunction that
that $L_f({}^{\perp}(\Fib_{\cD}\cap \mcal{W}_{\cD})) \subseteq {}^{\perp}(\Fib_{\cC}\cap \mcal{W}_{\cC})$. By Properties (1) and (2) of Definition~\ref{d:induce} we may now conclude that $L_f$ preserves cofibrations and trivial cofibrations.
\end{proof}

By Remark~\ref{r:adjunction} we may consider the induced adjunction of $\infty$-categories
\[(L_f)_\infty:\Pro(\cD)_\infty\rightleftarrows\Pro(\cC)_\infty:\Pro(f)_\infty.\]
We then have the following comparison result.

\begin{prop}\label{p:weak r q functor}
Under the assumptions above, the right derived functor $\Pro(f)_{\infty}: \Pro(\cC)_\infty \lrar \Pro(\cD)_\infty$ is equivalent to the prolongation of the right derived functor $f_\infty: \cC_\infty \lrar \cD_\infty$ (see Definition~\ref{d:prolong}), under the equivalence of Theorem~\ref{t:main}.
\end{prop}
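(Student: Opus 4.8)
The plan is to identify the transported functor $\cG := \cF_\cD \circ \Pro(f)_\infty \circ \cF_\cC^{-1}\colon \Pro(\cC_\infty)\lrar\Pro(\cD_\infty)$ (where $\cF_\cC,\cF_\cD$ denote the equivalences of Theorem~\ref{t:main} for $\cC$ and $\cD$, both applicable since the induced model structures exist) with the prolongation $\Pro(f_\infty)$ by appealing to the universal characterization of the latter. Recall from Definition~\ref{d:prolong} and Theorem~\ref{t:universal} that $\Pro(f_\infty)$ is the unique functor, up to a contractible space of choices, that preserves small cofiltered limits and whose restriction along the Yoneda embedding $\iota_{\cC_\infty}\colon\cC_\infty\hrar\Pro(\cC_\infty)$ is equivalent to $\iota_{\cD_\infty}\circ f_\infty$. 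Thus I would reduce the statement to two claims: that $\cG$ preserves small cofiltered limits, and that $\cG\circ\iota_{\cC_\infty}\simeq\iota_{\cD_\infty}\circ f_\infty$.

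The first claim is immediate: by Remark~\ref{r:adjunction} the functor $\Pro(f)_\infty$ is the right adjoint of the derived adjunction and hence preserves all limits, while $\cF_\cC$ and $\cF_\cD$ are equivalences and so preserve and reflect all limits; therefore $\cG$ preserves small cofiltered limits. For the second claim, write $j_\cC\colon\cC_\infty\lrar\Pro(\cC)_\infty$ and $j_\cD\colon\cD_\infty\lrar\Pro(\cD)_\infty$ for the fully faithful inclusions of simple objects from Corollary~\ref{c:fully-faithful}. Unwinding the definition of $\cF_\cC$ on a simple object $c$, its value is the functor $Y\mapsto\Map^h_{\Pro(\cC)}(c,Y)\simeq\Map^h_{\cC}(c,Y)$ (again by Corollary~\ref{c:fully-faithful}), which is corepresentable; hence $\cF_\cC\circ j_\cC\simeq\iota_{\cC_\infty}$, and likewise $\cF_\cD\circ j_\cD\simeq\iota_{\cD_\infty}$. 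It then remains to establish the key identity $\Pro(f)_\infty\circ j_\cC\simeq j_\cD\circ f_\infty$, for then $\cG\circ\iota_{\cC_\infty}\simeq\cF_\cD\circ\Pro(f)_\infty\circ j_\cC\simeq\cF_\cD\circ j_\cD\circ f_\infty\simeq\iota_{\cD_\infty}\circ f_\infty$.

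To prove this key identity I would pass to fibrant objects. First I would observe that a simple object $c\in\cC$ is fibrant in the induced model structure on $\Pro(\cC)$ precisely when $c$ is fibrant in $\cC$: the map $c\lrar\ast$ of simple objects, viewed over the one-point poset, is a special $\Fib$-map exactly when $c\lrar\ast$ lies in $\Fib$. Consequently the Yoneda embedding restricts to a relative functor $\iota_\cC\colon\cC^{\fib}\lrar\Pro(\cC)^{\fib}$, and since the weak right Quillen functor $f$ preserves fibrant objects while the classical prolongation satisfies $\Pro(f)(\iota_\cC(c))=\iota_\cD(f(c))$ on the nose, the two relative functors $\cC^{\fib}\x{\iota_\cC}{\lrar}\Pro(\cC)^{\fib}\x{\Pro(f)}{\lrar}\Pro(\cD)$ and $\cC^{\fib}\x{f^{\fib}}{\lrar}\cD^{\fib}\x{\iota_\cD}{\lrar}\Pro(\cD)$ coincide. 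Passing to $\infty$-localizations, using Proposition~\ref{p:equivalence C_f C} to identify $(\cC^{\fib})_\infty\simeq\cC_\infty$ and $(\Pro(\cC)^{\fib})_\infty\simeq\Pro(\cC)_\infty$, and recalling that $\Pro(f)_\infty$ and $f_\infty$ are by construction induced from $\Pro(f)^{\fib}$ and $f^{\fib}$ respectively, yields exactly $\Pro(f)_\infty\circ j_\cC\simeq j_\cD\circ f_\infty$.

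The hard part is this last identity, and specifically the fact that evaluating the derived functor $\Pro(f)_\infty$ on a simple object requires no nontrivial fibrant replacement. This rests on the computation that a fibrant simple object of $\cC$ remains fibrant in $\Pro(\cC)$, together with the strict compatibility $\Pro(f)\circ\iota_\cC=\iota_\cD\circ f$ of the classical prolongation with the inclusion of simple objects; these two facts together let the derived functor be computed by the underlying prolongation on the relevant subcategory. Once they are in hand, the universal property of $\Pro(f_\infty)$ closes the argument.
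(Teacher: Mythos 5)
Your proposal is correct and follows essentially the same route as the paper's own proof: reduce via the universal property of Theorem~\ref{t:universal} to checking that both functors preserve small cofiltered limits (Remark~\ref{r:adjunction} for $\Pro(f)_\infty$) and restrict to equivalent functors on $\cC_\infty\simeq(\cC^{\fib})_\infty$, where both are induced by $f$ itself. Your expansion of the last step --- that fibrant simple objects remain fibrant in $\Pro(\cC)$ (only this direction of your ``precisely when'' is needed, and it follows from $\Fib\subseteq({}^{\perp}\Fib)^{\perp}$) together with the strict identity $\Pro(f)\circ\iota_{\cC}=\iota_{\cD}\circ f$ --- is exactly the content the paper compresses into its final sentence.
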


\begin{proof}
By the universal property of Theorem~\ref{t:universal}, it suffices to prove that both functors preserve cofiltered limits and restrict to equivalent functors on the full subcategory $\cC_{\infty}\subset\Pro(\cC_{\infty})$. Now $\Pro(f_{\infty})$ preserve cofiltered limits by definition and $\Pro(f)_{\infty}$ preserves all limits by Remark~\ref{r:adjunction}. Moreover, the restriction of both functors to $\cC^{\fib}_{\infty}\simeq\cC_{\infty}$ is the functor induced by $f$.
\end{proof}

\subsection{Application: $\omega$-presentable $\infty$-categories}\label{ss:omega presentable}

Let $\cM$ be a combinatorial model category. By~\cite[Proposition A.3.7.6.]{Lur09} and the main result of~\cite{Dug01}, the underlying $\infty$-category $\cM_\infty$ is presentable. For many purposes it is often useful to know that $\cM_\infty$ is not only presentable, but also \textbf{$\omega$-presentable}, i.e., equivalent to the ind-category of its subcategory of $\omega$-compact objects. Recall that a model category $\cM$ is said to be \textbf{$\omega$-combinatorial} if its underlying category is $\omega$-presentable and $\cM$ admits a sets of generating cofibrations and trivial cofibrations whose domains and codomains are $\omega$-compact.

In this subsection we use (a dual version of) our main result, Theorem \ref{t:main}, to give sufficient conditions on an $\omega$-combinatorial model category $\cM$ which insure that its underlying $\infty$-category is $\omega$-presentable. First note that the definition of a weak fibration category can be directly dualized to obtain the notion of a \textbf{weak cofibration category}. If $\cC$ is a weak cofibration category then $\cC^{\op}$ is naturally a weak fibration category, and Theorem~\ref{t:main} can be readily applied to $\Ind(\cC) \cong \left(\Pro(\cC^{\op})\right)^{\op}$.

\begin{prop}\label{p:omega}
Let $(\cM,\cW,\cF,\cC)$ be an $\omega$-combinatorial model category and let $\cM_0 \subseteq \cM$ be the full subcategory spanned by $\omega$-compact objects. Let $\cW_0$ and $\cC_0$ denote the classes of weak equivalences and cofibrations between objects in $\cM_0$, respectively. Suppose that $\Mor(\cM_0) = \cW_0 \circ \cC_0$. Then $(\cM_0)_\infty$ is essentially small, admits finite colimits and
$$\Ind((\cM_0)_\infty)\simeq\cM_\infty.$$
In particular, $\cM_\infty$ is $\omega$-presentable, and every $\omega$-compact object in $\cM_\infty$ is a retract of an object in $\cM_0$.
\end{prop}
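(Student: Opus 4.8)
The plan is to recognize $\cM_0$ as an essentially small weak cofibration category and then feed it into the dual of Theorem~\ref{t:main}, after identifying $\cM$ itself with $\Ind(\cM_0)$. First I would check that $(\cM_0,\cW_0,\cC_0)$ satisfies the axioms dual to Definition~\ref{d:weak_fib}. Finite colimits exist in $\cM_0$ because $\omega$-compact objects are closed under finite colimits in a locally $\omega$-presentable category; $\cW_0$ inherits two-out-of-three from $\cW$, since the third map in a composable pair again lies between $\omega$-compact objects; and $\cC_0$ and $\cC_0\cap\cW_0$ are closed under cobase change because (trivial) cofibrations are stable under pushout in $\cM$ and a pushout of $\omega$-compact objects is again $\omega$-compact. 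The factorization axiom $\Mor(\cM_0)=\cW_0\circ\cC_0$ is precisely the standing hypothesis. Being essentially small, $\cM_0$ is in particular homotopically small (dualizing the remark after Definition~\ref{d:locally-small}); hence its localization $(\cM_0)_\infty$ is essentially small, and by the dual of Corollary~\ref{p:WFC_PB} it admits finite colimits.

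The crux is to compare $\cM$ with $\Ind(\cM_0)$. By the Gabriel--Ulmer theorem the underlying category of $\cM$ is equivalent to $\Ind(\cM_0)$, via an equivalence restricting to the identity on $\cM_0$ and carrying the simple (constant) ind-objects to the $\omega$-compact objects of $\cM$. I would transport the model structure of $\cM$ across this equivalence and claim it is the structure induced from the weak cofibration structure on $\cM_0$ in the sense dual to Definition~\ref{d:induce}. To see this, use that $\cM$ is $\omega$-combinatorial to choose generating cofibrations $I$ and generating trivial cofibrations $J$ with $\omega$-compact domains and codomains, so that $I\subseteq\cC_0$ and $J\subseteq\cC_0\cap\cW_0$. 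Then the trivial fibrations of $\cM$ are $I^{\perp}=\Cof^{\perp}=\cC_0^{\perp}$ and the fibrations are $J^{\perp}=(\Cof\cap\cW)^{\perp}=(\cC_0\cap\cW_0)^{\perp}$, all lifting being computed in $\Ind(\cM_0)\cong\cM$; the middle equalities are the cofibrant-generation identities $I^{\perp}=\Cof^{\perp}$ and $J^{\perp}=(\Cof\cap\cW)^{\perp}$, while the outer ones follow from the displayed inclusions. These are exactly conditions (1) and (2) of the dual of Definition~\ref{d:induce}, so by the dual of Proposition~\ref{p:induce} the transported model structure is induced from $\cM_0$.

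With the induced model structure on $\Ind(\cM_0)$ now known to exist, the dual of Theorem~\ref{t:main} yields $\Ind(\cM_0)_\infty\simeq\Ind((\cM_0)_\infty)$, and since the equivalence $\Ind(\cM_0)\cong\cM$ respects weak equivalences it induces $\Ind(\cM_0)_\infty\simeq\cM_\infty$. Combining these gives $\cM_\infty\simeq\Ind((\cM_0)_\infty)$. Because $(\cM_0)_\infty$ is essentially small and admits finite colimits, $\Ind((\cM_0)_\infty)$ is $\omega$-presentable and its $\omega$-compact objects are precisely the retracts of objects of $(\cM_0)_\infty$ (by the standard properties of $\Ind$-completions, see~\cite{Lur09}). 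Transporting along the equivalence, $\cM_\infty$ is $\omega$-presentable and every $\omega$-compact object of $\cM_\infty$ is a retract of the image of an object of $\cM_0$, as claimed.

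I expect the main obstacle to be the comparison step of the second paragraph, namely verifying that the model structure transported from $\cM$ coincides with the one induced from $\cM_0$. This rests on careful bookkeeping of the lifting classes under the Gabriel--Ulmer identification and on using $\omega$-combinatoriality to confine the generating (trivial) cofibrations to maps between $\omega$-compact objects; once this is settled, everything else is a routine dualization of results already established in the paper.
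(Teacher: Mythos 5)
Your proof is correct and follows essentially the same route as the paper's: recognize $(\cM_0,\cW_0,\cC_0)$ as an essentially small weak cofibration category, identify $\cM\simeq\Ind(\cM_0)$, use the $\omega$-compact domains and codomains of the generating sets to show the fibrations and trivial fibrations of $\cM$ are $(\cC_0\cap\cW_0)^{\perp}$ and $\cC_0^{\perp}$, and then apply the duals of Proposition~\ref{p:induce}, Theorem~\ref{t:main} and Corollary~\ref{p:WFC_PB}, finishing with the standard characterization of $\omega$-compact objects in ind-categories. Your bookkeeping of the lifting classes and the verification of the weak cofibration axioms are in fact more detailed than the paper's own proof, which asserts these steps directly.
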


\begin{proof}
By our assumption $\Mor(\cM_0) = \cW_0 \circ \cC_0$ and hence $(\cM_0,\cW_0,\cC_0)$ forms a weak cofibration category. The formation of colimits induces an equivalences of categories $\Ind(\cM_0) \cong \cM$. Since $\cM$ is $\omega$-combinatorial it admits generating sets $\cI,\cJ$ of cofibrations and trivial cofibrations respectively such that the domains and codomains of all maps in $\cI$ and $\cJ$ are in $\cM_0$. It follows that:
\begin{enumerate}
\item The fibrations in $\cM$ are $(\cC_0\cap \cW_0)^{\perp}$.
\item The trivial fibrations in $\cM$ are $\cC_0^{\perp} $.
\end{enumerate}
From Proposition~\ref{p:induce} we now get that the model structure on $\cM\simeq\Ind(\cM_0)$ is induced from $\cM_0$ in the sense of Definition \ref{d:induce}. From Theorem \ref{t:main} we can now deduce that
$$\Ind((\cM_0)_\infty)\simeq\cM_\infty ,$$
as desired. It follows that $(\cM_0)_\infty$ is locally small and hence essentially small (since $\cM_0$ is essentially small). From Corollary~\ref{p:WFC_PB} we get that $(\cM_0)_\infty$ admits finite colimits. The characterization of $\omega$-compact objects in $\cM_\infty$ now reduces to a well-known property of ind-categories.
\end{proof}

\begin{example}
Let $\cS$ be the category of simplicial sets with the Kan-Quillen model structure and let $\cS_0 \subseteq \cS$ be the full subcategory spanned by $\omega$-compact objects. Since $\cS$ is a presheaf category (of sets) it is easy to see that the $\omega$-compact objects are exactly those simplicial sets which have finitely many non-degenerate simplices. By Proposition~\ref{p:omega} we may hence conclude that
\begin{equation}\label{e:ind}
\cS_{\infty} \simeq \Ind((\cS_0)_\infty).
\end{equation}
We note that it is well-known that the $\infty$-category $\cS_{\infty}$ is $\omega$-combinatorial. However, Proposition~\ref{p:omega} gives us a bit more information, as it says that $\cS_\infty$ is more specifically the ind-category of $(\cS_0)_\infty$. This allows one to explicitly determine the $\infty$-category modelled by $\cS_0$. Indeed,~\ref{e:ind} implies that $(\cS_0)_\infty$ is a full subcategory of $\cS_\infty$. The essential image of the functor $(\cS_0)_\infty \lrar \cS_\infty$ is well-known as well: it consists of exactly those spaces which can be written as a colimit of a constant diagram $K \lrar \cS_\infty$ with value $\ast$ indexed by a $K \in \cS_0$. By Corollary~\ref{p:WFC_PB} the $\infty$-category $(\cS_0)_\infty$ admits finite colimits (i.e., colimits indexed by $\cS_0$) and these must coincide with the respective colimits in $\cS_\infty$ in view of Proposition~\ref{p:main}. We then obtain an explicit description of the $\infty$-category modelled by the weak cofibration category $\cS_0$ as the smallest full subcategory of $\cS_\infty$ containing $\ast$ and closed under finite colimits. Finally, we note that $(\cS_0)_\infty$ does not contain all $\omega$-compact objects of $\cS_\infty$ (as one has Wall finiteness obstruction), but every $\omega$-compact object of $\cS_\infty$ is indeed a retract of an object of $(\cS_0)_\infty$.
\end{example}

\section{Application: \'Etale homotopy type and shape of topoi}\label{s:topos}

Let $(\cC,\tau)$ be a small Grothendieck site and let $\PS_\Del(\cC)$ (resp. $\Sh_{\Del}(\cC)$) be the category of small simplicial presheaves (resp. small simplicial sheaves) on $\cC$. The category $\PS_{\Del}(\cC)$ (resp. $\Sh_{\Del}(\cC)$) can be given a weak fibration structure, where the weak equivalences and fibrations are local in the sense on Jardine~\cite{Jar87}. It is shown in~\cite{BS1} that $\PS_{\Del}(\cC)$ and $\Sh_{\Del}(\cC)$ are homotopically small and pro-admissible. Thus, by Theorem~\ref{t:model-2}, the induced model structure exists for both $\Pro(\PS_{\Del}(\cC))$ and $\Pro(\Sh_{\Del}(\cC))$. We refer to these model structures as \textbf{the projective model structures} on
$\Pro(\PS_{\Del}(\cC))$ and $\Pro(\Sh_{\Del}(\cC))$ respectively.

We denote by $\Sh_\infty(\cC)$ the $\infty$-topos of sheaves on $\cC$. The underlying  $\infty$-categories of $\PS_{\Del}(\cC)$ and $\Sh_{\Del}(\cC)$ are naturally equivalent by~\cite[Theorem 5]{Jar07} and both form a model for the hypercompletion $\what{\Sh}_\infty(\cC)$ of the $\infty$-topos $\Sh_\infty(\cC)$ by~\cite[Proposition 6.5.2.14]{Lur09}. We hence obtain the following corollary of Theorem~\ref{t:main}:

\begin{cor}\label{c:main-topos}
We have a natural equivalences of $\infty$-categories
\[ \Pro(\PS_{\Del}(\cC))_\infty \simeq \Pro(\what{\Sh}_\infty(\cC)) \]
\[ \Pro(\Sh_{\Del}(\cC))_\infty \simeq \Pro(\what{\Sh}_\infty(\cC)). \]
\end{cor}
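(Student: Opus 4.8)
The plan is to obtain the statement as a direct application of Theorem~\ref{t:main}, followed by the standard identification of the underlying $\infty$-categories of simplicial (pre)sheaves with the hypercompletion of the sheaf $\infty$-topos.

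First I would apply Theorem~\ref{t:main} to each of the weak fibration categories $\PS_{\Del}(\cC)$ and $\Sh_{\Del}(\cC)$. Their hypotheses are met: as noted above, both are homotopically small and pro-admissible by~\cite{BS1}, so the induced model structures on $\Pro(\PS_{\Del}(\cC))$ and $\Pro(\Sh_{\Del}(\cC))$ exist by Theorem~\ref{t:model-2}. Theorem~\ref{t:main} then yields natural equivalences
\[ \Pro(\PS_{\Del}(\cC))_\infty \simeq \Pro\bigl((\PS_{\Del}(\cC))_\infty\bigr), \qquad \Pro(\Sh_{\Del}(\cC))_\infty \simeq \Pro\bigl((\Sh_{\Del}(\cC))_\infty\bigr). \]

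Next I would identify the two underlying $\infty$-categories. Since the $\infty$-localization of a category depends only on its class of weak equivalences, the $\infty$-category $(\PS_{\Del}(\cC))_\infty$ produced by the weak fibration structure coincides with the underlying $\infty$-category of Jardine's local model structure on simplicial presheaves, which by~\cite[Proposition 6.5.2.14]{Lur09} is the hypercompletion $\what{\Sh}_\infty(\cC)$; the analogous statement holds for $\Sh_{\Del}(\cC)$, and the equivalence $(\PS_{\Del}(\cC))_\infty \simeq (\Sh_{\Del}(\cC))_\infty$ is~\cite[Theorem 5]{Jar07}. Hence $(\PS_{\Del}(\cC))_\infty \simeq (\Sh_{\Del}(\cC))_\infty \simeq \what{\Sh}_\infty(\cC)$.

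Finally I would combine the two steps, using that $\Pro(-)$ carries an equivalence of $\infty$-categories to an equivalence of pro-categories (this follows from the universal property of Theorem~\ref{t:universal}, prolonging an equivalence $\cD \simeq \cD'$ to $\Pro(\cD) \simeq \Pro(\cD')$). Applying $\Pro(-)$ to the equivalences of the second step and composing with those of the first gives the two claimed equivalences. I expect no serious obstacle here, as the argument is purely formal once Theorem~\ref{t:main} is in hand; the only point demanding care is the invariance of the $\infty$-localization under the passage from the weak fibration structure to Jardine's full model structure, so that Lurie's computation of the underlying $\infty$-category applies. One may further note that $\Pro(\what{\Sh}_\infty(\cC))$, formed in the sense of Definition~\ref{d:pro}, agrees with Lurie's pro-category of~\cite{Lur11b} by Remark~\ref{r:lurie}, since $\what{\Sh}_\infty(\cC)$ is accessible and admits finite limits.
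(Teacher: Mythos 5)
Your proposal is correct and matches the paper's own argument: the paper likewise deduces the corollary directly from Theorem~\ref{t:main} (with the hypotheses supplied by Theorem~\ref{t:model-2} and~\cite{BS1}), identifying the underlying $\infty$-categories with $\what{\Sh}_\infty(\cC)$ via~\cite[Theorem 5]{Jar07} and~\cite[Proposition 6.5.2.14]{Lur09}. Your extra remarks --- that $\infty$-localization depends only on the weak equivalences, and that $\Pro(-)$ carries equivalences to equivalences by Theorem~\ref{t:universal} --- are exactly the implicit steps in the paper's one-line derivation.
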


We have an adjunction
\[\Gamma^*:\cS \rightleftarrows \Sh_{\Del}(\cC) : \Gamma_* \]
where $\Gamma_*$ is the global sections functor and $\Gamma^*$ is the constant sheaf functor. As explained in~\cite{BS1}, the functor $\Gamma^*$ (which is a left functor in the adjunction above) is a weak \textbf{right} Quillen functor. Since the categories $\cS$ and $\Sh_{\Del}(\cC)$ are locally presentable and $\Gamma^*$ is accessible (being a left adjoint $\Gamma^*$ preserves all small colimits), we obtain a Quillen adjunction
\[L_{\Gamma^*}:\Pro(\Sh_{\Del}(\cC)) \rightleftarrows \Pro(\cS):\Pro(\Gamma^*),\]
where $\Pro(\Gamma^*)$ is now the right Quillen functor. In light of Remark~\ref{r:adjunction}, this Quillen adjunction induces an adjunction of $\infty$-categories
\[
(L_{\Gamma^*})_{\infty}:\Pro(\Sh_{\Del}(\cC))_\infty \rightleftarrows \Pro(\cS)_\infty:\Pro(\Gamma^*)_{\infty}.
\]

\begin{define}\label{d:realization}
The \textbf{topological realization of $\cC$} is defined in to be
\[|\cC| := (L_{\Gamma^*})_{\infty}(*)\in \Pro(\cS)_\infty,\]
where $*$ is a terminal object of $\Sh_{\Del}(\cC).$
\end{define}

This construction has an $\infty$-categorical version that we now recall. Let $\cX$ be an $\infty$-topos. According to~\cite[Proposition 6.3.4.1]{Lur09} there exists a unique (up to a contractible space of choices) geometric morphism
\[q^*:\cS_\infty\rightleftarrows\cX:q_*.\]
By definition of a geometric morphism, the functor $q^*$ preserves finite limits. As a right adjoint, the functor $q_*$ preserves all limits. Moreover, both functors are accessible. Thus the composite $q_*\circ q^*$ is an accessible functor which preserves finite limits, and hence represents an object of $\Pro(\cS_{\infty})$ (see Remark~\ref{r:lurie}). This object is called the \textbf{shape} of $\cX$ and is denoted $\Shp(\cX)$. This definition appears in~\cite[Definition 7.1.6.3]{Lur09}.

In order to compare the above notion of shape with Definition~\ref{d:realization} we will need the following lemma:

\begin{lem}\label{l:geometric}
Let $\cC$ be a Grothendieck site. Then the derived functor
\[ \Gamma_\infty^*:\cS_\infty \lrar (\Sh_{\Del}(\cC))_\infty \]
preserves finite limits and has a right adjoint.
\end{lem}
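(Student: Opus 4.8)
The plan is to treat the two assertions separately, exploiting the fact that the single functor $\Gamma^*$ carries two compatible structures. Preservation of finite limits is essentially free: $\Gamma^*$ is a weak right Quillen functor between weak fibration categories, so Corollary~\ref{p:weak right preserves limits} applies directly and shows that the right derived functor $\Gamma^*_\infty$ preserves finite limits. The real work lies in producing a right adjoint, and for this the plan is to recognize $\Gamma^* \dashv \Gamma_*$ as a \emph{genuine} Quillen adjunction in which $\Gamma^*$ is the \emph{left} Quillen functor, and then to quote Remark~\ref{r:adjunction}.

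To see that $\Gamma^*$ is left Quillen I would factor it as the constant presheaf functor $\cS \lrar \PS_\Del(\cC)$ followed by sheafification $\PS_\Del(\cC) \lrar \Sh_\Del(\cC)$. In the local (Jardine) model structure the cofibrations are precisely the monomorphisms, and both of these functors preserve monomorphisms and carry local weak equivalences to local weak equivalences: the constant presheaf on a weak equivalence of simplicial sets is a levelwise, hence local, weak equivalence, and sheafification preserves both monomorphisms and local weak equivalences. Hence $\Gamma^*$ preserves cofibrations and all weak equivalences, and therefore preserves trivial cofibrations as well, so that $\Gamma^* \dashv \Gamma_*$ is a Quillen adjunction. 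Now every object of $\Sh_\Del(\cC)$ is cofibrant, since all cofibrations are monomorphisms; by Ken Brown's lemma $\Gamma^*$ therefore preserves \emph{all} weak equivalences, so that the functor it induces on $\infty$-localizations is simultaneously its left and its right derived functor. In particular the functor $\Gamma^*_\infty$ furnished by Corollary~\ref{p:weak right preserves limits} agrees with the left derived functor of the Quillen pair, and Remark~\ref{r:adjunction} then yields an adjunction $\Gamma^*_\infty \dashv (\Gamma_*)_\infty$ of $\infty$-categories, exhibiting the sought-after right adjoint. Under the equivalence $(\Sh_\Del(\cC))_\infty \simeq \what{\Sh}_\infty(\cC)$ one expects $(\Gamma_*)_\infty$ to be the direct image of the canonical geometric morphism, which is exactly the datum needed for the subsequent comparison with the shape.

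I expect the only genuine subtlety to be this reconciliation of derived functors. A priori the lemma refers to the right derived functor arising from the weak fibration structure, which is computed via fibrant replacements, whereas Remark~\ref{r:adjunction} speaks of the left derived functor of a Quillen pair, computed via cofibrant replacements. These coincide only because $\Gamma^*$ preserves every weak equivalence, and this in turn rests on the special feature of the local model structure that every object is cofibrant. Once this identification is in place, both clauses of the lemma follow formally, the first from Corollary~\ref{p:weak right preserves limits} and the second from Remark~\ref{r:adjunction}.
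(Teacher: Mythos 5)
Your proof is correct and takes essentially the same route as the paper's: finite-limit preservation via Corollary~\ref{p:weak right preserves limits}, and the right adjoint by observing that $\Gamma^* \dashv \Gamma_*$ is a Quillen adjunction for the Joyal--Jardine model structure (cofibrations the monomorphisms) with $\Gamma^*$ left Quillen, then invoking Remark~\ref{r:adjunction}. Your explicit reconciliation of the left and right derived functors of $\Gamma^*$ --- using that every object is cofibrant and that $\Gamma^*$ preserves all local weak equivalences --- spells out a point the paper's proof leaves implicit, and is a correct and welcome addition rather than a divergence.
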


\begin{proof}
Since the functor $\Gamma^*$ is a weak right Quillen functor between weak fibration categories we get from Corollary~\ref{p:weak right preserves limits} that $\Gamma^*_\infty$ preserves finite limits. Furthermore, if one endows $\Sh_{\Del}(\cC)$ with the model structure of~\cite{Joy83,Jar87} (in which the cofibrations are the monomorphisms and the weak equivalences are the local weak equivalences) we clearly obtain a Quillen adjunction
\[\Gamma^*:\cS \rightleftarrows \Sh_{\Del}(\cC) : \Gamma_* .\]
In light of Remark~\ref{r:adjunction} we get that $\Gamma^*_\infty$ has a right adjoint, namely $(\Gamma_*)_\infty$.
\end{proof}

We can now state and prove the main theorem of this section:

\begin{thm}\label{t:main shape}
For any Grothendieck site $\cC$ we have a weak equivalence in $\Pro(\cS_{\infty})$
\[|\cC|\simeq \Shp(\what{\Sh}_\infty(\cC)).\]
\end{thm}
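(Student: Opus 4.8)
The plan is to reduce the statement to a formal computation with adjoint functors between pro-$\infty$-categories, feeding on the comparison results already established. Write $\cX = \what{\Sh}_\infty(\cC)$ and let $q^*:\cS_\infty \adj \cX : q_*$ denote the essentially unique geometric morphism, so that by definition $\Shp(\cX) = q_* \circ q^*$, viewed as an object of $\Pro(\cS_\infty)$ via Remark~\ref{r:lurie}. On the other side, $|\cC| = (L_{\Gamma^*})_\infty(\ast)$ is the value of the derived left adjoint of $L_{\Gamma^*}$ at the terminal object. My goal is to match these two pro-objects by identifying the relevant functors and then computing mapping spaces into simple objects.

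First I would identify $\Gamma^*_\infty$ with the inverse image functor $q^*$. By Lemma~\ref{l:geometric} the functor $\Gamma^*_\infty : \cS_\infty \lrar (\Sh_{\Del}(\cC))_\infty$ preserves finite limits and admits a right adjoint $(\Gamma_*)_\infty$; transporting along the equivalence $(\Sh_{\Del}(\cC))_\infty \simeq \cX$ of~\cite[Proposition 6.5.2.14]{Lur09}, this exhibits $(\Gamma^*_\infty, (\Gamma_*)_\infty)$ as a geometric morphism $\cX \to \cS_\infty$. The uniqueness clause of~\cite[Proposition 6.3.4.1]{Lur09} then forces $\Gamma^*_\infty \simeq q^*$ and $(\Gamma_*)_\infty \simeq q_*$. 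Next I would transport the whole Quillen adjunction to pro-categories: combining Theorem~\ref{t:main} (for $\cS$ and for $\Sh_{\Del}(\cC)$) with Corollary~\ref{c:main-topos} gives $\Pro(\cS)_\infty \simeq \Pro(\cS_\infty)$ and $\Pro(\Sh_{\Del}(\cC))_\infty \simeq \Pro(\cX)$, and under these equivalences Proposition~\ref{p:weak r q functor} identifies the right derived functor $\Pro(\Gamma^*)_\infty$ with the prolongation $\Pro(\Gamma^*_\infty) \simeq \Pro(q^*)$. Since $(L_{\Gamma^*})_\infty$ is its left adjoint (Remark~\ref{r:adjunction}) and left adjoints are unique up to equivalence, $(L_{\Gamma^*})_\infty$ is identified with the left adjoint $L$ of $\Pro(q^*)$.

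With these identifications in place the computation is formal. The terminal object of $\Sh_{\Del}(\cC)$ localizes to the terminal object $\ast_\cX$ of $\cX$ (terminal objects of weak fibration categories are terminal in the underlying $\infty$-category, cf.\ the proof of Proposition~\ref{p:brown_PB}), so $|\cC|$ corresponds to $L(\iota_\cX(\ast_\cX))$. For each simple object $\iota_{\cS_\infty}(S)$ with $S \in \cS_\infty$, the adjunction $L \dashv \Pro(q^*)$, the prolongation identity $\Pro(q^*)(\iota_{\cS_\infty}(S)) \simeq \iota_\cX(q^*S)$ (Definition~\ref{d:prolong}), the full faithfulness of $\iota_\cX$, and the fact that $q_* Y \simeq \Map_\cX(\ast_\cX, Y)$ (global sections are corepresented by the terminal object) yield a chain of natural equivalences $\Map_{\Pro(\cS_\infty)}(|\cC|, \iota_{\cS_\infty}(S)) \simeq \Map_\cX(\ast_\cX, q^* S) \simeq q_*q^*(S)$. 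By the Yoneda lemma the analogous mapping space out of $\Shp(\cX) = q_* q^*$ is also naturally $q_* q^*(S)$. Since the fully faithful embedding $\Pro(\cS_\infty) \hrar \Fun(\cS_\infty, \ovl{\cS}_\infty)^{\op}$ sends a pro-object precisely to the functor $S \mapsto \Map_{\Pro(\cS_\infty)}(-, \iota_{\cS_\infty}(S))$, the two pro-objects must agree, giving $|\cC| \simeq \Shp(\cX)$.

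The main obstacle, and the step demanding the most care, is the transport in the second paragraph: ensuring that the several comparison equivalences (Theorem~\ref{t:main} through Corollary~\ref{c:main-topos}, the prolongation identification of Proposition~\ref{p:weak r q functor}, and the geometric-morphism identification $\Gamma^*_\infty \simeq q^*$) are mutually compatible, so that $(L_{\Gamma^*})_\infty$ genuinely corresponds to the left adjoint of $\Pro(q^*)$ and not merely to some functor with the same source and target. Everything afterwards is either bookkeeping or a direct application of adjunction and the Yoneda lemma.
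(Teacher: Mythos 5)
Your proposal is correct and follows essentially the same route as the paper's proof: identify $\Gamma^*_\infty$ with $q^*$ via Lemma~\ref{l:geometric} and the uniqueness of geometric morphisms, identify $(L_{\Gamma^*})_\infty$ with the left adjoint $L$ of $\Pro(q^*)$ via Proposition~\ref{p:weak r q functor} and uniqueness of left adjoints, and then compute $L(\ast)$ as the functor $K \mapsto \Map_{\what{\Sh}_\infty(\cC)}(\ast, q^*(K)) \simeq q_*q^*(K)$. Your final step phrased via corepresentability of $q_*$ by the terminal object is the same computation the paper performs using $q^*(\ast)\simeq\ast$ together with the adjunction $q^* \dashv q_*$.
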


\begin{proof}
The functor $\Gamma^*:\cS\lrar\Sh_{\Del}(\cC)$ induces a functor $\Gamma^*_{\infty}:\cS_{\infty} \lrar \left(\Sh_{\Del}(\cC)\right)_{\infty}\simeq\what{\Sh}_{\infty}(\cC)$. By Lemma~\ref{l:geometric} the functor $\Gamma^*_\infty$ is the left hand side of a geometric morphism between $\cS_\infty$ and $\what{\Sh}_{\infty}(\cC)$ and hence must coincides with $q^*$ up to equivalence by~\cite[Proposition 6.3.4.1]{Lur09}. The functor $q^*:\cS_\infty\rightarrow\what{\Sh}_\infty(\cC)$, in turn, is accessible (being a left adjoint) and commutes with finite limits, hence its prolongation to $\Pro(\cS_{\infty})$ admits a left adjoint:
\begin{equation}\label{e:adj-3}
L:\Pro(\what{\Sh}_\infty(\cC))\rightleftarrows\Pro(\cS_\infty):\Pro(q^*).
\end{equation}
By Proposition~\ref{p:weak r q functor} the functor $\Pro(\Gamma^*)_{\infty}$ is equivalent $\Pro(\Gamma^*_{\infty})$ and hence to $\Pro(q^*)$. By uniqueness of left adjoints, it follows that the adjunction
\begin{equation}\label{e:adj-2}
(L_{\Gamma^*})_{\infty}:\Pro(\Sh_{\Del}(\cC))_\infty \rightleftarrows \Pro(\cS)_\infty:\Pro(\Gamma^*)_{\infty}
\end{equation}
is equivalent to the adjunction (\ref{e:adj-3}) and so the image of $|\cC|$ under the equivalence $\Pro(\cS)_\infty \simeq \Pro(\cS_\infty)$ (which is a particular case of Corollary~\ref{c:main-topos}) is given by the object $L(\ast)$. Now for every object $X \in \what{\Sh}_\infty(\cC)$, the pro-object $L(X)$ is given, as an object in $\Fun(\cS_\infty,\cS_\infty)^{\op}$, by the formula
\[ L(X)(K) \simeq \Map_{\Pro(\cS_\infty)}(L(X),K) \simeq\Map_{\what{\Sh}_\infty(\cC)}(X,q^*(K)) .\]
In particular, the object $L(\ast)\in \Pro(\cS_\infty)$ corresponds to the functor
\[ K \mapsto \Map_{\what{\Sh}_\infty(\cC)}(\ast,q^*(K)) \simeq \Map_{\what{\Sh}_\infty(\cC)}(q^*(\ast),q^*(K)) \simeq \Map_{\cS_\infty}(\ast,q_*q^*(K)) \simeq q_*q^*(K) \]
and so we obtain a natural equivalence $L(\ast) \simeq q_* \circ q^*$ in $\Fun(\cS_\infty,\cS_\infty)^{\op}$ and consequently a natural equivalence
$|\cC|\simeq \Shp(\what{\Sh}_\infty(\cC))$
as desired.
\end{proof}

\section{Application: Several models for profinite spaces}\label{s:profinite}

In this section we apply Theorem~\ref{t:main} in order to relate the model categorical and the $\infty$-categorical aspects of profinite homotopy theory. In \S\ref{ss:isaksen} we describe a certain left Bousfield localization, due to Isaksen, of the induced model structure on the category $\Pro(\cS)$ of pro-spaces. This localization depends on a choice of a collection $K$ of Kan complexes. We identify the underlying $\infty$-category of this localization as the pro-category of a suitable $\infty$-category $(K_{\nil})_{\infty}$. In \S\ref{ss:example1} and~\ref{ss:example2} we describe explicit examples where $(K_{\nil})_{\infty}$ is equivalent to the $\infty$-category of $\pi$-finite spaces and $p$-finite spaces respectively. Finally, in \S\ref{ss:quick} we relate Isaksen's approach to that of Quick and Morel, via two direct Quillen equivalences. These Quillen equivalences appear to be new.

\subsection{Isaksen's model}\label{ss:isaksen}

Consider the category of small simplicial sets $\cS$ with the Kan-Quillen model structure. According to Theorem~\ref{t:model_Isa} the induced model structure on $\Pro(\cS)$ exists. The pro-admissibility of $\cS$ follows from the left and right properness. This model structure was first constructed in \cite{EH76} and further studied in~\cite{Isa01}, where it was called the \textbf{strict model structure}. Isaksen shows in~\cite{Isa05} that for $K$ any small set of fibrant object of $\cS$, one can form the maximal left Bousfield localization $L_K \Pro(\cS)$ of $\Pro(\cS)$ for which all the objects in $K$ are local. In order to describe the fibrant objects of $L_K\Pro(\cS)$, Isaksen defines first the class $K_{\nil}$ of $K$-nilpotent spaces. This is the smallest class of Kan complexes that is closed under homotopy pullbacks and that contains $K$ and the terminal object $\ast$. In particular, $K_{\nil}$ is closed under weak equivalences between Kan complexes. The fibrant objects of $L_K\Pro(\cS)$ are the fibrant objects in $\Pro(\cS)$ which are isomorphic to a pro-space that is levelwise in $K_{\nil}$. The weak equivalences in $L_K\Pro(\cS)$ are the maps $X \lrar Y$ in $\Pro(\cS)$ such that for any $A$ in $K$, the map
\[\Map^h_{\Pro(\cS)}(Y,A) \lrar \Map^h_{\Pro(\cS)}(X,A)\]
is a weak equivalence.

Our goal in this section is to prove that $L_K\Pro(\cS)$ is a model for the pro-category of the $\infty$-category underlying $K_{\nil}$. We say that a map in $K_{\nil}$ is a weak equivalence (resp. fibration) if it is a weak equivalence (resp. fibration) when regarded as a map of simplicial sets. Since $\cS^{\fib}$ is a category of fibrant objects and $K_{\nil} \subseteq \cS^{\fib}$ is a full subcategory which is closed under weak equivalences and pullbacks along fibrations it follows that $K_{\nil}$ inherits a structure of a category of fibrant objects.

\begin{lem}\label{l:K_nil}
The natural map
\[ (K_{\nil})_{\infty} \lrar \cS_\infty \]
is fully faithful.
\end{lem}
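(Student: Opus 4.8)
The plan is to reduce fully faithfulness to a comparison of derived mapping spaces, and then to compute these spaces using Cisinski's explicit model. Since both $K_{\nil}$ and $\cS^{\fib}$ are categories of fibrant objects, and the inclusion $\cS^{\fib} \hookrightarrow \cS$ induces an equivalence $\cS^{\fib}_\infty \x{\simeq}{\lrar} \cS_\infty$ by Proposition~\ref{p:equivalence C_f C}, the natural map factors as $(K_{\nil})_\infty \lrar \cS^{\fib}_\infty \x{\simeq}{\lrar} \cS_\infty$, and it suffices to show that $(K_{\nil})_\infty \lrar \cS^{\fib}_\infty$ is fully faithful. Concretely, I would show that for every pair $X, Y \in K_{\nil}$ the induced map
\[ \Map^h_{K_{\nil}}(X,Y) \lrar \Map^h_{\cS^{\fib}}(X,Y) \]
is a weak equivalence.

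To compute these, I would invoke Proposition~\ref{p:cis-map}, which identifies $\Map^h_{\cC}(X,Y)$ with the nerve $\Ne\uline{\Hom}_{\cC}(X,Y)$ of the category of spans $X \x{p}{\llar} Z \x{f}{\lrar} Y$ with $p$ a trivial fibration. By the naturality recorded in Remark~\ref{r:natural}, the displayed map is then induced by the evident inclusion of span categories $\uline{\Hom}_{K_{\nil}}(X,Y) \hookrightarrow \uline{\Hom}_{\cS^{\fib}}(X,Y)$, so it is enough to show that this inclusion induces a weak equivalence on nerves.

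The crux of the argument, and the step carrying all the content, is the observation that this inclusion is in fact an isomorphism of categories. Indeed, let $X \x{p}{\llar} Z \x{f}{\lrar} Y$ be an object of $\uline{\Hom}_{\cS^{\fib}}(X,Y)$, so that $Z$ is a Kan complex and $p$ is a trivial fibration, hence in particular a weak equivalence between Kan complexes. Since $X \in K_{\nil}$ and $K_{\nil}$ is closed under weak equivalences between Kan complexes, it follows that $Z \in K_{\nil}$. As $K_{\nil}$ is a full subcategory of $\cS^{\fib}$ whose fibrations and weak equivalences are inherited from $\cS$, the map $p$ is a trivial fibration in $K_{\nil}$ and the structure map $f$ already lies in $K_{\nil}$; thus the whole span is an object of $\uline{\Hom}_{K_{\nil}}(X,Y)$. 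Since the inclusion is a fully faithful functor which is also surjective on objects, it is an isomorphism of categories, and therefore induces an isomorphism on nerves.

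Combining these steps, the comparison map on derived mapping spaces is a weak equivalence for all $X, Y \in K_{\nil}$, which is exactly the fully faithfulness of $(K_{\nil})_\infty \lrar \cS^{\fib}_\infty \simeq \cS_\infty$. I do not expect any serious obstacle here: the only genuine input is the closure of $K_{\nil}$ under weak equivalences between Kan complexes (recorded just before the statement), and once this is used to force $Z \in K_{\nil}$ for every trivial fibration $Z \to X$, the identification of the two span categories is immediate and no cofinality argument via Quillen's Theorem~A is required.
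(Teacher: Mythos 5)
Your proposal is correct and takes essentially the same route as the paper: the paper's one-sentence proof likewise rests on the observation that closure of $K_{\nil}$ under weak equivalences makes the inclusion $\uline{\Hom}_{K_{\nil}}(X,Y) \hrar \uline{\Hom}_{\cS^{\fib}}(X,Y)$ an isomorphism of categories, with Proposition~\ref{p:cis-map} (together with Proposition~\ref{p:equivalence C_f C}) converting this into an equivalence of derived mapping spaces. You have simply spelled out the details that the paper leaves implicit, including the verification that a trivial fibration $Z \lrar X$ forces $Z \in K_{\nil}$.
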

\begin{proof}
Since $K_{\nil} \subseteq \cS^{\fib}$ is closed under weak equivalences the natural map
\[ \uline{\Hom}_{K_{\nil}}(X,Y) \lrar \uline{\Hom}_{\cS^{\fib}}(X,Y) \]
is an isomorphism for any $X,Y \in K_{\nil}$ (see Definition~\ref{d:hom}).
\end{proof}

The main theorem of this subsection is the following:
\begin{thm}\label{t:main-isaksen}
Let $K$ be a small set of fibrant objects in $\cS$. Then the $\infty$-category $L_K\Pro(\cS)_\infty$ is naturally equivalent to $\Pro((K_{\nil})_{\infty})$.
\end{thm}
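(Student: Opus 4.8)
The plan is to deduce Theorem~\ref{t:main-isaksen} from the main comparison theorem (Theorem~\ref{t:main}) applied to an appropriate weak fibration structure, combined with the universal property of Bousfield localization. The key observation is that the strict model structure on $\Pro(\cS)$ is the induced model structure coming from the weak fibration structure on $\cS^{\fib}$ (or on $\cS$, via Theorem~\ref{t:model_Isa}), so by Theorem~\ref{t:main} its underlying $\infty$-category is $\Pro(\cS_\infty)$. The localization $L_K\Pro(\cS)$ should then correspond, on underlying $\infty$-categories, to a localization of $\Pro(\cS_\infty)$, and the goal is to identify this localized $\infty$-category with $\Pro((K_{\nil})_\infty)$.

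First I would set up the localization picture on the $\infty$-categorical side. The $\infty$-category $(K_{\nil})_\infty$ sits fully faithfully inside $\cS_\infty$ by Lemma~\ref{l:K_nil}, and since $K_{\nil}$ is a category of fibrant objects closed under pullbacks along fibrations, Proposition~\ref{p:brown_PB} gives that $(K_{\nil})_\infty$ has finite limits and that the inclusion $(K_{\nil})_\infty \hookrightarrow \cS_\infty$ preserves them (Proposition~\ref{p:left exact preserves limits}). Applying $\Pro(-)$ yields a functor $\Pro((K_{\nil})_\infty) \lrar \Pro(\cS_\infty)$. The natural candidate for the equivalence is the composite
\[ L_K\Pro(\cS)_\infty \lrar \Pro(\cS)_\infty \x{\simeq}{\lrar} \Pro(\cS_\infty), \]
where the first map is induced by the localization functor and the equivalence is Theorem~\ref{t:main}; I would show that this identifies $L_K\Pro(\cS)_\infty$ with the essential image $\Pro((K_{\nil})_\infty) \subseteq \Pro(\cS_\infty)$.

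The main technical step is to match the two localizations precisely. On the model side, the weak equivalences of $L_K\Pro(\cS)$ are exactly the maps $X \lrar Y$ inducing equivalences on $\Map^h_{\Pro(\cS)}(-,A)$ for all $A \in K$. Using the mapping space formula of Proposition~\ref{p:main} and Proposition~\ref{p:homotopy-limit}, these derived mapping spaces into a simple object $A$ translate into the pro-category mapping spaces in $\Pro(\cS_\infty)$; so under the equivalence of Theorem~\ref{t:main}, the $L_K$-weak equivalences become precisely those maps in $\Pro(\cS_\infty)$ that are local with respect to the objects of $K$ regarded as $\omega$-cocompact simple objects (Lemma~\ref{l:cocompact}). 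I would then argue that the localization of $\Pro(\cS_\infty)$ at this class is equivalent to $\Pro((K_{\nil})_\infty)$: since $K_{\nil}$ is by definition the smallest class of Kan complexes containing $K$ and $\ast$ and closed under homotopy pullbacks, $(K_{\nil})_\infty$ is the smallest full subcategory of $\cS_\infty$ containing $K$, closed under finite limits, and the reflective localization of $\Pro(\cS_\infty)$ onto $K$-local pro-objects lands in the pro-category of this finite-limit closure. The identification of fibrant objects (levelwise $K_{\nil}$ pro-spaces) with the essential image of $\Pro((K_{\nil})_\infty)$ provides the concrete bridge.

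The hard part will be controlling the localization on the $\infty$-categorical side: one must show that inverting the maps that are equivalences after mapping into every $A \in K$ produces exactly $\Pro((K_{\nil})_\infty)$ and not some larger or smaller subcategory. This requires knowing that being local with respect to the \emph{set} $K$ is the same as being local with respect to the whole \emph{closure} $K_{\nil}$, i.e.\ that closing up under homotopy pullbacks (finite limits) does not change the class of local objects — which is plausible because mapping spaces send finite limits in the target to finite limits of spaces, so $K$-locality is automatically inherited by finite limits of the $A$'s. Making this closure argument rigorous at the level of the pro-category, and verifying that the fibrant objects of $L_K\Pro(\cS)$ correspond under Theorem~\ref{t:main} exactly to the pro-objects valued in $(K_{\nil})_\infty$, is where the real work lies; everything else is a formal consequence of Theorem~\ref{t:main} together with the universal property of $\Pro$ from Theorem~\ref{t:universal}.
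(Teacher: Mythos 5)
Your skeleton matches the paper's: both embed $(L_K\Pro(\cS))_\infty$ into $\Pro(\cS)_\infty \simeq \Pro(\cS_\infty)$ via Theorem~\ref{t:main} and then try to identify the essential image with that of $\Pro((K_{\nil})_{\infty})$, using Lemma~\ref{l:K_nil} for full faithfulness of $\Pro((K_{\nil})_{\infty}) \lrar \Pro(\cS_\infty)$. But two steps are left genuinely open. First, you never establish that $(L_K\Pro(\cS))_\infty \lrar \Pro(\cS)_\infty$ is fully faithful; you only assert that the Bousfield localization ``should correspond'' to a localization of $\Pro(\cS_\infty)$. The paper proves this with a short argument internal to its toolkit: by Corollary~\ref{c:map} it suffices to compare the categories $\uline{\Hom}(X,Y)$ for fibrant $X,Y$, and since $\Pro(\cS)$ and $L_K\Pro(\cS)$ have the \emph{same trivial fibrations}, these categories are literally isomorphic. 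An appeal to general $\infty$-categorical theory of left Bousfield localizations would itself require justification not available in the paper.

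Second, and more seriously, your plan for essential surjectivity --- identify the localization of $\Pro(\cS_\infty)$ at the $K$-local equivalences and show it equals $\Pro((K_{\nil})_{\infty})$ --- is exactly the step you defer as ``the real work,'' and the closure argument you sketch only covers the easy inclusion: $K$-locality passes to finite limits, so objects of $K_{\nil}$ are $K$-local. It does not show that every object of $\Pro((K_{\nil})_{\infty})$ --- an abstract cofiltered limit of simple objects --- lies in the image of the localized model category; that would require strictifying such a formal limit to an actual levelwise-$K_{\nil}$ pro-space, which you do not supply. The paper sidesteps this entirely. For one containment it uses Isaksen's characterization of the fibrant objects of $L_K\Pro(\cS)$ as those isomorphic to pro-spaces levelwise in $K_{\nil}$, which land in $\Pro((K_{\nil})_{\infty})$ by Lemma~\ref{l:closed-filtered}. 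For the other it observes that the essential image of the composite $(L_K\Pro(\cS))_\infty \lrar \Pro(\cS_\infty)$ contains $(K_{\nil})_{\infty}$ and is \emph{closed under small limits}: $(L_K\Pro(\cS))_\infty$ has all small limits by Theorem~\ref{t:model-complete} since $L_K\Pro(\cS)$ is a model category, and the embedding preserves limits because it is derived from a right Quillen functor (Remark~\ref{r:adjunction}); then Lemma~\ref{l:generate}, which exhibits every object of $\Pro((K_{\nil})_{\infty})$ as a small cofiltered limit of simple objects, finishes the argument. Without this limit-closure idea (or a strictification result), your proposal does not close.
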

\begin{proof}
Let
\[ \iota: L_K\Pro(\cS) \lrar \Pro(\cS) \]
be the identity, considered as a right Quillen functor, and let $ \iota_\infty: (L_K\Pro(\cS))_\infty \lrar \Pro(\cS)_\infty$ be the associated functor of $\infty$-categories. We first claim that $\iota_\infty$ is fully faithful. By Corollary~\ref{c:map} it is enough to prove that if $X,Y$ are two fibrant objects of $L_K\Pro(\cS)$ (i.e., fibrant $K$-local objects of $\Pro(\cS)$) then the induced map
\begin{equation}\label{e:induced}
\uline{\Hom}_{L_K\Pro(\cS)}(X,Y) \lrar \uline{\Hom}_{\Pro(\cS)}(X,Y)
\end{equation}
induces a weak equivalence on nerves. But since the classes of trivial fibrations are the same for $\Pro(\cS)$ and $L_K\Pro(\cS)$ we see that this map~\ref{e:induced} is in fact an \textbf{isomorphism}, and hence in particular a weak equivalence after taking nerves. It follows that $\iota_\infty$ is fully-faithful.

By Theorem~\ref{t:main} we have a natural equivalence of $\infty$-categories
\[ \Pro(\cS)_\infty \x{\simeq}{\lrar} \Pro(\cS_\infty). \]
By Lemma~\ref{l:K_nil} the inclusion $(K_{\nil})_{\infty} \hrar \cS_\infty$ is fully faithful and so it follows from~\cite{Lur09} that the induced functor
\[ \Pro((K_{\nil})_{\infty}) \lrar \Pro(\cS_\infty) \]
is fully faithful. Hence in order to finish the proof it suffices to show that the essential image of the composed functor
\[ \iota'_\infty: (L_K\Pro(\cS))_\infty \lrar \Pro(\cS)_\infty \x{\simeq}{\lrar} \Pro(\cS_\infty) \]
coincides with the essential image of $\Pro((K_{\nil})_{\infty})$.

Now the essential image of $\iota_\infty'$ is given by the images of those objects in $\Pro(\cS)$ which are equivalent in $\Pro(\cS)$ to a fibrant object of $L_K\Pro(\cS)$. According to~\cite{Isa05} the latter are exactly those fibrant objects of $\Pro(\cS)$ which belong to the essential image of $\Pro(K_{\nil})$. We hence see that the essential image of $\iota'_\infty$ is contained in the essential image of $\Pro((K_{\nil})_{\infty}) \lrar \Pro(\cS_\infty)$. On the other hand, the essential image of $\iota_\infty'$ clearly contains $(K_{\nil})_{\infty}$. Since $L_K\Pro(\cS)$ is a model category we know by Theorem~\ref{t:model-complete} that$(L_K\Pro(\cS))_\infty$ has all small limits. Since $\iota_\infty$ is induced by a right Quillen functor we get from Remark~\ref{r:adjunction} and~\cite[Proposition 5.2.3.5]{Lur09} that $\iota_\infty$ preserves limits. It hence follows that the essential image of $\iota_\infty'$ is closed under small limits. By Lemma~\ref{l:generate} every object in $\Pro((K_{\nil})_{\infty})$ is a small (and even cofiltered) limit of simple objects and hence the essential image of $\iota_\infty'$ coincides with the essential image of $\Pro((K_{\nil})_{\infty})$.
\end{proof}

\subsection{Example: the $\infty$-category of $\pi$-finite spaces}\label{ss:example1}
In this subsection we show that for a specific choice of $K$, Isaksen's model category $L_K(\Pro(\cS))$ is a model for the $\infty$-category of \textbf{profinite spaces}. Let us begin with the proper definitions:

\begin{define}\label{d:profinite}
Let $X \in \cS_\infty$ be a space. We say that $X$ is \textbf{$\pi$-finite} if it has finitely many connected components and for each $x \in X$ the homotopy groups $\pi_n(X,x)$ are finite and vanish for large enough $n$. We denote by $\cS^{\pi}_\infty \subseteq \cS_\infty$ the full subcategory spanned by $\pi$-finite spaces. A \textbf{profinite space} is a pro-object in the $\infty$-category $\cS^{\pi}_\infty$. We refer to the $\infty$-category $\Pro\left(\cS^{\pi}_\infty\right)$ as the \textbf{$\infty$-category of profinite spaces}.
\end{define}

\begin{rem}\label{r:abuse}
By abuse of notation we shall also say that a simplicial set $X$ is $\pi$-finite if its image in $\cS_\infty$ is $\pi$-finite.
\end{rem}

In order to identify a suitable candidate for $K$ we first need to establish some terminology. Let $\Del_{\leq n} \subseteq \Del$ denote the full subcategory spanned by the objects $[0],\ldots,[n] \in \Del$. We have an adjunction
\[\tau_n:\Fun\left(\Del^{\op},\cD\right) \adj \Fun\left(\Del^{\op}_{\leq n},\cD\right):\cosk_n\]
where $\tau_n$ is given by restriction functor and $\cosk_n$ by right Kan extension. We say that a simplicial object $X \in \Fun\left(\Del^{\op},\cD\right)$ is \textbf{$n$-coskeletal} if the unit map $X \lrar \cosk_n \tau_nX$ is an isomorphism. We say that $X$ is \textbf{coskeletal} if it is $n$-coskeletal for some $n$.

\begin{define}
Let $X \in \cS$ be a simplicial set. We say that $X$ is \textbf{$\tau_n$-finite} if it is levelwise finite and $n$-coskeletal. We say that $X$ is \textbf{$\tau$-finite} if it is $\tau_n$-finite for some $n \geq 0$. We denote by $\cS_{\tau} \subseteq \cS$ the full subcategory spanned by $\tau$-finite simplicial sets. We note that $\cS_{\tau}$ is essentially small.
\end{define}

\begin{lem}\label{l:cofinite}
If $X$ is a minimal Kan complex then $X$ is $\tau$-finite if and only if it is $\pi$-finite (i.e., if the associated object in $\cS_\infty$ is $\pi$-finite, see Remark~\ref{r:abuse}).
\end{lem}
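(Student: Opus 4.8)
The plan is to prove both implications by exploiting the rigidity of minimal Kan complexes, using the two standard facts that (i) in a minimal Kan complex each path component contains a \emph{unique} vertex, so that $X_0\cong\pi_0(X)$, and that (ii) two simplices with the same boundary which are homotopic rel $\partial\Delta^m$ must coincide. A consequence of (ii) together with obstruction theory is that for each $m$ the fibres of the restriction map $X_m\lrar\Hom(\partial\Delta^m,X)$ are either empty or a torsor under a homotopy group $\pi_m(X,x)$, and that a map $\partial\Delta^m\lrar X$ admits a filler precisely when the corresponding obstruction in $\pi_{m-1}(X,x)$ vanishes. In particular, again by (ii), for a minimal Kan complex $\pi_k(X,x)$ may be identified with the set of $k$-simplices all of whose faces are the degenerate simplex at $x$.

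First I would treat the implication $\tau$-finite $\Rightarrow$ $\pi$-finite. If $X$ is levelwise finite then $X_0$ is finite, hence $\pi_0(X)$ is finite and $X$ has finitely many components; moreover each $\pi_k(X,x)$, being a subset of the finite set $X_k$, is finite. It remains to see that the homotopy groups vanish in high degrees. If $X$ is $n$-coskeletal then for every $m>n$ one has $\sk_n\Delta^m=\sk_n\partial\Delta^m\subseteq\partial\Delta^m$, so the restriction $X_m\lrar\Hom(\partial\Delta^m,X)$ is injective; since every element of $\pi_k(X,x)$ for $k>n$ has the same (constant) boundary as the degenerate simplex at $x$, injectivity forces it to equal that degenerate simplex, i.e. $\pi_k(X,x)=0$ for all $k>n$. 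Hence $X$ is $\pi$-finite.

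For the converse, suppose $X$ is $\pi$-finite, say with $\pi_k(X,x)=0$ for $k>N$. Levelwise finiteness I would establish by induction on $m$: the base case $X_0\cong\pi_0(X)$ is finite by hypothesis and fact (i); for the inductive step, $\Hom(\partial\Delta^m,X)$ is a subset of $(X_{m-1})^{m+1}$ and hence finite, while each fibre of $X_m\lrar\Hom(\partial\Delta^m,X)$ has cardinality at most $|\pi_m(X,x)|<\infty$, so $X_m$ is finite. To produce the coskeletality, set $n=N+1$, so that $\pi_k(X,x)=0$ for all $k\ge n$ and all $x$, and I would show that the unit $u\colon X\lrar\cosk_n X$ is an isomorphism, i.e. that $X_m\lrar\Hom(\sk_n\Delta^m,X)$ is bijective for every $m$ (the case $m\le n$ being trivial). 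Injectivity for $m>n$ follows by induction on $m$: two $m$-simplices agreeing on $\sk_n\Delta^m$ have, by the inductive hypothesis applied to their codimension-one faces, the same boundary, and are therefore equal by the torsor description together with $\pi_m(X,x)=0$. Surjectivity for $m>n$ follows by filling a given map $\sk_n\Delta^m\lrar X$ skeleton by skeleton: at each stage one fills the boundary of a $k$-simplex with $n<k\le m$, which is possible since the obstruction lies in $\pi_{k-1}(X,x)=0$, and unique by the injectivity just established. Thus $X$ is $n$-coskeletal, hence $\tau$-finite.

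The main obstacle is the coskeletality half of the converse, namely identifying $n$-coskeletality of a minimal Kan complex with the vanishing of its homotopy groups above degree $n-1$. The delicate points here are the correct bookkeeping of the degrees in the obstruction-theoretic filling argument and the fact that all of the statements about $\pi_k(X,x)$ must be handled simultaneously over the (finitely many) components and all basepoints $x$; the rigidity of minimal complexes, which converts the homotopy-theoretic torsor and obstruction data into statements about actual equalities of simplices, is exactly what makes the induction close.
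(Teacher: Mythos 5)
Your proof is correct, and it follows the same basic strategy as the paper's: both arguments rest on the rigidity of minimal Kan complexes, namely that $X_0$ is in bijection with $\pi_0(X)$ and that the nonempty fibers of $X_m \lrar \Hom(\partial\Delta^m,X)$ are (unnaturally) in bijection with $\pi_m(X,x)$, which immediately gives the equivalence of levelwise finiteness with finiteness of all homotopy groups, as well as the implication from coskeletality to vanishing of high homotopy groups. The one place you genuinely diverge is the remaining implication (vanishing of $\pi_k$ for $k\geq n$ implies $n$-coskeletality): the paper handles it in one stroke by observing that the Kan fibration $p_m: X^{\Delta^m} \lrar X^{\partial\Delta^m}$ has weakly contractible fibers for $m$ large and then invoking minimality to conclude that $p_m$ is an isomorphism, whereas you prove directly, by induction on $m$, that the unit map $X_m \lrar \Hom(\sk_n\Delta^m, X)$ is bijective, with injectivity coming from the torsor description over the trivial group and surjectivity from a skeleton-by-skeleton filling whose obstructions lie in the vanishing groups $\pi_{k-1}(X,x)$. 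Your route is longer but more self-contained: the paper's one-liner implicitly uses both that $X^{\Delta^m}\lrar X^{\partial\Delta^m}$ inherits minimality from $X$ (so that a trivial fibration among them is an isomorphism) and that unique boundary-filling in all degrees $>n$ is equivalent to $n$-coskeletality, and your induction makes exactly these two points explicit at the cost of the degree bookkeeping you flag, which you carry out correctly.
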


\begin{proof}
Since $X$ is minimal it follows that $X_0$ is in bijection with $\pi_0(X)$ and hence the former is finite if and only if the latter is. Furthermore, for each $x \in X_0$ and each $n \geq 1$ the minimality of $X$ implies that for every map $\tau: \partial \Del^n \lrar X$ such that $\tau(\Del^{\{0\}}) = x$, the set of maps $\sig: \Del^n \lrar X$ such that $\sig|_{\partial \Del^n} = \tau$ are (unnaturally) in bijection with $\pi_n(X,x)$. This implies that $X$ is levelwise finite if and only if all the homotopy groups of $X$ are finite. This also implies that if $X$ is coskeletal then its homotopy groups vanish in large enough degree. On the other hand, if the homotopy groups of $X$ vanish for large enough degree then there exists a $k$ such that for every $n > k$ the fibers of the Kan fibration $p_n:X^{\Del^n} \lrar X^{\partial \Del^n}$ are weakly contractible. Since $X$ is minimal we may then deduce that $p_n$ is an isomorphism. Since this is true for every $n > k$ this implies that $X$ is $k$-coskeletal. We hence conclude that $X$ is $\tau$-finite if and only if it is $\pi$-finite.
\end{proof}

\begin{rem}
If $X$ is not assumed to be minimal but only Kan then $X$ being $\tau$-finite implies that $X$ is $\pi$-finite, but not the other way around. If one removes the assumption that $X$ is Kan then there is no implication in any direction.
\end{rem}

\begin{cor}
Let $X$ be a simplicial set. Then $X$ is $\pi$-finite if and only if $X$ is equivalent to a minimal Kan $\tau$-finite simplicial set.
\end{cor}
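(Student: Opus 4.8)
The plan is to reduce the statement to Lemma~\ref{l:cofinite} by invoking the classical existence of minimal Kan models. The key observation I would use throughout is that $\pi$-finiteness, as defined here, is a property of the image of $X$ in $\cS_\infty$ (see Remark~\ref{r:abuse}), and is therefore invariant under weak equivalences of simplicial sets: if $X \simeq Y$ then $X$ is $\pi$-finite if and only if $Y$ is. Both implications will then follow by passing to a suitable minimal Kan model and applying Lemma~\ref{l:cofinite} there.

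First I would dispatch the ``if'' direction. Suppose $X$ is weakly equivalent to a simplicial set $M$ which is minimal, Kan and $\tau$-finite. Since $M$ is a minimal Kan complex which is $\tau$-finite, Lemma~\ref{l:cofinite} shows that $M$ is $\pi$-finite. By the invariance noted above, $X$ is then $\pi$-finite as well.

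For the ``only if'' direction, suppose $X$ is $\pi$-finite. I would first replace $X$ by a weakly equivalent Kan complex, for instance $\Ex^\infty X$, which is again $\pi$-finite. I would then appeal to the classical theorem of simplicial homotopy theory that every Kan complex contains a minimal Kan subcomplex onto which it strongly deformation retracts; applying this to $\Ex^\infty X$ yields a minimal Kan complex $M$ with $M \simeq \Ex^\infty X \simeq X$. In particular $M$ is again $\pi$-finite, and since $M$ is a minimal Kan complex Lemma~\ref{l:cofinite} shows that $M$ is $\tau$-finite. Thus $X$ is weakly equivalent to the minimal Kan $\tau$-finite simplicial set $M$, as required.

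The content of the corollary is really carried by Lemma~\ref{l:cofinite}, so I do not expect a genuine obstacle here; the only nontrivial external input is the existence (and deformation-retract property) of minimal Kan models, which is standard. The one point to verify carefully is the homotopy-invariance of $\pi$-finiteness used in both directions, but this is immediate from the definition via $\cS_\infty$, since weak equivalences of simplicial sets induce equivalences in $\cS_\infty$.
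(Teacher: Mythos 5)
Your proof is correct and follows exactly the paper's route: the paper's own proof is the one-line observation that the corollary follows from Lemma~\ref{l:cofinite} together with the fact that any simplicial set is weakly equivalent to a minimal Kan complex, which is precisely what you spell out (with the harmless extra care of passing through $\Ex^\infty$ and noting homotopy-invariance of $\pi$-finiteness).
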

\begin{proof}
This follows from Lemma~\ref{l:cofinite} and the fact that any simplicial set is equivalent to one that is minimal Kan.
\end{proof}

Let us now recall the ``basic building blocks'' of $\pi$-finite spaces. Given a set $S$, we denote by $\K(S,0)$ the set $S$ considered as a simplicial set. For a group $G$, we denote by $\B G$ the simplicial set
\[ (\B G)_n = G^n \]
The simplicial set $\B G$ can also be identified with the nerve of the groupoid with one object and automorphism set $G$. It is often referred to as the \textbf{classifying space} of $G$. We denote by $\E G$ the simplicial set given by
\[ (\E G)_n = G^{n+1} \]
The simplicial set $\E G$ may also be identified with $\cosk_0(G)$. The simplicial set $\E G$ is weakly contractible and carries a free action of $G$ (induced by the free action of $G$ on itself), such that the quotient may be naturally identified with $\B G$, and the quotient map $\E G \lrar \B G$ is a $G$-covering.

Now recall the \textbf{Dold-Kan correspondence}, which is given by an adjunction
\[ \Gam: \Ch_{\geq 0} \adj \Ab^{\Del^{\op}} : N \]
such that the unit and counit are natural isomorphisms (\cite[Corollary III.2.3]{GJ99}). We note that in this case the functor $\Gam$ is simultaneously also the right adjoint of $N$. Furthermore, the homotopy groups of $\Gam(C)$ can be naturally identified with the \textbf{homology groups} of $C$.

For every abelian group $A$ and every $n \geq 2$ we denote by $\K(A,n)$ the simplicial abelian group $\Gam(A[n])$ where $A[n]$ is the chain complex which has $A$ at degree $n$ and $0$ everywhere else. Then $\K(A,n)$ has a unique vertex $x$ and $\pi_k(\K(A,n),x) = 0$ if $k \neq 0$ and $\pi_n(\K(A,n),x) = A$. Though $\K(A,n)$ is a simplicial abelian group we will only treat it as a simplicial set (without any explicit reference to the forgetful functor). Let $\L(A,n) \lrar \K(A,n)$ be a minimal fibration such that $\L(A,n)$ is weakly contractible. This property characterizes $\L(A,n)$ up to an isomorphism over $\K(A,n)$. There is also an explicit functorial construction of $\L(A,n)$ as $W\K(A,n-1)$, where $W$ is the functor described in~\cite[\S V.4]{GJ99} (and whose construction is originally due to Kan).

Now let $G$ be a group and $A$ a $G$-module. Then $\K(A,n)$ inherits a natural action of $G$ and $\L(A,n)$ can be endowed with a compatible action (alternatively, $\L(A,n)$ inherits a natural action via the functor $W$). We denote by $\K(A,n)_{hG} = (\E G \times \K(A,n))/G$ the (standard model of the) homotopy quotient of $\K(A,n)$ by $G$ and similarly $\L(A,n)_{hG} = (\E G \times \L(A,n))/G$.

\begin{lem}\label{l:minimal-kan}
For every $S,G,A$ and $n \geq 2$ as above the simplicial sets $\K(S,0), \B G$ and $\K(A,n)$ and $\K(A,n)_{hG}$ are minimal Kan complexes, and the maps $\E G \lrar \B G, \L(A,n) \lrar \K(A,n)$ and $\L(A,n)_{hG} \lrar \K(A,n)_{hG}$ are minimal Kan fibrations.
\end{lem}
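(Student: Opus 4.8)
The plan is to reduce all six assertions to two facts about the Goerss--Jardine $W$-construction \cite{GJ99}, after recalling that a Kan fibration $p\colon E\to B$ is \emph{minimal} when any two $n$-simplices of $E$ that have the same boundary, the same image in $B$, and are fibrewise homotopic rel $\partial\Delta^n$ must be equal (a Kan complex being minimal when it is minimal as a fibration over a point). The two facts are: (i) the projection $F\times_\tau B\to B$ of a twisted Cartesian product is a minimal fibration as soon as the fibre $F$ is a minimal Kan complex; and (ii) if $H$ is a simplicial group whose underlying simplicial set is a minimal Kan complex, then $\overline{W}H$ is a minimal Kan complex. Fact (i) is elementary: over a degenerate simplex of $B$ the twisting function is the identity, so the restriction of $F\times_\tau B$ there is a product and a fibrewise homotopy rel $\partial\Delta^n$ is literally a homotopy rel $\partial\Delta^n$ in $F$; minimality of $F$ then forces the two simplices to coincide.

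First I would settle the three fibration statements using (i). The map $\E G\to \B G$ is the universal principal bundle $WG\to\overline{W}G$ of the discrete group $G$, a twisted Cartesian product with fibre $G$, which is discrete and hence minimal. The map $\L(A,n)\to\K(A,n)$ is the universal principal bundle $W\K(A,n-1)\to\overline{W}\K(A,n-1)$, a twisted Cartesian product with fibre $\K(A,n-1)$; the latter is a minimal Kan complex (the standard minimal Eilenberg--MacLane complex, \cite{GJ99}). Finally $\L(A,n)_{hG}\to\K(A,n)_{hG}$ is obtained from the previous bundle by the Borel construction $(\E G\times-)/G$; since the Borel construction of a simplicial fibre bundle is again a fibre bundle with the same fibre, it is a twisted Cartesian product with minimal fibre $\K(A,n-1)$. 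In each case (i) applies.

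For the complexes, $\K(S,0)$ is discrete, hence trivially minimal. For the other three I would exhibit each as the classifying space of a minimal simplicial group and invoke (ii). One has $\B G=\overline{W}G$ with $G$ discrete; $\K(A,n)=\overline{W}\K(A,n-1)$ with $\K(A,n-1)$ minimal (for instance by induction from (ii), starting from $\K(A,1)=\B A$); and, most importantly, $\K(A,n)_{hG}\cong\overline{W}\bigl(\K(A,n-1)\rtimes G\bigr)$. This last isomorphism is the standard identification of the bar construction of a split extension with a twisted product of bar constructions: the Borel construction $(\E G\times\overline{W}\K(A,n-1))/G$ is the twisted product $\overline{W}\K(A,n-1)\times_\tau\B G$, and this agrees with $\overline{W}$ of the semidirect product formed with the $G$-action on $\K(A,n-1)$ induced by the $G$-module structure on $A$. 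The underlying simplicial set of $\K(A,n-1)\rtimes G$ is the product $\K(A,n-1)\times G$, which is minimal because a finite product of minimal Kan complexes is minimal; thus (ii) yields the minimality of $\K(A,n)_{hG}$.

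The hard part is Fact (ii): that $\overline{W}$ sends a minimal simplicial group to a minimal complex. This is genuinely special to $\overline{W}$ and cannot be replaced by any formal ``total space of a minimal fibration over a minimal base is minimal'' principle, since the total space $WG=\E G$ of the very same universal bundle is contractible with many homotopic vertices and is therefore not minimal. I would prove (ii) from the explicit description of $\overline{W}H$, whose $n$-simplices are tuples $(h_{n-1},\dots,h_0)$ with $h_i\in H_i$: the zeroth face recovers $(h_{n-2},\dots,h_0)$, while the remaining faces prescribe all the faces of the top entry $h_{n-1}\in H_{n-1}$. Hence two $n$-simplices with equal boundary differ only in a top entry with fixed boundary, and a homotopy rel $\partial\Delta^n$ in $\overline{W}H$ descends to a homotopy rel boundary of these top entries in $H$, so minimality of $H$ forces them to agree. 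Checking that this descent respects the twisted faces and degeneracies of the bar construction is the one delicate point; alternatively one may cite the corresponding statement in the literature on minimal complexes \cite{GJ99}.
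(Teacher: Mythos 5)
Your route is genuinely different from the paper's. The paper's proof is short and leans on the literature: $\K(S,0)$ is clear; $\B G$ is handled directly (it is Kan as the nerve of a groupoid, $2$-coskeletal and reduced, so minimality reduces to checking edges homotopic rel $\partial\Del^1$); $\E G \lrar \B G$ is handled via $0$-coskeletality of $\E G$ together with discreteness of the fibers; and the two nontrivial families are simply cited, namely \cite[Lemma III.2.21]{GJ99} for $\K(A,n)$ and $\L(A,n)\lrar\K(A,n)$, and \cite[Lemma VI.4.2]{GJ99} for the Borel variants. You instead reduce everything to two uniform mechanisms: (i) a fibre bundle/twisted Cartesian product with minimal Kan fibre is a minimal fibration, and (ii) $\overline{W}$ of a minimal simplicial group is minimal. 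Both facts are true, and your sketch of (ii) is correct in its essentials: writing the combinatorial witness $z=(g_n,\ldots,g_0)$ of a homotopy rel boundary between $x=(h_{n-1},\ldots,h_0)$ and $y=(h'_{n-1},h_{n-2},\ldots,h_0)$, the condition $d_0z=s_{n-1}d_0x$ forces the lower entries of $z$ to be degenerate, and the remaining face conditions then say exactly that $g_n$ satisfies $d_jg_n=s_{n-2}d_jh_{n-1}$ for $j\le n-2$, $d_{n-1}g_n=h_{n-1}$ and $d_ng_n=h'_{n-1}$ (the group multiplications in the twisted faces cancel against the degenerate lower entries), i.e.\ $g_n$ is a homotopy rel boundary from $h_{n-1}$ to $h'_{n-1}$ in $H$; minimality of $H$ finishes it. What your approach buys is a self-contained and unified argument; what it costs is a stack of on-the-nose identifications ($\E G\lrar\B G$ as $WG\lrar\overline{W}G$; $\K(A,n)\cong\overline{W}\K(A,n-1)$ via Dold--Kan; $(\E G\times X)/G$ as a twisted product over $\B G$; and, the least citable one, $\K(A,n)_{hG}\cong\overline{W}\bigl(\K(A,n-1)\rtimes G\bigr)$). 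These are all classical, but the semidirect-product identification in particular requires an explicit computation or a precise reference --- it is exactly the content that the paper's citation of \cite[Lemma VI.4.2]{GJ99} packages away.

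One step in your write-up is wrong as stated, though the fact it supports is fine. Your justification of (i) asserts that ``over a degenerate simplex of $B$ the twisting function is the identity.'' This holds only for $s_0$-degeneracies: the twisting-function identities give $\tau(s_0b)=e$ but $\tau(s_ib)=s_{i-1}\tau(b)$ for $i\ge 1$, and the prism carrying a fibrewise homotopy over $b$ involves all the degeneracies $s_ib$, so the restriction of $F\times_\tau B$ there is not literally a product. The correct elementary argument pulls the bundle back along $b:\Del^n\lrar B$ and uses the fact that every bundle over $\Del^n$ is trivial (a principal $G$-bundle over $\Del^n$ is classified by a map $\Del^n\lrar\overline{W}G$, which lifts through the fibration $WG\lrar\overline{W}G$ since one can lift a vertex and extend along the anodyne inclusion $\{0\}\subseteq\Del^n$); after trivializing, a fibrewise homotopy rel $\partial\Del^n$ is literally a homotopy rel boundary in $F$, and minimality of $F$ applies. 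With this fix, and with references (or computations) supplied for the identifications above, your proof goes through.
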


\begin{proof}
The fact that $\K(S,0)$ is minimal Kan complex is clear, and $\B G$ is Kan because it is a nerve of a groupoid. Now, since $\B G$ is $2$-coskeletal and reduced, in order to check that it is also minimal, it suffices to check that if $\sig,\tau: \Del^1 \lrar \B G$ are two edges which are homotopic relative to $\partial \Del^1$ then they are equal. But this is clear since $\B G$ is the nerve of a discrete groupoid. In order to check that the map $p:\E G \lrar \B G$ is a minimal fibration it is enough to note that $\E G$ is $0$-coskeletal and the fibers of $p$ are discrete. Finally, by~\cite[Lemma III.2.21]{GJ99} the simplicial set $\K(A,n)$ is minimal and the map $\L(A,n) \lrar \K(A,n)$ is a minimal fibration. The analogous claims for $\K(A,n)_{hG}$ and $\L(A,n)_{hG} \lrar \K(A,n)_{hG}$ follow from~\cite[Lemma VI.4.2]{GJ99}.
\end{proof}

\begin{define}
Let $K^{\pi} \subseteq \cS^{\fib}$ be a (small) set of representatives of all isomorphism classes of objects of the form $\K(S,0)$, $\B G$, $\K(A,n)_{hG}$ and $\L(A,n)_{hG}$ for all finite sets $S$, finite groups $G$, and finite $G$-modules $A$.
\end{define}

\begin{rem}
By construction all the objects in $K^{\pi}$ are $\pi$-finite. Combining Lemma~\ref{l:minimal-kan} with Lemma~\ref{l:cofinite} we may also conclude that all the objects in $K^{\pi}$ are $\tau$-finite.
\end{rem}

We now explain in what way the spaces in $K^{\pi}$ are the building blocks for all $\pi$-finite spaces.

\begin{prop}\label{p:pi-finite}
Every object in $K^{\pi}_{\nil}$ is $\pi$-finite. Conversely, every $\pi$-finite space is a retract of an object in $K^{\pi}_{\nil}$.
\end{prop}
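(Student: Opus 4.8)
The plan is to prove the two inclusions separately: that every object of $K^{\pi}_{\nil}$ is $\pi$-finite, and that every $\pi$-finite space is a retract of an object of $K^{\pi}_{\nil}$. For the first inclusion I would observe that the class of $\pi$-finite Kan complexes (inside $\cS^{\fib}$) contains $K^{\pi}$ and $\ast$ and is closed under homotopy pullbacks; since $K^{\pi}_{\nil}$ is by definition the smallest such class, this yields the containment. The generators are checked by hand: $\K(S,0)$ is a finite discrete space, $\B G$ has only $\pi_1=G$ finite, the fibration $\K(A,n)\lrar \K(A,n)_{hG}\lrar \B G$ together with the long exact sequence shows $\K(A,n)_{hG}$ is $\pi$-finite, and $\L(A,n)_{hG}\simeq \B G$ because $\L(A,n)$ is weakly contractible. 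Closure under homotopy pullbacks follows from the Mayer--Vietoris long exact sequence: for a homotopy pullback of $X\lrar Z\llar Y$ with $X,Y,Z$ $\pi$-finite, the set $\pi_0$ of the pullback surjects onto $\pi_0(X)\times_{\pi_0(Z)}\pi_0(Y)$ with fibers that are quotients of $\pi_1(Z)$-actions and is hence finite, while each higher homotopy group lies in an exact sequence between the finite, eventually vanishing homotopy groups of $X,Y,Z$ and is therefore itself finite and eventually vanishing.

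For the converse I would first treat the connected case. If $X$ is connected and $\pi$-finite with $G=\pi_1(X)$, I would climb its Postnikov tower $\B G = X_1 \llar \cdots \llar X_N\simeq X$, which is finite since $X$ is truncated. The base $X_1=\B G$ lies in $K^{\pi}$. Inductively, the stage $X_{n-1}\rightsquigarrow X_n$ is a twisted principal fibration with fiber $\K(\pi_n X,n)$, exhibited by a homotopy pullback whose right-hand leg is $\L(\pi_n X,n+1)_{hG}\lrar \K(\pi_n X,n+1)_{hG}$ and whose bottom map is the $k$-invariant $X_{n-1}\lrar \K(\pi_n X,n+1)_{hG}$ over $\B G$, representing the relevant twisted cohomology class. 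As $\pi_n X$ is a finite $G$-module, both $\K(\pi_n X,n+1)_{hG}$ and $\L(\pi_n X,n+1)_{hG}$ belong to $K^{\pi}$, and $X_{n-1}\in K^{\pi}_{\nil}$ by induction, so $X_n$ is a homotopy pullback of objects of $K^{\pi}_{\nil}$ and again lies in $K^{\pi}_{\nil}$. Since the tower is finite and $K^{\pi}_{\nil}$ is closed under weak equivalences between Kan complexes, $X\in K^{\pi}_{\nil}$.

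For a general $\pi$-finite $X$, write $X=\coprod_{i=1}^{k}X_i$ as the finite disjoint union of its connected components, each of which is connected $\pi$-finite and hence lies in $K^{\pi}_{\nil}$ by the previous step. To assemble these I would use that $K^{\pi}_{\nil}$ is closed under finite products (homotopy pullbacks over $\ast$) together with two elementary retractions. First, $X_i\sqcup \ast$ is a retract of $X_i\times\{1,2\}=X_i\sqcup X_i\in K^{\pi}_{\nil}$. Second, expanding $\prod_{i}(X_i\sqcup\ast)=\coprod_{T\subseteq\{1,\dots,k\}}\prod_{i\in T}X_i$, the coproduct $\coprod_i X_i$ is the union of the singleton summands and is therefore a retract of $\prod_i(X_i\sqcup\ast)$. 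Since each $X_i\sqcup\ast$ is a retract of an object of $K^{\pi}_{\nil}$, so is the product $\prod_i(X_i\sqcup\ast)$, and hence (retracts composing) so is $X$.

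The main obstacle is the connected Postnikov step: one must produce the twisted $k$-invariant as a genuine map into $\K(\pi_n X,n+1)_{hG}$ over $\B G$ and verify that the indicated homotopy pullback reconstructs the correct Postnikov stage together with its $\pi_1$-action, i.e.\ that $\K(-,n+1)_{hG}$ and $\L(-,n+1)_{hG}$ really are the twisted Eilenberg--MacLane and path objects required. This is exactly where the homotopy-quotient building blocks of $K^{\pi}$ are essential. By contrast, the treatment of $\pi_0$ is where the word ``retract'' in the statement is genuinely forced: a disconnected space can never be a retract of a connected one, so disjoint unions must be produced directly, and the product-and-retract maneuver above is what makes this possible inside $K^{\pi}_{\nil}$.
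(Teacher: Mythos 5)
Your proposal is correct and follows essentially the same route as the paper: the first inclusion by minimality of $K^{\pi}_{\nil}$, the connected case by climbing the Postnikov tower via the squares with $\L(\pi_n X,n+1)_{hG} \lrar \K(\pi_n X,n+1)_{hG}$, and the disconnected case by the same product-and-retract trick (the paper does a binary induction, exhibiting $X_0 \sqcup X_1$ as a retract of $X_0 \times X_1 \times \left(\Del^0 \sqcup \Del^0\right)$, rather than your $\prod_i (X_i \sqcup \ast)$ expansion, but these are the same idea). The obstacle you flag at the Postnikov step is resolved in the paper by first replacing $X$ with a minimal Kan model, which makes the twisted $k$-invariant squares available verbatim from \cite[Corollary VI.5.13]{GJ99} and, via Lemma~\ref{l:cofinite}, shows the tower terminates at a finite stage $X \cong X(k)$ (your appeal to truncatedness plus closure of $K^{\pi}_{\nil}$ under weak equivalences between Kan complexes works equally well).
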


\begin{proof}
Since the class of Kan complexes which are $\pi$-finite contains $K^{\pi}$ and $\ast$ and is closed under homotopy pullbacks and retracts it contains $K^{\pi}_{\nil}$ by definition. On the other hand, let $X$ be a $\pi$-finite simplicial set. We wish to show that $X$ is a retract of an object in $K^{\pi}_{\nil}$. We first observe that we may assume without loss of generality that $X$ is connected. Indeed, if $X = X_0 \coprod X_1$ with $X_0,X_1 \neq \emptyset$ then $X$ is a retract of $X_0 \times X_1 \times \left[\Del^0 \coprod \Del^0\right]$, and $\Del^0 \coprod \Del^0 = S(\{0,1\}))$ belongs to $K^{\pi}$. It follows that if $X_0$ and $X_1$ are retracts of objects in $K^{\pi}_{\nil}$ then so is $X$. Hence, it suffices to prove the claim when $X$ is connected.

By possibly replacing $X$ with a minimal model we assume that $X$ is minimal Kan. Let $\{X(n)\}$ be the Moore-Postnikov tower for $X$. Since $X$ is minimal we have $X_0 = \{x_0\}$ and $X(1) = \B G$ with $G = \pi_1(X,x_0)$ (see~\cite[Proposition 3.8]{GJ99}). We may hence conclude that $X(1) \in K^{\pi}_{\nil}$. Now according to~\cite[Corollary 5.13]{GJ99} we have, for each $n \geq 2$ a pullback square of the form
\[ \xymatrix{
X(n) \ar[r]\ar[d] & \L(\pi_n(X,x),n+1)_{hG} \ar[d] \\
X(n-1) \ar[r] & \K(\pi_n(X,x),n+1)_{hG}
}\]
Hence $X(n-1) \in K^{\pi}_{\nil}$ implies that $X(n) \in K^{\pi}_{\nil}$, and by induction $X(k) \in K^{\pi}_{\nil}$ for every $k \geq 0$. Since $X$ is minimal and $\pi$-finite Lemma~\ref{l:cofinite} implies that $X$ is $\tau$-finite. Hence there exists a $k$ such that $X \cong X(k)$ and the desired result follows.
\end{proof}

By Proposition~\ref{p:pi-finite} the fully faithful inclusion $(K^{\pi}_{\nil})_\infty \lrar \cS_\infty$ of Lemma~\ref{l:K_nil} factors through a fully faithful inclusion $\iota_\pi: (K^{\pi}_{\nil})_\infty \lrar \cS^\pi_{\infty}$, and every object in $\cS^\pi_{\infty}$ is a retract of an object in the essential image of $\iota_\pi$. This fact has the following implication:

\begin{cor}\label{c:pro-pi-equiv}
The induced map
\[ \Pro(\iota_\pi): \Pro((K^{\pi}_{\nil})_{\infty}) \lrar \Pro\left(\cS^{\pi}_\infty\right) \]
is an equivalence of $\infty$-categories.
\end{cor}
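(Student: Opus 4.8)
The plan is to follow the same two-step strategy employed in the proof of Theorem~\ref{t:main-isaksen}: I would show separately that $\Pro(\iota_\pi)$ is fully faithful and that it is essentially surjective. Since $\iota_\pi \colon (K^{\pi}_{\nil})_\infty \hrar \cS^{\pi}_\infty$ is fully faithful, its prolongation $\Pro(\iota_\pi)$ preserves small cofiltered limits, and I expect the formal properties of pro-categories established in \S\ref{s:pro}, together with Proposition~\ref{p:pi-finite}, to carry the argument.

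For full faithfulness I would argue exactly as in Theorem~\ref{t:main-isaksen}: because $\iota_\pi$ is fully faithful, the induced functor on pro-categories is fully faithful by~\cite{Lur09}. Concretely, this can be read off the formula for mapping spaces in pro-categories, $\Map_{\Pro((K^{\pi}_{\nil})_\infty)}(X,Y) \simeq \lim_j \colim_i \Map_{(K^{\pi}_{\nil})_\infty}(X_i,Y_j)$ for $X = \{X_i\}$ and $Y = \{Y_j\}$: since $\iota_\pi$ induces equivalences on the mapping spaces of the indexing diagrams, the corresponding limit–colimit expressions agree.

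The substance lies in essential surjectivity. Let $\cE \subseteq \Pro(\cS^{\pi}_\infty)$ denote the essential image of $\Pro(\iota_\pi)$, so that by the previous step $\cE \simeq \Pro((K^{\pi}_{\nil})_\infty)$. I would first observe that $\cE$ is closed under small cofiltered limits in $\Pro(\cS^{\pi}_\infty)$: such limits exist in $\Pro((K^{\pi}_{\nil})_\infty)$ by Lemma~\ref{l:closed-filtered}, and $\Pro(\iota_\pi)$ preserves them. By Lemma~\ref{l:generate} every object of $\Pro(\cS^{\pi}_\infty)$ is a small cofiltered limit of simple objects, so it suffices to show that every simple object, i.e. every object of $\cS^{\pi}_\infty$, lies in $\cE$. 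By Proposition~\ref{p:pi-finite} each such object is a retract of an object in the essential image of $\iota_\pi$, and hence a retract of an object of $\cE$.

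The main obstacle is therefore to verify that $\cE$ is closed under retracts inside the ambient $\infty$-category $\Pro(\cS^{\pi}_\infty)$. Here I would use that $\Pro((K^{\pi}_{\nil})_\infty)$, admitting all small cofiltered (in particular sequential) limits, is idempotent complete by~\cite[Corollary 4.4.5.16]{Lur09}, so that $\cE$ is idempotent complete as well. If $X$ is a retract of $B \in \cE$, then $X$ splits an idempotent $e \colon B \lrar B$; since $\cE$ is full and $B \in \cE$, the endomorphism $e$ lives in $\cE$, where it splits as some $X' \in \cE$. Because the inclusion $\cE \hrar \Pro(\cS^{\pi}_\infty)$ is fully faithful, the retract data exhibiting $X'$ as the splitting of $e$ in $\cE$ also exhibits it in $\Pro(\cS^{\pi}_\infty)$; by uniqueness of idempotent splittings $X \simeq X'$, so $X \in \cE$. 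Combining closure under small cofiltered limits with closure under retracts then yields $\cE = \Pro(\cS^{\pi}_\infty)$, completing the proof.
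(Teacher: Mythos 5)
Your proposal is correct, and it shares the paper's overall decomposition --- full faithfulness via \cite[Proposition 5.3.5.11(1)]{Lur09}, then essential surjectivity reduced, via Proposition~\ref{p:pi-finite}, to handling retracts of objects in the image --- but it resolves the retract step by a genuinely different, more abstract mechanism. The paper splits the idempotent by hand: from the retract diagram $X \x{i}{\lrar} Y \x{r}{\lrar} X$ with $Y \in K^{\pi}_{\nil}$ it forms $f = i \circ r$ and the sequential pro-object $Y^f = (\cdots \x{f}{\lrar} Y \x{f}{\lrar} Y)$, and checks directly that $X \simeq Y^f$ in $\Pro(\cS^{\pi}_\infty)$, so that every \emph{simple} object lies in the essential image. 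You instead quote idempotent completeness of $\Pro((K^{\pi}_{\nil})_{\infty})$ (the dual of \cite[Corollary 4.4.5.16]{Lur09}, since it admits sequential limits) together with absoluteness and uniqueness of idempotent splittings. These are two faces of the same argument --- the paper's $Y^f$ is precisely the sequential limit that splits the idempotent in Lurie's proof of that corollary --- so neither is stronger, but the trade-offs differ: your route leans on the \S 4.4.5 formalism, in particular on the (true, but worth noting) fact that a retract diagram, unlike a bare homotopy idempotent, automatically determines a coherent idempotent to which that machinery applies, and that a fully faithful functor carries splittings to splittings; the paper's explicit construction avoids invoking this theory, at the price of glossing over the coherence data needed to produce the cone $X \lrar Y^f$. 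One further point in your favor: you make explicit the final reduction --- closure of the essential image under small cofiltered limits (Lemma~\ref{l:closed-filtered} plus the fact that $\Pro(\iota_\pi)$, being a prolongation, preserves such limits) combined with Lemma~\ref{l:generate} --- a step the paper leaves implicit when it passes from simple objects to essential surjectivity.
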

\begin{proof}
By~\cite[Proposition 5.3.5.11(1)]{Lur09} the map $\Pro(\iota_\pi)$ is fully faithful. Now let $X$ be a $\pi$-finite space. By Proposition~\ref{p:pi-finite} there is a retract diagram $X \x{i}{\lrar} Y \x{r}{\lrar} X$ with $Y \in K^{\pi}_{\nil}$. Let $f = ir: Y \lrar Y$ and consider the pro-object $Y^f$ given by
\[ \hdots \x{f}{\lrar} Y \x{f}{\lrar} Y \x{f}{\lrar} \hdots \x{f}{\lrar} Y .\]
The maps $i$ and $r$ can then be used to produce an equivalence
\[ X \simeq Y^f \]
in $\Pro(\cS^{\pi}_\infty)$. This shows that the $\Pro(\iota_\pi)$ is essentially surjective and hence an equivalence.
\end{proof}

Applying Theorem~\ref{t:main-isaksen} we may now conclude that Isaksen's model category $L_{K^{\pi}}\Pro(\cS)_\infty$ is indeed a model for the $\infty$-category of profinite spaces. More precisely, we have the following

\begin{cor}\label{c:isaksen-finite}
The underlying $\infty$-category of $L_{K^{\pi}}\Pro(\cS)$ is naturally equivalent to the $\infty$-category $\Pro(\cS^{\pi}_{\infty})$ of profinite spaces.
\end{cor}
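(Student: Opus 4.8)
The plan is to obtain the result purely formally, by chaining together the two substantive results that immediately precede it. Essentially all of the genuine work has already been carried out, so the proof amounts to composing two equivalences of $\infty$-categories.

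First I would specialize Theorem~\ref{t:main-isaksen} to the particular set $K = K^{\pi}$. That theorem identifies the underlying $\infty$-category of any localization $L_K\Pro(\cS)$ with $\Pro((K_{\nil})_\infty)$, where $K_{\nil}$ carries its induced structure of a category of fibrant objects. Taking $K = K^{\pi}$ yields a natural equivalence
\[ (L_{K^{\pi}}\Pro(\cS))_\infty \simeq \Pro((K^{\pi}_{\nil})_\infty). \]
Next I would apply Corollary~\ref{c:pro-pi-equiv}, which asserts that the prolongation $\Pro(\iota_\pi)$ of the fully faithful inclusion $\iota_\pi\colon (K^{\pi}_{\nil})_\infty \hookrightarrow \cS^{\pi}_\infty$ is an equivalence
\[ \Pro((K^{\pi}_{\nil})_\infty) \simeq \Pro(\cS^{\pi}_\infty). \]
Composing these two equivalences produces the desired natural equivalence $(L_{K^{\pi}}\Pro(\cS))_\infty \simeq \Pro(\cS^{\pi}_\infty)$.

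The point I would emphasize is that there is no real obstacle remaining at the level of this corollary: all the genuine content lives upstream. The delicate input is Proposition~\ref{p:pi-finite}, which shows that $K^{\pi}_{\nil}$ consists of exactly the $\pi$-finite spaces up to retracts — this is what endows $\iota_\pi$ with a suitably dense essential image and hence makes $\Pro(\iota_\pi)$ essentially surjective (via the telescope pro-object $Y^f$ appearing in Corollary~\ref{c:pro-pi-equiv}), while fully faithfulness of $\Pro(\iota_\pi)$ is automatic from~\cite[Proposition 5.3.5.11(1)]{Lur09}. The only compatibility one should check is that the fully faithful comparison functor of Theorem~\ref{t:main-isaksen} and the map $\iota_\pi$ are induced by the same underlying inclusion of categories of fibrant objects $K^{\pi}_{\nil} \hookrightarrow \cS^{\fib}$ (Lemma~\ref{l:K_nil}); this is immediate from the constructions. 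Once these facts are in hand the corollary follows with no further computation.
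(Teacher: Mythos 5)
Your proof is correct and is essentially identical to the paper's own argument: the corollary is deduced precisely by composing the equivalence $(L_{K^{\pi}}\Pro(\cS))_\infty \simeq \Pro((K^{\pi}_{\nil})_\infty)$ of Theorem~\ref{t:main-isaksen} with the equivalence $\Pro(\iota_\pi)\colon \Pro((K^{\pi}_{\nil})_\infty) \x{\simeq}{\lrar} \Pro(\cS^{\pi}_\infty)$ of Corollary~\ref{c:pro-pi-equiv}. Your assessment of where the substantive content lives (Proposition~\ref{p:pi-finite} feeding the retract/telescope argument in Corollary~\ref{c:pro-pi-equiv}, with full faithfulness coming from Lemma~\ref{l:K_nil} and \cite[Proposition 5.3.5.11(1)]{Lur09}) also matches the paper's structure.
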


\subsection{Example: the $\infty$-category of pro-$p$ spaces}\label{ss:example2}

In this subsection we will show that for a specific choice of $K$, Isaksen's model category $L_K(\Pro(\cS))$ is a model for a suitable $\infty$-category of \textbf{pro-$p$ spaces}. We begin with the proper definitions.

\begin{define}[{\cite[Definition 2.4.1, Definition 3.1.12]{Lur11b}}]\label{d:pro-p}
Let $X \in \cS_\infty$ be a space and $p$ a prime number. We say that $X$ is \textbf{$p$-finite} if it has finitely many connected components and for each $x \in X$ the homotopy groups $\pi_n(X,x)$ are finite $p$-groups which vanish for large enough $n$. We denote by $\cS^{p}_\infty \subseteq \cS_\infty$ the full subcategory spanned by $p$-finite spaces. A \textbf{pro-$p$ space} is a pro-object in the $\infty$-category $\cS^{p}_\infty$. We refer to the $\infty$-category $\Pro\left(\cS^{p}_\infty\right)$ as the \textbf{$\infty$-category of pro-$p$ spaces}.
\end{define}

\begin{define}
Let $K^p$ be a (small) set of isomorphism representatives for all $\K(S,0), \B\ZZ/p$ and $\K(\ZZ/p,n)$ for all finite sets $S$ and all $n \geq 2$.
\end{define}

As in Lemma~\ref{l:K_nil} we obtain a fully faithful inclusion $(K^{p}_{\nil})_\infty \lrar S_\infty$. Out next goal is to identify its essential image. We first recall a few facts about nilpotent spaces.

Let $G$ be a group. Recall that the \textbf{upper central series} of $G$ is a sequence of subgroups
\[\{e\}=Z_0(G)\subset Z_1(G)\subset Z_2(G)\subset\ldots\subset G\]
defined inductively by $Z_0(G)=\{e\}$ and $Z_i(G)=\{g\in g |[g,G]\subset Z_{i-1}(G)\}$. In particular, $Z_1(G)$ is the center of $G$. Alternatively, one can define $Z_i(G)$ as the inverse image along the map $G\lrar G/Z_{i-1}(G)$ of the center of $G/Z_{i-1}(G)$.

\begin{define}\
\begin{enumerate}
\item
A group $G$ is called \textbf{nilpotent} if $Z_n(G) = G$ for some $n$.
\item
A $G$-module $M$ is called \textbf{nilpotent} if $M$ has a finite filtration by $G$-submodules
\[0=M_n\subset M_{n-1}\subset \ldots\subset M_1\subset M_0=M\]
such that the induced action of $G$ on each $M_i/M_{i+1}$ is trivial.
\item
A space $X$ is called \textbf{nilpotent} if for each $x \in X$, the group $\pi_1(X,x)$ is nilpotent and for each $n \geq 2$ the abelian group $\pi_n(X,x)$ is a nilpotent $\pi_1(X,x)$-module.
\end{enumerate}
\end{define}

We recall the following well-known group theoretical results:

\begin{prop}
Let $G$ be a finite $p$-group. Then:
\begin{enumerate}
\item
$G$ is nilpotent.
\item
Let $M$ be a finite abelian $p$-group equipped with an action of $G$. Then $M$ is nilpotent $G$-module.
\end{enumerate}
\end{prop}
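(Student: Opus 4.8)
The plan is to reduce both statements to a single elementary fact about $p$-groups: if a finite $p$-group $G$ acts on a finite set $S$, then $|S^G| \equiv |S| \pmod p$. This follows from the orbit decomposition, since every orbit has cardinality dividing $|G|$ and is therefore either a singleton or divisible by $p$. In particular, if $S$ has cardinality a power of $p$ and admits at least one fixed point, then it admits at least $p$ of them. First I would isolate this observation, as it is the only genuinely substantive ingredient; everything else is bookkeeping.

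For part (1), I would apply this to the conjugation action of $G$ on itself, whose fixed-point set is exactly the center $Z_1(G) = Z(G)$. Since the identity is fixed and $|G|$ is a power of $p$, we get $|Z_1(G)| \equiv |G| \equiv 0 \pmod p$, so $|Z_1(G)| \geq p$; thus a nontrivial finite $p$-group has nontrivial center. Next I would iterate using the characterization recalled in the definition above, namely that $Z_{i+1}(G)$ is the preimage along $G \lrar G/Z_i(G)$ of the center of $G/Z_i(G)$. Whenever $Z_i(G) \neq G$, the quotient $G/Z_i(G)$ is again a nontrivial finite $p$-group and hence has nontrivial center, so $Z_{i+1}(G) \supsetneq Z_i(G)$. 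Since $G$ is finite, the ascending chain $Z_0(G) \subsetneq Z_1(G) \subsetneq \cdots$ must stabilize, and it can only stabilize at $G$; hence $Z_n(G) = G$ for some $n$, so $G$ is nilpotent.

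For part (2), I would run the analogous argument on $M$. Applying the fixed-point fact to the $G$-action on the underlying set of $M$ shows that the submodule of invariants $M^G$ is nonzero whenever $M$ is nonzero, since $M^G$ contains $0$ and has cardinality divisible by $p$. I would then build the required filtration inductively: set $N_0 = 0$ and let $N_j \subseteq M$ be the $G$-submodule determined by $N_j/N_{j-1} = (M/N_{j-1})^G$. By construction $G$ acts trivially on each quotient $N_j/N_{j-1}$, and as long as $N_{j-1} \neq M$ the module $M/N_{j-1}$ is a nonzero finite abelian $p$-group, so its invariants are nonzero and $N_j \supsetneq N_{j-1}$. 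Finiteness of $M$ forces $N_r = M$ for some $r$. Reindexing by $M_i := N_{r-i}$ then yields a decreasing filtration $0 = M_r \subset M_{r-1} \subset \cdots \subset M_0 = M$ with trivial $G$-action on each $M_i/M_{i+1}$, matching the convention of the definition and exhibiting $M$ as a nilpotent $G$-module.

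The hard part here is really just the fixed-point lemma, and even that is immediate from orbit counting; I do not anticipate any serious obstacle. The only points requiring minor care are the verification that the relevant hypothesis is inherited by the successive quotients ($G/Z_i(G)$ remains a $p$-group, $M/N_{j-1}$ remains a finite abelian $p$-group) and the reindexing needed to present the filtration in the decreasing form fixed by the definition.
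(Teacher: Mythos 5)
Your proof is correct, and it is essentially the paper's argument with the references unpacked: the paper simply cites Serre's \emph{Corps locaux} (IX \S 1, Corollary of Th\'eor\`eme I for (1), Lemme II plus ``a straightforward inductive argument'' for (2)), and your orbit-counting fixed-point lemma is exactly the content of Serre's Lemme II, while your center/upper-central-series iteration and your inductive filtration by invariants are precisely the standard proofs behind the corollary and the alluded-to induction. No gaps; the reindexing to match the paper's decreasing filtration convention is handled correctly.
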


\begin{proof}
The first claim is~\cite[IX \S 1, Corollary of Theorem I]{Ser62}. The second claim follows from~\cite[IX \S 1, Lemme II]{Ser62} via a straightforward inductive argument.
\end{proof}

\begin{lem}\label{l:p-abelian}
Let $A$ be a finite abelian $p$-group. Then $\K(A,n)$ belongs to $K^p_{\nil}$ for $n\geq 1$.
\end{lem}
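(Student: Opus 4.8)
The plan is to first reduce to cyclic $p$-groups and then to induct on the exponent, exploiting the closure of $K^p_{\nil}$ under homotopy pullbacks.

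\textbf{Reduction to the cyclic case.} I would observe first that $K^p_{\nil}$ is closed under finite products: for Kan complexes $X,Y$ the product $X \times Y$ is the homotopy pullback $X \times^h_{\ast} Y$, and $\ast \in K^p_{\nil}$. Since $\Gam$ and the degree shift are additive, for abelian groups $A,B$ we have $\K(A \oplus B,n) \cong \K(A,n) \times \K(B,n)$. As every finite abelian $p$-group is a finite direct sum of cyclic groups $\ZZ/p^{k_i}$, it then suffices to prove that $\K(\ZZ/p^k,n) \in K^p_{\nil}$ for every $k \geq 1$ and every $n \geq 1$.

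\textbf{A homotopy pullback from a short exact sequence.} Given a short exact sequence $0 \to A' \to A \to A'' \to 0$ of abelian groups, I would pass through the Dold--Kan correspondence: the induced short exact sequence of chain complexes $0 \to A'[n] \to A[n] \to A''[n] \to 0$ fits into a distinguished triangle $A'[n] \to A[n] \to A''[n] \x{\delta}{\lrar} A'[n+1]$ in the derived category, whose connecting map $\delta$ represents the extension class in $\Ext^1(A'',A')$. Applying $\Gam$ (and identifying $\Gam(A[1])$ with $\B A$ up to weak equivalence in the case $n=1$) exhibits $\K(A,n)$ as the homotopy fiber of a map $\K(A'',n) \lrar \K(A',n+1)$, that is,
\[ \K(A,n) \simeq \K(A'',n) \times^h_{\K(A',n+1)} \ast . \]
The point requiring care — and the main obstacle — is to verify that this is a homotopy pullback of \emph{Kan complexes}, valid uniformly for all $n \geq 1$ (including the delicate case $n=1$, where $\K(A'',1)=\B A''$). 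This follows because the forgetful functor from simplicial abelian groups to $\cS$ preserves fibrations and sends every object to a fibrant one, hence preserves homotopy pullbacks; since $A,A',A''$ are abelian, the usual nonabelian $\pi_1$-subtleties do not intervene.

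\textbf{Induction on the exponent.} For the base case $k=1$, the complexes $\K(\ZZ/p,n)$ for $n \geq 2$ together with $\B\ZZ/p = \K(\ZZ/p,1)$ all lie in $K^p$ by the definition of $K^p$, hence in $K^p_{\nil}$. For the inductive step, I would assume $\K(\ZZ/p^{k-1},n) \in K^p_{\nil}$ for all $n \geq 1$ and apply the previous paragraph to the short exact sequence $0 \to \ZZ/p \to \ZZ/p^k \to \ZZ/p^{k-1} \to 0$, obtaining
\[ \K(\ZZ/p^k,n) \simeq \K(\ZZ/p^{k-1},n) \times^h_{\K(\ZZ/p,n+1)} \ast . \]
Here $\K(\ZZ/p^{k-1},n) \in K^p_{\nil}$ by the inductive hypothesis, $\K(\ZZ/p,n+1) \in K^p_{\nil}$ by the base case (since $n+1 \geq 2$), and $\ast \in K^p_{\nil}$; as $K^p_{\nil}$ is closed under homotopy pullbacks and under weak equivalences between Kan complexes, we conclude $\K(\ZZ/p^k,n) \in K^p_{\nil}$ for all $n \geq 1$. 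This completes the induction and, combined with the reduction above, proves the lemma.
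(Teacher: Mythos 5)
Your proof is correct, and at its core it turns on the same mechanism as the paper's: exhibiting $\K(B,n)$, for an extension $0 \to A \to B \to C \to 0$, as a homotopy pullback of $\K(C,n) \lrar \K(A,n+1) \llar \ast$ with $\K(A,n+1)$ already known to lie in $K^p_{\nil}$. The differences are organizational. First, the paper dispenses with your reduction to cyclic groups: it simply shows the class $\cG$ of abelian groups $A$ with $\K(A,n)\in K^p_{\nil}$ for all $n\geq 1$ contains $\ZZ/p$ and is closed under extensions, which covers all finite abelian $p$-groups at once (direct sums being particular extensions), so your product-closure step, while correct, is unnecessary. Second, and more substantively, the step you yourself flag as the main obstacle — producing an honest map of Kan complexes realizing the connecting morphism, which exists a priori only in the derived category — is handled in the paper without any derived-category detour: the surjection $\K(B,n)\lrar\K(C,n)$ of simplicial abelian groups is a principal $\K(A,n)$-fibration, hence classified by a genuine simplicial map $\K(C,n)\lrar\K(A,n+1)$, and the strict pullback square with the contractible object $\L(A,n+1)$ mapping by a fibration to $\K(A,n+1)$ is a homotopy pullback on the nose (this is the classification of principal fibrations from Goerss--Jardine, Chapter V). Your patch — working up to weak equivalence and invoking closure of $K^p_{\nil}$ under weak equivalences between Kan complexes, plus the fact that the forgetful functor from simplicial abelian groups preserves fibrations and fibrancy — does close the gap, but note that as written you never actually strictify $\delta$: the clean way to do so is precisely the $W/\overline{W}$ classification the paper cites, so your argument is best viewed as the paper's argument with an extra layer of derived-category language on top.
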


\begin{proof}
Let $\cG$ be the class of groups $A$ such $K(A,n) \in K^p_{\nil}$ for every $n \geq 1$. By construction $\cG$ contains the group $\ZZ/p$. Now let $0\lrar A\lrar B\lrar C\lrar 0$ be a short exact sequence of abelian groups such that $A,C \in \cG$ and let $n \geq 1$ be an integer. We then have a homotopy pullback square
\[\xymatrix{
\K(B,n)\ar[d]\ar[r]&\L(A,n+1)\ar[d]\\
\K(C,n)\ar^{p}[r]&\K(A,n+1)
}
\]
where the $p:\K(C,n)\lrar \K(A,n+1)$ is the map classifying the principal $\K(A,n)$-fibration $\K(B,n)\lrar\K(C,n)$. Since $\L(A,n+1)$ is contractible and $\K(C,n)$ and $\K(A,n+1)$ are in $K^p_{\nil}$ we conclude that $\K(B,n) \in K^p_{\nil}$ as well. Since this is true for every $n \geq 1$ it follows that $B \in \cG$. It follows that the class $\cG$ is closed under extensions and hence contains all finite abelian $p$-groups.
\end{proof}

We can now prove the $p$-finite analogue of proposition~\ref{p:pi-finite}.

\begin{prop}\label{p:p-finite}
Every object of $K^p_{\nil}$ is $p$-finite. Conversely, every $p$-finite space is a retract of an object of $K^p_{\nil}$.
\end{prop}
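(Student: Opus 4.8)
The plan is to mirror the proof of Proposition~\ref{p:pi-finite}, upgrading each step from the $\pi$-finite to the $p$-finite setting; the one genuinely new ingredient is that the twisted Eilenberg--MacLane pieces appearing in the Postnikov tower, which were placed directly into the generating set $K^{\pi}$ in the $\pi$-finite case, must now be \emph{generated} inside $K^p_{\nil}$, and this is exactly where the nilpotence of finite $p$-groups gets used.

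For the first assertion I would observe that the class of $p$-finite Kan complexes contains $\K(S,0)$ for $S$ finite, $\B\ZZ/p$ and $\K(\ZZ/p,n)$, hence contains $K^p$ and the point $\ast$. It therefore suffices to check that $p$-finite spaces are closed under homotopy pullbacks: given a homotopy pullback square with $p$-finite corners, the Mayer--Vietoris sequence of homotopy groups, together with the fact that finite $p$-groups are closed under subgroups, quotients and extensions, and the fact that $\pi_0$ of the pullback is controlled by the (finite) homotopy sets of the corners, shows that the pullback is again $p$-finite. Since $K^p_{\nil}$ is by definition the smallest class of Kan complexes containing $K^p$ and $\ast$ and closed under homotopy pullbacks, we conclude $K^p_{\nil}\subseteq\{p\text{-finite}\}$.

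For the converse I would first reduce to the connected case exactly as in Proposition~\ref{p:pi-finite}: if $X=X_0\coprod X_1$ then $X$ is a retract of $X_0\times X_1\times(\Del^0\coprod\Del^0)$, and $\Del^0\coprod\Del^0=\K(\{0,1\},0)\in K^p$, while products of objects of $K^p_{\nil}$ lie in $K^p_{\nil}$ (being homotopy pullbacks over $\ast$); so by induction on the finite number of components it is enough to treat connected $X$. For connected $X$ I would pass to a minimal Kan model; since $X$ is $p$-finite it is $\pi$-finite, hence $\tau$-finite by Lemma~\ref{l:cofinite}, so its Moore--Postnikov tower terminates, $X\cong X(k)$ for some $k$, with $X(1)=\B G$ and $G=\pi_1(X,x_0)$. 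Here $G$ is a finite $p$-group and each $\pi_n(X,x)$ is a finite abelian $p$-group; by the group-theoretic results recalled above, $G$ is nilpotent and each $\pi_n$ is a nilpotent $G$-module, so that $X$ is a nilpotent space. I would then run the induction up the tower using the pullback squares of \cite[Corollary 5.13]{GJ99},
\[
\xymatrix{
X(n) \ar[r]\ar[d] & \L(\pi_n,n+1)_{hG}\ar[d]\\
X(n-1)\ar[r] & \K(\pi_n,n+1)_{hG},
}
\]
noting that $\L(\pi_n,n+1)_{hG}\simeq\B G$; by closure under homotopy pullbacks it then remains to show that $\B G$ and the twisted pieces $\K(\pi_n,n+1)_{hG}$ all lie in $K^p_{\nil}$.

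These two facts are precisely the main obstacle, and they are where the nilpotence is spent. For $\B G$ I would induct on $|G|$: a nontrivial finite $p$-group has nontrivial centre, hence a central subgroup $\ZZ/p$, giving a central extension $1\to\ZZ/p\to G\to Q\to1$ that exhibits $\B G$ as the homotopy pullback of $\B Q\to\K(\ZZ/p,2)\leftarrow\ast$; since $\K(\ZZ/p,2)\in K^p$, $\ast\in K^p_{\nil}$, and $\B Q\in K^p_{\nil}$ by induction, we obtain $\B G\in K^p_{\nil}$ (base case $\B\ZZ/p\in K^p$). For $\K(A,m)_{hG}$ with $A$ a finite abelian $p$-group and $m\geq1$, I would induct on the length of a $G$-module filtration of $A$ with trivial action on the subquotients, which exists because $A$ is a nilpotent $G$-module: for trivial action $\K(A,m)_{hG}=\B G\times\K(A,m)\in K^p_{\nil}$ using Lemma~\ref{l:p-abelian}, while a short exact sequence $0\to B\to A\to C\to0$ of $G$-modules with $C$ of trivial action realizes $\K(A,m)_{hG}$ as a homotopy pullback over $\K(B,m+1)_{hG}$ whose remaining corners $\K(C,m)_{hG}$ and $\L(B,m+1)_{hG}\simeq\B G$ already lie in $K^p_{\nil}$. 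Assembling these, every connected $p$-finite space lies in $K^p_{\nil}$, and the retract reduction of the previous paragraph handles the general case. The hard part is exactly the generation of these twisted Eilenberg--MacLane objects: it is automatic in the $\pi$-finite setting but here rests on Lemma~\ref{l:p-abelian} and on the central-series and nilpotent-module inductions made possible by the nilpotence of finite $p$-groups and of their finite modules.
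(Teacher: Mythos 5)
Your proposal is correct, but for the converse direction it takes a genuinely different route from the paper. The paper, having recorded that finite $p$-groups are nilpotent and that finite abelian $p$-groups with $p$-group action are nilpotent modules, concludes that a connected $p$-finite $X$ is a nilpotent space and invokes~\cite[Proposition V.6.1]{GJ99}: the Postnikov tower of a nilpotent space refines to a finite tower of \emph{principal} fibrations, i.e.\ homotopy pullbacks along \emph{untwisted} maps $\K(A_i,n_i)\llar \L(A_i,n_i)$ with each $A_i$ an abelian subquotient of a homotopy group of $X$, hence a finite abelian $p$-group; Lemma~\ref{l:p-abelian} then finishes by induction, and no twisted Eilenberg--MacLane object ever appears. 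You instead reuse the twisted Moore--Postnikov squares of~\cite[Corollary 5.13]{GJ99} from the proof of Proposition~\ref{p:pi-finite} and generate the twisted corners inside $K^p_{\nil}$ by hand: $\B G$ via the upper central series (central extensions $1\to\ZZ/p\to G\to Q\to 1$ exhibiting $\B G$ as a homotopy pullback over $\K(\ZZ/p,2)$), and $\K(A,m)_{hG}$ via the filtration of $A$ as a nilpotent $G$-module, reducing to the trivial-action case $\B G\times \K(A,m)$. In effect you reprove internally the special case of the Postnikov refinement that the paper outsources to~\cite{GJ99}; this is more self-contained and makes explicit exactly where nilpotence is spent, at the cost of length and of one step you use silently: that applying the Borel construction $(-)_{hG}$ to the $G$-equivariant principal-fibration square yields a homotopy pullback of the quotients. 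That step is valid --- the classifying maps can be taken $G$-equivariant by functoriality of the $W$-construction, all four Borel constructions fiber compatibly over $\B G$, and the comparison map $\K(A,m)_{hG}\lrar \K(C,m)_{hG}\times^{h}_{\K(B,m+1)_{hG}}\L(B,m+1)_{hG}$ is a map over the connected base $\B G$ inducing an equivalence on fibers --- but it deserves a sentence, since commuting a homotopy colimit past a homotopy pullback is false in general. With that point made explicit, your argument is a complete and correct alternative proof.
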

\begin{proof}
Since the class of Kan complexes which are $p$-finite contains $K^{p}$ and $\ast$ and is closed under homotopy pullbacks and retracts it contains $K^{p}_{\nil}$ by definition. Now let $X$ be a $p$-finite space. As in the proof of~\ref{p:pi-finite} we may assume without loss of generality that $X$ is a connected minimal Kan complex. By Lemma~\ref{l:cofinite} $X$ is $\tau$-finite. According to~\cite[Proposition V.6.1]{GJ99}, we can refine the Postnikov tower of $X$ into a finite sequence of maps
\[X=X_k\lrar X_{k-1}\lrar\ldots\lrar X_1\lrar X_0=\ast\]
in which the map $X_i\lrar X_{i-1}$ fits in a homotopy pullback square
\[\xymatrix{
X_i\ar[d]\ar[r]& \L(A_i,n_i)\ar[d]\\
X_{i-1}\ar[r]& \K(A_i,n_i)
}
\]
where $n_i\geq 1$ is an integer and $A_i$ is an abelian subquotient of one of the homotopy group of $X$. Since $X$ is $p$-finite every $A_i$ is a finite abelian $p$-group. Applying Lemma~\ref{l:p-abelian} inductively we may conclude that each $X_i$ is in $K^p_{\nil}$, and hence $X \in K^p_{\nil}$ as desired.
\end{proof}

By Proposition~\ref{p:p-finite} the fully faithful inclusion $(K^p_{\nil})_\infty \lrar \cS_\infty$ factors through a fully faithful inclusion $\iota_p: K^{p}_{\nil} \lrar \cS^p_{\infty}$, and every object in $\cS^p_{\infty}$ is a retract of an object in the essential image of $\iota_p$. As for the case of profinite spaces we hence obtain an equivalence after passing to pro-categories:

\begin{cor}
The induced map
\[ \Pro(\iota_p): \Pro((K^{p}_{\nil})_{\infty}) \lrar \Pro\left(\cS^{p}_\infty\right) \]
is an equivalence of $\infty$-categories.
\end{cor}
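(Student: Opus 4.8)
The plan is to adapt, essentially verbatim, the proof of the $\pi$-finite analogue Corollary~\ref{c:pro-pi-equiv}, substituting Proposition~\ref{p:p-finite} for Proposition~\ref{p:pi-finite} at the single point where the building-block decomposition enters. First I would establish that $\Pro(\iota_p)$ is fully faithful. By the discussion preceding the corollary, $\iota_p: (K^p_{\nil})_\infty \lrar \cS^p_\infty$ is fully faithful, being the factorization through $\cS^p_\infty$ of the fully faithful inclusion produced as in Lemma~\ref{l:K_nil}. Since both $(K^p_{\nil})_\infty$ and $\cS^p_\infty$ are essentially small, \cite[Proposition 5.3.5.11(1)]{Lur09} applies and shows that the prolongation $\Pro(\iota_p)$ is again fully faithful.

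It then remains to prove essential surjectivity, and here I would reduce to the simple objects. Being a prolongation, $\Pro(\iota_p)$ preserves small cofiltered limits, and since it is fully faithful and $\Pro((K^p_{\nil})_\infty)$ has all such limits (Lemma~\ref{l:closed-filtered}), its essential image is closed under small cofiltered limits computed in $\Pro(\cS^p_\infty)$. By Lemma~\ref{l:generate} every object of $\Pro(\cS^p_\infty)$ is a small cofiltered limit of simple objects, so it suffices to show that each $p$-finite space $X$, viewed as a simple object, lies in the essential image.

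For this I would run the telescope argument. By Proposition~\ref{p:p-finite} there is a retract diagram $X \x{i}{\lrar} Y \x{r}{\lrar} X$ with $r \circ i \simeq \id_X$ and $Y \in K^p_{\nil}$. Setting $f = i \circ r: Y \lrar Y$ one has $f \circ f \simeq i \circ (r \circ i) \circ r \simeq i \circ r = f$, so $f$ is idempotent. I would then form the pro-object $Y^f$ given by the tower $\cdots \x{f}{\lrar} Y \x{f}{\lrar} Y$, which is a standard model for the splitting of $f$ in the idempotent-complete $\infty$-category $\Pro(\cS^p_\infty)$; since $i$ and $r$ exhibit $X$ as the image of $f$, the induced maps yield an equivalence $X \simeq Y^f$. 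As $Y \in K^p_{\nil}$, the tower $Y^f$ is the image under $\Pro(\iota_p)$ of the corresponding tower in $\Pro((K^p_{\nil})_\infty)$, so $X$ is in the essential image and $\Pro(\iota_p)$ is an equivalence.

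The main obstacle is not a genuinely new difficulty: the only input specific to the $p$-finite setting, namely the decomposition of $p$-finite spaces through the generators $K^p$, is already packaged in Proposition~\ref{p:p-finite}. The one step requiring real care is the telescope construction, that is, checking that $Y^f$ indeed splits the idempotent $f$ and is therefore equivalent to the retract $X$; but this is a formal manipulation in the idempotent-complete pro-category, identical to the $\pi$-finite case.
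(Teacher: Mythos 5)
Your proposal is correct and follows essentially the same route as the paper, whose own proof of this corollary simply declares it identical to that of Corollary~\ref{c:pro-pi-equiv}: fully faithfulness of $\Pro(\iota_p)$ via \cite[Proposition 5.3.5.11(1)]{Lur09}, then essential surjectivity on simple objects via the retract diagram from Proposition~\ref{p:p-finite} and the telescope $Y^f$ (your added reduction through Lemma~\ref{l:generate} and Lemma~\ref{l:closed-filtered} just makes explicit what the paper leaves implicit). One small point of care: the equivalence $X \simeq Y^f$ should be justified, as in the paper, directly from the retract data $(i,r)$ rather than from abstract splitting of the homotopy idempotent $f$, since homotopy idempotents in an $\infty$-category need not split without coherence data, whereas the maps induced by $i$ and $r$ give the equivalence outright.
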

\begin{proof}
The proof is identical to the proof of Corollary~\ref{c:pro-pi-equiv}.
\end{proof}

Applying Theorem~\ref{t:main-isaksen} we may now conclude that Isaksen's model category $L_{K^p}\Pro(\cS)_\infty$ is a model for the $\infty$-category of pro-$p$ spaces. More precisely, we have the following

\begin{cor}\label{c:isaksen-p finite}
The underlying $\infty$-category of $L_{K^{p}}\Pro(\cS)$ is naturally equivalent to the $\infty$-category $\Pro(\cS^{p}_{\infty})$ of pro-$p$ spaces.
\end{cor}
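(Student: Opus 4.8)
The plan is to combine the two substantive results immediately preceding the statement, in exact parallel with the proof of Corollary~\ref{c:isaksen-finite}. First I would apply Theorem~\ref{t:main-isaksen} to the specific set $K = K^p$. Since $K^p$ is by construction a small set of fibrant simplicial sets (representatives of the $\K(S,0)$, $\B\ZZ/p$ and $\K(\ZZ/p,n)$), the hypotheses of the theorem are satisfied verbatim, and it produces a natural equivalence
\[ L_{K^p}\Pro(\cS)_\infty \x{\simeq}{\lrar} \Pro\bigl((K^p_{\nil})_\infty\bigr). \]
This step identifies the underlying $\infty$-category of Isaksen's localization with the pro-category of the $\infty$-localization of the category of fibrant objects $K^p_{\nil}$, and requires no new argument beyond checking that $K^p$ is admissible input for Theorem~\ref{t:main-isaksen}.

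Second, I would invoke the corollary established immediately above, which asserts that the induced map $\Pro(\iota_p): \Pro((K^p_{\nil})_\infty) \lrar \Pro(\cS^p_\infty)$ is an equivalence of $\infty$-categories. The content of that corollary rests on Proposition~\ref{p:p-finite}, which shows that every object of $K^p_{\nil}$ is $p$-finite and, conversely, that every $p$-finite space is a retract of an object of $K^p_{\nil}$. Together with the idempotent-splitting argument of Corollary~\ref{c:pro-pi-equiv} (replacing a retract $X \x{i}{\lrar} Y \x{r}{\lrar} X$ by the telescope pro-object $Y^f$ of the idempotent $f = ir$), this promotes the fully faithful inclusion on pro-categories, provided by \cite[Proposition 5.3.5.11(1)]{Lur09}, into an honest equivalence.

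Finally, composing the two equivalences yields the natural equivalence
\[ L_{K^p}\Pro(\cS)_\infty \x{\simeq}{\lrar} \Pro\bigl((K^p_{\nil})_\infty\bigr) \x{\simeq}{\lrar} \Pro(\cS^p_\infty), \]
which is precisely the assertion. The main obstacle here is effectively nil: all the genuinely difficult work—the comparison Theorem~\ref{t:main}, its Bousfield-localized refinement Theorem~\ref{t:main-isaksen}, and the recognition of $K^p_{\nil}$ as generating the $p$-finite spaces up to retracts in Proposition~\ref{p:p-finite}—has already been carried out, so the corollary is a formal two-line composition. If anything merits a word of care, it is the remark that the retract (rather than genuine containment) in Proposition~\ref{p:p-finite} is harmless, precisely because pro-categories are idempotent-complete and the telescope construction realizes $X$ itself as a pro-object built from $K^p_{\nil}$.
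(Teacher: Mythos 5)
Your proposal is correct and follows essentially the same route as the paper, whose (implicit) proof is exactly the composition you describe: the equivalence $L_{K^p}\Pro(\cS)_\infty \simeq \Pro\bigl((K^p_{\nil})_\infty\bigr)$ from Theorem~\ref{t:main-isaksen} applied to $K = K^p$, followed by the equivalence $\Pro(\iota_p): \Pro((K^p_{\nil})_\infty) \x{\simeq}{\lrar} \Pro(\cS^p_\infty)$, which the paper proves verbatim as in Corollary~\ref{c:pro-pi-equiv} using Proposition~\ref{p:p-finite} and the telescope construction $Y^f$ to split the retract. Your closing remark on why the retract in Proposition~\ref{p:p-finite} is harmless is precisely the content of that telescope argument, so nothing is missing.
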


\subsection{Comparison with Quick and Morel model structures}\label{ss:quick}
Let $\cF \subseteq \Set$ denote the full subcategory spanned by finite sets and let $\hat{\cS}$ denote the category of simplicial objects in $\Pro(\cF)$. In~\cite{Qui11} Quick constructs a model structure on $\hat{\cS}$ in order to model profinite homotopy theory. This model structure is fibrantly generated with sets of generating fibrations denoted by $P$ and set of generating trivial fibrations denoted by $Q$. We note that the domain and codomain of any map in $P$ or $Q$ is isomorphic to an object of $K^{\pi}$. Furthermore, for any object $X \in K^{\pi}$, the map $X \lrar \ast$ is either contained in $P \cup Q$ or is a composition of two such maps. In particular, every object in $K^{\pi}$ is fibrant in Quick's model structure.

In this subsection we will construct a Quillen equivalence between $\hat{\cS}$ and Isaksen's model category $L_{K^{\pi}}\Pro(\cS)$. Corollary~\ref{c:isaksen-finite} then implies that $\hat{\cS}$ is indeed a model for the $\infty$-category $\Pro(\cS^{\pi}_\infty)$ of profinite spaces.

The following proposition asserts that the category $\hat{\cS}$ can be naturally identified with the pro-category of $\cS_{\cofin}$. This makes it easier to compare it with the Isaksen model structure considered in the previous subsection.

\begin{prop}
The natural full inclusion $\iota:\cS_{\cofin} \lrar \hat{\cS}$ induces an equivalence of categories
\[ \Pro(\cS_{\cofin}) \lrar \hat{\cS} \]
\end{prop}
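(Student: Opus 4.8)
The plan is to exhibit $\hat{\cS}=\Fun(\Del^{\op},\Pro(\cF))$ as the free completion of $\cS_{\cofin}$ under cofiltered limits. First I would record that $\Pro(\cF)$ has all small limits: since $\cF$ has finite limits this is exactly the universal property (*) of pro-categories discussed in the introduction. Hence $\hat{\cS}$ has all small limits, computed levelwise, and in particular $\iota$ extends to a (unique) cofiltered-limit-preserving functor $L=\Pro(\iota)\colon \Pro(\cS_{\cofin})\lrar\hat{\cS}$, sending a pro-object $\{X_i\}_{i\in\cI}$ to $\lim_i X_i$ computed in $\hat{\cS}$. One also notes that $\iota$ is a full inclusion, since between levelwise-finite simplicial objects a map of simplicial profinite sets is just a map of simplicial sets ($\cF\subseteq\Pro(\cF)$ being full). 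It then remains to show that $L$ is fully faithful and essentially surjective.

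For full faithfulness, the key point is that every $\tau$-finite $Y$ is $\omega$-cocompact in $\hat{\cS}$. Indeed, if $Y$ is $n$-coskeletal then the adjunction $\tau_n\dashv\cosk_n$ gives, for any $Z\in\hat{\cS}$, a natural isomorphism $\Hom_{\hat{\cS}}(Z,Y)\cong\Hom_{\Fun(\Del_{\leq n}^{\op},\Pro(\cF))}(\tau_nZ,\tau_nY)$, and since $\Del_{\leq n}$ is a \emph{finite} category this last Hom is a finite limit of the sets $\Hom_{\Pro(\cF)}(Z_m,Y_m)$ for $m\leq n$. As each $Y_m$ is finite, it is $\omega$-cocompact in $\Pro(\cF)$ by Lemma~\ref{l:cocompact}; since $\tau_n$ commutes with the cofiltered limit $\lim_i X_i$ (limits in a functor category are levelwise) and finite limits commute with filtered colimits in $\Set$, we obtain $\colim_i\Hom_{\hat{\cS}}(X_i,Y)\x{\cong}{\lrar}\Hom_{\hat{\cS}}(\lim_iX_i,Y)$. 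Combining this cocompactness with the standard pro-category formula $\Hom_{\Pro(\cS_{\cofin})}(\{X_i\},\{Y_j\})=\lim_j\colim_i\Hom_{\cS_{\cofin}}(X_i,Y_j)$ and the fullness of $\iota$ shows that $L$ induces an isomorphism on Hom-sets.

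For essential surjectivity I would show that every $X\in\hat{\cS}$ is a cofiltered limit of $\tau$-finite simplicial sets. Since the essential image of $L$ is closed under cofiltered limits (as $L$ preserves them), and since in each degree $k$ the coskeletal tower $\{\cosk_n\tau_nX\}_n$ is eventually constant equal to $X_k$, we have $X=\lim_n\cosk_n\tau_nX$, so it suffices to treat each $\cosk_n\tau_nX$. Here the truncation $\tau_nX$ is a \emph{finite} diagram $\Del_{\leq n}^{\op}\lrar\Pro(\cF)$ of profinite sets, and the crux is to write it as a cofiltered limit of objects of $\Fun(\Del_{\leq n}^{\op},\cF)$ — that is, to strictify the finitely many profinite sets $X_0,\dots,X_n$ and the finitely many structure maps over a common cofiltered index. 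This is precisely the assertion that $\Pro$ commutes with $\Fun(\Del_{\leq n}^{\op},-)$ for the finite category $\Del_{\leq n}$, which I expect to be the main obstacle; it follows from the reindexing machinery of Lemma~\ref{l:finite} and Lemma~\ref{l:every map natural} together with the characterization of pro-objects in Proposition~\ref{p:characterization}. Granting $\tau_nX\cong\lim_\beta Y^{\beta}$ with $Y^{\beta}\in\Fun(\Del_{\leq n}^{\op},\cF)$, applying $\cosk_n$ (a right adjoint, hence limit-preserving) yields $\cosk_n\tau_nX\cong\lim_\beta\cosk_nY^{\beta}$, and each $\cosk_nY^{\beta}$ is $\tau$-finite: it is $n$-coskeletal by construction and levelwise finite because its degree-$k$ part is a finite limit of the finite sets $Y^{\beta}_m$ ($m\leq n$). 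Thus each $\cosk_n\tau_nX$, and hence $X$, lies in the essential image of $L$, completing the proof that $L$ is an equivalence.
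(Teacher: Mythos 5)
Your reduction has the same skeleton as the paper's proof, which verifies the three hypotheses of (the classical version of) \cite[Theorem 5.4.5.1]{Lur09}: full faithfulness of $\iota$, $\omega$-cocompactness of $\tau$-finite objects, and that every object of $\hat{\cS}$ is a cofiltered limit of $\tau$-finite ones. But the one genuinely nontrivial step---writing the finite diagram $\tau_n X\colon \Del^{\op}_{\leq n}\lrar\Pro(\cF)$ as a cofiltered limit of diagrams $\Del^{\op}_{\leq n}\lrar\cF$---is exactly where your argument stops being a proof. You assert this ``follows from the reindexing machinery of Lemma~\ref{l:finite} and Lemma~\ref{l:every map natural} together with Proposition~\ref{p:characterization},'' but those tools do not deliver it: Lemma~\ref{l:every map natural} strictifies a \emph{single} morphism (a diagram indexed by $[1]$), and Lemma~\ref{l:finite} is about cones in cofiltered \emph{index} categories, not about diagrams of pro-objects. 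The category $\Del_{\leq n}$ is a finite category with parallel pairs and nonidentity endomorphisms (composites of faces and degeneracies), and level representation of diagrams indexed by such categories fails in general pro-categories; what saves the situation here is that $\cF$ has finite limits, and the precise statement needed is the equivalence $\Pro\bigl(\Fun\bigl(\Del^{\op}_{\leq n},\cF\bigr)\bigr)\simeq \Fun\bigl(\Del^{\op}_{\leq n},\Pro(\cF)\bigr)$, which the paper imports from Meyer \cite{Mey80}. One \emph{could} prove it along the lines you gesture at---check via Proposition~\ref{p:characterization} that the functor on $\Fun(\Del^{\op}_{\leq n},\cF)$ corepresented by $\tau_n X$ is small and has cofiltered Grothendieck construction, the cofilteredness being where finite limits in $\cF$ enter---but that argument is absent from your proposal, so the essential-surjectivity step has a genuine gap.

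The rest is correct, and your cocompactness argument is in fact a nice variant: the paper deduces $\omega$-cocompactness of $\tau$-finite objects from the same Meyer equivalence (under it, $\tau_n X$ becomes a simple object, hence $\omega$-cocompact by Lemma~\ref{l:cocompact}), whereas you compute $\Hom_{\Fun(\Del^{\op}_{\leq n},\Pro(\cF))}(\tau_n Z,\tau_n Y)$ directly as a finite limit---an equalizer of finite products of sets $\Hom_{\Pro(\cF)}(Z_m,Y_{m'})$ with $m,m'\leq n$; note the mixed indices coming from the morphisms of $\Del_{\leq n}$, which your phrasing glosses over but which is harmless since every target $Y_{m'}$ is a finite set, hence $\omega$-cocompact---and then use that finite limits of sets commute with filtered colimits. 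This bypasses Meyer for the cocompactness part. Your full-faithfulness factorization through $\Fun(\Del^{\op},\cF)$ and the identification $X\cong\lim_n \cosk_n\tau_n X$ (with each $\cosk_n Y^{\beta}$ $\tau$-finite because the coskeleton in each degree is a finite limit of finite sets) agree with the paper. So the proposal is salvageable, but as written the strictification over $\Del_{\leq n}$ must either be proved or cited.
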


\begin{proof}
According to (the classical version of)~\cite[5.4.5.1]{Lur09} what we need to check is that $\tau$-finite simplicial sets are $\omega$-cocompact in $\hat{\cS}$, that every object of $\hat{\cS}$ is a cofiltered limit of $\tau$-finite simplicial sets and that the inclusion $\cS_{\cofin}\lrar\hat{\cS}$ is fully faithful.

We first show that the functor $\cS_{\cofin}\lrar\hat{\cS}$ is fully faithful. This functor factors as a composition
\[ \cS_{\cofin}\lrar\Fun(\Delta^{\op},\cF)\lrar\Fun(\Delta^\op,\Pro(\cF))=\hat{\cS} .\]
Now the first functor is fully faithful by definition of $\cS_{\cofin}$ and the second functor is fully faithful because $\cF \lrar \Pro(\cF)$ is fully faithful. We hence obtain that $\cS_{\cofin} \lrar \hat{\cS}$ is fully faithful.

Next, we show that any object $X \in \hat{\cS}$ is a cofiltered limit of $\tau$-finite simplicial sets. Since the natural map
\[ X \lrar \lim_n \cosk_n(\tau_n(X)) \]
is an isomorphism it is enough to show that for every $n \geq 0$, every $n$-coskeletal object in $\hat{\cS}$ is a cofiltered limit of $\tau_n$-finite simplicial sets. Unwinding the definitions, we wish to show that any functor $\Del^{\op}_{\leq n} \lrar \Pro(\cF)$ is a cofiltered limit of functors $\Del^{\op}_{\leq n} \lrar \cF$. Since the category $\cF$ is essentially small and admits finite limits and since the category $\Del^{\op}_{\leq n}$ is finite we may use~\cite[\S 4]{Mey80} to deduce that the inclusion $\cF \subseteq \Pro(\cF)$ induces an equivalence of categories
\begin{equation}\label{e:meyer}
\Pro\left(\Fun\left(\Del^{\op}_{\leq n}, \cF\right)\right) \x{\simeq}{\lrar} \Fun\left(\Del^{\op}_{\leq n}, \Pro(\cF)\right) .
\end{equation}
It hence follows that every object in $\Fun\left(\Del^{\op}_{\leq n}, \Pro(\cF)\right)$ is a cofiltered limit of objects in $\Fun\left(\Del^{\op}_{\leq n}, \cF\right)$, as desired.

Finally, we show that every $\tau$-finite simplicial set is $\omega$-cocompact in $\hat{\cS}$. Let $X$ be a $\tau$-finite simplicial set. We need to show that the functor $\Hom_{\hat{\cS}}(-,X)$ sends cofiltered limits to filtered colimits. Let $n$ be such that $X$ is $n$-coskeletal. Then $\Hom_{\hat{\cS}}(Z,X) \cong \Hom_{\hat{\cS}}(\tau_n(Z),\tau_n(X))$. Since the functor $\tau_n$ preserves limits it is enough to show that $\tau_n(X)$ is $\omega$-cocompact in $\Fun\left(\Del^{\op}_{\leq n}, \Pro(\cF)\right)$. But this again follows from the equivalence~\ref{e:meyer}.
\end{proof}

Using the equivalence of categories $\hat{\cS} \cong \Pro(\cS_{\cofin})$ we may consider Quick's model structure as a model structure on $\Pro(\cS_{\cofin})$, which is fibrantly generated by the sets $P$ and $Q$ described in~\cite[Theorem 2.3]{Qui11}\footnote{Note that there is a small mistake in the generating fibrations in~\cite{Qui11}. An updated version of this paper can be found on the author's webpage \url{http://www.math.ntnu.no/~gereonq/}. In this version the relevant result is Theorem 2.10.} (where we consider now $P$ and $Q$ as sets of maps in $\cS_{\cofin} \subseteq \Pro(\cS_{\cofin})$). We note that $\cS_{\cofin}$ is \textbf{not} a weak fibration category and that this model structure is \textbf{not} a particular case of the model structure of Theorem~\ref{t:model-2}. This follows, in particular, from the following observation:
\begin{prop}\label{p:counter}
There exist maps in $\hat{\cS} \cong \Pro(\cS_{\cofin})$ which are weak equivalences with respect to Quick's model structure but that are not isomorphic to levelwise weak equivalences.
\end{prop}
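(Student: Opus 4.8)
The plan is to exhibit a single explicit morphism in $\hat{\cS}\cong\Pro(\cS_{\cofin})$ which is a weak equivalence in Quick's model structure but does not belong to $\Lw^{\cong}(\cW)$, where $\cW$ denotes the class of underlying weak homotopy equivalences of $\tau$-finite simplicial sets. Let $\cE$ be the category with two objects $0,1$ and two distinct non-identity morphisms $0\lrar 1$, and set $W:=\Ne(\cE)$. Since $\cE$ is a finite category, its nerve is levelwise finite and $2$-coskeletal, so $W\in\cS_{\cofin}$; moreover $W$ has no non-degenerate simplices above dimension $1$ and is weakly equivalent to $S^1$, so $\pi_1(W)\cong\ZZ$ and all higher homotopy groups vanish. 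Let $\what{W}:=\{\B(\ZZ/n)\}_{n}$ be the standard pro-object (indexed by the positive integers ordered by divisibility) modelling the profinite completion $\B\what{\ZZ}$; each $\B(\ZZ/n)$ is $\tau$-finite, so $\what{W}\in\Pro(\cS_{\cofin})$. The reductions $\ZZ=\pi_1(W)\lrar\ZZ/n$ are realised by a compatible family of simplicial maps $W\lrar\B(\ZZ/n)$ (both sides being nerves), which assemble into a morphism $c\colon W\lrar\what{W}$ in $\hat{\cS}$; this is the map I claim witnesses the proposition.

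First I would check that $c$ is a weak equivalence in Quick's model structure. This is a computation with profinite homotopy groups: the induced map on $\pi_1$ is precisely the completion homomorphism $\ZZ\lrar\what{\ZZ}$ exhibiting $\what{\ZZ}$ as the profinite completion of $\ZZ$, while all higher homotopy groups vanish on both sides. Hence $c$ induces an isomorphism on all profinite homotopy groups, and is therefore a weak equivalence in the sense of Quick~\cite{Qui11}.

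The heart of the argument is to show that $c$ is not isomorphic to a levelwise weak equivalence. For this I would use that the assignment of the fundamental group extends, after passing to fibrant models and choosing compatible basepoints, to a functor $\pi_1^{\Pro}$ from (connected, pointed) pro-objects of $\cS_{\cofin}$ to $\Pro(\mathrm{Grp})$, which sends a levelwise weak equivalence to a levelwise isomorphism of pro-groups and an isomorphism of pro-objects to an isomorphism. Consequently every map in $\Lw^{\cong}(\cW)$ is sent by $\pi_1^{\Pro}$ to an isomorphism in $\Pro(\mathrm{Grp})$, and in particular is sent to an isomorphism by the limit functor $\lim\colon\Pro(\mathrm{Grp})\lrar\mathrm{Grp}$, which is invariant under pro-isomorphism. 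Now $\pi_1^{\Pro}(W)$ is the constant pro-group $\ZZ$ and $\pi_1^{\Pro}(\what{W})=\{\ZZ/n\}_{n}$, so $\lim\pi_1^{\Pro}(c)$ is the inclusion $\ZZ\hookrightarrow\what{\ZZ}$. This map is not surjective, hence not an isomorphism; therefore $\pi_1^{\Pro}(c)$ is not a pro-isomorphism and $c\notin\Lw^{\cong}(\cW)$.

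The main obstacle is the careful set-up of the two invariance statements in the last paragraph: that $\pi_1^{\Pro}$ is well defined on $\Pro(\cS_{\cofin})$ up to pro-isomorphism (one must replace the $\tau$-finite representatives, which need not be Kan, by fibrant models and track basepoints) and that $\lim$ descends to a functor on $\Pro(\mathrm{Grp})$ invariant under pro-isomorphism. Both are standard once properly formulated, and I expect no genuine difficulty; the conceptual content is entirely captured by the observation that $\ZZ$ and $\what{\ZZ}$ are distinct as pro-groups yet are indistinguishable by the profinite invariants that govern Quick's weak equivalences.
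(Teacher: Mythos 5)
Your proposal is correct in outline and, notably, constructs the \emph{same} counterexample as the paper: the paper takes the nerve of the category $\cI$ with two objects and two parallel non-identity arrows (your $\cE$), maps it compatibly to the nerves $\Ne\mathrm{B}\ZZ/n$ indexed by $(\NN,\,|\,)$, and observes that the resulting map is a model for the profinite completion of the circle, hence a Quick weak equivalence. Where you diverge is the obstruction: you use the pointed fundamental pro-group $\pi_1^{\Pro}$ followed by the limit functor $\lim\colon\Pro(\mathrm{Grp})\lrar\mathrm{Grp}$ to certify that $\ZZ\lrar\{\ZZ/n\}_n$ is not a pro-isomorphism, whereas the paper applies the first homology functor $H_1\colon\cS_{\cofin}\lrar\Ab$ and its prolongation $\Pro(H_1)\colon\Pro(\cS_{\cofin})\lrar\Pro(\Ab)$. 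The paper's choice is deliberate and buys you exactly what you defer to the final paragraph: $H_1$ is an honest functor on all of $\cS_{\cofin}$, with no basepoints, no fibrant replacements, and no connectivity hypotheses, so invariance of $\Pro(H_1)$ under isomorphisms in the arrow category and under levelwise equivalences is automatic rather than ``standard once properly formulated.'' The step you postpone is genuinely the delicate one — unpointed pro-isomorphisms need not respect any choice of compatible basepoints, the levels of a pro-object isomorphic to your $W$ need not be connected, and making pro-$\pi_1$ invariant under unpointed pro-isomorphism is precisely the pointedness subtlety of Artin--Mazur~\cite{AM69}; the one-word repair is to abelianize, which recovers the paper's argument. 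Two further points of comparison: on the plus side, your $\lim$-certificate explicitly proves the non-isomorphism in $\Pro(\mathrm{Grp})$, which the paper asserts for $\Pro(\Ab)$ without comment; on the minus side, the paper's argument via homology with all finite coefficients and universal coefficients shows that even a \emph{Quick} weak equivalence between $\tau$-finite levels induces an isomorphism on $H_1$, so their map is not isomorphic to a levelwise Quick equivalence either — the stronger form actually needed for Remark~\ref{r:Raptis} on $\omega$-accessibility — whereas your $\pi_1$-based invariance only rules out levelwise weak homotopy equivalences, i.e.\ the class $\Lw^{\cong}(\cW)$ of Theorem~\ref{t:model-2}. For the proposition as literally stated your reading suffices, but if you intend the result to feed into the accessibility remark you should switch to the homological invariant.
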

\begin{proof}
Let $\Ab$ denote the category of abelian groups. The first homology functor $H_1: \cS_{\tau} \lrar \Ab$ induces a functor $\Pro(H_1): \Pro(\cS_{\tau}) \lrar \Pro(\Ab)$. We now note that if $f: X \lrar Y$ is a map in $\cS_{\tau}$ which is a weak equivalence in Quick's model structure (when considered as a map in $\what{\cS}$) then $f$ induces an isomorphism on homology with all finite coefficients and hence an isomorphism on $H_1$ by the universal coefficients theorem (recall that $H_1(X)$ is finitely generated for every $X \in \cS_\tau$). It then follows that every levelwise weak equivalence in $\Pro(\cS_{\tau})$ is mapped by $\Pro(H_1)$ to an isomorphism in $\Pro(\Ab)$. It will hence suffice to exhibit a weak equivalence in $\what{\cS}$ which is not mapped to an isomorphism in $\Pro(\Ab)$.

Since the nerve of any category is $2$-coskeletal it follows that $\cS_{\tau}$ contains the nerve of any finite category. In particular, $\cS_\tau$ contains the nerve of the finite groupoid $\mathrm{B}\ZZ/n$ with one object and automorphism group $\ZZ/n$, as well as the nerve of the category $\cI$ with two objects $0,1$ and two non-identity morphisms $\alp,\bet$, both going from $0$ to $1$. We note that $\Ne\cI$ is weakly equivalent to $S^1$. Now for every $n$ we have a functor $f_n: \cI \lrar \mathrm{B}\ZZ/n$ which sends $\alp$ to $1 \in \ZZ/n$ and $\bet$ to $0 \in \ZZ/n$. Furthermore, if $n | n'$ then the quotient map $\ZZ/n' \lrar \ZZ/n$ which sends $1 \in \ZZ/n'$ to $1 \in \ZZ/n$ is compatible with $f_n$ and $f_n'$. We may hence assemble the nerves $\Ne\mathrm{B}\ZZ/n$ into a pro-object $\{\Ne\mathrm{B}\ZZ/n\}_{n \in \NN} \in \Pro(\cS_{\tau})$ indexed by the inverse poset $(\NN,|)$, in which case the maps $f_n$ determine a map $F:\Ne \cI \lrar \{\Ne\mathrm{B}\ZZ/n\}_{n \in \NN}$ in $\Pro(\cS_{\tau})$. Now $\pi_1(\Ne\cI) = \pi_1(S^1) \cong \ZZ$ and for each $n$ the map $\Ne f_n: \Ne\cI \lrar \Ne\mathrm{B}\ZZ/n$ induces the natural quotient $\ZZ \lrar \ZZ/n$ on the level of homotopy groups. The map $F$ is hence a model for the profinite completion of the circle, and as such is a weak equivalence in Quick's model structure. However, the corresponding map $\ZZ \lrar \{\ZZ/n\}_{n \in \NN}$ in $\Pro(\Ab)$ is not an isomorphism.
\end{proof}

\begin{rem}\label{r:Raptis}
The opposite model category $\what{\cS}^{\op}$ is an $\omega$-combinatorial model category: the underlying category $\what{\cS}^{\op} \cong \Ind(\cS^{\op}_\tau)$ is $\om$-presentable and the generating cofibrations and trivial cofibrations have their domains and codomains in $\cS^{\op}_{\tau}$.  However, the full subcategory $W \subseteq (\what{\cS}^{\op})^{[1]}$ spanned by weak equivalences is not an $\omega$-accessible category. Indeed, the $\omega$-compact objects in $W$ are the weak equivalences between objects of $\cS^{\op}_{\tau}$ and by Proposition~\ref{p:counter} not all weak equivalences in $\what{\cS}^{\op}$ are filtered colimits of such. This settles negatively a question raised by G. Raptis.
\end{rem}

Since the inclusion $\vphi:\cS_{\cofin} \lrar \cS$ is fully faithful and preserves finite limits it follows that the induced functor
\[ \Phi: \Pro(\cS_{\cofin}) \lrar \Pro(\cS) \]
is fully faithful and preserves all limits. The functor $\Phi$ admits a left adjoint
\[ \Psi: \Pro(\cS) \lrar \Pro(\cS_{\cofin}) \]
whose value on simple objects $X \in \cS$ is given by
\[ \Psi(X) = \{X'\}_{(X \lrar X') \in (\cS_{\cofin})_{X/}} \]

\begin{rem}\label{r:counit}
Since $\Phi$ is fully faithful we see that for every $X \in \Pro(\cS_{\cofin})$ the counit map
\[ \Psi(\Phi(X)) \lrar X \]
is an \textbf{isomorphism}.
\end{rem}

\begin{prop}
The adjunction
\[ \Psi: \Pro(\cS) \adj \Pro(\cS_{\cofin}) : \Phi \]
is a Quillen adjunction between Isaksen's strict model structure on the left, and Quick's model structure on the right.
\end{prop}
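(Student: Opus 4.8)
The plan is to prove that $\Phi$ is a right Quillen functor; since $\Psi$ is its left adjoint this is exactly the assertion. Concretely I will show that $\Phi$ sends fibrations of Quick's model structure to fibrations of Isaksen's strict structure, and trivial fibrations to trivial fibrations. The two facts I will lean on are, on the one hand, the lifting-property description of Isaksen's structure coming from Definition~\ref{d:induce} (which applies here by Theorem~\ref{t:model_Isa}), namely that its trivial cofibrations are exactly ${}^{\perp}\Fib$ and its cofibrations exactly ${}^{\perp}(\Fib\cap\cW)$, where $\Fib$ and $\Fib\cap\cW$ denote the Kan fibrations and the trivial Kan fibrations of $\cS$ viewed as maps of simple objects; and, on the other hand, that Quick's structure is fibrantly generated, so that (being the opposite of a cofibrantly generated structure) its trivial cofibrations are exactly ${}^{\perp}P$ and its cofibrations exactly ${}^{\perp}Q$.

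The core is a double transposition argument. Let $f$ be a fibration in Quick's structure; to see that $\Phi(f)$ is an Isaksen fibration it suffices to check that $\Phi(f)$ has the right lifting property against every Isaksen trivial cofibration $g\in{}^{\perp}\Fib$. By the adjunction $\Psi\dashv\Phi$ this lift exists as soon as $\Psi(g)$ has the left lifting property against $f$, and since $f$ is a Quick fibration (hence has the right lifting property against all of ${}^{\perp}P$) this holds as soon as $\Psi(g)\in{}^{\perp}P$, i.e. as soon as $\Psi(g)$ lifts against every $p\in P$. Transposing once more, this is equivalent to $g$ lifting against $\Phi(p)$ for every $p\in P$. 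Now $\Phi$ is induced by the inclusion $\vphi\colon\cS_{\cofin}\hrar\cS$, so it carries a map of $\cS_{\cofin}$, regarded as a map of simple objects, to the same map regarded as a simple object of $\Pro(\cS)$; as each $p\in P$ is a Kan fibration we get $\Phi(p)\in\Fib$, and therefore $g\in{}^{\perp}\Fib$ does lift against it. Running the chain of transpositions backwards shows that $g$ lifts against $\Phi(f)$, as desired.

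The trivial-fibration case is entirely parallel: if $f$ is a trivial fibration in Quick's structure one tests $\Phi(f)$ against an arbitrary Isaksen cofibration $g\in{}^{\perp}(\Fib\cap\cW)$, reduces by the same two transpositions to the requirement that $g$ lift against $\Phi(q)$ for each $q\in Q$, and concludes using that every $q\in Q$ is a trivial Kan fibration, so that $\Phi(q)\in\Fib\cap\cW$. Together the two cases show that $\Phi$ preserves fibrations and trivial fibrations, hence is right Quillen, so that $(\Psi,\Phi)$ is a Quillen pair.

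The main obstacle is the one genuinely non-formal input: verifying that $\Phi(P)\subseteq\Fib$ and $\Phi(Q)\subseteq\Fib\cap\cW$, i.e. that Quick's generating fibrations and generating trivial fibrations are, as maps of underlying simplicial sets, Kan fibrations and trivial Kan fibrations respectively. This is settled by inspecting the explicit generators of \cite[Theorem 2.10]{Qui11}; it is made plausible by the fact, already recorded above, that the domains and codomains of all maps in $P\cup Q$ lie in $K^{\pi}$ and are in particular fibrant Kan complexes, the relevant maps being the standard path-type and projection fibrations built from $\E G\lrar\B G$, $\L(A,n)_{hG}\lrar\K(A,n)_{hG}$ and the maps to the point. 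Everything else in the argument is pure adjunction bookkeeping, using only that Isaksen's trivial cofibrations and cofibrations, and Quick's trivial cofibrations and cofibrations, are each cut out by a single lifting property.
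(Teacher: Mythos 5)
Your proposal is correct and takes essentially the same route as the paper: both use that Quick's structure is fibrantly generated by $P$ and $Q$ to reduce the claim that $\Phi$ is right Quillen to the single non-formal check that $\Phi$ carries $P$ into Kan fibrations and $Q$ into trivial Kan fibrations, verified by inspecting Quick's explicit generators. Your double-transposition argument, using the lifting characterizations $\mathbf{Cof}\cap\mathbf{W}={}^{\perp}\Fib$ and $\mathbf{Cof}={}^{\perp}(\Fib\cap\cW)$ from Definition~\ref{d:induce}, simply spells out the reduction that the paper compresses into the phrase ``it is enough to check.''
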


\begin{proof}
We need to check that $\Phi$ preserves fibrations and trivial fibrations. Since the model structure on $\Pro(\cS_{\cofin})$ is fibrantly generated by $P$ and $Q$, which are sets of maps in $\cS_{\cofin}$, it is enough to check that all the maps in $P$ are Kan fibrations of simplicial sets and all the maps in $Q$ are trivial Kan fibrations. This fact can be verified directly by examining the definition of $P$ and $Q$.
\end{proof}

\begin{lem}\label{l:isaksen quick quillen}
The Quillen adjunction of the previous proposition descends to a Quillen adjunction
\[ \Psi_{K^\pi}: L_{K^\pi}\Pro(\cS) \adj \Pro(\cS_{\cofin}) : \Phi_{K^\pi}. \]
\end{lem}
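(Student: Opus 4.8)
The plan is to reduce the statement to a single condition on fibrant objects via the universal property of left Bousfield localization. The preceding proposition already shows that $\Psi \dashv \Phi$ is a Quillen adjunction between the strict model structure on $\Pro(\cS)$ and Quick's model structure on $\Pro(\cS_{\cofin})$, and $L_{K^{\pi}}\Pro(\cS)$ is obtained from $\Pro(\cS)$ by a left Bousfield localization, so its cofibrations (and hence its trivial fibrations) agree with those of the strict structure. Consequently $\Psi$ automatically preserves cofibrations, and the only extra requirement for $\Psi$ to remain left Quillen is that it preserve the enlarged class of trivial cofibrations; by adjunction this is equivalent to $\Phi$ sending fibrant objects to fibrant objects of $L_{K^{\pi}}\Pro(\cS)$. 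Thus, by \cite[Theorem 3.3.20]{Hir03}, it suffices to verify that $\Phi$ carries every Quick-fibrant object to a $K^{\pi}$-local object of $\Pro(\cS)$.

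To carry this out I would first record the two relevant descriptions. By Isaksen's construction (see \cite{Isa05} and the discussion of \S\ref{ss:isaksen}), the fibrant objects of $L_{K^{\pi}}\Pro(\cS)$ are exactly the strict-fibrant pro-spaces that are isomorphic to a pro-object which is levelwise in $K^{\pi}_{\nil}$. On the other hand, Quick's model structure on $\Pro(\cS_{\cofin})$ is fibrantly generated by the sets $P$ and $Q$, whose domains and codomains all lie in $K^{\pi}$, and every map in $P$ is a Kan fibration, as established in the proof of the preceding proposition. The main step is to show that every Quick-fibrant object $Y$ is isomorphic in $\Pro(\cS_{\cofin})$ to a pro-object levelwise in $K^{\pi}_{\nil}$. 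Here I would run the dual of the small object argument underlying Quick's fibrantly generated structure: the map $Y \lrar \ast$ is a retract of a cofiltered tower built from $\ast$ by iterated pullbacks of maps in $P$. Since $K^{\pi}_{\nil}$ contains $\ast$ and $K^{\pi}$, is closed under homotopy pullbacks, and is therefore also closed under retracts (a retract being realizable as a homotopy pullback), and since each map in $P$ is a Kan fibration between objects of $K^{\pi} \subseteq K^{\pi}_{\nil}$ so that the pullbacks in question are homotopy pullbacks, an induction along the tower shows that every one of its terms lies in $K^{\pi}_{\nil}$; as $\cS_{\cofin}$ is closed under finite limits, these terms lie in $K^{\pi}_{\nil} \cap \cS_{\cofin}$.

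Finally I would assemble the pieces. The functor $\Phi$ is induced by the inclusion $\cS_{\cofin} \hrar \cS$, so it sends a pro-object levelwise in $K^{\pi}_{\nil} \cap \cS_{\cofin}$ to the same diagram viewed in $\Pro(\cS)$, which is then levelwise in $K^{\pi}_{\nil}$; moreover $\Phi$ is right Quillen for the strict structure, so $\Phi(Y)$ is strict-fibrant. Hence $\Phi(Y)$ is a strict-fibrant pro-space isomorphic to a levelwise-$K^{\pi}_{\nil}$ pro-object, that is, a $K^{\pi}$-local object, which is precisely the condition required above. I expect the main obstacle to be the middle step: extracting from the abstract fibrant generation of Quick's model structure the concrete conclusion that its fibrant objects are, up to isomorphism, levelwise-$K^{\pi}_{\nil}$ pro-objects. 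This is where one must actually control the cocellular towers produced by the dual small object argument, rather than merely manipulate lifting properties, and where the hypotheses that the maps in $P$ are Kan fibrations and that $K^{\pi}_{\nil}$ is closed under homotopy pullbacks do the real work.
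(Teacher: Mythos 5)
Your opening reduction is sound: since $L_{K^{\pi}}\Pro(\cS)$ is a left Bousfield localization, cofibrations and trivial fibrations are unchanged, and by~\cite[Theorem 3.3.20]{Hir03} it suffices that $\Phi$ carry fibrant objects to $K^{\pi}$-local ones. But the way you then execute this object-level criterion has a genuine gap, and one outright error. The error: you assert that $K^{\pi}_{\nil}$ is closed under retracts because ``a retract is realizable as a homotopy pullback.'' This is false. Splitting an idempotent is not a finite homotopy limit: the homotopy equalizer of $\mathrm{id}_Y$ and a split idempotent $ir$ carries extra path data and does not recover the retract in general, and splittings require sequential (co)limits and coherence, not pullbacks. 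This is exactly why the paper's Proposition~\ref{p:pi-finite} is phrased the way it is --- every $\pi$-finite space is only a \emph{retract} of an object of $K^{\pi}_{\nil}$; retract-closure of $K^{\pi}_{\nil}$ is never claimed. (This particular defect is repairable: fibrant objects of $L_{K^{\pi}}\Pro(\cS)$ are closed under retracts, so you may split the retract \emph{after} applying $\Phi$ rather than inside $K^{\pi}_{\nil}$.) The deeper gap is the one you yourself flag: extracting from the fibrant generation of Quick's structure that every fibrant object is, up to retract and isomorphism, a pro-object levelwise in $K^{\pi}_{\nil}$. In $\Pro(\cS_{\cofin})$ the cosmall object argument does not produce a naive tower; it produces retracts of special $P$-maps over cofinite posets (cf.~\cite{BS15}), whose levels involve finite limits $\lim_{s<t}X_s$, and verifying by induction that all these levels stay in $K^{\pi}_{\nil}$ is real work that your sketch does not carry out. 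As written, the proof is incomplete at its central step.

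The paper avoids all of this by applying your reduction at the level of arrows rather than objects. Since the trivial fibrations of $L_{K^{\pi}}\Pro(\cS)$ agree with those of $\Pro(\cS)$, and Quick's model structure is fibrantly generated by the sets $P$ and $Q$, it is enough to check that every map in $\Phi(P)$ is a fibration in $L_{K^{\pi}}\Pro(\cS)$. Each map in $P$ is a fibration in $\Pro(\cS)$ (by the preceding proposition) whose domain and codomain lie in $K^{\pi}$, hence are $K^{\pi}$-local by the very construction of the localization; by~\cite[Proposition 3.3.16]{Hir03}, a fibration between local objects is a local fibration, and the lemma follows in three lines. The moral: once you know the generating fibrations have local domains and codomains, no characterization of the fibrant objects of Quick's structure --- the step where your argument stalls --- is needed at all.
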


\begin{proof}
We need to verify that $\Phi_{K^\pi}$ is still a right Quillen functor. Since the trivial fibrations in $L_{K^\pi}\Pro(\cS)$ are the same as the trivial fibrations in $\Pro(\cS)$ it is enough to check that all the maps in $\Phi(P)$ are fibrations in $L_{K^\pi}\Pro(\cS)$. We now observe that the domain and codomain of every map in $P$ is in $K^{\pi}$ and hence $K^{\pi}$-local. By~\cite[Proposition 3.3.16]{Hir03} the maps in $P$ are also fibrations in $L_{K^\pi}\Pro(\cS)$.
\end{proof}

\begin{lem}\label{l:we}
A map $f: X \lrar Y$ in $\Pro(\cS)$ is an equivalence in $L_{K^{\pi}}\Pro(\cS)$ if and only if $\Psi(f)$ is an equivalence in $\Pro(\cS_{\cofin})$.
\end{lem}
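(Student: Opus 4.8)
The plan is to prove the two implications separately, using the Quillen adjunction $\Psi_{K^\pi} : L_{K^\pi}\Pro(\cS) \rightleftarrows \Pro(\cS_{\cofin}) : \Phi_{K^\pi}$ of Lemma~\ref{l:isaksen quick quillen} together with the explicit description of the $K^\pi$-local weak equivalences recalled in \S\ref{ss:isaksen}: a map $f : X \lrar Y$ is an equivalence in $L_{K^\pi}\Pro(\cS)$ precisely when the induced map $\Map^h_{\Pro(\cS)}(Y,A) \lrar \Map^h_{\Pro(\cS)}(X,A)$ is a weak equivalence for every $A \in K^\pi$. The first preliminary observation I would record is that every object of $\Pro(\cS)$ is cofibrant in the strict model structure, hence also in $L_{K^\pi}\Pro(\cS)$, since left Bousfield localization leaves the cofibrations unchanged. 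Indeed, $\cS$ is proper and hence pro-admissible (Lemma~\ref{l:proper}), so by Theorem~\ref{t:model_Isa} we have $\mathbf{Cof} = \Lw^{\cong}(\cC of)$; and for any $X = \{X_i\}_{i\in\cI}$ the map $\varnothing \lrar X$ is represented by the levelwise map $\{\varnothing \lrar X_i\}_{i\in\cI}$, which is a levelwise cofibration because every simplicial set is cofibrant.

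For the forward direction, suppose $f$ is a $K^\pi$-local equivalence. Since $\Psi_{K^\pi}$ is a left Quillen functor and every object of $L_{K^\pi}\Pro(\cS)$ is cofibrant, Ken Brown's lemma shows that $\Psi_{K^\pi}$ carries \emph{all} weak equivalences to weak equivalences; in particular $\Psi(f) = \Psi_{K^\pi}(f)$ is an equivalence in Quick's model structure.

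For the reverse direction, the key step is a natural identification of mapping spaces. Fix $A \in K^\pi$. Then $A$ is fibrant in Quick's model structure (as recalled in \S\ref{ss:quick}), and, being a Kan complex, it is also a fibrant and $K^\pi$-local object of $\Pro(\cS)$. Using the induced adjunction of underlying $\infty$-categories $(\Psi_{K^\pi})_\infty \dashv (\Phi_{K^\pi})_\infty$ (Remark~\ref{r:adjunction}), together with the fact that any object $Z$ of $L_{K^\pi}\Pro(\cS)$ is cofibrant while $A$ is Quick-fibrant, so that $(\Psi_{K^\pi})_\infty(Z) \simeq \Psi(Z)$ and $(\Phi_{K^\pi})_\infty(A) \simeq \Phi_{K^\pi}(A) = A$, I obtain a weak equivalence natural in $Z$:
\[
\Map^h_{\Pro(\cS_{\cofin})}(\Psi(Z),A) \;\simeq\; \Map^h_{L_{K^\pi}\Pro(\cS)}(Z,A) \;=\; \Map^h_{\Pro(\cS)}(Z,A),
\]
where the last equality uses that $A$ is local and fibrant, so the Bousfield localization does not alter the derived mapping space into $A$. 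Now assume $\Psi(f)$ is an equivalence in Quick's model structure. Then for each $A \in K^\pi$ the map $\Map^h_{\Pro(\cS_{\cofin})}(\Psi(Y),A) \lrar \Map^h_{\Pro(\cS_{\cofin})}(\Psi(X),A)$ is a weak equivalence; substituting $Z = X$ and $Z = Y$ into the displayed natural equivalence and using naturality in $Z$, I conclude that $\Map^h_{\Pro(\cS)}(Y,A) \lrar \Map^h_{\Pro(\cS)}(X,A)$ is a weak equivalence for every $A \in K^\pi$, i.e.\ that $f$ is a $K^\pi$-local equivalence.

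The main obstacle I expect is making the displayed chain of equivalences fully rigorous: one must take care that the derived adjunction is evaluated on the correct (co)fibrant objects — this is exactly where the cofibrancy of all objects and the $K^\pi$-locality of $A$ enter — and that the identification is natural in $Z$, so that it is compatible with $f$ and assembles into a commuting square of mapping spaces. The remaining ingredients (that each $A \in K^\pi$ is fibrant in Quick's structure, and that derived mapping spaces into a local fibrant object are insensitive to the localization) are routine and require no further work.
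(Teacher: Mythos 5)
Your proof is correct and follows essentially the same route as the paper: cofibrancy of all objects of $\Pro(\cS)$ (via Theorem~\ref{t:model_Isa}) combined with Ken Brown's lemma shows that $\Psi_{K^\pi}$ preserves weak equivalences, and the adjunction identification $\Map^h_{\Pro(\cS_{\cofin})}(\Psi(X),A)\simeq\Map^h_{\Pro(\cS)}(X,A)$ for $A\in K^\pi$ (using that $A$ is Quick-fibrant and $X$ is cofibrant) shows that it detects them. The only cosmetic difference is that the paper obtains this identification directly from the unlocalized Quillen adjunction $\Psi\dashv\Phi$, whereas you route it through the localized adjunction $\Psi_{K^\pi}\dashv\Phi_{K^\pi}$ together with the standard fact that left Bousfield localization does not alter derived mapping spaces into local fibrant objects.
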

\begin{proof}
From Theorem \ref{t:model_Isa} (3) it follows that every object in $\Pro(\cS)$ (and hence in $L_{K^\pi}\Pro(\cS)$) is cofibrant. Thus, by Lemma \ref{l:isaksen quick quillen}, $\Psi_{K^\pi}$ must preserve weak equivalences. It hence suffices to show that $\Psi$ detects weak equivalences.

By definition the weak equivalences in $L_{K^\pi}\Pro(\cS)$ are exactly the maps $f: X \lrar Y$ such that the induced map
\[ \Map^h_{\Pro(\cS)}(Y,A) \lrar \Map^h_{\Pro(\cS)}(X,A) \]
is a weak equivalence for every $A \in K^{\pi}$.
Since $A$ is a fibrant simplicial set it is fibrant in $\Pro(\cS)$. On the other hand, every $A \in K^{\pi}$ is fibrant in Quick's model structure. By adjunction we get for every $X \in \Pro(\cS)$ a natural weak equivalence
\[ \Map^h_{\Pro(\cS)}(X,A) = \Map^h_{\Pro(\cS)}(X,\Phi(A)) \simeq \Map^h_{\Pro(\cS_{\cofin})}(\Psi(X),A) \]
It follows that if $f: X \lrar Y$ is a map such that $\Psi(f)$ is a weak equivalence in $\Pro(\cS_{\cofin})$ then $f$ is a weak equivalence in $L_{K^\pi}\Pro(\cS)$.
\end{proof}

\begin{thm}\label{t:isaksen-is-quick}
The Quillen adjunction
\[ \Psi: \Pro(\cS) \adj \Pro(\cS_{\cofin}) : \Phi \]
descends to a \textbf{Quillen equivalence}
\[ \Psi_{K^\pi}: L_{K^\pi}\Pro(\cS) \adj \Pro(\cS_{\cofin}) : \Phi_{K^\pi}. \]
\end{thm}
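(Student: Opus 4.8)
The plan is to reduce the statement to two facts that are already available: that every object of $L_{K^\pi}\Pro(\cS)$ is cofibrant, and that the counit of the adjunction $\Psi \dashv \Phi$ is an isomorphism (Remark~\ref{r:counit}). Since Lemma~\ref{l:isaksen quick quillen} already shows that $(\Psi_{K^\pi},\Phi_{K^\pi})$ is a Quillen adjunction, the only remaining task is to verify the equivalence condition. I would invoke the standard characterization: a Quillen adjunction $\Psi_{K^\pi}\dashv\Phi_{K^\pi}$ is a Quillen equivalence if and only if, for every cofibrant $X\in L_{K^\pi}\Pro(\cS)$ and every fibrant $Y\in \Pro(\cS_{\cofin})$, a morphism $g\colon \Psi(X)\lrar Y$ is a weak equivalence precisely when its adjunct $g^{\flat}\colon X\lrar \Phi(Y)$ is. Because every object of $L_{K^\pi}\Pro(\cS)$ is cofibrant (as noted in the proof of Lemma~\ref{l:we}), no cofibrant-replacement subtleties intervene and the derived counit coincides with the ordinary one.

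The decisive step is the triangle-identity factorization of $g$. Writing $\epsilon$ for the counit of $\Psi\dashv\Phi$, the adjunct $g^{\flat}$ satisfies
\[ g = \epsilon_Y\circ\Psi(g^{\flat})\colon \Psi(X)\xrightarrow{\ \Psi(g^{\flat})\ } \Psi(\Phi(Y))\xrightarrow{\ \epsilon_Y\ } Y. \]
By Remark~\ref{r:counit} the counit $\epsilon_Y$ is an isomorphism, hence a weak equivalence, so $g$ is a weak equivalence if and only if $\Psi(g^{\flat})$ is one. Now $g^{\flat}$ is a morphism of $\Pro(\cS)$, and Lemma~\ref{l:we} tells us that $\Psi(g^{\flat})$ is a weak equivalence in Quick's model structure on $\Pro(\cS_{\cofin})$ if and only if $g^{\flat}$ is a weak equivalence in $L_{K^\pi}\Pro(\cS)$. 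Chaining these two equivalences gives exactly the required criterion, and the theorem follows.

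I do not anticipate a genuine obstacle in this final argument, since the real content has been extracted into Lemma~\ref{l:we} and Remark~\ref{r:counit}; what is left is the formal manipulation above. The single point deserving care is confirming that the criterion is applied with the correct variance --- that reflection of weak equivalences by the left adjoint, combined with an isomorphism counit, suffices --- which is precisely what the factorization $g=\epsilon_Y\circ\Psi(g^{\flat})$ together with the all-objects-cofibrant property makes transparent.
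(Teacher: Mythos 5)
Your proof is correct, and it reorganizes the argument around a different (equivalent) formulation of Quillen equivalence than the one the paper uses. The paper verifies the derived unit and counit separately: the counit is handled exactly as you do (all objects of $L_{K^\pi}\Pro(\cS)$ are cofibrant, so the actual counit computes the derived one, and it is an isomorphism by Remark~\ref{r:counit}), but for the unit the paper takes a fibrant replacement $\Psi_{K^\pi}(X) \lrar (\Psi_{K^\pi}(X))^{\fib}$, applies Lemma~\ref{l:we} to reduce to showing that $\Psi_{K^\pi}$ of the derived unit is a weak equivalence, and then uses Remark~\ref{r:counit} to identify $\Psi_{K^\pi}(\Phi_{K^\pi}((\Psi_{K^\pi}(X))^{\fib}))$ with $(\Psi_{K^\pi}(X))^{\fib}$ --- an argument that implicitly invokes a triangle identity to recognize the resulting map as the fibrant replacement. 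You instead apply the adjunct-pair criterion directly, using the factorization $g = \epsilon_Y\circ\Psi(g^{\flat})$ together with the same two inputs, Lemma~\ref{l:we} and Remark~\ref{r:counit}. What your route buys is economy: no fibrant replacement is needed, the triangle-identity bookkeeping is reduced to the single standard factorization of an adjunct, and in fact fibrancy of $Y$ is never used, so you establish the adjunct condition for arbitrary $Y$. What the paper's route buys is a template in which the two halves (counit, unit) are checked independently, but since your argument also transfers verbatim to the Morel case by substituting Lemma~\ref{l:we-morel} for Lemma~\ref{l:we}, the two approaches are equally robust here; yours is arguably the cleaner write-up.
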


\begin{proof}
By Lemma \ref{l:isaksen quick quillen}, the adjunction $\Psi_{K^\pi} \dashv \Phi_{K^\pi}$ is a Quillen adjunction. In order to show that it is also a Quillen equivalence we need to show that the derived unit and counit are weak equivalences. Since all objects of $\Pro(\cC)$ are cofibrant the same holds for $L_{K^\pi}\Pro(\cC)$. It follows that if $X \in \Pro(\cS_{\cofin})$ is fibrant then the actual counit
\[ \Psi_{K^\pi}(\Phi_{K^\pi}(X)) \lrar X \]
is equivalent to the derived counit. But this counit is an isomorphism by Remark~\ref{r:counit}. It is left to show that the derived unit is a weak equivalence.

Let $X \in L_{K^\pi}\Pro(\cS)$ be a cofibrant object and consider the map
\[ X \lrar \Phi_{K^\pi}((\Psi_{K^\pi}(X))^{\fib}) .\]
By Lemma~\ref{l:we} it is enough to check that the map
\[ \Psi_{K^\pi}(X) \lrar \Psi_{K^\pi}(\Phi_{K^\pi}((\Psi_{K^\pi}(X))^{\fib})) \]
is a weak equivalence. By Remark~\ref{r:counit} the latter is naturally isomorphic to $(\Psi_{K^\pi}(X))^{\fib}$ and the desired result follows.

\end{proof}

\begin{cor}
There is an equivalence of $\infty$-categories
\[ \hat{\cS}_\infty \simeq \Pro(\cS_{\cofin})_\infty \simeq \Pro(\cS^{\pi}_\infty) \]
\end{cor}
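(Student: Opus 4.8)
The plan is to read off this chain of equivalences directly from the results already established, the only genuine input being the standard fact that a Quillen equivalence induces an equivalence of underlying $\infty$-categories. First I would recall that the proposition preceding Proposition~\ref{p:counter} identifies $\hat{\cS}$ with $\Pro(\cS_{\cofin})$ as ordinary categories, and that under this identification Quick's model structure is transported to the fibrantly generated model structure on $\Pro(\cS_{\cofin})$ appearing in Theorem~\ref{t:isaksen-is-quick}. Since the two relative categories then coincide along an isomorphism of categories, passing to $\infty$-localizations gives the tautological equivalence $\hat{\cS}_\infty \simeq \Pro(\cS_{\cofin})_\infty$, which accounts for the first $\simeq$ in the statement.

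For the second equivalence I would invoke Theorem~\ref{t:isaksen-is-quick}, which provides a Quillen equivalence
\[ \Psi_{K^\pi}: L_{K^\pi}\Pro(\cS) \adj \Pro(\cS_{\cofin}) : \Phi_{K^\pi}. \]
By Remark~\ref{r:adjunction} (building on~\cite{MG15}) this descends to an adjunction of $\infty$-categories $(\Psi_{K^\pi})_\infty \dashv (\Phi_{K^\pi})_\infty$; since the underlying Quillen adjunction is a Quillen equivalence, its derived unit and counit are weak equivalences, so the induced adjunction is in fact an equivalence of $\infty$-categories
\[ (L_{K^\pi}\Pro(\cS))_\infty \simeq \Pro(\cS_{\cofin})_\infty. \]

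Finally, Corollary~\ref{c:isaksen-finite} identifies the left-hand side with $\Pro(\cS^{\pi}_\infty)$, giving $(L_{K^\pi}\Pro(\cS))_\infty \simeq \Pro(\cS^{\pi}_\infty)$. Concatenating the three equivalences yields
\[ \hat{\cS}_\infty \simeq \Pro(\cS_{\cofin})_\infty \simeq \Pro(\cS^{\pi}_\infty), \]
as desired. I do not anticipate a serious obstacle: the whole statement is a formal consequence of the Quillen equivalence together with the already-established identification of $(L_{K^\pi}\Pro(\cS))_\infty$. The only point requiring a little care is the homotopy-theoretic principle that a Quillen equivalence induces an equivalence on $\infty$-localizations, which in the present framework is supplied by Remark~\ref{r:adjunction} and the characterization of such equivalences via the derived unit and counit.
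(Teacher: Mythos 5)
Your proposal is correct and follows exactly the route the paper intends: the corollary is stated without proof precisely because it is the formal concatenation of the categorical identification $\hat{\cS}\simeq\Pro(\cS_{\cofin})$, the Quillen equivalence of Theorem~\ref{t:isaksen-is-quick} (which induces an equivalence of $\infty$-localizations via Remark~\ref{r:adjunction} and the invertibility of the derived unit and counit), and Corollary~\ref{c:isaksen-finite}. The only nitpick is that the identification $\hat{\cS}\cong\Pro(\cS_{\cofin})$ is an equivalence rather than an isomorphism of categories, but that makes no difference since an equivalence of relative categories already induces an equivalence on $\infty$-localizations.
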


In~\cite{Mor96} Morel constructed a model structure on the category $\hat{\cS} \cong \Pro(\cS_{\cofin})$ in order to study \textbf{pro-$p$ homotopy theory}. Let us denote this model structure by $\Pro(\cS_{\cofin})_p$. The cofibrations in $\Pro(\cS_{\cofin})_p$ are the same as the cofibrations in Quick's model structure $\Pro(\cS_{\cofin})$, but the weak equivalences are more numerous. More precisely, the weak equivalences in $\Pro(\cS_{\cofin})_p$ are the maps which induce isomorphism on cohomology with $\mathbb{Z}/p\mathbb{Z}$ coefficients, whereas those of $\Pro(\cS_{\cofin})$ can be characterized as the maps which induce isomorphism on cohomology with coefficients in any finite local system. In particular, $\Pro(\cS_{\cofin})_p$ is a left Bousfield localization of $\Pro(\cS_{\cofin})$. This implies that the adjunction
\[\Psi:\Pro(\cS)\adj \Pro(\cS_{\cofin})_p:\Phi\]
is still a Quillen adjunction.

\begin{lem}\label{l:we-morel}
A map $f: X \lrar Y$ in $\Pro(\cS)$ is an equivalence in $L_{K^{p}}\Pro(\cS)$ if and only if $\Psi(f)$ is an equivalence in $\Pro(\cS_{\cofin})_p$.
\end{lem}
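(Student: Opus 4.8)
The plan is to follow the proof of Lemma~\ref{l:we} line by line, replacing $K^{\pi}$ by $K^{p}$ and Quick's model structure by Morel's. First I would record the Morel analogue of Lemma~\ref{l:isaksen quick quillen}, namely that the Quillen adjunction $\Psi: \Pro(\cS) \adj \Pro(\cS_{\cofin})_p : \Phi$ descends to a Quillen adjunction
\[ \Psi_{K^p}: L_{K^p}\Pro(\cS) \adj \Pro(\cS_{\cofin})_p : \Phi_{K^p}. \]
Exactly as in the Quick case, the trivial fibrations of $L_{K^p}\Pro(\cS)$ coincide with those of $\Pro(\cS)$, so it suffices to check that $\Phi$ carries a set of generating fibrations of Morel's structure to fibrations of $L_{K^p}\Pro(\cS)$; by~\cite[Proposition 3.3.16]{Hir03} this reduces to the statement that the domains and codomains of those generating fibrations are $K^p$-local. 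Granting this, every object of $\Pro(\cS)$, and hence of $L_{K^p}\Pro(\cS)$, is cofibrant by Theorem~\ref{t:model_Isa}~(3), and a left Quillen functor between model categories in which all objects are cofibrant preserves all weak equivalences. This already gives the forward implication: if $f$ is an equivalence in $L_{K^p}\Pro(\cS)$ then $\Psi(f) = \Psi_{K^p}(f)$ is an equivalence in $\Pro(\cS_{\cofin})_p$.

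For the converse I would show, as in Lemma~\ref{l:we}, that $\Psi$ detects $L_{K^p}$-equivalences. By definition $f: X \lrar Y$ is an equivalence in $L_{K^p}\Pro(\cS)$ if and only if
\[ \Map^h_{\Pro(\cS)}(Y,A) \lrar \Map^h_{\Pro(\cS)}(X,A) \]
is a weak equivalence for every $A \in K^{p}$. Each such $A$ is a $\tau$-finite Kan complex, hence lies in $\cS_{\cofin}$ and is fibrant in $\Pro(\cS)$, and I will argue below that it is moreover fibrant in Morel's model structure. Applying the derived mapping space adjunction of the Quillen pair $\Psi \dashv \Phi$ to the cofibrant object $X$ and the fibrant object $A$ then yields a natural weak equivalence
\[ \Map^h_{\Pro(\cS)}(X,A) = \Map^h_{\Pro(\cS)}(X,\Phi(A)) \simeq \Map^h_{\Pro(\cS_{\cofin})_p}(\Psi(X),A), \]
so that if $\Psi(f)$ is an equivalence in $\Pro(\cS_{\cofin})_p$ then the displayed map of mapping spaces is a weak equivalence for all $A \in K^p$, and therefore $f$ is an equivalence in $L_{K^p}\Pro(\cS)$.

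The one genuinely new input, and the step I expect to be the main obstacle, is the assertion that every $A \in K^{p}$ is fibrant in Morel's model structure (equivalently, $K^p$-local in Quick's structure); this is precisely what makes both the descent of the Quillen adjunction and the mapping space adjunction above go through. In the Quick case the corresponding fact was immediate from the construction of the generating fibrations, whereas Morel's structure is a nontrivial left Bousfield localization of Quick's and hence has strictly fewer fibrant objects, so a genuine argument is required. I would deduce it from the observation that the objects of $K^{p}$, namely $\K(S,0)$, $\B\ZZ/p$ and $\K(\ZZ/p,n)$, lie in $K^p_{\nil}$ and are therefore $p$-finite by Proposition~\ref{p:p-finite}; one then checks that $p$-finite spaces are local with respect to the mod-$p$ cohomology equivalences defining Morel's structure, using that the objects $\K(\ZZ/p,n)$ corepresent mod-$p$ cohomology. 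Once this locality statement is established the remainder of the argument is formal and identical to the proof of Lemma~\ref{l:we}.
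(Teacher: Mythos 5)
Your proposal matches the paper's own proof, which reads in its entirety ``similar to the proof of Lemma~\ref{l:we}, using the fact that every $A \in K^p$ is fibrant in $\Pro(\cS_{\cofin})_p$''; you correctly isolated that fibrancy statement as the one genuinely new input, and the rest of your argument (the descended Quillen adjunction, cofibrancy of all objects, and the derived mapping-space adjunction $\Map^h_{\Pro(\cS)}(X,\Phi(A)) \simeq \Map^h_{\Pro(\cS_{\cofin})_p}(\Psi(X),A)$) is exactly the paper's route. The only divergence is that the paper simply cites~\cite[Lemme 2]{Mor96} for the fibrancy of the objects of $K^p$, whereas you sketch a locality argument via Proposition~\ref{p:p-finite}; your sketch is viable but note that the locality must be verified in the profinite category with respect to \emph{continuous} mod-$p$ cohomology (corepresented there by $\K(\ZZ/p,n)$), which is essentially the content of Morel's lemma, so citing it is the shorter path.
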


\begin{proof}
The proof is similar to the proof of Lemma~\ref{l:we}, using the fact that every $A \in K^p$ is fibrant in $\Pro(\cS_{\cofin})_p$ (see~\cite[Lemme 2]{Mor96}).
\end{proof}

\begin{thm}\label{t:isaksen-is-morel}
The Quillen adjunction
\[ \Psi: \Pro(\cS) \adj \Pro(\cS_{\cofin})_p : \Phi \]
descends to a Quillen equivalence
\[ \Psi_{K^p}: L_{K^p}\Pro(\cS) \adj \Pro(\cS_{\cofin})_p : \Phi_{K^p}. \]
\end{thm}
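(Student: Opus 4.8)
The plan is to run the proof of Theorem~\ref{t:isaksen-is-quick} almost verbatim, replacing Quick's model structure $\Pro(\cS_{\cofin})$ throughout by Morel's $\Pro(\cS_{\cofin})_p$, the set $K^\pi$ by $K^p$, and Lemma~\ref{l:we} by its $p$-primary counterpart Lemma~\ref{l:we-morel}. Concretely there are three things to establish: that the adjunction descends to a Quillen adjunction $\Psi_{K^p}\dashv\Phi_{K^p}$, that the derived counit is a weak equivalence, and that the derived unit is a weak equivalence.

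First I would verify the descended Quillen adjunction, the $p$-analogue of Lemma~\ref{l:isaksen quick quillen}. Since left Bousfield localization does not change the cofibrations, the trivial fibrations of $L_{K^p}\Pro(\cS)$ coincide with those of $\Pro(\cS)$, and $\Phi$ sends trivial fibrations of $\Pro(\cS_{\cofin})_p$ (which agree with those of Quick's structure) to trivial fibrations of $\Pro(\cS)$, hence of $L_{K^p}\Pro(\cS)$, exactly as in the strict case. It then remains to see that $\Phi$ carries fibrations of $\Pro(\cS_{\cofin})_p$ to fibrations of $L_{K^p}\Pro(\cS)$. The point is that the generating fibrations of Morel's structure have domains and codomains lying in $K^p$, which are $K^p$-local, so by~\cite[Proposition 3.3.16]{Hir03} their images under $\Phi$ are fibrations in the localized category. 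This is also the input that, via Lemma~\ref{l:we-morel}, guarantees $\Psi_{K^p}$ preserves weak equivalences.

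Granting this, the two halves of the equivalence are formal. By Theorem~\ref{t:model_Isa}(3) every object of $\Pro(\cS)$, hence of $L_{K^p}\Pro(\cS)$, is cofibrant, so for a fibrant $X\in\Pro(\cS_{\cofin})_p$ the object $\Phi_{K^p}(X)$ is cofibrant and the strict counit $\Psi_{K^p}(\Phi_{K^p}(X))\lrar X$ already computes the derived counit. By Remark~\ref{r:counit}, a statement about the underlying functors and therefore indifferent to the model structure, this map is an isomorphism. For the unit, given a cofibrant $X\in L_{K^p}\Pro(\cS)$ I would apply Lemma~\ref{l:we-morel} to the map $X\lrar\Phi_{K^p}((\Psi_{K^p}(X))^{\fib})$: it is an $L_{K^p}$-equivalence if and only if its image under $\Psi_{K^p}$ is a Morel-equivalence. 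Using $\Psi_{K^p}\circ\Phi_{K^p}\cong\Id$ from Remark~\ref{r:counit}, that image is identified with the fibrant-replacement map $\Psi_{K^p}(X)\lrar(\Psi_{K^p}(X))^{\fib}$, which is a weak equivalence by construction.

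The only step requiring genuine care is the first one. Because Morel's structure is a further left Bousfield localization of Quick's, it has strictly more weak equivalences and hence fewer fibrations, so the preservation of fibrations by $\Phi$ into $L_{K^p}\Pro(\cS)$ is not automatic; it hinges on the explicit description of Morel's generating fibrations together with the fibrancy and $K^p$-locality of the objects of $K^p$ (cf.~\cite[Lemme 2]{Mor96}). Once the descended adjunction is in place, the remainder is a direct transcription of the profinite argument, and in combination with Corollary~\ref{c:isaksen-p finite} it identifies $\Pro(\cS_{\cofin})_p$ as a model for the $\infty$-category $\Pro(\cS^p_\infty)$ of pro-$p$ spaces.
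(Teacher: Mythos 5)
Your proposal is correct in substance and its treatment of the derived unit and counit coincides with the paper's: the paper likewise uses that all objects of $L_{K^p}\Pro(\cS)$ are cofibrant (Theorem~\ref{t:model_Isa}) so that the strict counit computes the derived one and is an isomorphism by Remark~\ref{r:counit}, and verifies the unit by applying Lemma~\ref{l:we-morel} together with $\Psi_{K^p}\circ\Phi_{K^p}\cong\Id$. Where you genuinely diverge is the descent of the Quillen adjunction. You argue on the right-adjoint side, mimicking Lemma~\ref{l:isaksen quick quillen}: trivial fibrations are unchanged by left Bousfield localization, and the generating fibrations of Morel's structure are sent by $\Phi$ to fibrations of $L_{K^p}\Pro(\cS)$ via \cite[Proposition 3.3.16]{Hir03}. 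The paper instead argues on the left-adjoint side: since $\Pro(\cS_{\cofin})_p$ and $L_{K^p}\Pro(\cS)$ are left Bousfield localizations of $\Pro(\cS_{\cofin})$ and $L_{K^\pi}\Pro(\cS)$ with the same cofibrations, $\Psi_{K^p}$ preserves cofibrations by Theorem~\ref{t:isaksen-is-quick}, and it preserves trivial cofibrations precisely because of Lemma~\ref{l:we-morel}; no description of Morel's fibrations is ever needed. Two caveats about your route: first, you silently rely on Morel's structure being fibrantly generated --- this is true (Morel constructs it that way, and Quick's construction in \cite{Qui11} is modeled on \cite{Mor96}), but it is an input the paper deliberately avoids; second, your literal claim that the generating fibrations have domains and codomains \emph{in} $K^p$ is inaccurate, since the natural generators include the maps $\L(\ZZ/p,n)\lrar\K(\ZZ/p,n)$ and $K^p$ contains only the objects $\K(S,0)$, $\B\ZZ/p$ and $\K(\ZZ/p,n)$, not the $\L(\ZZ/p,n)$. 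The gap is easily repaired: $\L(\ZZ/p,n)$ is a fibrant, weakly contractible simplicial set, hence automatically $K^p$-local, so the appeal to \cite[Proposition 3.3.16]{Hir03} still goes through. In sum, your argument works after this patch and is a self-contained parallel of the Quick case, while the paper's left-sided argument is shorter and more robust, requiring only the localization structure and Lemma~\ref{l:we-morel}.
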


\begin{proof}
Since $L_{K^p}\Pro(\cS)$ and $\Pro(\cS_{\cofin})_p$ are left Bousfield localizations of $L_{K^\pi}\Pro(\cS)$ and $\Pro(\cS_{\cofin})$ respectively, it follows from Theorem~\ref{t:isaksen-is-quick} that $\Psi_{K^p}$ preserves cofibrations and from Lemma~\ref{l:we-morel} that $\Psi_{K^p}$ preserves trivial cofibrations. The rest of the proof is identical to the proof of Theorem~\ref{t:isaksen-is-quick} using Lemma~\ref{l:we-morel} instead of Lemma~\ref{l:we}.
\end{proof}

\begin{rem}
A slightly weaker form of this theorem is proved by Isaksen in~\cite[Theorem 8.7.]{Isa05}. Isaksen constructs a length two zig-zag of adjunctions between  $ L_{K^p}\Pro(\cS)$ and $\Pro(\cS_{\cofin})_p$ and the middle term of this zig-zag is not a model category but only a relative category.
\end{rem}

\end{document}